\newtheorem{theorem}{Theorem}[section]
\newtheorem{proposition}[theorem]{Proposition}
\newtheorem{lemma}[theorem]{Lemma}
\newtheorem{claim}{Claim}
\newtheorem{corollary}[theorem]{Corollary}
\newtheorem{observation}{Observation}
\newtheorem{question}{Question}
\newenvironment{subproof}[1][\proofname]{
	
	\begin{proof}[#1]}{\end{proof}}
\newcommand\abs[1]{\lvert #1\rvert}
\newcommand{\DeltaR}{\Delta\!^R}
\newcommand{\tww}{\operatorname{tww}}
\newcommand{\rdeg}{\operatorname{rdeg}}
\newcommand{\dist}{\operatorname{dist}}
\newcommand\sgre[1]{se\-gre\-gated-model}
\newcommand\sena[1]{$\nabla$-sep\-ar\-ation}
\newcommand\subd[1]{$(\ge\!#1)$-sub\-di\-vi\-sion}
\newcommand\cont[1]{$#1$-con\-trac\-tion sequence}
\begin{document}
\date{\today}
\title{Twin-width of subdivisions of multigraphs%
\thanks{All the authors were supported by the Institute for Basic Science (IBS-R029-C1). Jungho Ahn was also supported by the KIAS Individual Grant (CG095301) at Korea Institute for Advanced Study. Debsoumya Chakraborti was also supported by the European Research Council (ERC) under the European Union Horizon 2020 research and innovation programme (grant agreement No.\ 947978).}}
\author[1]{Jungho Ahn}
\author[2]{Debsoumya Chakraborti}
\author[3]{Kevin Hendrey} 
\author[3,4]{Sang-il Oum}
\affil[1]{School of Computational Sciences, Korea Institute for Advanced Study (KIAS), Seoul, South~Korea}
\affil[2]{Mathematics Institute, University of Warwick, Coventry, UK}
\affil[3]{Discrete Mathematics Group, Institute for Basic Science (IBS), Daejeon, South~Korea}
\affil[4]{Department of Mathematical Sciences, KAIST, Daejeon,~South~Korea}
\affil[ ]{\small \textit{Email addresses:} \texttt{junghoahn@kias.re.kr}, \texttt{debsoumya.chakraborti@warwick.ac.uk}, 
\texttt{kevinhendrey@ibs.re.kr}, \texttt{sangil@ibs.re.kr}}

\maketitle

\begin{abstract}
    For each $d\leq3$, we construct a finite set $\mathcal{F}_d$ of multigraphs such that for each graph $H$ of girth at least~$5$ obtained from a multigraph~$G$ by subdividing each edge at least two times, $H$ has twin-width at most~$d$ if and only if $G$ has no minor in~$\mathcal{F}_d$.
    This answers a question of Berg\'{e}, Bonnet, and D\'{e}pr\'{e}s asking for the structure of graphs~$G$ such that each long subdivision of~$G$ has twin-width~$4$.
    As a corollary, we show that the $7\times7$ grid has twin-width~$4$, which answers a question of Schidler and Szeider.
\end{abstract}

\section{Introduction}\label{sec:intro}

In this paper, we use the word \emph{graph} to mean finite simple graph, and we use the word \emph{multigraph} when referring to graphs which may have loops or parallel edges.
Bonnet, Kim, Thomass\'{e}, and Watrigant~\cite{twin-width1} introduced the twin-width of graphs.
The \emph{twin-width} of an $n$-vertex graph~$G$, denoted by $\tww(G)$, is the minimum integer $d$ such that starting from the partition of the vertex set into singletons, by iteratively merging two parts, we can obtain the partition into one part while always maintaining the property that for each part~$P$, there are at most~$d$ other parts~$Q$ such that the graph has both an edge and a non-edge between~$P$ and~$Q$.
We will present a formal definition in Section~\ref{subsec:tww}.
Bonnet~et~al.~\cite{twin-width1} showed that if such a sequence for an $n$-vertex graph is given, then for any first-order formula $\varphi$, one can check whether the graph satisfies~$\varphi$ in $O(n)$ time where the hidden constant depends on~$d$ and the size of~$\varphi$.
Twin-width has received a lot of attention recently~\cite{twin-width2,twin-width3,twin-width4,twin-width6,GPT2021,BKRTW2021,BH2021,DGJOR2021,ST2021,BNOST2021,SS2021,modelcounting,AHKO2021,twwrandom}.
One straightforward but important observation, which will be used several times throughout this paper, is that the twin-width of a graph is at least as large as the twin-width of any of its induced subgraphs.

We analyse the twin-width of subdivisions of multigraphs.
\emph{Subdividing} an edge $e$ of a multigraph is the operation of deleting $e$ and adding a new degree-$2$ vertex whose neighbourhood is the set of ends of $e$.
A \emph{subdivision} of a multigraph~$G$ is a multigraph $H$ obtained from~$G$ by iteratively subdividing edges.
Observe that in~$H$, each edge of $G$ is replaced by a path unless it is a loop, in which case it is replaced by a cycle.
For $k\geq0$, we say that $H$ is an \emph{\subd{k}} of~$G$ if each edge of $G$ is replaced by a path or a cycle of length at least $k+1$ in~$H$,
and is the \emph{$k$-subdivision} of~$G$ if each edge of~$G$ is replaced by a path or a cycle of length exactly $k+1$ in~$H$.
We denote by $K_n$ the complete graph on $n$ vertices and by~$K_{m,n}$ the complete bipartite graph where one part has~$m$ vertices and the other part has~$n$ vertices.

Bonnet, Geniet, Kim, Thomass\'{e}, and Watrigant~\cite{twin-width2} showed that for all positive integers $k$ and $d$, there is a graph whose $k$-subdivision has twin-width at least $d$.

\begin{proposition}[Bonnet, Geniet, Kim, Thomass\'{e}, and Watrigant~{\cite[Proposition~6.2]{twin-width2}}]\label{prop:shortsubd}
    For integers~$d$, $n$, and~$k$ with $d,n\geq2$ and $1\leq k<\log_d(n-1)-1$, the $k$-subdivision of~$K_n$ has twin-width at least $d$.
\end{proposition}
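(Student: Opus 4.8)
The plan is to argue by contradiction: suppose the $k$-subdivision $G$ of $K_n$ satisfies $\tww(G)\le d-1$, and fix a contraction sequence witnessing this (in the sense of Section~\ref{subsec:tww}), that is, a sequence of partitions of $V(G)$, from singletons to a single part, in which each part is always joined by a red edge (one spanning both an edge and a non-edge of $G$) to at most $d-1$ other parts. Call the $n$ vertices of the original $K_n$ the \emph{branch vertices} $v_1,\dots,v_n$, and for distinct $a,c$ let $P_{ac}$ be the path of $G$ that replaced the corresponding edge of $K_n$, with vertices $v_a=q^{ac}_0,q^{ac}_1,\dots,q^{ac}_k,q^{ac}_{k+1}=v_c$; note that $G$ has girth $3(k+1)\ge6$ and that $q^{ac}_i$ is at distance exactly $i$ from $v_a$. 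I would then single out the first trigraph $T$ in the sequence that has a part containing two branch vertices; this exists because the final one-part partition contains all of them, and it is not the all-singletons trigraph. By minimality of $T$, the contraction producing $T$ merges a part $A$ containing a single branch vertex $v_a$ with a part $B$ containing a single branch vertex $v_b$ into a part $W=A\cup B$, and in $T$ every branch vertex other than $v_a$ and $v_b$ is still alone in its part. Hence the parts of $T$ that meet $\{v_c:c\neq a\}$ are exactly $W$ together with the $n-2$ parts of the vertices $v_c$, $c\notin\{a,b\}$, which is $n-1$ parts in total.

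Fix this $a$, and for $0\le i\le k+1$ let $\mathcal{A}_i$ be the set of parts of $T$ meeting $L_i$, where $L_0=\{v_a\}$, $L_i=\{q^{ac}_i:c\neq a\}$ for $1\le i\le k$, and $L_{k+1}=\{v_c:c\neq a\}$. Then $\abs{\mathcal{A}_0}=1$, and by the last sentence of the previous paragraph $\abs{\mathcal{A}_{k+1}}=n-1$. The core of the argument is to establish
\[
  \abs{\mathcal{A}_1}\le d \qquad\text{and}\qquad \abs{\mathcal{A}_{i+1}}\le d\,\abs{\mathcal{A}_i}\quad\text{for }1\le i\le k,
\]
because then $n-1=\abs{\mathcal{A}_{k+1}}\le d^{k+1}$, i.e.\ $k\ge\log_d(n-1)-1$, contradicting the hypothesis. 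For the recursive bound, fix $U\in\mathcal{A}_i$ with $i\ge1$ and set $J(U)=\{c\neq a:q^{ac}_i\in U\}$; the sets $J(U)$ partition $\{c:c\neq a\}$ as $U$ ranges over $\mathcal{A}_i$. For $c\in J(U)$ the vertex $q^{ac}_{i+1}$ is adjacent in $G$ to $q^{ac}_i\in U$, so it lies in $U$ or in a part joined to $U$ by a black or a red edge. The key observation is that a part $Y$ joined to $U$ by a black edge is fully adjacent to $U$, hence $Y\subseteq\bigcap_{z\in U}N_G(z)$; but if $\abs{J(U)}\ge2$ and we pick $c'\in J(U)\setminus\{c\}$, then $q^{ac}_{i+1}$ is non-adjacent to $q^{ac'}_i\in U$ (the paths $P_{ac}$ and $P_{ac'}$ meet only in $v_a$, and every neighbour of $q^{ac}_{i+1}$ lies on $P_{ac}$), so no black-neighbour of $U$ can contain $q^{ac}_{i+1}$. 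Thus each $q^{ac}_{i+1}$ with $c\in J(U)$ lies in $U$ or in one of the at most $d-1$ red-neighbours of $U$, and in any case (including $\abs{J(U)}\le1$) the vertices $\{q^{ac}_{i+1}:c\in J(U)\}$ lie in at most $d$ parts; summing over $U\in\mathcal{A}_i$ gives $\abs{\mathcal{A}_{i+1}}\le d\,\abs{\mathcal{A}_i}$. For the base estimate, every vertex of $L_1$ is a neighbour of $v_a\in W$, so it lies in $W$ or in a black- or red-neighbour of $W$; a black-neighbour of $W$ lies in $N_G(v_a)\cap N_G(v_b)$, which is empty if $k\ge2$, while if $k=1$ it is $\{q^{ab}_1\}$, and then a black-neighbour equal to $\{q^{ab}_1\}$ forces $W\subseteq N_G(q^{ab}_1)=\{v_a,v_b\}$, so $W=\{v_a,v_b\}$, which is disjoint from $L_1$ and hence not in $\mathcal{A}_1$. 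Either way $\abs{\mathcal{A}_1}\le 1+(d-1)=d$.

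The step I expect to be the main obstacle is controlling black (full) adjacencies: unlike red edges, these are not degree-bounded, so a careless layer-by-layer count could blow up. What rescues the argument is that full adjacency traps one part inside a common neighbourhood, and in a graph of girth at least~$6$ the vertices lying on two different paths out of $v_a$ have no common neighbour past $v_a$; this forces every ``next-layer'' part to fall back onto $U$ itself or one of its at most $d-1$ red-neighbours. This is also precisely why the hypothesis $k\ge1$, equivalently girth at least~$6$, is needed, and it is where the $k=1$ case requires the small extra observation above. Verifying that $T$ has the stated structure and that the sets $L_i$ behave as claimed is routine.
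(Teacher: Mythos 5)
The paper does not prove Proposition~\ref{prop:shortsubd} at all: it is quoted from Bonnet, Geniet, Kim, Thomass\'{e}, and Watrigant~\cite[Proposition~6.2]{twin-width2}, so there is no internal proof to compare against. Judged on its own, your argument is correct and is essentially the expected layered-counting proof: take the first trigraph $T$ in a $(d-1)$-contraction sequence in which some part contains two branch vertices, and show that the number of parts meeting the $i$-th distance layer from $v_a$ grows by a factor of at most $d$ per layer, because a black neighbour of a part $U$ must lie in the common neighbourhood of all of $U$, which is impossible once $U$ contains vertices of two different subdivided edges at $v_a$; hence $n-1\le d^{k+1}$, contradicting $k<\log_d(n-1)-1$. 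Your treatment of the base layer, including the $k=1$ case where $W$ can have the black neighbour $\{q^{ab}_1\}$ only if $W=\{v_a,v_b\}$, is sound, as is the summation $\abs{\mathcal{A}_{i+1}}\le d\,\abs{\mathcal{A}_i}$.

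Two small phrasing points should be repaired, though neither affects the argument. First, ``every branch vertex other than $v_a$ and $v_b$ is still alone in its part'' should say it is the \emph{unique branch vertex} of its part (earlier contractions may have absorbed subdivision vertices); your subsequent count of $n-1$ parts only uses this weaker, correct statement. Second, the parenthetical justification ``every neighbour of $q^{ac}_{i+1}$ lies on $P_{ac}$'' fails when $i+1=k+1$, since then $q^{ac}_{k+1}=v_c$ is a branch vertex with neighbours on all paths through $v_c$; the needed non-adjacency of $q^{ac}_{i+1}$ and $q^{ac'}_i$ still holds, but should be argued from the other side: $q^{ac'}_i$ with $1\le i\le k$ is an internal vertex of $P_{ac'}$, so all its neighbours lie on $P_{ac'}$, and neither an internal vertex of $P_{ac}$ nor $v_c$ lies on $P_{ac'}$.
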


Bonnet et al.~\cite{twin-width2} also showed that there exists some integer $d$ such that every graph has a subdivision of twin-width at most $d$.
Berg\'{e}, Bonnet, and D\'{e}pr\'{e}s~\cite{BBD2021} strengthened this result as follows.

\begin{theorem}[Berg\'{e}, Bonnet, and D\'{e}pr\'{e}s~\cite{BBD2021}]\label{thm:bbd}
    For an $n$-vertex graph $G$, every \subd{2\log_2n-1} of~$G$ has twin-width at most $4$.
\end{theorem}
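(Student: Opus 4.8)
The plan is to construct, for any subdivision $H$ of $G$ in which every edge of $G$ is replaced by a path of length at least $2\log_2 n$ (where $n:=\abs{V(G)}$), an explicit contraction sequence of width at most $4$. Set $k:=\lceil\log_2 n\rceil$ and fix an injection $b\colon V(G)\to\{0,1\}^k$, thought of as a binary \emph{address} of each vertex. As a preliminary reduction, observe that merging a block of consecutive \emph{internal} vertices of one subdivision path into a single part creates red edges only at that part and at its at most two path-neighbours, each then of red degree at most $2$, and leaves the vertices of $G$ untouched; applying this independently to all the paths, we may assume that every edge $e=uv$ of $G$ has been replaced by a path $(u=z^e_0,z^e_1,\dots,z^e_{2k}=v)$ of length exactly $2k$ (the off-by-one cases caused by rounding require only a harmless modification). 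Call $z^e_1,\dots,z^e_k$ the \emph{$u$-slots} of $e$ and $z^e_{2k-1},\dots,z^e_k$ its \emph{$v$-slots}, meeting in the midpoint $z^e_k$. The guiding principle is that the length $2k$ provides $k=\lceil\log_2 n\rceil$ slots near each end of each path, precisely enough to fold, near each vertex $w$ of $G$, the subdivided star formed by the $w$-slots of the edges at $w$ into a binary tree rooted at $w$, branching according to the address bits of the far endpoints.

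For the first stage, fix a rooted binary tree $\mathcal{T}$ of depth $k$ with $V(G)$ at its leaves, $v$ being placed at the leaf coded by $b(v)$; for $0\le j\le k$ the depth-$j$ subtrees of $\mathcal{T}$ induce a partition of $V(G)$, coarser for smaller $j$. We build, for all $w$ at once, the binary tree $T_w$ whose depth-$j$ node is the part $\{z^e_j : e=wv \text{ with } v \text{ under a fixed depth-}j \text{ subtree of } \mathcal{T}\}$, and we reach it in $k-1$ \emph{rounds}: in round $r$, for every $w$, the $i$-th $w$-slots with $1\le i\le k-r$ are coarsened from the depth-$(k-r+1)$ partition to the depth-$(k-r)$ partition, the slots being processed in decreasing order of $i$. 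Between rounds the red graph is a disjoint union of ``brooms'' — paths followed by binary trees — of red degree at most $3$, and red degree $4$ occurs only transiently, at the slot $i=k-r$ at the top of a block being coarsened, which sees at most two parts on each side because $\mathcal{T}$ is binary; this is exactly why the balanced choice of $\mathcal{T}$ is needed to keep the width at $4$ rather than larger. After round $k-1$ each $i$-th $w$-slot carries the depth-$i$ partition, so the depth-$k$ nodes of each $T_w$ are singletons — the midpoints — and for every edge $e=uv$ the midpoint $z^e_k$ is simultaneously the appropriate leaf of $T_u$ and of $T_v$. Hence $H$ has been contracted to the graph $H^{\ast}$ obtained from $G$ by replacing each vertex $w$ with $T_w$ and gluing these trees at their leaves along the edges of $G$.

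It remains to collapse $H^{\ast}$ to a single part, by contracting each tree $T_w$ from its leaves towards its root. A binary tree collapses in isolation with red degree at most $3$; the new difficulty is that merging a node of $T_w$ into its parent inherits, from the leaves in its subtree, red edges into the other trees that those leaves were glued to, and a naive schedule lets these ``foreign'' edges accumulate to red degree $5$. To avoid this we interleave the collapses of all the $T_w$ and discharge the foreign edges in a coordinated one-at-a-time fashion, so that every part is incident to at most one foreign edge at any moment and its red degree stays at most $1+2+1=4$. When all trees are collapsed only the vertices of $G$ remain, pairwise non-adjacent, and an arbitrary sequence of merges completes the contraction with no red edges — yielding a width-$4$ contraction sequence of $H$.

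The step I expect to be the main obstacle is the design and analysis of these two interleaved schedules so that the red degree never exceeds the exact value $4$: a straightforward level-by-level schedule already overshoots, and defeating it is what forces both the perfectly balanced tree $\mathcal{T}$ and the full $2\log_2 n$ length of the paths (so that the $u$-side and $v$-side slots do not overlap). Everything else — the normalisation of path lengths, and the fact that away from the transient refinements and collapses the red graph is a disjoint union of paths and binary trees — is routine.
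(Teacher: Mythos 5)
You should first note that the paper does not prove Theorem~\ref{thm:bbd} at all: it is quoted from Berg\'e, Bonnet, and D\'epr\'es~\cite{BBD2021}, so there is no internal proof to compare with, and your proposal has to stand on its own. Its first phase is essentially the right opening: normalising the path lengths and folding the half-paths at each vertex $w$ into a red binary tree $T_w$ indexed by address prefixes of the far endpoints, with red degree $4$ occurring only transiently at the top slot of each coarsened block, is a believable account (up to the rounding issue you flag, and up to the brooms being glued at the midpoints rather than disjoint).

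The genuine gap is the second phase, and it is not just a missing schedule --- the plan as stated cannot work. A red edge joining a part inside $T_u$ to a part inside $T_v$ can only vanish when those two parts are merged with each other; contractions never delete adjacencies between parts. Hence, if you only ever collapse each tree into itself, then at the moment $T_u$ is fully collapsed the resulting part carries exactly $\deg_G(u)$ foreign red edges, one for each edge of $G$ at $u$, going to pairwise distinct parts (they lie in distinct trees $T_v$). So the invariant ``every part is incident to at most one foreign edge'' is unachievable for $\deg_G(u)\ge 2$, and the red degree already exceeds $4$ whenever $\deg_G(u)\ge 5$. Your final configuration is misdescribed for the same reason: when ``only the vertices of $G$ remain'', each subdivided path of an edge $uv$ lies entirely inside the two parts corresponding to $u$ and $v$, forcing a red edge between them, so the quotient at that moment is an all-red copy of $G$ with maximum red degree $\Delta(G)$, not an edgeless graph that can be finished arbitrarily. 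Consequently no interleaving of purely per-tree collapses can succeed; a correct argument must merge parts coming from \emph{different} trees throughout the endgame (in the argument of~\cite{BBD2021} the same binary addressing coordinates how the structures attached to different vertices of $G$ are merged as the trees are consumed), and this cross-tree coordination --- which is the actual heart of obtaining the constant $4$ --- is precisely what your proposal leaves unspecified and, as formulated, contradicts.
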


Berg\'{e} et al.~\cite{BBD2021} suspected that long subdivisions of ``sufficiently complicated'' graphs might have twin-width exactly~$4$ for some unspecified notion of complexity.
We confirm this suspicion and completely characterise the class of graphs whose long subdivisions have twin-width exactly~$d$ for every integer~$d\leq4$.
Surprisingly, our main theorem combined with Theorem~\ref{thm:bbd} implies that the twin-width of the $(\lceil2\log_2\abs{V(G)}\rceil+4)$-subdivision of a multigraph $G$ is a minor-monotone parameter.

A multigraph $H$ is a \emph{minor} of a multigraph~$G$ if there is a collection $(T_u)_{u\in V(H)}$ of vertex-disjoint subtrees~$T_u$ of~$G$ indexed by the vertices $u$ of $H$ and an injection $\varphi:E(H)\to E(G)\setminus\bigcup_{u\in V(H)}E(T_u)$ such that for every $e\in E(H)$, if $\{v,w\}$ is the set of ends of $e$, then $\varphi(e)$ is an edge joining a vertex in $T_v$ with a vertex in~$T_w$.
The \emph{girth} of a multigraph is the minimum length of a cycle.
For every positive integer $\ell$, we denote by $C_\ell$ the $\ell$-edge cycle.
In particular,~$C_1$ is the $1$-vertex multigraph with one loop.
We denote by~$K_6^-$ the graph obtained by removing an edge from $K_6$, by $\overline{C_7}$ the complement of~$C_7$, and by $C_5+\overline{K_2}$ the join of~$C_5$ and the complement of $K_2$.
We denote by $K_{3,\widehat{1},3}$ the graph obtained from the complete tripartite graph $K_{3,1,3}$ by removing one edge from the part of size $1$ to each of the other parts.
We denote by~$Q_3$ the \emph{cube} graph, and by $V_8$ the \emph{Wagner} graph.
We present these six graphs in Figure~\ref{fig:forbidden}.

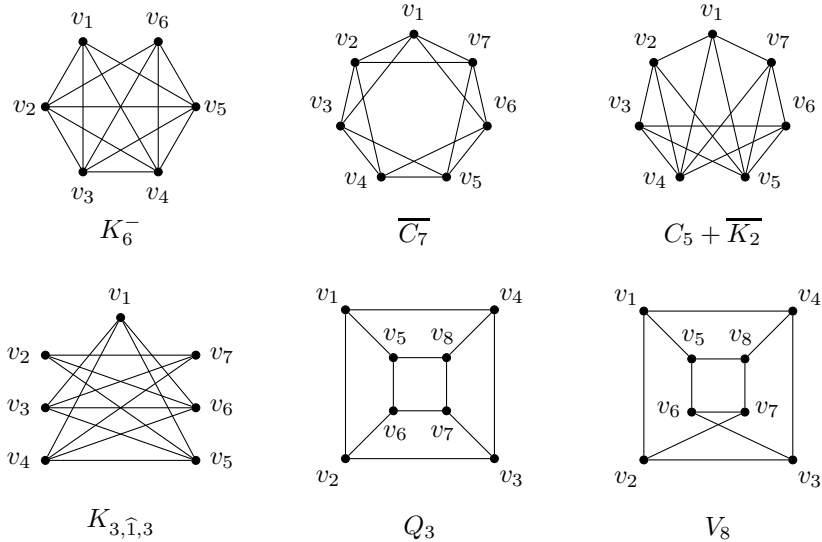
\begin{figure}[t]
    \centering
    \tikzstyle{v}=[circle, draw, solid, fill=black, inner sep=0pt, minimum width=3pt]
    \begin{tikzpicture}
        \draw (0,1.8);
        \draw (0,-1.8);
        \draw (1*60:1) node[v,label={[xshift=0.0mm, yshift=0.0mm]$v_6$}](u1){};
        \draw (2*60:1) node[v,label={[xshift=0.0mm, yshift=0.0mm]$v_1$}](u2){};
        \draw (3*60:1) node[v,label={[xshift=-2.7mm, yshift=-3.0mm]$v_2$}](u3){};
        \draw (4*60:1) node[v,label={[xshift=0.0mm, yshift=-6.0mm]$v_3$}](u4){};
        \draw (5*60:1) node[v,label={[xshift=0.0mm, yshift=-6.0mm]$v_4$}](u5){};
        \draw (6*60:1) node[v,label={[xshift=2.7mm, yshift=-3.0mm]$v_5$}](u6){};
        \foreach \x in {3,4,5,6}{
            \draw (u1)--(u\x)--(u2);
        }
        \draw (u4)--(u3)--(u6)--(u5);
        \draw (u3)--(u5);
        \draw (u4)--(u6);
        \draw (u4)--(u5);
        \draw (0,-1.2) node[label=below:$K_6^-$](){};
    \end{tikzpicture}
    \hspace{0.5cm}
    \begin{tikzpicture}
	\draw (360/7*1+90-360/7:1) node[v,label={[xshift=0.0mm,yshift=0.0mm]$v_1$}](v1){};
	\draw (360/7*2+90-360/7:1) node[v,label={[xshift=-1mm,yshift=0.0mm]$v_2$}](v2){};
	\draw (360/7*3+90-360/7:1) node[v,label={[xshift=-2.5mm,yshift=0.0mm]$v_3$}](v3){};
	\draw (360/7*4+90-360/7:1) node[v,label={[xshift=-3.3mm,yshift=-3.3mm]$v_4$}](v4){};
	\draw (360/7*5+90-360/7:1) node[v,label={[xshift=3.3mm,yshift=-3.3mm]$v_5$}](v5){};
	\draw (360/7*6+90-360/7:1) node[v,label={[xshift=2.5mm,yshift=0.0mm]$v_6$}](v6){};
	\draw (360/7*7+90-360/7:1) node[v,label={[xshift=1mm,yshift=0.0mm]$v_7$}](v7){};
        \draw (-1.5,0) node(){};
        \draw (1.5,0) node(){};
	\draw (v1)--(v2)--(v3)--(v4)--(v5)--(v6)--(v7)--(v1);
	\draw (v1)--(v3)--(v5)--(v7)--(v2)--(v4)--(v6)--(v1);
        \draw (0,-1.2) node[label=below:$\overline{C_7}$](){};
    \end{tikzpicture}
    \hspace{0.5cm}
    \begin{tikzpicture}
	\draw (360/7*1+90-360/7:1) node[v,label={[xshift=0.0mm,yshift=0.0mm]$v_1$}](v1){};
	\draw (360/7*2+90-360/7:1) node[v,label={[xshift=-1mm,yshift=0.0mm]$v_2$}](v2){};
	\draw (360/7*3+90-360/7:1) node[v,label={[xshift=-2.5mm,yshift=0.0mm]$v_3$}](v3){};
	\draw (360/7*4+90-360/7:1) node[v,label={[xshift=-3.3mm,yshift=-3.3mm]$v_4$}](v4){};
	\draw (360/7*5+90-360/7:1) node[v,label={[xshift=3.3mm,yshift=-3.3mm]$v_5$}](v5){};
	\draw (360/7*6+90-360/7:1) node[v,label={[xshift=2.5mm,yshift=0.0mm]$v_6$}](v6){};
	\draw (360/7*7+90-360/7:1) node[v,label={[xshift=1mm,yshift=0.0mm]$v_7$}](v7){};
        \draw (-1.5,0) node(){};
        \draw (1.5,0) node(){};
	\draw (v1)--(v2)--(v3)--(v6)--(v7)--(v1);
        \draw (v4)--(v1);
        \draw (v4)--(v2);
        \draw (v4)--(v3);
        \draw (v4)--(v6);
        \draw (v4)--(v7);
        \draw (v5)--(v1);
        \draw (v5)--(v2);
        \draw (v5)--(v3);
        \draw (v5)--(v6);
        \draw (v5)--(v7);
        \draw (0,-1.2) node[label=below:$C_5+\overline{K_2}$](){};
    \end{tikzpicture}
    \\\vspace{0.3cm}
    \begin{tikzpicture}
        \draw (0,1.2) node[v,label=above:$v_1$](v1){};
        \draw (-1,0.7) node[v,label=left:$v_2$](v2){};
        \draw (-1,0) node[v,label=left:$v_3$](v3){};
        \draw (-1,-0.7) node[v,label=left:$v_4$](v4){};
        \draw (1,-0.7) node[v,label=right:$v_5$](v5){};
        \draw (1,0) node[v,label=right:$v_6$](v6){};
        \draw (1,0.7) node[v,label=right:$v_7$](v7){};
        \draw (-1.5,0) node(){};
        \draw (1.5,0) node(){};
	\draw (v2)--(v5);
        \draw (v2)--(v6);
        \draw (v2)--(v7);
        \draw (v3)--(v5);
        \draw (v3)--(v6);
        \draw (v3)--(v7);
        \draw (v4)--(v5);
        \draw (v4)--(v6);
        \draw (v4)--(v7);
        \draw (v1)--(v3);
        \draw (v1)--(v4);
        \draw (v1)--(v5);
        \draw (v1)--(v6);
        \draw (0,-1.1) node[label=below:$K_{3,\widehat{1},3}$](){};
    \end{tikzpicture}
    \hspace{0.5cm}
    \begin{tikzpicture}
        \draw (1*90+45:1.4) node[v,label={[xshift=-2.3mm,yshift=-1.0mm]$v_1$}](v1){};
        \draw (2*90+45:1.4) node[v,label={[xshift=-2.3mm,yshift=-5.5mm]$v_2$}](v2){};
        \draw (3*90+45:1.4) node[v,label={[xshift=2.3mm,yshift=-5.5mm]$v_3$}](v3){};
        \draw (0*90+45:1.4) node[v,label={[xshift=2.3mm,yshift=-1.0mm]$v_4$}](v0){};
        \draw (1*90+45:0.5) node[v,label={[xshift=0.3mm,yshift=0.0mm]$v_5$}](u1){};
        \draw (2*90+45:0.5) node[v,label={[xshift=0.3mm,yshift=-5.8mm]$v_6$}](u2){};
        \draw (3*90+45:0.5) node[v,label={[xshift=-0.5mm,yshift=-5.8mm]$v_7$}](u3){};
        \draw (0*90+45:0.5) node[v,label={[xshift=-0.5mm,yshift=0.0mm]$v_8$}](u0){};
        \draw (-1.5,0) node(){};
        \draw (1.5,0) node(){};
        \draw (u0)--(u1)--(u2)--(u3)--(u0);
        \draw (v0)--(v1)--(v2)--(v3)--(v0);
        \foreach \x in {0,1,2,3}{
            \draw (u\x)--(v\x);
        }
        \draw (0,-1.5) node[label=below:$Q_3$](){};
    \end{tikzpicture}    
    \hspace{0.5cm}  
    \begin{tikzpicture}
        \draw (1*90+45:1.4) node[v,label={[xshift=-2.3mm,yshift=-1.0mm]$v_1$}](v1){};
        \draw (2*90+45:1.4) node[v,label={[xshift=-2.3mm,yshift=-5.5mm]$v_2$}](v2){};
        \draw (3*90+45:1.4) node[v,label={[xshift=2.3mm,yshift=-5.5mm]$v_3$}](v3){};
        \draw (0*90+45:1.4) node[v,label={[xshift=2.3mm,yshift=-1.0mm]$v_4$}](v0){};
        \draw (1*90+45:0.5) node[v,label={[xshift=0.3mm,yshift=0.0mm]$v_5$}](u1){};
        \draw (2*90+45:0.5) node[v,label={[xshift=-3.0mm,yshift=-2.5mm]$v_6$}](u2){};
        \draw (3*90+45:0.5) node[v,label={[xshift=3.0mm,yshift=-2.5mm]$v_7$}](u3){};
        \draw (0*90+45:0.5) node[v,label={[xshift=-0.5mm,yshift=0.0mm]$v_8$}](u0){};
        \draw (-1.5,0) node(){};
        \draw (1.5,0) node(){};
        \draw (u0)--(u1)--(u2)--(u3)--(u0);
        \draw (v0)--(v1)--(v2)--(v3)--(v0);
        \foreach \x in {0,1}{
            \draw (u\x)--(v\x);
        }
        \draw (u2)--(v3);
        \draw (u3)--(v2);
        \draw (0,-1.5) node[label=below:$V_8$](){};
    \end{tikzpicture}
    \caption{Minor minimal graphs whose \subd{2}s have twin-width at least $4$.}
    \label{fig:forbidden}
\end{figure}

\begin{theorem}\label{thm:main}
    Let $H$ be a graph which is an \subd{2} of a multigraph $G$.
    The following hold.
    \begin{enumerate}[label=(\roman*)]
        \item\label{main:atmost3} $\tww(H)\leq3$ if and only if $G$ has no minor in $\{K_6^-,\overline{C_7},C_5+\overline{K_2},K_{3,\widehat{1},3},Q_3,V_8\}$.
        \item\label{main:atmost2} $\tww(H)\leq2$ if and only if $G$ has no $K_4$ as a minor.
    \end{enumerate}
    In addition, if $H$ is of girth at least~$5$, then the following hold.
    \begin{enumerate}[label=(\roman*)]
    \setcounter{enumi}{2}
        \item\label{main:atmost1} $\tww(H)\leq1$ if and only if $G$ has no minor in $\{K_{1,3},C_1\}$.
        \item\label{main:atmost0} $\tww(H)=0$ if and only if $G$ has no minor in $\{K_2,C_1\}$.
    \end{enumerate}
\end{theorem}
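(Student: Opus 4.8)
The plan is to prove each of the four biconditionals by separately establishing a lower bound (a minor in the relevant set forces $\tww(H)$ above the stated value) and an upper bound (absence of such a minor keeps $\tww(H)$ at or below it), using throughout that twin-width does not increase when passing to an induced subgraph. For the \textbf{lower bounds} the key tool is a reduction principle: if $F$ is a minor of $G$ and $H$ is a $(\ge 2)$-subdivision of $G$, then $H$ contains, as an \emph{induced} subgraph, a $(\ge 2)$-subdivision of some multigraph $\widehat{F}$ obtained from $F$ by replacing each vertex $u$ with a tree having at most $\deg_F(u)$ leaves and each edge with a path; indeed, fixing a minor model of $F$ in $G$ and keeping only the branch trees together with the model edges, every edge of $G$ outside the model has been subdivided, so no chords survive in $H$. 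For a fixed $F$ there are, up to suppressing degree-$2$ vertices, only finitely many such $\widehat{F}$, and combined with the fact that for each fixed multigraph the twin-width of its $k$-subdivision stops increasing once $k$ is large, it suffices to verify a finite list of explicit finite graphs. For $F\in\mathcal{F}_3$ we check, with computer assistance, that every $(\ge 2)$-subdivision of each such $\widehat{F}$ has twin-width at least $4$ by lower-bounding the red degree in any contraction sequence; for the remaining sets the relevant graphs are subcubic or trivial, so no tree-blow-up is needed, and an induced long subdivision of $K_4$ has twin-width at least $3$, a $K_{1,3}$-minor yields an induced long subdivision of $K_{1,3}$ of twin-width $2$ while (using that $H$ has girth at least $5$) a $C_1$-minor yields an induced cycle of length at least $5$ of twin-width $2$, and a $K_2$- or $C_1$-minor yields an induced $P_4$ of twin-width $1$.

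For the \textbf{upper bounds of \ref{main:atmost2}, \ref{main:atmost1}, \ref{main:atmost0}}: ``no $K_4$ minor'' means $G$ is series-parallel, hence has a tree-decomposition of width at most $2$, and I would build from it a contraction sequence for $H$ that processes the decomposition tree from the leaves, collapsing each fully processed subdivided edge to a single vertex while only boundedly many ``active'' subdivided paths ever touch the current bag; a subdivided path is absorbed with red degree $1$, and a careful accounting at the size-$\le 2$ separators shows the width can be held at $2$. Parts \ref{main:atmost1} and \ref{main:atmost0} are immediate, since ``no minor in $\{K_{1,3},C_1\}$'' forces $G$ to be a disjoint union of paths, whence $H$ is too and $\tww(H)\le 1$, with equality failing precisely when $G$ is edgeless.

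The \textbf{upper bound of \ref{main:atmost3}} is the crux; it improves the universal bound $\tww(H)\le 4$ of Theorem~\ref{thm:bbd} under the hypothesis that $G$ has no minor in $\mathcal{F}_3$. I need a structure theorem for $\mathcal{F}_3$-minor-free multigraphs --- that each such multigraph has a tree-decomposition of bounded adhesion whose torsos are each series-parallel or one of a bounded number of small exceptional graphs. Granting this, the strategy parallels the series-parallel case: sweep the decomposition tree keeping only $O(1)$ subdivided paths incident to the current part, and choose the contractions so that the red degree never exceeds $3$ --- bounded torsos and bounded separators contribute $O(1)$, a subdivided path contributes at most $1$, and the single unit of slack over the series-parallel bound of $2$ is precisely what is needed to pass through the exceptional torsos and the slightly larger separators. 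The principal obstacles are (a) proving or assembling the structure theorem, and (b), the genuinely delicate point, arranging the contraction sequence so that its width is exactly $3$ rather than merely $O(1)$: this tight analysis of how red edges propagate across separators and exceptional bags is what pins $\mathcal{F}_3$ down to precisely these six graphs, and is where I expect most of the effort to lie. Minimality of each obstruction set then follows by combining the lower-bound verification with the upper bound applied to proper minors.
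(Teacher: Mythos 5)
Your lower bound for \ref{main:atmost3} has a genuine gap: it cannot be reduced to a finite, computer-checkable list in the way you propose. For each blown-up graph $\widehat F$ the class of its \subd{2}s is infinite, and the ``fact'' you invoke --- that the twin-width of the $k$-subdivision stops increasing once $k$ is large --- is neither proved in your sketch nor sufficient: you need a lower bound of $4$ for \emph{every} \subd{2}, including those whose edges are subdivided wildly different numbers of times, and no monotonicity in the subdivision length is available (Theorem~\ref{thm:bbd} caps long subdivisions at $4$, so any transfer argument would have to be exactly the equality statement being proved). The paper handles all subdivisions uniformly via \sgre{}s: Lemma~\ref{lem:detecting} extracts a \sgre{} of some $F\in\mathcal{F}_3$ from any \subd{1}, and Proposition~\ref{prop:segregated} shows that containing such a model forces twin-width at least $4$, through a minimal-counterexample induction (Lemma~\ref{lem:segregated}) in which contracting any pair of vertices either creates red degree $4$ or yields a \sgre{} of a possibly \emph{different} member of $\mathcal{F}_3$; the closure of $\mathcal{F}_3$ under $(\Delta,Y)$-operations up to spanning subgraphs (Lemma~\ref{lem:DeltaY}) together with Observation~\ref{obs:forbidden} is exactly what closes this induction, and it is run on trigraphs so that red edges can be tracked. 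Nothing in a per-graph verification supplies this dynamic step. The same remark applies, more mildly, to \ref{main:atmost2}: that a \subd{2} of $K_4$ has twin-width at least $3$ is itself Proposition~\ref{prop:segregated2} and needs a comparable argument, not a citation.

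For the upper bound of \ref{main:atmost3} you explicitly defer both essential ingredients, and the structure theorem you posit is too weak even as a target. Bounded adhesion with torsos that are series-parallel or from a finite exceptional list is not enough: the paper's $3$-contractible tree decompositions (Proposition~\ref{prop:treedecomp}, built on Proposition~\ref{prop:4connected}) require each bag to induce one of the nine graphs in $\mathcal{K}$ \emph{and} the size-$3$ adhesion sets meeting a bag to form an addable set of that bag (at most three of the four triangles of a $K_4$ bag, parity-type constraints for $K_5$, $K_6^{=}$, and $\overline{C_6}+K_1$, as in Propositions~\ref{prop:K_4 addable}--\ref{prop:spinner addable}); without these placement constraints the decomposition itself encodes a $Q_3$ or $V_8$ minor and no $3$-contraction sequence of the subdivision exists, so no amount of careful accounting in the sweep can hold the red degree at $3$. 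The width-$3$ bookkeeping is then carried by explicit partial contraction sequences (Lemma~\ref{lem:theta}, Lemmas~\ref{lem:yoperation}--\ref{lem:routine}, Corollary~\ref{cor:base}) and by proving the stronger trigraph statement Proposition~\ref{prop:mainbackward}, which is what allows leaf bags to be contracted inductively while controlling the red edges left on the adhesion. Since you name both the structure theorem and this tight analysis as open obstacles, your proposal covers \ref{main:atmost1} and \ref{main:atmost0}, sketches a workable route for the upper bound of \ref{main:atmost2} (close in spirit to the paper's $2$-tree argument via Theorem~\ref{thm:2tree} and Lemma~\ref{lem:2tree}), but does not yet contain a proof of either direction of \ref{main:atmost3}.
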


The extra condition for Theorem~\ref{thm:main}\ref{main:atmost1} and \ref{main:atmost0} is necessary since the twin-width of an \subd{2} of the $1$-vertex multigraph with one loop is $0$ if it has girth at most~$4$.

Let $d$ and $t$ be positive integers, and let $\mathcal{S}_{d,t}$ be the set of multigraphs whose $t$-subdivisions have twin-width at most~$d$.
By Theorem~\ref{thm:main}, if $d\leq3$ and $t\geq4$, then $\mathcal{S}_{d,t}$ is closed under taking minors.
However, if $d\geq4$, then $\mathcal{S}_{d,t}$ is not closed under taking minors.
To see this, let $n>(d+1)^{t+1}+1$.
By Proposition~\ref{prop:shortsubd}, the $t$-subdivision of~$K_n$ has twin-width at least~$d+1$, so $K_n$ is not in ${S}_{d,t}$.
Let $H$ be an \subd{\log_2n} of~$K_n$.
Since $t$ is positive, the $t$-subdivision of $H$ is an \subd{2\log_2n} of $K_n$, and so $H$ is in $\mathcal{S}_{d,t}$ 
by Theorem~\ref{thm:bbd}.
This demonstrates that $\mathcal{S}_{d,t}$ is not closed under taking minors since $K_n$ is a minor of $H$.

Using Theorem~\ref{thm:bbd}, Berg\'{e}, Bonnet, and D\'{e}pr\'{e}s~\cite{BBD2021} were able to show that deciding twin-width at most~$4$ is NP-complete.
In contrast, an algorithm of Corneil, Perl, and Stewart~\cite{Corneil1985} can be used to find a \cont{0} of a graph in linear time if one exists, and Bonnet, Kim, Reinald, Thomass\'{e}, and Watrigant~\cite{BKRTW2021} presented a polynomial-time algorithm that finds a \cont{1} if one exists.
For each $d\in\{2,3\}$, it is not known whether one can decide in polynomial time that a graph has twin-width at most $d$.
However, for graphs of twin-width at most~$3$ and girth at least~$5$ which are \subd{2}s of multigraphs, we present explicit contraction sequences witnessing the twin-width in the proofs leading to Theorem~\ref{thm:main}.
A polynomial-time algorithm for finding these contraction sequences can easily be extracted from the proofs.

For the forward direction of Theorem~\ref{thm:main}\ref{main:atmost3}, we actually prove a stronger statement which has the following additional consequence.
Let $\mathcal{F}_3:=\{K_6^-,\overline{C_7},C_5+\overline{K_2},K_{3,\widehat{1},3},Q_3,V_8\}$

\begin{proposition}\label{prop:mainforward}
    If $G$ is a multigraph which has a minor in $\mathcal{F}_3$, then all \subd{1}s of~$G$ and the line graphs of all \subd{2}s of~$G$ have twin-width at least $4$.
\end{proposition}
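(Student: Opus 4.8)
The plan is to reduce the statement to the six graphs in $\mathcal{F}_3$ and then to establish the lower bound for their subdivisions by analysing contraction sequences directly. Since twin-width does not increase when passing to an induced subgraph, and since the line graph of an induced subgraph of a graph is an induced subgraph of its line graph, it suffices to exhibit, inside every \subd{1} of $G$ and inside the line graph of every \subd{2} of $G$, an induced subgraph of twin-width at least~$4$. The transfer tool is the following: suppose $F\in\mathcal{F}_3$ is a minor of $G$, and fix a model consisting of vertex-disjoint trees $(T_u)_{u\in V(F)}$ and an edge injection~$\varphi$; by deleting vertices from the trees we may assume that every leaf of every~$T_u$ is incident to an edge in the image of~$\varphi$. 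For any \subd{1} $G'$ of $G$, the vertices of $G'$ lying on the subdivided trees~$T_u$ or on the subdivided edges~$\varphi(e)$ induce in $G'$ precisely a \subd{1} of the graph $F^{\star}$ obtained from $F$ by inflating each vertex~$u$ to the tree~$T_u$: no two vertices of $G$ are adjacent in $G'$ because every edge of $G$ is subdivided at least once, and each subdivision vertex has degree~$2$ and lies on a single subdivided edge, so no unwanted adjacency is created. The same construction, together with the observation on line graphs, handles the half of the statement concerning line graphs of \subd{2}s. It therefore suffices to prove that, for every graph $F^{\star}$ obtained from a member of $\mathcal{F}_3$ by inflating vertices to trees, every \subd{1} of $F^{\star}$ has twin-width at least~$4$, and the line graph of every \subd{2} of $F^{\star}$ has twin-width at least~$4$.

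For this base estimate I would argue about contraction sequences directly. Fix such an $F^{\star}$ and a \subd{1} $H$ of it, and suppose toward a contradiction that $H$ admits a contraction sequence of width at most~$3$; the line-graph case is treated identically, the only difference being that the subdivided paths emanate from cliques rather than from single vertices. Call the images of the vertices of $F^{\star}$ the \emph{branch vertices} of $H$; they are joined by pairwise internally disjoint induced paths, one per edge of $F^{\star}$, and $F^{\star}$ has a minor in $\mathcal{F}_3$ (contract the inflating trees), so the configuration of branch vertices is at least as rich as one of the six graphs. I would then trace the contraction sequence and show, by an inductive analysis of how red edges are forced along these long induced paths — each such path must at some stage be contracted toward one of its ends or collapsed internally, and every such collapse pushes a red edge onto a part meeting a branch vertex — that some part is eventually forced to have four red neighbours, contradicting the assumed width. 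The precise point where the forcing becomes unavoidable is where the branch-vertex configuration realises one of the graphs of $\mathcal{F}_3$, which is where the explicit list enters; I expect the combinatorics to be packaged as a discharging argument on the trigraph. For the finitely many tightest configurations, namely short subdivisions of the six graphs, the bound can in addition be confirmed by a direct case analysis.

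The crux is this base estimate, and in particular making the forcing argument work uniformly over \emph{all} subdivision lengths: a \subd{1} of a member of $\mathcal{F}_3$ may be taken arbitrarily long, and such a subdivision is locally tree-like, so no local configuration rules out a width-$3$ sequence — the obstruction is global and must be extracted entirely from how contractions propagate red edges between the few but intricately interlinked branch vertices. By Theorem~\ref{thm:bbd} these long subdivisions do have twin-width at most~$4$, so the estimate is tight, which is consistent with the forcing argument having essentially no slack. A secondary, more bookkeeping obstacle is to confirm that $\mathcal{F}_3$ is exactly the right list, that is, that the forcing genuinely fails for every multigraph with no minor in $\mathcal{F}_3$; this is intertwined with the structural description of $\mathcal{F}_3$-minor-free multigraphs needed for the converse direction of Theorem~\ref{thm:main}, and I would develop both in tandem.
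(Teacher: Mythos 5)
Your reduction step is sound and essentially parallels the paper's Lemma~\ref{lem:detecting}: inside any \subd{1} of $G$ one finds an induced \subd{1} of a graph obtained from some $F\in\mathcal{F}_3$ by inflating vertices into trees. But the heart of the proposition — the lower bound for such subdivisions and for the corresponding line-graph structures — is only sketched, and the sketch as stated does not work. Your forcing heuristic, ``every such collapse pushes a red edge onto a part meeting a branch vertex,'' is false: contracting two adjacent degree-$2$ vertices on a long subdivided path creates red edges only locally on that path, nowhere near the branch structure, and indeed this is exactly why subdivisions of $\mathcal{F}_3$-minor-free graphs \emph{do} admit $3$-contraction sequences. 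No local discharging on red edges can succeed; what is needed is a global invariant that is preserved by every contraction in a putative width-$3$ sequence. This is what the paper supplies with the notion of a \sgre{} of a graph in $\mathcal{F}_3$ (Proposition~\ref{prop:segregated} via Lemma~\ref{lem:segregated}): in a minimal counterexample one shows that contracting any pair $\{u,v\}$ either creates red degree at least $4$ or leaves a \sgre{} of some member of $\mathcal{F}_3$ — crucially, possibly a \emph{different} member, obtained via a $(\Delta,Y)$-operation, so the argument hinges on the closure property of Lemma~\ref{lem:DeltaY} together with the degree properties in Observation~\ref{obs:forbidden} and the minimality properties of Lemma~\ref{lem:minsgre}. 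None of these ingredients (the invariant, its contraction-stability case analysis, the $(\Delta,Y)$-closure of $\mathcal{F}_3$) appears in your proposal, and without them the ``inductive analysis of how red edges are forced'' has no identified mechanism.

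A secondary gap: your claim that the line-graph case is ``treated identically'' does not fit your own formulation. The line graph of an \subd{2} of the inflated graph $F^{\star}$ is not a subdivision of $F^{\star}$ (branch vertices become triangles or larger cliques), so your base estimate, stated only for subdivisions of $F^{\star}$, does not cover it; you would need a common generalisation of both configurations, which is again precisely the role the \sgre{} abstraction plays in the paper (branch sets joined by internally disjoint paths whose short $\mathcal{X}$-paths carry red edges). Finally, your last paragraph's worry about whether $\mathcal{F}_3$ is ``exactly the right list'' is irrelevant to this proposition — only the lower bound for graphs \emph{with} an $\mathcal{F}_3$-minor is at stake here — so no structural theory of $\mathcal{F}_3$-minor-free graphs is needed for this direction.
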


We determine the maximum number of edges of a simple $n$-vertex graph having some   \subd{1} of twin-width at most $3$.

\begin{restatable}{theorem}{edgenumber}\label{thm:final}
    For every integer $n\geq3$, if an $n$-vertex graph~$G$ has more than $\lfloor(7n-15)/2\rfloor$ edges, then every \subd{1} of~$G$ has twin-width at least~$4$.
    Furthermore, for every integer $n\geq3$, there exists an $n$-vertex graph~$G$ with $\lfloor(7n-15)/2\rfloor$ edges such that every \subd{1} of $G$ has twin-width at most~$3$.
\end{restatable}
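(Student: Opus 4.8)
The plan is to establish the two assertions separately. For the first one, by Proposition~\ref{prop:mainforward} it suffices to prove that every $n$-vertex graph $G$ with more than $\lfloor(7n-15)/2\rfloor$ edges has a minor in $\mathcal{F}_3$; equivalently, that every $n$-vertex graph with no minor in $\mathcal{F}_3$ has at most $\lfloor(7n-15)/2\rfloor$ edges. This is now a purely extremal statement about graph minors, and it is where the real work lies.

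To prove it, I would use the structural description of $\mathcal{F}_3$-minor-free graphs underlying the converse direction of Theorem~\ref{thm:main}\ref{main:atmost3}: such graphs are built from a finite list of small graphs by repeated clique-sums over cliques of size at most~$3$. Given this, argue by strong induction on $n$. If $G$ has a vertex $v$ of degree at most~$3$, then $G-v$ is $\mathcal{F}_3$-minor-free on $n-1$ vertices, so $\abs{E(G)}\le\abs{E(G-v)}+3\le\lfloor(7(n-1)-15)/2\rfloor+3\le\lfloor(7n-15)/2\rfloor$. If $G$ has minimum degree at least~$4$, then the structural description yields either a clique-cut of size at most~$3$ — across which we split $G$ into two smaller $\mathcal{F}_3$-minor-free graphs, apply the inductive hypothesis to each, and check that re-gluing along a clique on $t\le3$ vertices does not break the bound, the decisive arithmetic being that a copy of $K_5$ attached along a triangle contributes exactly $10-3=7$ edges for $5-3=2$ new vertices and so meets the bound with equality — or that $G$ belongs to a finite list of small highly connected graphs, which is checked by inspection; the base cases $n\le7$ are finite checks as well. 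I expect this extremal argument, and in particular pinning down the highly connected $\mathcal{F}_3$-minor-free graphs and making the clique-sum bookkeeping exact, to be the main obstacle.

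For the second assertion, the extremal graph is simply the equality case above. For $n\in\{3,4,5\}$ take $G_n=K_n$, which has $\binom{n}{2}=\lfloor(7n-15)/2\rfloor$ edges. For odd $n\ge7$, let $G_n$ be obtained by gluing $(n-3)/2$ copies of $K_5$ in a path so that consecutive copies share a triangle; then $\abs{V(G_n)}=n$ and $\abs{E(G_n)}=10+7\bigl(\tfrac{n-3}{2}-1\bigr)=7\cdot\tfrac{n-3}{2}+3=\lfloor(7n-15)/2\rfloor$. For even $n\ge6$, obtain $G_n$ from $G_{n-1}$ by adding one new vertex adjacent to the three vertices of some triangle of $G_{n-1}$; this adds one vertex and three edges, so $\abs{E(G_n)}=\lfloor(7n-15)/2\rfloor$. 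Since $G_n$ has a tree-decomposition of adhesion at most~$3$ all of whose torsos equal $K_5$ or $K_4$, the structural description immediately gives that $G_n$ has no minor in $\mathcal{F}_3$. Finally, every \subd{1} of $G_n$ is a simple graph of girth at least~$6$, since each edge of $G_n$ is subdivided at least once and hence every triangle becomes a cycle of length at least~$6$; the proof of the converse direction of Theorem~\ref{thm:main}\ref{main:atmost3} then applies (it extends from \subd{2}s to \subd{1}s of simple graphs, and in any case outputs explicit contraction sequences), so every \subd{1} of $G_n$ has twin-width at most~$3$. Concretely, such a sequence first contracts the subdivided degree-$3$ vertex when $n$ is even and then contracts the subdivided copies of $K_5$ one leaf bag at a time, keeping the red degree at most~$3$ throughout.
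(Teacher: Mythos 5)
Your first half follows the same route as the paper: reduce, via Proposition~\ref{prop:mainforward}, to the purely extremal claim that every $\mathcal{F}_3$-minor-free $n$-vertex graph has at most $\lfloor(7n-15)/2\rfloor$ edges, and prove that claim by induction over the clique-sum/tree-decomposition structure of $\mathcal{F}_3$-minor-free graphs, with the ``$K_5$ glued over a triangle'' arithmetic as the equality case. This is exactly what the paper does (Propositions~\ref{prop:treedecomp} and~\ref{prop:exactextremal}), so although you defer the bookkeeping, the plan is sound and matches the paper.

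The second half has a genuine gap. You justify the twin-width bound for \subd{1}s of your extremal graphs by asserting that the converse direction of Theorem~\ref{thm:main}\ref{main:atmost3} ``extends from \subd{2}s to \subd{1}s of simple graphs''. Nothing in the paper supports this: Proposition~\ref{prop:mainbackward} and the whole Section~\ref{sec:backward} machinery (Lemma~\ref{lem:theta}, the $\theta$ and $\psi$ gadgets, Corollary~\ref{cor:base}) require every edge to be subdivided at least twice, and whether every \subd{1} of an $\mathcal{F}_3$-minor-free graph admits a \cont{3} is precisely the first open question in Section~\ref{sec:conclu}. (Girth at least $6$ does not help; the girth hypothesis in Theorem~\ref{thm:main} only enters items \ref{main:atmost1} and \ref{main:atmost0}.) Your fallback, ``contract the subdivided copies of $K_5$ one leaf bag at a time, keeping the red degree at most $3$'', is exactly the nontrivial claim that needs proof: with length-$2$ subdivision paths there is far less slack than in the \subd{2} setting, and after collapsing a leaf $K_5$ of your chain you leave red edges on the shared triangle or its subdivision vertices which the next bag must then absorb; your sketch never verifies that this can be iterated within red degree $3$. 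This is why the paper uses a different extremal graph, $G_n=K_3+\frac{n-3}{2}K_2$ (all copies of $K_5$ sharing one common triangle), and writes out an explicit \cont{3} in which every red edge stays incident with a single fixed vertex $p$, so that each pendant part interacts only with the central triangle. To repair your argument, either switch to that construction and give such an explicit sequence, or genuinely carry out and verify an adhesion-preserving leaf-bag contraction for \subd{1}s of your chain, which at present is unproven. A smaller, fixable point: for even $n$ you must attach the new degree-$3$ vertex to a carefully chosen triangle (for instance a triangle of a leaf copy of $K_5$); by Proposition~\ref{prop:K_5 addable}, attaching it to an arbitrary triangle of an interior copy can create a minor in $\mathcal{F}_3$, and then by your own first part the resulting \subd{1}s would have twin-width at least~$4$.
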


\begin{figure}[t]
    \centering
    \tikzstyle{b}=[circle, draw=black!70, solid, fill=black!70, inner sep=0pt, minimum width=5.5pt]
    \tikzstyle{v}=[circle, draw, solid, fill=black, inner sep=0pt, minimum width=1.6pt]
    \begin{tikzpicture}[scale=0.5]
        \draw(0,7.1) node(){};
        \draw(0,0.9) node(){};
        \draw[line width=3.5pt,black!40] (1,1)--(1,7)--(7,7)--(7,1)--(1,1)--(1,7);
        \draw[line width=3.5pt,black!40] (1,5)--(5,5);
        \draw[line width=3.5pt,black!40] (5,7)--(5,3);
        \draw[line width=3.5pt,black!40] (3,3)--(7,3);
        \draw[line width=3.5pt,black!40] (3,5)--(3,1);
        \foreach \x in {1,2,3,4,5,6,7}{
            \draw (\x,1) node[v](){};
            \draw (\x,2) node[v](){};
            \draw (\x,3) node[v](){};
            \draw (\x,4) node[v](){};
            \draw (\x,5) node[v](){};
            \draw (\x,6) node[v](){};
            \draw (\x,7) node[v](){};
            \draw (\x,1)--(\x,7);
            \draw (1,\x)--(7,\x);
        }
        \draw (1,5) node[b](){};
        \draw (3,5) node[b](){};
        \draw (3,3) node[b](){};
        \draw (3,1) node[b](){};
        \draw (5,3) node[b](){};
        \draw (5,5) node[b](){};
        \draw (5,7) node[b](){};
        \draw (7,3) node[b](){};
    \end{tikzpicture}
    \hspace{0.3cm}
    \begin{tikzpicture}[scale=0.5]
        \draw(0,7.1) node(){};
        \draw(0,0.9) node(){};
        \draw [line width=3.5pt,black!40] (4,2)--(4,1)--(8,1)--(8,2)--(11,2)--(11,3)--(10,3)
        --(10,4)--(11,4)--(11,5)--(10,5)--(10,6)--(9,6)--(9,7)--(3,7)--(3,6)--(2,6)--(2,5)
        --(1,5)--(1,4)--(2,4)--(2,3)--(1,3)--(1,2)--(4,2);
        \draw [line width=3.5pt,black!40] (2,5)--(10,5);
        \draw [line width=3.5pt,black!40] (4,2)--(5,2)--(5,3)--(4,3)--(4,4)--(5,4)--(5,5);
        \draw [line width=3.5pt,black!40] (8,2)--(7,2)--(7,3)--(8,3)--(8,4)--(7,4)--(7,5);
        \draw [line width=3.5pt,black!40] (5,3)--(7,3);
        \foreach \x in {1,2,3,4,5,6,7,8,9,10,11}{
            \draw (\x,7) node[v](7h\x){};
            \draw (\x,6) node[v](6h\x){};
            \draw (\x,5) node[v](5h\x){};
            \draw (\x,4) node[v](4h\x){};
            \draw (\x,3) node[v](3h\x){};
            \draw (\x,2) node[v](2h\x){};
        }
        \foreach \x in {2,3,4,5,6,7,8,9,10}{
            \draw (\x,1) node[v](1h\x){};
        }
        \foreach \x in {1,3,5,7,9,11}{
            \draw (7h\x)--(6h\x);
            \draw (5h\x)--(4h\x);
            \draw (3h\x)--(2h\x);
        }
        \foreach \x in {2,4,6,8,10}{
            \draw (6h\x)--(5h\x);
            \draw (4h\x)--(3h\x);
            \draw (2h\x)--(1h\x);
        }
        \foreach \x in {2,3,4,5,6,7}{
            \draw (1,\x)--(11,\x);
        }
        \draw (2,1)--(10,1);
        \draw (4,2) node[b](a1){};
        \draw (5,3) node[b](b1){};
        \draw (7,3) node[b](b2){};
        \draw (7,5) node[b](b3){};
        \draw (5,5) node[b](b4){};
        \draw (8,2) node[b](a4){};
        \draw (2,5) node[b](a2){};
        \draw (10,5) node[b](a3){};
    \end{tikzpicture}
    \hspace{0.3cm}
    \begin{tikzpicture}[scale=0.5]
        \draw(0,6.6) node(){};
        \draw(0,0.4) node(){};
        \draw [line width=3.5pt,black!40] (2,1)--(10,1)--(10,2)--(11,2)--(11,3)--(10,3)--(10,4)
        --(11,4)--(11,5)--(10,5)--(10,6)--(2,6)--(2,5)--(1,5)--(1,4)--(2,4)--(2,3)
        --(1,3)--(1,2)--(2,2)--(2,1)--(3,1);
        \draw [line width=3.5pt,black!40] (2,2)--(7,2)--(7,3)--(9,3)--(9,2)--(8,2)--(8,1);
        \draw [line width=3.5pt,black!40] (10,5)--(5,5)--(5,4)--(3,4)--(3,5)--(4,5)--(4,6);
        \draw [line width=3.5pt,black!40] (5,2)--(5,3)--(4,3)--(4,4);
        \draw [line width=3.5pt,black!40] (7,5)--(7,4)--(8,4)--(8,3);
        \foreach \x in {1,2,3,4,5,6,7,8,9,10,11}{
            \draw (\x,5) node[v](5h\x){};
            \draw (\x,4) node[v](4h\x){};
            \draw (\x,3) node[v](3h\x){};
            \draw (\x,2) node[v](2h\x){};
        }
        \foreach \x in {2,3,4,5,6,7,8,9,10}{
            \draw (\x,1) node[v](1h\x){};
            \draw (\x,6) node[v](6h\x){};
        }
        \foreach \x in {1,3,5,7,9,11}{
            \draw (5h\x)--(4h\x);
            \draw (3h\x)--(2h\x);
        }
        \foreach \x in {2,4,6,8,10}{
            \draw (6h\x)--(5h\x);
            \draw (4h\x)--(3h\x);
            \draw (2h\x)--(1h\x);
        }
        \foreach \x in {2,3,4,5}{
            \draw (1,\x)--(11,\x);
        }
        \draw (2,1)--(10,1);
        \draw(2,6)--(10,6);
        \draw (2,2) node[b](){};
        \draw (5,2) node[b](){};
        \draw (7,5) node[b](){};
        \draw (4,4) node[b](){};
        \draw (4,6) node[b](){};
        \draw (8,3) node[b](){};
        \draw (8,1) node[b](){};
        \draw (10,5) node[b](){};
    \end{tikzpicture}
    \caption{An induced subgraph of the $7\times 7$ grid isomorphic to an \subd{1} of $Q_3$, an induced subgraph of the elementary $11\times 7$ wall isomorphic to an \subd{1} of $Q_3$, and a subgraph of the elementary $11\times 6$ wall isomorphic to an \subd{2} of $Q_3$.}
    \label{fig:cubic}
\end{figure}
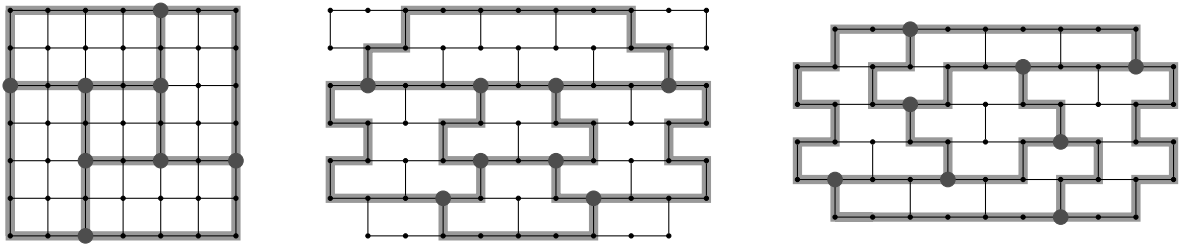

Proposition~\ref{prop:mainforward} is an important step towards understanding the structure of sparse graphs of twin-width at most $3$.
As an example, it allows us to determine the twin-width of large grids, large walls, and the line graphs of large walls by finding appropriate induced subgraphs.
Let $m$ and $n$ be positive integers.
The \emph{$m\times n$ grid} is the graph with vertex set $\{(i,j)\in\mathbb{Z}^2:1\leq i\leq m,~1\leq j\leq n\}$ such that $(i,j)$ and $(i',j')$ are adjacent if and only if $\abs{i-i'}+\abs{j-j'}=1$.
The \emph{elementary $m\times n$ wall} is the graph obtained from the $m\times n$ grid by removing all edges between $(i,j)$ and $(i,j+1)$ where~$i$ and~$j$ have the same parity, and then removing vertices of degree~$1$.
An \emph{$m\times n$ wall} is a graph isomorphic to a subdivision of the elementary $m\times n$ wall.

Utilising SAT solvers, Schidler and Szeider~\cite{SS2021} were able to compute the twin-width of various small graphs, including the $6\times 8$ grid which has twin-width~$3$.
Leading from this work, they raised the problem of determining the twin-width of the $m\times m$ grid for all positive integers $m$.
We completely solve the problem by showing that the $m\times m$ grid has twin-width~$4$ for all $m\geq 7$.
The upper bound was already known, and is straightforward to observe; see~\cite{gridtwinwidth}.
The lower bound follows from Proposition~\ref{prop:mainforward}, together with the observation that the $7\times7$ grid has an induced subgraph isomorphic to an \subd{1} of $Q_3$, as illustrated in Figure~\ref{fig:cubic}.
We summarise the state of knowledge of the twin-width of general $m\times n$ grids in Table~\ref{tab:grid}.
The case which remains open is determining the twin-width of the $6\times n$ grid for $n\geq 9$.

Similar to grids, large walls and their line graphs also have twin-width $4$.
We will prove the upper bounds in Section~\ref{sec:grid}.
The lower bounds come from Proposition~\ref{prop:mainforward} and the following observations.
We observe that the elementary $11\times7$ wall contains an induced subgraph isomorphic to an \subd{1} of~$Q_3$, and the elementary $11\times6$ wall contains a subgraph isomorphic to an \subd{2} of~$Q_3$; see Figure~\ref{fig:cubic}.
It follows that every $11\times7$ wall contains an induced subgraph isomorphic to an \subd{1} of~$Q_3$, and the line graph of every $11\times6$ wall contains an induced subgraph isomorphic to an \subd{2} of~$Q_3$.
Thus, we obtain the following corollary of Proposition~\ref{prop:mainforward}.

\begin{table}[t]
    \centering
    \newcommand\rem[1]{\text{ ({#1})}}
    \begin{tblr}{
        colspec={cccccccccc},
        column{2-11}={mode=math},
        cell{2-9}{1}={mode=math},
        vline{1,2}={1pt,solid},
        hline{1,2}={1pt,solid},
        hline{3}={2,4-11}{solid},
        hline{4}={3-11}{solid},
        hline{5}={4-5}{solid},
        hline{6}={5}{solid},
        hline{7}={6-11}{solid},
        hline{8}={7-11}{solid},
        vline{3}={3}{solid},
        vline{4}={3-4}{solid},
        vline{5}={5}{solid},
        vline{6}={6}{solid},
        vline{7}={7}{solid},
        vline{10}={7}{solid},
        vline{8}={8-9}{solid},
        vline{5}={2-3}{solid},
        vline{6}={4}{solid},
        cell{2}{5}={c=7}{l},
        cell{3,5}{5}={c=7}{l,gray!20},
        cell{4}{6}={c=6,r=3}{l,gray!20},
        cell{7}{7}={c=3}{l},
        cell{7}{10}={c=2}{l},
        cell{8}{8}={c=4}{l,gray!20}
        }
        \diagbox{$m$}{$n$} & 1& 2& 3 & 4 & 5 & 6 & 7 & 8 & \geq9 &  \\
        1 & 0 & & & 1 \rem{Bonnet et al.~\cite{twin-width1}} \\
        2 &   & &1& {\mathbf{2}} \rem{Proposition~\ref{prop:grid1}}\\  
        3 &   & &2& &  \\
        4 &   & & & {\mathbf{3}} \rem{Proposition~\ref{prop:grid2} and Schidler\textsuperscript{\textcolor{red!60!black}{1}}} \\
        5 &   & & &  & \\
        6 &   & & &  &  & 3\rem{Schidler and Szeider~\cite{SS2021}} & & &\le 4 \rem{Bonnet~\cite{gridtwinwidth}}\\
        \geq7 &   & & &  &  &   & {\mathbf{4}} \rem{Corollary~\ref{cor:grid}}  
    \end{tblr}
    \caption{Twin-width of $m\times n$ grids. New results are in gray-coloured cells.}
    \label{tab:grid}
\end{table}

\footnotetext[1]{Schidler informed the authors in 2023 that he computed the twin-width of the $3\times5$ grid and the $4\times4$ grid using the SAT solver by the method of Schidler and Szeider~\cite{SS2021}.}

\begin{corollary}\label{cor:grid}
    Let $m$ and $n$ be integers.
    Then the following hold.
    \begin{itemize}
        \item If $m\geq7$ and $n\geq7$, then the $m\times n$ grid has twin-width~$4$.
        \item If $m\geq11$ and $n\geq7$, then every $m\times n$ wall has twin-width~$4$.
        \item If $m\geq11$ and $n\geq6$, then the line graph of every $m\times n$ wall has twin-width~$4$.\qed
    \end{itemize}
\end{corollary}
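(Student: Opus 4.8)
The plan is to separate the two bounds. The inequality $\tww\le4$ for grids is already known~\cite{gridtwinwidth}, and the matching bounds for walls and for line graphs of walls will come from the constructions of Section~\ref{sec:grid}; so the real content of the corollary is the lower bound $\tww\ge4$, which I would extract from Proposition~\ref{prop:mainforward} using the three embeddings of subdivisions of $Q_3$ pictured in Figure~\ref{fig:cubic}. Two elementary facts are used repeatedly: twin-width does not increase under taking induced subgraphs (noted in the introduction), and for a subgraph $H'$ of a graph $H$ the line graph $L(H')$ is exactly the induced subgraph of $L(H)$ on the vertex set $E(H')$, because adjacency in a line graph records only whether two edges share an end.

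First I would check the three embeddings literally, by naming the eight branch vertices and the twelve internally disjoint connecting paths: the $7\times7$ grid has an induced subgraph isomorphic to an \subd{1} of $Q_3$, the elementary $11\times7$ wall has an induced subgraph isomorphic to an \subd{1} of $Q_3$, and the elementary $11\times6$ wall has a (not necessarily induced) subgraph isomorphic to an \subd{2} of $Q_3$. Then I would promote these to all grids and walls. For grids, when $m,n\ge7$ the set of vertices $(i,j)$ with $i\le7$ and $j\le7$ induces a $7\times7$ grid in the $m\times n$ grid, so the $7\times7$ case suffices. For walls, an $m\times n$ wall is by definition a subdivision of the elementary $m\times n$ wall, and subdividing edges only lengthens the connecting paths of an embedded \subd{1} or \subd{2} of $Q_3$, hence keeps it a \subd{1} resp.\ \subd{2} of $Q_3$; for the $11\times7$ case I would further check that it stays induced, which follows because each edge of the subdivided wall lies on a subdivided edge of the elementary wall, an internal subdivision vertex of an edge outside the embedded copy lies on no other subdivided edge, so a hypothetical chord would reduce to an edge of the elementary wall with both ends in the induced embedded copy, a contradiction. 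This gives: every $11\times7$ wall has an induced subgraph isomorphic to an \subd{1} of $Q_3$, and every $11\times6$ wall has a subgraph isomorphic to an \subd{2} of $Q_3$.

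Finally I would assemble the lower bounds using Proposition~\ref{prop:mainforward} with $G=Q_3$ (which lies in $\mathcal{F}_3$, so in particular has a minor in $\mathcal{F}_3$): every \subd{1} of $Q_3$ has twin-width at least $4$, and the line graph of every \subd{2} of $Q_3$ has twin-width at least $4$. Combined with the embeddings above and induced-subgraph monotonicity, this yields $\tww\ge4$ for the $m\times n$ grid when $m,n\ge7$ and for the $m\times n$ wall when $m\ge11$, $n\ge7$. For line graphs of walls, if $W$ is an $m\times n$ wall with $m\ge11$ and $n\ge6$, then $W$ has a subgraph $H''$ isomorphic to an \subd{2} of $Q_3$, so $L(H'')$ is an induced subgraph of $L(W)$ and $\tww(L(W))\ge\tww(L(H''))\ge4$. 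Together with the upper bounds, each of the three parameters equals $4$.

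The main obstacle is really located elsewhere: once Proposition~\ref{prop:mainforward} and Figure~\ref{fig:cubic} are in hand, the only delicate point here is verifying that an embedded subdivision of $Q_3$ remains induced after the underlying wall is subdivided further, and that rests only on the internal disjointness of subdivision paths, not on any property of walls. All the substance has been absorbed into Proposition~\ref{prop:mainforward} and the upper-bound constructions of Section~\ref{sec:grid}.
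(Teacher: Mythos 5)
Your proposal is correct and follows essentially the same route as the paper: the upper bounds come from \cite{gridtwinwidth} and Propositions~\ref{prop:wall} and~\ref{prop:linewall}, and the lower bounds come from Proposition~\ref{prop:mainforward} applied to $Q_3\in\mathcal{F}_3$ via the embeddings of Figure~\ref{fig:cubic} together with induced-subgraph monotonicity (using $L(H'')$ induced in $L(W)$ for the line-graph case). The only difference is that you spell out the verification that the embedded subdivisions of $Q_3$ remain induced after further subdividing the wall and that larger grids and walls inherit the configuration, which the paper leaves implicit.
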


The reason for focusing on walls and their line graphs is due to their relationship with tree-width.
The tree-width of a graph is an important structural parameter which has many algorithmic applications.
For completeness, we present a formal definition in Section~\ref{sec:prelim}.
Courcelle's theorem~\cite{Courcelle1990} famously shows that every monadic second-order formula can be decided in linear time on graphs of bounded tree-width.
In terms of the relationship with twin-width, Bonnet et al.~\cite{twin-width1} showed that every class of graphs of bounded tree-width has bounded twin-width, and Jacob and Pilipczuk~\cite{JP2022} improved the bound.
The converse does not hold, as large complete graphs have large tree-width while having twin-width~$0$.
Thus, a bound on the twin-width of a graph is not generally sufficient for applying Courcelle's theorem.
However the following recent breakthrough results of Aboulker, Adler, Kim, Sintiari, and Trotignon~\cite{evenholefree} and of Korhonen~\cite{gridinducedminor} enable us to bound the tree-width of a graph~$G$ of twin-width at most $3$, given either a bound on the maximum degree of~$G$ or a fixed graph~$H$ which~$G$ does not contain as a minor.

\begin{theorem}[Aboulker, Adler, Kim, Sintiari, and Trotignon~\cite{evenholefree}]\label{thm:evenholefree}
    For every graph  $H$, there is a function $f_H:\mathbb{N}\to\mathbb{N}$ such that every $H$-minor-free graph of tree-width at least $f_H(k)$ contains a $k\times k$-wall or the line graph of a $k\times k$-wall as an induced subgraph.
\end{theorem}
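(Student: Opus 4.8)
\emph{Proof plan.}
The plan is to combine the Grid Minor Theorem with the Flat Wall Theorem to place a large, well-controlled wall inside $G$, and then to extract a uniform local structure by a Ramsey argument, from which exactly the two advertised induced subgraphs emerge.

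First I would invoke the Grid Minor Theorem of Robertson and Seymour (with the polynomial bounds of Chekuri and Chuzhoy, or, since $G$ excludes $H$ as a minor, the linear bounds of Demaine and Hajiaghayi): if $f_H$ is chosen large enough and $\operatorname{tw}(G)\ge f_H(k)$, then $G$ contains a subdivision of the elementary $N\times N$ wall as a subgraph, where $N=N(H,k)$ may be taken as large as we like. The $H$-minor-freeness is not needed here, only to make $f_H$ grow more slowly; since the theorem merely asserts the existence of $f_H$, even the tower-type bound that ultimately comes out is acceptable. Call this wall $W_0$. Next, since $G$ has no $H$-minor, I would apply the Flat Wall Theorem (Robertson--Seymour; in the cleaner form of Kawarabayashi, Thomas, and Wollan): there is a set $A\subseteq V(G)$ with $\abs{A}\le a(H)$ and a sub-wall $W\subseteq W_0$ whose order still grows with $N$ such that $W$ is \emph{flat} in $G-A$, meaning $G-A$ has a subgraph $G_\Delta\supseteq W$ drawn in a closed disk $\Delta$ with the branch vertices of $W$ on $\partial\Delta$ in wall order, so that $G-A$ attaches to $G_\Delta$ only inside the bounded faces (``bricks'') of this drawing. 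Deleting $A$ costs only a bounded number of rows and columns, so $W$ is still huge; and since any induced subgraph of $G-A$ is an induced subgraph of $G$, it now suffices to find the desired induced subgraph inside $G_\Delta$. In particular, every chord of $W$ and every vertex of $G_\Delta\setminus V(W)$ is ``brick-local''.

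Then I would clean $W$ once more, shortcutting every chord whose two ends lie on a common subdivision path so that the interiors of the subdivision paths of $W$ induce paths; flatness is preserved. Now apply Ramsey's theorem to the $\Theta(N^2)$ branch vertices of $W$, colouring each branch vertex $b$ by the isomorphism type --- rooted at $b$ and respecting the cyclic order of its three legs --- of the subgraph of $G_\Delta$ induced on the ball of radius $r$ around $b$ in $G_\Delta$, where $r=r(a(H))$ is a constant arising from the bounded brick sizes. The number of colours is bounded in terms of $H$, so a monochromatic sub-wall $W'$, of order still growing with $N$, has identical local structure at every branch vertex. A routine case analysis of these uniform local types --- using planarity inside $\Delta$ to discard the exotic ones, with at most one further bounded round of cleaning and Ramsey to make the first vertices of the legs behave uniformly --- leaves exactly two possibilities that glue together globally: either the first vertices of the three legs at each branch vertex are pairwise non-adjacent and nothing else is attached, whence $W'$ contains an induced subdivision of the elementary $k\times k$ wall, that is, an induced $k\times k$-wall; or the first vertices of the three legs at each branch vertex form a triangle, whence the ``thickened'' structure along $W'$ is an induced copy of the line graph of a subdivision of the elementary $k\times k$ wall, that is, the line graph of a $k\times k$-wall.

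I expect the main obstacle to be this last step: proving that the uniform local structure on a flat wall is forced into one of exactly these two shapes, and setting up the Ramsey colouring precisely enough that monochromaticity really does pin down the local picture (including the bricks, which must be handled together with the branch vertices). A secondary point requiring care is that the Flat Wall Theorem must be used in the form that \emph{deletes} a bounded apex set $A$ rather than merely captures it, since otherwise the candidate wall inside $G_\Delta$ need not be induced in $G$. The resulting $f_H$ is of tower type because of the Grid Minor Theorem and the iterated Ramsey applications, which is harmless here.
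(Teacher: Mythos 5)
First, a point of comparison: the paper does not prove Theorem~\ref{thm:evenholefree} at all; it is imported verbatim from Aboulker, Adler, Kim, Sintiari, and Trotignon~\cite{evenholefree} and used as a black box. So there is no in-paper argument to measure your plan against, and your proposal has to stand on its own as a proof of that external result. As it stands, it does not: it is a plausible high-level plan (grid theorem, then flat wall theorem to localise attachments, then a cleaning/uniformisation step), but the two places where the real content lies are exactly the places that are asserted rather than argued.

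Concretely, the Ramsey step is unjustified. The Flat Wall Theorem bounds only the size of the attachment sets of the pieces sitting inside bricks (at most three vertices each); it says nothing about the size or structure of those pieces, so the bricks of the flat wall are not of bounded size and the radius-$r$ balls around branch vertices in $G_\Delta$ do not fall into boundedly many isomorphism types. Hence your colouring has an unbounded palette and Ramsey's theorem gives nothing. You could retreat to colouring by the adjacency pattern among a bounded set of designated vertices (say the first vertices of the three legs), but then monochromaticity yields far less than ``identical local structure at every branch vertex,'' and the subsequent analysis has to be redone from that weaker hypothesis. Secondly, the sentence ``a routine case analysis of these uniform local types \ldots leaves exactly two possibilities'' is precisely the theorem. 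Chords joining two different subdivision paths on the boundary of a common brick, compass vertices adjacent to several wall vertices that become relevant once you reroute, branch vertices of mixed type, and the global assembly of the line-graph outcome from local triangles are all genuinely delicate, and none of this is supplied. Until that dichotomy argument is written out (or the statement is simply cited, as the paper does), the proposal should be regarded as a sketch with the central step missing, not a proof.
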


\begin{theorem}[Korhonen~\cite{gridinducedminor}]\label{thm:gridinducedminor}
    For every integer $d$, there is a function $f_d:\mathbb{N}\to\mathbb{N}$ such that every graph with maximum degree at most $d$ and tree-width at least $f_d(k)$ contains a $k\times k$-wall or the line graph of a $k\times k$ wall as an induced subgraph.
\end{theorem}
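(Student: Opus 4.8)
The plan is to combine the Grid Minor Theorem of Robertson and Seymour with a cleaning argument that exploits the degree bound. The Grid Minor Theorem provides a function $g$ such that every graph of tree-width at least $g(N)$ contains a subdivision of the elementary $N\times N$ wall as a subgraph; we would apply it with $N$ a sufficiently large function of $k$ and $d$, obtaining a wall subgraph $W$ of $G$. Passing to a sub-wall obtained by keeping only every $t$-th brick in each direction (for a large $t=t(d)$), we may further assume that every path of $W$ between two branch vertices has length at least $t$; this headroom is needed to absorb local deletions later. Since we only want an induced subgraph of $G$, we delete $V(G)\setminus V(W)$; note that $G[V(W)]\supseteq W$ still has large tree-width, so the hypothesis is inherited. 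The edges of $G[V(W)]$ not in $W$ are the \emph{chords} of the wall, and the degree bound guarantees that every vertex of $W$ is the endpoint of at most $d-2$ chords. The difficulty is now concentrated in: find a large induced wall, or a large induced line graph of a wall, inside $W$ together with its chords.

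I would organise the chords by \emph{length}, i.e.\ by the distance in $W$ between their endpoints, and treat short and long chords separately. For chords of length bounded by a function of $d$, the relevant structure is local: around each branch vertex of $W$, together with bounded-length initial segments of the three paths leaving it, only a bounded-size pattern of chords can occur, so by a Ramsey-type argument over the grid of bricks we obtain a large sub-wall $W'$ on which this pattern is \emph{homogeneous}, looking the same near every branch vertex. A finite case analysis then classifies the possible homogeneous patterns: after passing to a further sub-wall and deleting a bounded number of vertices per brick (this is what the headroom $t$ buys), $G[V(W')]$ becomes either a subdivided wall again, or a subdivided wall in which each branch vertex has been replaced by a triangle — that is, the line graph of a wall, which is exactly the configuration produced when the homogeneous pattern makes the three path-ends at a branch vertex pairwise adjacent. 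Long chords are removed recursively: a chord of length in $[2^i,2^{i+1})$ joins two regions of $W$ that are far apart in a suitable linear layout of the bricks, so at each of the $O(\log N)$ scales one uses a balanced-separator argument to delete a controlled portion of the wall killing every chord at that scale (or else directly locates the homogeneous structure above), and a sub-wall still growing with $N$ survives all scales and all the short-chord cleaning simultaneously.

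The main obstacle is this cleaning phase, and specifically three intertwined points. First, the case analysis of homogeneous short-chord patterns must be exhaustive: one has to reduce to the two target structures every sparse homogeneous way of adding bounded-length chords to a subdivided wall. Second, both outcomes genuinely occur, so neither alternative can be dropped: the line graph of a large wall has maximum degree $4$ and large tree-width, yet every triangle-free induced subgraph of it has maximum degree at most $2$ and hence contains no induced subdivided wall at all — so a bounded-degree graph of large tree-width need not contain an induced wall. Third, one must make the scale-by-scale recursion interact correctly with the Ramsey step so that a single large sub-wall, rather than a shrinking sequence of them, survives everything; this is where essentially all of the quantitative cost is paid, and one should expect $f_d(k)$ to be a rapidly growing function of $k$ (tower-type or worse) whose growth degrades with $d$. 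The bookkeeping that each local surgery leaves a genuine subdivided wall — connected horizontal and vertical paths with the correct brick adjacencies — is routine but must be done carefully, which is precisely why the headroom $t$ is built in from the start.
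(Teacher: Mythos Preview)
The paper does not contain a proof of this statement: Theorem~\ref{thm:gridinducedminor} is quoted from Korhonen~\cite{gridinducedminor} and used as a black box to derive the corollaries that follow. There is therefore no ``paper's own proof'' to compare your proposal against.

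That said, your outline is a reasonable high-level sketch of the kind of argument that goes into results of this type (start from a large wall via the Grid Minor Theorem, thin it to gain headroom, clean chords using the degree bound, and homogenise the local picture at branch vertices to land in either the wall or line-graph-of-wall outcome). If you want feedback on the correctness of the approach itself rather than a comparison, you should consult Korhonen's paper directly; the present paper does not engage with the internals of that proof at all.
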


From Theorems~\ref{thm:evenholefree} and~\ref{thm:gridinducedminor}, and Corollary~\ref{cor:grid}, we deduce the following corollaries.

\begin{corollary}
    For every graph $H$, there exists an integer $w$ such that every $H$-minor-free graph of twin-width at most~$3$ has tree-width at most~$w$.
\end{corollary}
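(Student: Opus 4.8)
The plan is to combine Theorem~\ref{thm:evenholefree} with Corollary~\ref{cor:grid} and the fact, observed in Section~\ref{sec:intro}, that twin-width does not increase when passing to an induced subgraph. Fix a graph~$H$. First I would invoke Theorem~\ref{thm:evenholefree} to obtain a function $f_H:\mathbb{N}\to\mathbb{N}$ such that every $H$-minor-free graph of tree-width at least $f_H(k)$ contains a $k\times k$-wall or the line graph of a $k\times k$-wall as an induced subgraph. Then I would set $k:=11$ and $w:=f_H(11)$.

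Next I would argue by contradiction: suppose $G$ is an $H$-minor-free graph with $\tww(G)\le3$ and tree-width at least $f_H(11)$. By Theorem~\ref{thm:evenholefree}, $G$ contains an $11\times 11$-wall or the line graph of an $11\times 11$-wall as an induced subgraph. By Corollary~\ref{cor:grid} (applied with $m=n=11$, noting $11\ge 11$ and $11\ge 7\ge 6$), every $11\times 11$-wall has twin-width~$4$ and the line graph of every $11\times 11$-wall has twin-width~$4$. Hence in either case $G$ has an induced subgraph of twin-width~$4$, which contradicts $\tww(G)\le3$ together with the monotonicity of twin-width under induced subgraphs. Therefore every $H$-minor-free graph of twin-width at most~$3$ has tree-width less than $f_H(11)$, and in particular at most $w$.

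I do not expect a genuine obstacle here; the only care needed is in lining up the numerical hypotheses of Corollary~\ref{cor:grid} so that \emph{both} the $k\times k$-wall and its line graph have twin-width~$4$ (any $k\ge 11$ works, and $k=11$ is the smallest value guaranteed by the corollary), and in checking that the notion of ``$k\times k$-wall'' appearing in Theorem~\ref{thm:evenholefree} is exactly the one defined in this paper, namely a graph isomorphic to a subdivision of the elementary $k\times k$ wall. Note that Theorem~\ref{thm:gridinducedminor} is not needed for this corollary; it would instead be used for the analogous statement with a bounded-maximum-degree hypothesis in place of $H$-minor-freeness.
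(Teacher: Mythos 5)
Your proposal is correct and matches the paper's (implicit) argument: the authors state this corollary as a direct consequence of Theorem~\ref{thm:evenholefree} and Corollary~\ref{cor:grid}, exactly as you deduce it, with Theorem~\ref{thm:gridinducedminor} reserved for the bounded-degree analogue. Your choice $k=11$ correctly satisfies both hypotheses of Corollary~\ref{cor:grid} (walls and their line graphs), so the contradiction with monotonicity of twin-width under induced subgraphs goes through as written.
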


\begin{corollary}
    For every integer $\Delta$, there exists an integer $w$ such that graph of twin-width at most $3$ and maximum degree at most $\Delta$ has tree-width at most $w$.
\end{corollary}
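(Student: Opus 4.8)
The plan is to derive the corollary directly from Korhonen's theorem (Theorem~\ref{thm:gridinducedminor}) together with the wall part of Corollary~\ref{cor:grid}, using the observation recorded in the introduction that the twin-width of a graph is at least the twin-width of any of its induced subgraphs.

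First I would fix the integer $\Delta$ and let $f_\Delta\colon\mathbb{N}\to\mathbb{N}$ be the function supplied by Theorem~\ref{thm:gridinducedminor}. I then set $w:=f_\Delta(11)-1$ and argue by contraposition. Suppose $G$ has maximum degree at most $\Delta$ and tree-width at least $w+1=f_\Delta(11)$. Applying Theorem~\ref{thm:gridinducedminor} with $k=11$, the graph $G$ contains, as an induced subgraph, either an $11\times 11$-wall or the line graph of an $11\times 11$-wall.

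Next I would invoke Corollary~\ref{cor:grid}: since $11\geq 11$ and $11\geq 7$, every $11\times 11$-wall has twin-width $4$; and since $11\geq 11$ and $11\geq 6$, the line graph of every $11\times 11$-wall has twin-width $4$. In either case $G$ has an induced subgraph of twin-width $4$, so by induced-subgraph monotonicity $\tww(G)\geq 4$, and in particular $\tww(G)>3$. This establishes the contrapositive of the corollary, with the explicit bound $w=f_\Delta(11)-1$.

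I do not anticipate a real obstacle here; the only points that require care are (i) choosing the wall size $k=11$ large enough that both outcomes of Theorem~\ref{thm:gridinducedminor} land in the range for which Corollary~\ref{cor:grid} gives twin-width exactly $4$, and (ii) using the induced-subgraph monotonicity of $\tww$ so that a local obstruction certifies the lower bound on $\tww(G)$. The immediately preceding corollary (on $H$-minor-free graphs) can be proved by the identical argument with $k=11$, substituting Theorem~\ref{thm:evenholefree} for Theorem~\ref{thm:gridinducedminor}.
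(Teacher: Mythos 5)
Your proposal is correct and matches the paper's intended argument: the paper deduces this corollary directly from Theorem~\ref{thm:gridinducedminor} and Corollary~\ref{cor:grid} in exactly the way you describe, using induced-subgraph monotonicity of twin-width, and your choice $k=11$ correctly places both outcomes (wall and line graph of a wall) in the range where Corollary~\ref{cor:grid} gives twin-width $4$.
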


We organise this paper as follows.
In Section~\ref{sec:prelim}, we present some terminology from graph theory and formally define twin-width.
In Section~\ref{sec:structures}, we investigate the structures of the graphs in $\mathcal{F}_3$.
In Sections~\ref{sec:forward},~\ref{sec:backward}, and~\ref{sec:2tree}, we show Theorem~\ref{thm:main} and Proposition~\ref{prop:mainforward}.
In Section~\ref{sec:grid}, we investigate the twin-width of $m\times n$ grids for small $m$, walls, and the line graphs of walls.
In Section~\ref{sec:extremal}, we determine the maximum number of edges of graphs having no \subd{1}s of twin-width at least $4$, and in Section~\ref{sec:conclu}, we present some open problems.

\section{Preliminaries}\label{sec:prelim}

For every integer $k$, let $[k]$ be the set of positive integers less than or equal to~$k$.
For a set~$S$ and an integer~$k$, we denote by ${S\choose k}$ the set of $k$-element subsets of~$S$.
Let $G$ be a multigraph.
For a set $X\subseteq V(G)$, let $N_G[X]:=\bigcup_{v\in X}N_G[v]$, called the \emph{closed neighbourhood} of~$X$, and $N_G(X):=N_G[X]\setminus X$.
The \emph{degree} of a vertex~$v$ of~$G$, denoted by $\deg_G(v)$, is the number of edges incident with~$v$, where each loop contributes~$2$.
The \emph{minimum degree} of~$G$, denoted by $\delta(G)$, is the minimum of the degree of a vertex, and the \emph{maximum degree} of~$G$, denoted by $\Delta(G)$, is the maximum of the degree of a vertex.
The \emph{distance} between~$v$ and~$w$ in~$G$, denoted by $\dist_G(v,w)$, is the minimum length of a path in~$G$ between them.
The \emph{second neighbourhood} of~$X$ is $N_G^2[X]:=N_G[N_G[X]]$.
If $X=\{v\}$, then we may write $N_G^2[v]$ for $N_G^2[\{v\}]$.
We may omit the subscripts in these notations if it is clear from the context.
The \emph{simplification} of a multigraph~$G$ is the  simple graph on~$V(G)$ such that two distinct vertices are adjacent if and only if they are adjacent in~$G$.

For a set $X$ of vertices, we denote by $G-X$ the multigraph obtained from $G$ by removing the vertices in~$X$ and the edges incident with vertices in $X$.
We sometimes abuse the notation to allow deleting vertices not in $V(G)$, which means that $G-X=G-(X\cap V(G))$.
If $X=\{v\}$, then we may write $G-v$ for $G-\{v\}$.
The \emph{induced subgraph} of $G$ on $X$ is the multigraph $G[X]:=G-(V(G)\setminus X)$.
For a set $Y$ of edges, we denote by $G-Y$ the multigraph obtained from $G$ by removing the edges of $Y$.

A \emph{clique} in~$G$ is a set of pairwise adjacent vertices of $G$, and an \emph{independent set} in~$G$ is a set of pairwise nonadjacent vertices of $G$.
A \emph{triangle} is a cycle of length $3$.
In a graph, we may write $uvw$ for a triangle on the vertex set $\{u,v,w\}$.
A \emph{star} is a complete bipartite graph $K_{1,s}$ for $s\geq1$.
The \emph{line graph} of a multi\-graph~$G$ is the graph~$L(G)$ with vertex set $E(G)$ such that distinct $e,e'\in E(G)$ are adjacent in $L(G)$ if and only if~$e$ and~$e'$ share an end in~$G$.
A \emph{matching} of~$G$ is an independent set of~$L(G)$.
For multigraphs $G$ and $G'$, the \emph{union} of~$G$ and~$G'$ is the multigraph $(V(G)\cup V(G'),E(G)\cup E(G'))$.
If $V(G)\cap V(G')=\emptyset$, then we call it the \emph{disjoint union} of~$G$ and~$G'$.
The \emph{join} of~$G$ and~$G'$ is the multigraph $G+G'$ obtained from the disjoint union of~$G$ and~$G'$ by adding one new edge between~$v$ and~$w$ for each pair $\{v,w\}$ of $v\in V(G)$ and $w\in V(G')$.
For a graph $F$, the \emph{complement} of~$F$ is the graph $\overline{F}$ with vertex set $V(F)$ such that two distinct vertices~$u$ and~$v$ are adjacent in $\overline{F}$ if and only if they are nonadjacent in~$F$.
An \emph{isomorphism} from a multigraph~$G$ to a multigraph~$G'$ is a pair $(\varphi_v,\varphi_e)$ of 
bijections $\varphi_v:V(G)\to V(G')$ and $\varphi_e:E(G)\to E(G')$ such that 
for every edge $f$ of $G$ with its set of ends $\{u,w\}$, 
the set of ends of $\varphi_e(f)$ is equal to $\{ \varphi_v(u),\varphi_v(w)\}$.
If $G$ is a simple graph and $u$ and $v$ are adjacent vertices, then we write $uv$ to denote the unique edge incident with both~$u$ and~$v$.

A multigraph $G$ is \emph{connected} if for every pair $\{v,w\}$ of distinct vertices, $G$ has a path between $v$ and~$w$.
A \emph{separator} of $G$ is a set $S\subseteq V(G)$ such that $G-S$ is disconnected.
A separator $S$ of $G$ is \emph{minimal} if no proper subset of $S$ is a separator of~$G$.
For an integer $k\geq1$, $G$ is \emph{$k$-connected} if $\abs{V(G)}>k$ and~$G$ has no separator of size less than $k$.
A \emph{separation} of $G$ is an ordered pair $(A,B)$ of subsets of $V(G)$ such that $A\cup B=V(G)$ and $G$ has no edge between $A\setminus B$ and $B\setminus A$.
For a minor~$H$ of~$G$, a \emph{minor model of~$H$ in~$G$} is a collection $(T_u)_{u\in V(H)}$
of vertex-disjoint subtrees $T_u$ of~$G$ indexed by the vertices~$u$ of~$H$ for which there is an injection $\varphi:E(H)\to E(G)\setminus\bigcup_{u\in V(H)}E(T_u)$ 
such that if~$u$ and~$v$ are the ends of~$e$ in~$H$, then a vertex in $T_u$ and a vertex in $T_v$ are the ends of $\varphi(e)$ in~$G$.
For a set $\mathcal{F}$ of multigraphs, $G$ is \emph{$\mathcal{F}$-minor-free} if no multigraph in $\mathcal{F}$ is a minor of $G$.
If $\mathcal{F}=\{F\}$, we may say that $G$ is \emph{$F$-minor-free}.
A subgraph of $G$ is \emph{spanning} if its vertex set is equal to~$V(G)$.

A \emph{tree decomposition} of a multigraph $G$ is a pair $(T,(B_t)_{t\in V(T)})$ 
of a tree~$T$ and a collection of subsets~$B_t$ of~$V(G)$ indexed by the vertices~$t$ of~$T$
such that
\begin{enumerate}[label=$(\mathrm{T}\arabic*)$]
    \item\label{def:td bag union} $V(G)=\bigcup_{t\in V(T)}B_t$,
    \item\label{def:td edge} for each $e\in E(G)$, there is a node $t\in V(T)$ such that both ends of $e$ are in $B_t$,
    \item\label{def:td vertex} for each $v\in V(G)$, 
    the set $\{t\in V(T):v\in B_t\}$ induces a connected subgraph of~$T$.
\end{enumerate}
The sets in $(B_t)_{t\in V(T)}$ are \emph{bags}.
For $vw\in E(T)$, the \emph{adhesion set} of $B_v$ and $B_w$ is $B_v\cap B_w$.

The \emph{tree-width} of a multigraph is the minimum $k$ such that it admits a tree decomposition such that every bag has at most $k+1$ vertices.
A \emph{$k$-tree} is either $K_{k+1}$ or a graph obtained from a $k$-tree by adding a new vertex and making it adjacent to every vertex in some clique of size~$k$.
A \emph{partial $k$-tree} is a subgraph of a $k$-tree.
It is well known that a graph has tree-width at most $k$ if and only if it is a partial $k$-tree, see Bodlaender~\cite{Bodlaender1998}.

\begin{lemma}[see~{\cite[Lemma~12.3.1]{Diestel2018}}]\label{lem:adhesion}
    Let $(T,(B_t)_{t\in V(T)})$ be a tree decomposition of a multigraph~$G$.
    For $vw\in E(T)$, let $T_1$ and $T_2$ be the components of $T-vw$ with $v\in V(T_1)$.
    For each $i\in[2]$, let $U_i:=\bigcup_{t\in V(T_i)}B_t$.
    If $U_i\setminus(B_v\cap B_w)\neq\emptyset$ for each $i\in[2]$, then $B_v\cap B_w$ is a separator of $G$.
\end{lemma}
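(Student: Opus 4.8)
\textbf{Proof plan for Lemma~\ref{lem:adhesion}.}
The statement is a standard fact about tree decompositions, so the plan is to argue directly from the three axioms $(\mathrm{T}1)$--$(\mathrm{T}3)$. Set $S := B_v \cap B_w$. We must show $G - S$ is disconnected, and the natural way to do this is to exhibit a partition of $V(G) \setminus S$ into two nonempty parts with no edge of $G - S$ between them. The obvious candidates are $A_1 := U_1 \setminus S$ and $A_2 := U_2 \setminus S$, where $U_i = \bigcup_{t \in V(T_i)} B_t$ as in the statement. First I would check these two sets are nonempty: this is exactly the hypothesis $U_i \setminus S \neq \emptyset$ (noting $U_i \setminus (B_v \cap B_w) = U_i \setminus S$). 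Next I would check $A_1 \cup A_2 = V(G) \setminus S$: by $(\mathrm{T}1)$ every vertex lies in some bag $B_t$, and $t$ belongs to $T_1$ or $T_2$ since $T - vw$ has exactly these two components (here $vw$ is an edge of the tree, so its removal yields precisely two components), hence $V(G) = U_1 \cup U_2$ and removing $S$ gives the claim.

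The crux is showing there is no edge of $G$ between $A_1$ and $A_2$; equivalently, every vertex of $U_1 \cap U_2$ lies in $S$, and moreover no edge has one end in $A_1$ and the other in $A_2$. I would first establish $U_1 \cap U_2 \subseteq S$. Take $u \in U_1 \cap U_2$; then $u$ lies in some bag $B_{t_1}$ with $t_1 \in V(T_1)$ and some bag $B_{t_2}$ with $t_2 \in V(T_2)$. By $(\mathrm{T}3)$, the set $\{t \in V(T) : u \in B_t\}$ induces a connected (hence path-connected) subtree of $T$, so it contains the unique $t_1$--$t_2$ path in $T$. That path must use the edge $vw$, since $t_1$ and $t_2$ lie in different components of $T - vw$; in particular both $v$ and $w$ lie on it, so $u \in B_v$ and $u \in B_w$, i.e.\ $u \in S$. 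Now suppose for contradiction that $e \in E(G)$ joins $a_1 \in A_1$ to $a_2 \in A_2$. By $(\mathrm{T}2)$ there is a node $t$ with $a_1, a_2 \in B_t$. This node lies in $T_1$ or in $T_2$; say $t \in V(T_1)$. Then $a_2 \in B_t \subseteq U_1$, so $a_2 \in U_1 \cap U_2 \subseteq S$, contradicting $a_2 \in A_2 = U_2 \setminus S$. The symmetric case $t \in V(T_2)$ gives $a_1 \in S$, again a contradiction.

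Putting this together: $(A_1, A_2)$ is a partition of $V(G) \setminus S$ into two nonempty parts with no edge of $G$ between them, so $G - S$ is disconnected and $S = B_v \cap B_w$ is a separator of $G$, as claimed. I do not expect any real obstacle here; the only point requiring a little care is the use of $(\mathrm{T}3)$ to force the $t_1$--$t_2$ path through the edge $vw$, and the bookkeeping that $U_i \setminus (B_v \cap B_w)$ is literally $A_i$. Everything else is routine set manipulation from the axioms. \qed
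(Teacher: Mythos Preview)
Your proof is correct and is the standard argument for this fact. The paper does not give its own proof but simply cites Diestel~\cite{Diestel2018}, where essentially the same argument appears.
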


\begin{lemma}[see~{\cite[Corollary~12.3.5]{Diestel2018}}]\label{lem:complete bag}
    For every tree decomposition of a multigraph~$G$, every clique in~$G$ is contained in some bag of the tree decomposition.
\end{lemma}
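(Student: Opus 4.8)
The plan is to argue by induction on the size of the clique $K$. If $\abs{K}\le 1$, then $K$ is contained in any bag by axiom (T1). So suppose $\abs{K}\ge 2$, fix a vertex $w\in K$, and set $K':=K\setminus\{w\}$. Since $K'$ is again a clique, the induction hypothesis gives a node $t_1\in V(T)$ with $K'\subseteq B_{t_1}$. Let $T_w$ denote the subgraph of $T$ induced by $\{t\in V(T):w\in B_t\}$, which is a subtree by axiom (T3) and nonempty by axiom (T1). If $t_1\in V(T_w)$ then $K\subseteq B_{t_1}$ and we are done, so assume otherwise, and let $t^*$ be the node of $T_w$ closest to $t_1$ in $T$.

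The claim is that $K\subseteq B_{t^*}$. Clearly $w\in B_{t^*}$ since $t^*\in V(T_w)$. Now fix $u\in K'$. Because $K$ is a clique, $uw\in E(G)$, so by axiom (T2) there is a node $t_u$ with $\{u,w\}\subseteq B_{t_u}$; in particular $t_u\in V(T_w)$. The key point is a standard fact about trees: since $T_w$ is a subtree of $T$ not containing $t_1$, the node $t^*$ is the unique node of $T_w$ through which every path in $T$ from $t_1$ to a node of $V(T_w)$ passes. Applying this to the $t_1$--$t_u$ path, we conclude that $t^*$ lies on this path. On the other hand, the subgraph $T_u$ of $T$ induced by $\{t\in V(T):u\in B_t\}$ is a subtree (by (T3)) containing both $t_1$ (as $u\in K'\subseteq B_{t_1}$) and $t_u$, so it contains the entire $t_1$--$t_u$ path, and hence $t^*$. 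Therefore $u\in B_{t^*}$. As $u\in K'$ was arbitrary, $K=\{w\}\cup K'\subseteq B_{t^*}$, which completes the induction.

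I expect the only point requiring care to be the ``gateway'' fact about trees invoked above: for a subtree $S$ of a tree $T$ and a node $t_1\notin V(S)$, the node of $S$ nearest to $t_1$ separates $t_1$ from $V(S)$ in $T$, so every path from $t_1$ to $V(S)$ passes through it. This follows immediately from the uniqueness of paths in trees and can be recorded as a one-line sublemma. Alternatively, one can dispense with the induction and instead invoke the Helly property of subtrees of a tree: the family $\{T_v:v\in K\}$ consists of subtrees (by (T3)) that pairwise intersect, since for distinct $v,v'\in K$ the edge $vv'\in E(G)$ together with (T2) yields a node whose bag contains both $v$ and $v'$; hence the family has a common node $t$, and then $K\subseteq B_t$. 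I would present the inductive argument as the main line, as the Helly property itself is proved by essentially the same induction.
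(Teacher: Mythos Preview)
Your proof is correct and follows a standard route. One small wording slip: in the base case you say that if $\abs{K}\le1$ then $K$ is contained in \emph{any} bag by (T1), but for $\abs{K}=1$ axiom (T1) only guarantees that $K$ lies in \emph{some} bag. This is harmless for the induction, since ``some bag'' is exactly what you need.

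As for comparison with the paper: there is nothing to compare. The paper does not prove this lemma; it simply records it with a reference to Diestel's textbook (Corollary~12.3.5). Your inductive argument, and the Helly-property alternative you sketch, are both standard proofs of this fact and either would be perfectly acceptable as a self-contained justification.
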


\subsection{Trigraphs and twin-width}\label{subsec:tww}

A \emph{trigraph} is a triple $H=(V(H),B(H),R(H))$ such that $(V(H),B(H)\cup R(H))$ is a graph and~$B(H)$ and~$R(H)$ are disjoint.
We define $E(H):=B(H)\cup R(H)$.
The elements of $E(H)$ are called the \emph{edges} of~$H$, the elements of~$B(H)$ are called the \emph{black edges} of~$H$, and the elements of~$R(H)$ are called the \emph{red edges} of~$H$.
We identify a graph $G=(V,E)$ with a trigraph $(V,E,\emptyset)$.

Let $H$ be a trigraph.
The \emph{underlying graph} of~$H$ is the graph $(V(H),E(H))$.
We identify $H$ with its underlying graph when we use standard graph-theoretic terms and notations such as adjacency, neighbours, $N_H(v)$, $N_H[X]$, $\deg_H(v)$, $\delta(H)$, $\Delta(H)$, $\dist_H(u,v)$, the second neighbourhood, connectedness, $k$-connectedness, a separator, and a separation.
We call a neighbour $w$ of $v$ a \emph{black neighbour} if $vw\in B(H)$ and a \emph{red neighbour} if $vw\in R(H)$.
The \emph{red degree} of~$v$, denoted by $\rdeg_H(v)$, is the number of red neighbours of $v$.
The \emph{maximum red degree} of $H$, denoted by $\DeltaR(H)$, the maximum of the red degree of a vertex.
For an integer $d$, a \emph{$d$-trigraph} is a trigraph with the maximum red degree at most~$d$.
Two distinct vertices~$v$ and~$w$ of $H$ are \emph{twins} in~$H$ if for every $u\in V(G)\setminus\{v,w\}$, $uv\in B(H)$ implies $uw\in B(H)$, $uv\in R(H)$ implies $uw\in R(H)$, and $uv\notin E(H)$ implies $uw\notin E(H)$.
We may omit the subscripts in these notations if it is clear from the context.

A trigraph $H'$ is a \emph{subgraph} of $H$ if $V(H')\subseteq V(H)$, $B(H')\subseteq B(H)$, and $R(H')\subseteq R(H)$.
For a set $X\subseteq V(G)$, we denote by $H-X$ the trigraph obtained from~$H$ by removing the vertices in~$X$ and the edges incident with vertices in~$X$.
If $X=\{v\}$, then we may write $H-v$ for~$H-\{v\}$.
The \emph{induced subgraph} of~$H$ on~$X$ is the trigraph $H[X]:=H-(V(H)\setminus X)$.
For a set $Y\subseteq E(H)$, we denote by~$H-Y$ the trigraph obtained from~$H$ by removing the edges in~$Y$.

An \emph{isomorphism} from~$H$ to~$H'$ is a bijection $\varphi:V(H)\to V(H')$ such that for every pair of distinct vertices of $H$,
\begin{itemize}
    \item if $v$ and $w$ are joined by a black edge in~$H$, then $\varphi(v)$ and $\varphi(w)$ are joined by a black edge in $H'$,
    \item if $v$ and $w$ are joined by a red edge in~$H$, then $\varphi(v)$ and $\varphi(w)$ are joined by a red edge in $H'$,
and
    \item if $v$ and $w$ are non-adjacent in~$H$, then $\varphi(v)$ and $\varphi(w)$ are non-adjacent in $H'$.
\end{itemize}
An \emph{automorphism} of $H$ is an isomorphism from~$H$ to itself.
Two sets $\mathcal{X},\mathcal{X}'$ of subsets of $V(H)$ are \emph{isomorphic} if $H$ has an automorphism $\rho$ such that $\mathcal{X}'=\{\rho(X):X\in\mathcal{X}\}$.

For distinct vertices $v$ and $w$ of $H$, we denote by $H/\{u,v\}$ the trigraph $H'$ obtained from $H-\{u,v\}$ by adding a vertex $x$ such that for every vertex $y\in V(H)\setminus\{u,v\}$, the following hold:
\begin{itemize}
    \item if both $u$ and $v$ are joined to $y$ by black edges in~$H$, then $x$ and $y$ are joined by a black edge in $H'$, 
    \item if both $u$ and $v$ are non-adjacent to $y$ in~$H$, then $x$ and $y$ are non-adjacent in $H'$, and
    \item otherwise, $x$ and $y$ are joined by a red edge in $H'$.
\end{itemize}
We call this operation \emph{contracting $u$ and $v$}, and we say that $u$ and $v$ are \emph{involved} in this contraction, and that $x$ is the vertex \emph{resulting} from this contraction.
We allow $x$ to be equal to one of the vertices $u$ or $v$, and if $x=u$, then we call this operation \emph{contracting $\{u,v\}$ to~$u$}.

A \emph{partial contraction sequence} from~$H$ to a trigraph~$H'$ is a sequence $H_1,\ldots,H_t$ of trigraphs such that $H=H_1$, $H'=H_t$, and for each $i\in[t-1]$, $H_{i+1}$ is obtained from~$H_i$ by contracting vertices.
If $\abs{V(H')}=1$, then we call it a \emph{contraction sequence} of $H$.
For an integer $d$, a (partial) contraction sequence $H_1,\ldots,H_t$ is a \emph{(partial) \cont{d}} if $\DeltaR(H_i)\leq d$ for every $i\in[t]$.
The \emph{twin-width} of $H$, denoted by $\tww(H)$, is the minimum integer~$d$ such that there is a \cont{d} of~$H$.
If~$H'$ is an induced subgraph of~$H$, then $\tww(H')\leq\tww(H)$, because every \cont{\tww(H)} of $H$ yields a \cont{\tww(H)} of $H'$ by ignoring contractions involving vertices not in $V(H')$.

\begin{lemma}\label{lem:cycle}
    The twin-width of a cycle is~$2$ if its length is at least~$5$, and otherwise~$0$.
\end{lemma}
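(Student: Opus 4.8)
The plan is to handle the small cases $C_1, C_2, C_3, C_4$ directly and then treat cycles of length at least $5$ with a matching upper and lower bound.

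First I would dispose of the short cycles. For $C_1$ (a single vertex with a loop), $C_2$, $C_3$, and $C_4$, the underlying simple graph has at most $4$ vertices, so any contraction sequence produces trigraphs on at most $4$ vertices; a vertex in such a trigraph has at most $3$ neighbours, but in fact one checks directly that $C_3 = K_3$ and $C_4$ both admit a $0$-contraction sequence: in $C_3$ all three vertices are pairwise adjacent so contracting any two yields a black edge to the third, and in $C_4$ the two non-adjacent vertices are twins, so we contract them first (producing no red edge), leaving a path on three vertices $P_3$, whose two endpoints are twins, and one more contraction finishes. The loop and parallel-edge cases $C_1, C_2$ collapse similarly since as trigraphs we only track the underlying simple graph. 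Hence $\tww(C_\ell) = 0$ for $\ell \le 4$.

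Next, for the lower bound when $\ell \ge 5$: I would argue that $\tww(C_\ell) \ge 2$. The cleanest route is to observe that $C_\ell$ has no pair of twins when $\ell \ge 5$ (two vertices at distance $\ge 2$ have disjoint closed neighbourhoods unless they share their unique common neighbour's... — more carefully, in $C_\ell$ with $\ell\ge5$, adjacent vertices have different non-neighbours and non-adjacent vertices fail to have the same neighbours), so the first contraction must create a red edge, giving $\tww \ge 1$; then one checks that a single red edge cannot be avoided from propagating, i.e. that $\tww(C_\ell) \ne 1$. For the latter I would use the known classification that graphs of twin-width at most $1$ are exactly the cographs closed under a certain operation — but since the paper only allows earlier results, I would instead give a short direct argument: in any $1$-contraction sequence of $C_\ell$, track a "red component"; after the first contraction we have a path-like trigraph with one red edge, and contracting to reduce vertex count while keeping max red degree $\le 1$ is impossible because contracting the two endpoints of the red edge with anything else creates a vertex of red degree $2$, while contracting other pairs either does nothing useful or also raises the red degree. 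This case analysis is short but must be done carefully.

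Finally, for the upper bound $\tww(C_\ell) \le 2$ when $\ell \ge 5$: label the vertices $v_1, \dots, v_\ell$ cyclically and repeatedly contract $v_1$ with $v_2$, renaming the result $v_1$ and shifting indices. After each such contraction the trigraph is a shorter cycle with exactly one red edge (joining the contracted vertex to one of its neighbours), or after a couple of steps a path with red edges only at a bounded-length "active" segment; the key invariant is that at every stage the red edges form a subpath of length at most $2$ incident to the most recently created vertex, so every vertex has red degree at most $2$. Continuing until only a triangle or smaller remains, we finish with $0$-contractions.

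The main obstacle is the lower bound $\tww(C_\ell) \ge 2$, specifically ruling out $\tww = 1$: one must show no clever $1$-contraction sequence exists, which requires either invoking a characterization of twin-width-$1$ graphs or carrying out the red-degree bookkeeping by hand; I expect the paper does the latter via a short structural observation about how red edges in a near-path trigraph are forced to accumulate.
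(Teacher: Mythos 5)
Your treatment of the short cycles and your upper bound are correct and essentially the same as the paper's (the paper disposes of lengths at most $4$ by noting such cycles are cographs, and gets $\tww(C_\ell)\le2$ for $\ell\ge5$ by iteratively contracting adjacent vertices, just as you do). The problem is in your lower bound. Your description of the trigraph after the first contraction is wrong: contracting two \emph{adjacent} vertices of $C_\ell$ with $\ell\ge5$ does not leave ``a path-like trigraph with one red edge''; it leaves a cycle of length $\ell-1$ in which the new vertex is incident with \emph{two} red edges, because each of its two neighbours was adjacent to exactly one of the two contracted vertices. Similarly, contracting two vertices at distance two yields a new vertex with one black edge (to the common neighbour) and two red edges, and contracting two vertices at distance at least three yields red degree four. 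So the two-stage plan you propose (first get $\tww\ge1$, then rule out a $1$-contraction sequence by tracking how a single red edge propagates) starts from a false structural premise, and the deferred case analysis would have to be redone from scratch.

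The corrected computation also shows that the second stage is unnecessary: in a cycle of length at least $5$, \emph{every} possible first contraction already creates a vertex of red degree at least $2$, so no contraction sequence can keep the maximum red degree below $2$, and $\tww(C_\ell)\ge2$ follows in one step. This one-line observation is exactly the paper's argument; the ``main obstacle'' you identify (ruling out twin-width $1$ by hand or by a classification of twin-width-$1$ graphs) never arises.
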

\begin{proof}
    Cographs are known to have twin-width~$0$~\cite{twin-width1}, and cycles of length at most~$4$ are cographs.
    Thus, a cycle has twin-width~$0$ if its length is at most~$4$.
    Let $C$ be a cycle of length at least~$5$.
    If we contract any pair of vertices of $C$, then the new vertex created by the contraction already has the red degree at least~$2$, which means that $\tww(C)\geq2$.
    On the other hand, by iteratively contracting pairs of adjacent vertices, we can find a $2$-contraction sequence of~$C$.
    Therefore, $\tww(C)=2$.
\end{proof}

\section{Forbidden structures for twin-width at most {\boldmath$2$} or at most {\boldmath$3$}}\label{sec:structures}

\subsection{Segregated-models}

A \emph{refinement} of a trigraph $H$ is a trigraph $H'$ such that $B(H)\subseteq B(H')\subseteq E(H)$ and $R(H')\subseteq R(H)$.
A \emph{refined subgraph} of~$H$ is a refinement of an induced subgraph of~$H$.

\begin{lemma}[Bonnet, Kim, Thomass\'{e}, and Watrigant~\cite{twin-width1} and Berg\'{e}, Bonnet, and D\'{e}pr\'{e}s~{\cite[Observation~4]{BBD2021}}]\label{lem:refinedtww}
    If a trigraph $H$ is a refined subgraph of a trigraph $G$, then $\tww(H)\le \tww(G)$.
\end{lemma}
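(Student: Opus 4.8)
The plan is to exploit the two-step definition of ``refined subgraph''. Since a refined subgraph of $G$ is, by definition, a refinement of some induced subgraph $G[X]$ of $G$, and since $\tww(G[X])\le\tww(G)$ (noted at the end of Section~\ref{subsec:tww}), it suffices to prove the following: if a trigraph $H$ is a refinement of a trigraph $G'$ — so $V(H)=V(G')$, $B(G')\subseteq B(H)\subseteq E(G')$, and $R(H)\subseteq R(G')$ — then $\tww(H)\le\tww(G')$. The lemma then follows by composing these two inequalities.

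To prove this reduction, set $d:=\tww(G')$ and fix a \cont{d} $G'_1=G',G'_2,\ldots,G'_t$ of $G'$ with $\abs{V(G'_t)}=1$. I would build a contraction sequence $H_1=H,H_2,\ldots,H_t$ of $H$ by mirroring the contractions: whenever $G'_{i+1}$ is obtained from $G'_i$ by contracting a pair $\{a,b\}$ to a vertex $c$, let $H_{i+1}$ be obtained from $H_i$ by contracting the same pair $\{a,b\}$ to $c$. This is well-defined as long as $V(H_i)=V(G'_i)$ at each step, which will be part of the maintained invariant.

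The heart of the argument is the invariant that $H_i$ is a refinement of $G'_i$ for every $i\in[t]$. The base case $i=1$ is the hypothesis. For the inductive step, assume $H_i$ refines $G'_i$; I must check that contracting the same pair $\{a,b\}$ in both trigraphs preserves the refinement relation. The only pairs whose colour can change are those incident with the new vertex $c$, and for such a pair $uc$ its colour in $G'_{i+1}$ (respectively $H_{i+1}$) is black iff both $ua$ and $ub$ are black in $G'_i$ (respectively $H_i$), a non-edge iff both are non-edges, and red otherwise. A short case analysis using $B(G'_i)\subseteq B(H_i)$ and $E(H_i)\subseteq E(G'_i)$ then gives $B(G'_{i+1})\subseteq B(H_{i+1})\subseteq E(G'_{i+1})$ and $R(H_{i+1})\subseteq R(G'_{i+1})$, i.e.\ $H_{i+1}$ refines $G'_{i+1}$. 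This case check is the only nontrivial point of the proof, and it is routine; I do not anticipate a genuine obstacle.

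Finally, from $R(H_i)\subseteq R(G'_i)$ we obtain $\rdeg_{H_i}(v)\le\rdeg_{G'_i}(v)\le d$ for every vertex $v$ and every $i$, so $H_1,\ldots,H_t$ is a \cont{d} of $H$ (with $\abs{V(H_t)}=\abs{V(G'_t)}=1$). Hence $\tww(H)\le d=\tww(G')$, which combined with $\tww(G[X])\le\tww(G)$ yields $\tww(H)\le\tww(G)$ and completes the proof.
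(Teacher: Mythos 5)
Your proposal is correct: the paper itself states this lemma with a citation (to Bonnet et al.\ and to Observation~4 of Berg\'{e}, Bonnet, and D\'{e}pr\'{e}s) and gives no proof, and your argument is exactly the standard one behind those citations. Splitting the refined-subgraph relation into the induced-subgraph step (already noted in Section~\ref{subsec:tww}) and the refinement step, and then mirroring a $\tww(G')$-contraction sequence while maintaining the invariant that each $H_i$ refines $G'_i$ (so $R(H_i)\subseteq R(G'_i)$ bounds the red degrees), is precisely how this observation is proved; your case analysis for the pairs incident with the new vertex goes through as claimed.
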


A \emph{proper refined subgraph} of $H$ is a refined subgraph that is not equal to~$H$, or equivalently a refined subgraph with fewer vertices or fewer red edges.
Since twin-width does not increase by taking induced subgraphs, every refined subgraph of~$H$ has twin-width at most $\tww(H)$.
It is easy to observe that if $H'$ is a refined subgraph of $H$, then every refined subgraph of $H'$ is a refined subgraph of $H$.

For a collection $\mathcal{X}$ of subsets of $V(H)$, an \emph{$\mathcal{X}$-path} in~$H$ is a path in~$H$ having two ends in distinct members of $\mathcal X$ such that none of its internal vertices are in $\bigcup\mathcal{X}$.
For an integer $t$ and a graph $F$, a \emph{$t$-\sgre{}} of~$F$ in~$H$ is a collection~$\mathcal{X}:=(X_v)_{v\in V(F)}$ of distinct pairwise disjoint nonempty subsets of~$V(H)$ such that for some refined subgraph~$H'$ of~$H$ the following hold.
\begin{enumerate}[label=(\Roman*)]
    \item\label{sgre:connected} For every $v\in V(F)$, $H[X_v]$ is a connected subgraph of $H'$.
        In particular, $X_v\subseteq V(H')$.
    \item\label{sgre:degree2} Every vertex in $V(H')\setminus\bigcup\mathcal{X}$ has degree $2$ in $H'$.
    \item\label{sgre:length} Every $\mathcal{X}$-path in $H'$ of length at most $t$ has a red edge.
    \item\label{sgre:isomorphic} For every pair of adjacent vertices $v$ and $w$ in~$F$, there is an $\mathcal{X}$-path between a vertex in $X_v$ and a vertex in $X_w$ in $H'$.
\end{enumerate}

Observe by \ref{sgre:degree2} that the $\mathcal{X}$-paths in $H'$ are pairwise internally disjoint.
We say that $H'$ \emph{witnesses} that~$\mathcal{X}$ is a $t$-\sgre{} of $F$ in~$H$.
If there exists a $t$-\sgre{} of~$F$ in~$H$, then we say that~$H$ \emph{contains} a $t$-\sgre{} of $F$.
For notational convenience, we call a $1$-\sgre{} a \emph{\sgre{}}.

\begin{lemma}\label{lem:detecting}
    Let $F$ be a graph.
    For every positive integer $t$, if a multigraph $G$ has $F$ as a minor, then all \subd{t}s of~$G$ and the line graphs of all \subd{t+1}s of $G$ contain a $t$-\sgre{} of~$F$.
\end{lemma}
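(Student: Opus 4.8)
The plan is to prove the statement directly from the definition of a minor model, handling the two cases (long subdivisions of $G$, and line graphs of long subdivisions of $G$) in parallel. Fix a minor model $(T_u)_{u\in V(F)}$ of $F$ in $G$ together with the injection $\varphi:E(F)\to E(G)\setminus\bigcup_u E(T_u)$. Let $H$ be an \subd{t} of $G$, obtained by replacing each edge $e$ of $G$ by a path (or cycle, if $e$ is a loop) $P_e$ of length at least $t+1$ in $H$. For each $u\in V(F)$, let $X_u$ be the union of $V(T_u)$ together with all internal vertices of the paths $P_e$ for $e\in E(T_u)$; these are the vertices of the subdivided copy of $T_u$ inside $H$. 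The sets $X_u$ are pairwise disjoint because the trees $T_u$ are vertex-disjoint and no edge of $G$ belongs to two of them, and each $X_u$ induces a connected (in fact, a subdivided tree) subgraph of $H$. We take $H'$ to be the induced subgraph of $H$ on $\bigcup_u X_u$ together with, for each $f\in E(F)$ with ends $v,w$, all internal vertices of the path $P_{\varphi(f)}$ joining $X_v$ and $X_w$ (note $\varphi(f)$ joins a vertex of $T_v$ to a vertex of $T_w$ in $G$, so $P_{\varphi(f)}$ is an $\mathcal{X}$-path in $H$ of length at least $t+1$). With no red edges at all, conditions \ref{sgre:connected}, \ref{sgre:degree2}, and \ref{sgre:isomorphic} are immediate: every vertex of $H'$ outside $\bigcup\mathcal{X}$ is an internal subdivision vertex, hence of degree $2$, and the paths $P_{\varphi(f)}$ supply the required $\mathcal{X}$-paths. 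Condition \ref{sgre:length} holds vacuously, since every $\mathcal{X}$-path in $H'$ has length at least $t+1>t$, so none of them can be an $\mathcal{X}$-path of length at most $t$.

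For the line-graph case, let $H$ instead be an \subd{t+1} of $G$, so every edge of $G$ is replaced by a path (or cycle) of length at least $t+2$ in $H$, and consider $L(H)$. The natural candidate is $X_u:=\{e\in E(H): e\text{ lies in the subdivided copy of }T_u\}$, viewed as a vertex set of $L(H)$. These sets are pairwise disjoint and each induces a connected subgraph of $L(H)$ (the line graph of a connected subdivided tree is connected). A subdivision vertex of $H$ of degree $2$ becomes, in $L(H)$, an edge between two degree-$2$ vertices of a path of edges; more carefully, one checks that the vertices of $L(H)$ outside $\bigcup\mathcal{X}$ — namely the edges of the interiors of the paths $P_{\varphi(f)}$, excluding the two end-edges which meet $X_v$ and $X_w$ — have degree $2$ in $L(H)$. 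The $\mathcal{X}$-path in $L(H)$ corresponding to $f\in E(F)$ is the path of edges along $P_{\varphi(f)}$; since $P_{\varphi(f)}$ has at least $t+2$ edges, this path in $L(H)$ has at least $t+1$ vertices, hence length at least $t$, and one extra vertex of slack at each end accounts for the end-edges being incident to $X_v,X_w$ — a short count shows the $\mathcal{X}$-path in $L(H)$ has length at least $t+1$. So again every $\mathcal{X}$-path in the witnessing refined subgraph $H':=L(H)$ has length more than $t$, and \ref{sgre:length} is vacuous, while \ref{sgre:connected}, \ref{sgre:degree2}, \ref{sgre:isomorphic} are verified directly.

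The main obstacle, and the only place requiring genuine care, is the bookkeeping for the line-graph case: getting the off-by-one lengths right so that the $\mathcal{X}$-paths in $L(H)$ are guaranteed to have length at least $t+1$ (this is exactly why the statement asks for the \subd{t+1} of $G$ rather than the \subd{t}), and verifying that the only vertices of $L(H)$ outside $\bigcup\mathcal{X}$ have degree $2$ — this uses that $H$ has girth at least $5$ only implicitly (through the hypotheses guaranteeing the paths are long and the copies of $T_u$ are genuinely disjoint), so one must be slightly careful about edges of $H$ incident to branch vertices of the $T_u$'s that are not themselves in any path $P_{\varphi(f)}$; such edges are simply not included in $V(H')$. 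Since no red edges are used, the entire argument reduces to checking the four structural conditions, and \ref{sgre:length} — the one condition with real content in the definition of a \sgre{} — is satisfied for free precisely because the subdivisions are long enough.
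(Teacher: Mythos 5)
Your first half (the \subd{t} of $G$ itself) is correct and essentially identical to the paper's argument: take $X_u$ to be the vertices of the subdivided copy of $T_u$, restrict to the used connecting paths, and note that every $\mathcal{X}$-path has length at least $t+1$, so \ref{sgre:length} is vacuous.

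The line-graph half has a genuine gap, and it is exactly in the definition of the branch sets. You set $X_u:=\{e\in E(H):e\text{ lies in the subdivided copy of }T_u\}$. First, this set is empty whenever $T_u$ is a single vertex of $G$, which is not a corner case but the typical one (for instance when $G$ contains a subdivision of $F$, as in the grid and wall applications); empty sets are not allowed in a \sgre{}. Second, even when all $X_u$ are nonempty, condition \ref{sgre:degree2} fails: the two end-edges of a connecting path $P_{\varphi(f)}$ are vertices of your witnessing subgraph that lie in no $X_u$, and at a branch vertex where several connecting paths and/or tree-edges meet, such an end-edge has degree at least $3$ in the line graph. Your claim that the vertices outside $\bigcup\mathcal{X}$ are only the interior edges is therefore false as stated; and if you instead delete the end-edges from the witnessing subgraph, you destroy every $\mathcal{X}$-path between $X_v$ and $X_w$, so \ref{sgre:isomorphic} fails (and the first interior edges acquire degree $1$). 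The repair is what the paper does: with $X_u$ the vertex sets from the first half and $H'$ the witnessing subgraph there, define $X^*_u$ to be the set of \emph{all} edges of $H'$ incident with a vertex of $X_u$. Then $X^*_u$ is nonempty whenever $u$ has a neighbour in $F$ (the edgeless case being trivial), the only vertices of $L(H')$ outside $\bigcup\mathcal{X}^*$ are interior edges of connecting paths and have degree $2$, and each $\mathcal{X}^*$-path is the edge-sequence of a connecting path of $H'$, of length one less than that path, hence at least $t+1$ because $H'$ is an \subd{t+1}. Your length bookkeeping was aimed at the right target, but it cannot be completed with the branch sets as you defined them.
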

\begin{proof}
    Let $(T_u)_{u\in V(F)}$ be a minor model of $F$ in~$G$.
    Let $H$ be an \subd{t} of~$G$, and for each $e\in E(G)$, let~$P_e$ be the path in~$H$ replacing~$e$.
    For each $u\in V(F)$, let $X_u:=V(T_u)\cup \bigcup_{e\in E(T_u)}V(P_e)$.
    For each $uv\in E(F)$, we choose an edge $f(uv)$ of~$G$ between a vertex in $X_u$ and a vertex in $X_v$.
    It is easy to see that $H':=H[\bigcup_{u\in V(F)}X_u\cup\bigcup_{e\in E(F)}V(P_{f(e)})]$ witnesses that $\mathcal{X}:=(X_u)_{u\in V(F)}$ is a $t$-\sgre{} of~$F$ in~$H$.

    For the line graph, we first observe that the lemma is trivial if $F$ has no edges, and therefore we may assume that $F$ has at least one edge, implying that $H'$ has an edge.
    Now consider $H^*:=L(H')$, and for each $u\in V(F)$, let $X^*_u$ be the set of edges of $H'$ which are incident with a vertex in $X_u$ and let $\mathcal{X}^*:=(X^*_u)_{u\in V(F)}$.
    If~$H'$ is an \subd{t+1} of $H$, then every $\mathcal{X}^*$-path in $H^*$ has length at least $t+1$, and so $H^*$ witnesses that $\mathcal{X}^*$ is a $t$-\sgre{} of $F$ in $L(H)$.
\end{proof}

\begin{lemma}\label{lem:minsgre}
    Let $t$ be a positive integer, let $F$ be a graph with $\delta(F)\geq 3$, and let $H$ be a trigraph containing a $t$-\sgre{} of $F$ such that no proper refined subgraph of $H$ contains a $t$-\sgre{} of $F$.
    If~$\mathcal{X}$ is a $t$-\sgre{} of $F$ which minimises $|\bigcup\mathcal{X}|$, then: 
    \begin{enumerate}[label=\rm(\alph*)]
        \item\label{minsgre:internal} every vertex in $V(H)\setminus\bigcup\mathcal{X}$ is on an $\mathcal{X}$-path in~$H$,
        \item\label{minsgre:uniqueness} for distinct vertices $p$ and $q$ of $F$, $H$ has a unique $\mathcal{X}$-path between $X_p$ and $X_q$ if $pq\in E(F)$, and otherwise $H$ has no $\mathcal{X}$-path between $X_p$ and $X_q$,
        \item \label{minsgre:delta}$\delta(H)\geq 2$, and
        \item \label{minsgre:deg2} for every $p\in V(F)$ and every $z\in X_p$, if $\deg_H(z)=2$, then $N_H(z)\subseteq X_p$.
    \end{enumerate}
\end{lemma}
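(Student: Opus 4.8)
The plan is to exploit the two minimality assumptions in the hypothesis. First I would observe that the refined subgraph $H'$ witnessing $\mathcal{X}$ must be $H$ itself: being a refined subgraph of $H$, the trigraph $H'$ contains $\mathcal{X}$ as a $t$-\sgre{} of $F$ (witnessed by $H'$), so $H'\neq H$ would contradict the minimality of $H$. Hence \ref{sgre:connected}--\ref{sgre:isomorphic} hold with $H'=H$; in particular every vertex of $V(H)\setminus\bigcup\mathcal{X}$ has degree exactly $2$ in $H$, and every $\mathcal{X}$-path in $H$ of length at most $t$ contains a red edge. The recurring tactic is then: from a putative failure of one of \ref{minsgre:internal}--\ref{minsgre:deg2}, produce either a proper refined subgraph of $H$ that still contains a $t$-\sgre{} of $F$ (by deleting some vertices, or by deleting a single red edge), or a $t$-\sgre{} $\mathcal{X}'$ of $F$ in $H$ with $\abs{\bigcup\mathcal{X}'}<\abs{\bigcup\mathcal{X}}$ (by shrinking one class), each contradicting one of the two minimality assumptions. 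Throughout I would use one elementary \emph{propagation} fact: since each internal vertex of an $\mathcal{X}$-path has degree $2$ in $H$ by \ref{sgre:degree2}, and a path passing through a degree-$2$ vertex without ending there uses both edges at it, any path of $H$ meeting the interior of an $\mathcal{X}$-path $P$ without ending in that interior must contain all of $P$ as a subpath; in particular an $\mathcal{X}$-path meeting the interior of $P$ equals $P$, so distinct $\mathcal{X}$-paths are internally disjoint.

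For \ref{minsgre:internal}, I would set $H_0:=H\bigl[\,\bigcup\mathcal{X}\cup\bigcup\{V(Q):Q\text{ an }\mathcal{X}\text{-path in }H\}\,\bigr]$. By \ref{sgre:degree2} each vertex of $V(H_0)\setminus\bigcup\mathcal{X}$ has both of its $H$-neighbours on the $\mathcal{X}$-path through it, so $H_0$ is an induced subgraph of $H$ inheriting \ref{sgre:connected}--\ref{sgre:isomorphic} verbatim, and hence a refined subgraph of $H$ containing $\mathcal{X}$ as a $t$-\sgre{} of $F$; minimality of $H$ forces $V(H_0)=V(H)$, which is \ref{minsgre:internal}. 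For \ref{minsgre:uniqueness}, existence of an $\mathcal{X}$-path between $X_p$ and $X_q$ when $pq\in E(F)$ is \ref{sgre:isomorphic}, so suppose $P$ is an $\mathcal{X}$-path between $X_p$ and $X_q$ which is superfluous ($pq\notin E(F)$) or is a second such path ($pq\in E(F)$). By the propagation fact, no $\mathcal{X}$-path demanded by \ref{sgre:isomorphic} meets the interior of $P$: such a path would contain $P$ and so join $X_p$ to $X_q$, which is impossible in the first case and forces the path to be $P$ in the second. Hence if $P$ has an internal vertex, deleting all internal vertices of $P$ leaves \ref{sgre:connected}--\ref{sgre:isomorphic} intact and yields a proper refined subgraph of $H$ with a $t$-\sgre{} of $F$, a contradiction; and if $P$ is a single edge, that edge is red by \ref{sgre:length} (this is where $t\geq1$ is used), and deleting it again leaves \ref{sgre:connected}--\ref{sgre:isomorphic} intact and yields a proper refined subgraph of $H$ with fewer red edges, again a contradiction. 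This proves \ref{minsgre:uniqueness}.

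For \ref{minsgre:delta}: a vertex $z\notin\bigcup\mathcal{X}$ has degree $2$ by \ref{sgre:degree2}; if $z\in X_p$ with $\abs{X_p}=1$, then $\delta(F)\geq3$ and \ref{sgre:isomorphic} give at least three, hence distinct, $\mathcal{X}$-paths with an end at $z$, and two of them sharing an edge at $z$ would be forced by the propagation fact to coincide, so $\deg_H(z)\geq3$; and if $z\in X_p$ with $\abs{X_p}\geq2$ and $\deg_H(z)\leq1$, then by \ref{sgre:connected} the unique neighbour of $z$ lies in $X_p$, so $z$ is the end of no $\mathcal{X}$-path (an $\mathcal{X}$-path from $z$ would have its second vertex in $X_p$, giving two ends in one class) and lies on no $\mathcal{X}$-path at all, whence deleting $z$ and replacing $X_p$ by $X_p\setminus\{z\}$ (still connected, $z$ having been a leaf of $H[X_p]$) gives a $t$-\sgre{} of $F$ in the proper refined subgraph $H-z$, a contradiction. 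Hence $\delta(H)\geq2$. For \ref{minsgre:deg2}: let $z\in X_p$ with $\deg_H(z)=2$, and suppose $z$ has a neighbour outside $X_p$. If both neighbours of $z$ lay outside $X_p$, then $X_p=\{z\}$ by \ref{sgre:connected} and $\deg_H(z)\geq3$ as above; so $z$ has exactly one neighbour $z'\in X_p$ and one neighbour $y\notin X_p$, and $\abs{X_p}\geq2$. I would then take $\mathcal{X}':=(X_v)_{v\neq p}\cup\{X_p\setminus\{z\}\}$, witnessed by $H$ itself, with $z$ demoted to a degree-$2$ vertex outside $\bigcup\mathcal{X}'$. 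Conditions \ref{sgre:connected} and \ref{sgre:degree2} are immediate, and \ref{sgre:isomorphic} holds because an $\mathcal{X}$-path for an edge of $F$ either avoids $z$ --- and is then already an $\mathcal{X}'$-path --- or has an end at $z$, from which it leaves along $zy$ (leaving along $zz'$ would put its other end in $X_p$), so prepending $z'$ turns it into an $\mathcal{X}'$-path. For \ref{sgre:length}, a short $\mathcal{X}'$-path either avoids $z$ --- and is then a short $\mathcal{X}$-path --- or passes through $z$ internally, in which case the propagation fact (with $z'$ necessarily an endpoint) makes it $z',z,y,\dots,w$, whose subpath $z,y,\dots,w$ is an $\mathcal{X}$-path one edge shorter, hence of length at most $t$, hence containing a red edge. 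So $\mathcal{X}'$ is a $t$-\sgre{} of $F$ with $\abs{\bigcup\mathcal{X}'}=\abs{\bigcup\mathcal{X}}-1$, contradicting the minimality of $\abs{\bigcup\mathcal{X}}$; therefore $N_H(z)\subseteq X_p$.

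The step I expect to be the main obstacle is keeping condition \ref{sgre:length} under control. Deleting vertices or deleting a red edge only removes $\mathcal{X}$-paths, so \ref{sgre:length} is preserved for free there; but shrinking a class in \ref{minsgre:deg2} genuinely creates new $\mathcal{X}'$-paths through the demoted vertex, and the heart of that case is the observation that every such new short path strictly contains an older, shorter $\mathcal{X}$-path to which \ref{sgre:length} applies. A smaller but real subtlety is the dichotomy in \ref{minsgre:uniqueness} between an $\mathcal{X}$-path with interior vertices, removed by vertex deletion, and a bare edge, removed as a red edge --- which is precisely where the hypothesis $t\geq1$ is needed.
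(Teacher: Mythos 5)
Your proof is correct and follows essentially the same route as the paper: the witnessing refined subgraph must be $H$ itself, and each item is obtained by trading a putative counterexample either for a proper refined subgraph still containing a $t$-segregated-model (deleting path interiors or a red edge) or for a model with smaller $\abs{\bigcup\mathcal{X}}$ (shrinking $X_p$), using degree-$2$ propagation along $\mathcal{X}$-paths throughout. The only differences are organisational: the paper gets (a) and (b) in one step by pruning to a single chosen path per edge of $F$ and deleting the stray red edges between parts, whereas you prove (a) keeping all $\mathcal{X}$-paths and then eliminate superfluous paths one at a time, and in (d) you spell out the verification of condition \ref{sgre:length} for the new paths through the demoted vertex, which the paper leaves implicit.
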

\begin{proof}
    Note that any refined subgraph $H'$ of $H$ which witnesses that $\mathcal{X}$ is a $t$-\sgre{} of $F$ in~$H$ also witnesses that $\mathcal{X}$ is a $t$-\sgre{} of $F$ in $H'$, and is therefore equal to~$H$.
    By~\ref{sgre:isomorphic}, for every edge $xy\in E(F)$, we can select an $\mathcal{X}$-path $P_{xy}$ between a vertex in~$X_x$ and a vertex in~$X_y$ in~$H$.
    Let $W$ be the set of all internal vertices of paths in $\{P_{rs}:rs\in E(F)\}$
    and let $R$ be the set of edges of $H-E(\bigcup_{rs\in E(F)} E(P_{rs}))$ 
    having two ends in distinct members of $\mathcal X$.
    By \ref{sgre:length}, every edge in $R$ is red.
    Let us consider the refined subgraph $H^*:=H[W\cup \bigcup \mathcal{X}]-R$.
    Trivially, \ref{sgre:connected},~\ref{sgre:length}, and~\ref{sgre:isomorphic} are satisfied with $H':=H^*$.
    Additionally, every vertex in~$W$ has degree at least~$2$ in~$H^*$ since it is an internal vertex of a path, and hence has degree exactly~$2$ by~\ref{sgre:degree2} with $H':=H$.
    Therefore,~$H^*$ witnesses that $\mathcal{X}$ is a $t$-\sgre{} of $F$, and so~$H=H^*$.
    
    Now by~\ref{sgre:degree2} and~\ref{sgre:length}, the set of $\mathcal{X}$-paths of $H^*$ is exactly $\{P_{rs}:rs\in E(F)\}$, which completes the proof of~\ref{minsgre:internal} and~\ref{minsgre:uniqueness}.

    To prove~\ref{minsgre:delta}, suppose for contradiction $H$ has a vertex $z$ of degree at most $1$.
    By~\ref{sgre:degree2}, there is $p\in V(F)$ such that $X_p$ contains $z$.
    Since $\delta(F)\geq 3$,~\ref{sgre:isomorphic} implies that there are at least three~$\mathcal{X}$-paths with an end in~$X_p$, and so $X_p\setminus \{z\}$ is nonempty.
    Now by~\ref{sgre:connected}, $H[X_p]$ is connected, from which we deduce that $z$ has degree exactly $1$ and that the unique neighbour of $z$ is in $X_p$.
    Let $X'_p:=X_p\setminus\{z\}$, and for every $q\in V(F)\setminus\{p\}$, let $X'_q:=X_q$.
    It follows that $(X'_s)_{s\in V(F)}$ is a $t$-\sgre{} of~$F$ in~$H-z$, a contradiction.

    To prove~\ref{minsgre:deg2}, suppose for contradiction that there exist $p\in V(F)$ and $z\in X_p$ such that $\deg_H(z)=2$ and~$z$ has a neighbour which is not in $X_p$.
    To derive a contradiction, we show that $\abs{\bigcup\mathcal{X}}$ is not minimised.
    Since $H[X_p]$ is connected and $\deg_H(z)=2$, if $N_H(z)\cap X_p=\emptyset$, then $X_p=\{z\}$, so that $H$ has at most two $\mathcal{X}$-paths from~$X_p$, contradicting~\ref{sgre:isomorphic} and the assumption that $\delta(F)\ge 3$.
    Thus, $\deg_{H[X_p]}(z)=1$, and therefore $H[X_p\setminus\{z\}]$ is connected.
    Let $X'_p:=X_p\setminus\{z\}$, and for every $q\in V(F)\setminus\{p\}$, let $X'_q:=X_q$.
    Then $(X'_s)_{s\in V(F)}$ is a $t$-\sgre{} of~$F$ in~$H$, contradicting our choice of $\mathcal{X}$.
\end{proof}

\subsection{Properties of {\boldmath$\mathcal{F}_3$}}

We recall that $\mathcal{F}_3=\{K_6^-,\overline{C_7},C_5+\overline{K_2},K_{3,\widehat{1},3},Q_3,V_8\}$; see Figure~\ref{fig:forbidden}.
We have the following observation and lemmas about $\mathcal{F}_3$.

\begin{observation}\label{obs:forbidden}
    For every $G\in\mathcal{F}_3$, the following hold.
    \begin{enumerate}[label=(\roman*)]
        \item\label{obs:mindeg} $\delta(G)\geq3$.
        \item\label{obs:pairneighbour} For distinct vertices $u,v$ of $G$, $\abs{N_G(\{u,v\})}\geq4$.
        \item\label{obs:triangle mindeg} 
        Every vertex belonging to a triangle has degree at least~$4$ in~$G$.
    \end{enumerate}
\end{observation}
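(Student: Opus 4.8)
The plan is to verify the three statements directly by going through the six graphs in $\mathcal{F}_3$, but organised so that only a bounded amount of checking is actually needed. For~\ref{obs:mindeg}, one simply reads off the minimum degree from Figure~\ref{fig:forbidden}: $K_6^-$ has $\delta=4$; $\overline{C_7}$ is $4$-regular; $C_5+\overline{K_2}$ has the two vertices of $\overline{K_2}$ of degree~$5$ and the cycle vertices of degree~$4$; $K_{3,\widehat{1},3}$ has the degree-$1$ part of $K_{3,1,3}$ turned into a degree-$4$ vertex while the six outer vertices keep degree~$3$ or~$4$; and $Q_3$ and $V_8$ are both $3$-regular. So $\delta(G)\ge 3$ in every case, with equality only for $Q_3$, $V_8$, and the six outer vertices of $K_{3,\widehat{1},3}$.

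For~\ref{obs:pairneighbour} I would argue by a counting/degree bound rather than casework wherever possible. Fix distinct $u,v$. We have $N_G(\{u,v\})\supseteq (N_G(u)\cup N_G(v))\setminus\{u,v\}$, and $\abs{N_G(u)\cup N_G(v)}\ge \max(\deg_G(u),\deg_G(v))$ with the contribution of $u,v$ themselves removed being at most~$2$. When $\Delta(G)\ge 6$ (only $C_5+\overline{K_2}$, where a vertex of $\overline{K_2}$ has degree~$5$ — actually we must be slightly more careful) this already gives at least~$4$. The cleaner uniform route: since $\delta(G)\ge 3$, if $uv\notin E(G)$ then $N_G(u)$ and $N_G(v)$ each avoid $u,v$, so $\abs{N_G(\{u,v\})}\ge\abs{N_G(u)}\ge 3$, and it fails to reach~$4$ only if $N_G(u)=N_G(v)$ has size exactly~$3$; a short inspection of each graph rules this out (in each $G\in\mathcal F_3$ no two non-adjacent vertices have identical neighbourhoods of size~$3$, e.g.\ in $Q_3$ antipodal vertices have disjoint neighbourhoods, in $V_8$ likewise, in the bipartite-like $K_{3,\widehat1,3}$ the size-$3$ sides do not have common neighbourhood of size exactly~$3$, and in $\overline{C_7}$, $C_5+\overline{K_2}$, $K_6^-$ the minimum degree is~$4$). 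If $uv\in E(G)$, then each of $u,v$ has at least two neighbours other than the other endpoint, so $\abs{N_G(\{u,v\})\setminus\{u,v\}}\ge 2$ trivially, and to push to~$4$ one notes these neighbour sets are not forced to coincide: again a finite check over the six graphs, using that $G[\{u,v\}]$ together with its neighbourhood would otherwise be too small to be an induced piece of the highly-connected graphs in the list.

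For~\ref{obs:triangle mindeg}: every $G\in\mathcal{F}_3$ that contains a triangle at a vertex~$z$ in fact has $\deg_G(z)\ge 4$, and this is where I would just look at the triangle-containing members. The triangle-free members are exactly $Q_3$ and $V_8$ (both are bipartite-ish: $Q_3$ is bipartite of girth~$4$, $V_8$ has girth~$4$), and $K_{3,\widehat1,3}$ — wait, $K_{3,\widehat1,3}$ does contain triangles through the degree-$4$ hub, so for that graph the hub has degree~$4$ and the claim holds, while its degree-$3$ outer vertices must be checked to lie in no triangle (they don't, since the two size-$3$ parts are independent and each outer vertex's neighbours split across the other part and the hub with no edge among them). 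For $K_6^-$, $\overline{C_7}$, and $C_5+\overline{K_2}$ the minimum degree is already~$4$, so~\ref{obs:triangle mindeg} is immediate. Thus the only genuine content is: (i) in $K_{3,\widehat1,3}$ the outer vertices are in no triangle, and (ii) $Q_3$, $V_8$ are triangle-free. The main obstacle, such as it is, is purely bookkeeping: making sure the neighbourhood inspections in~\ref{obs:pairneighbour} are complete for all $\binom{|V(G)|}{2}$ pairs in each of the six graphs; this is finite and can be reduced drastically using the vertex-transitivity of $\overline{C_7}$ and $Q_3$ and the large automorphism groups of $V_8$ and $K_6^-$, so that only a handful of orbit representatives of pairs need to be examined.
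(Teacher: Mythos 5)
Your verification is correct and takes essentially the same approach as the paper, which states this as an observation without proof: one checks the three properties directly on the six explicit graphs of $\mathcal{F}_3$, using their symmetries to reduce the pair-by-pair inspection in~\ref{obs:pairneighbour} to a few orbit representatives. One cosmetic slip: in $K_{3,\widehat{1},3}$ the two degree-$3$ vertices are exactly the ones \emph{not} adjacent to the hub, so their neighbourhoods lie entirely in the opposite size-$3$ part (an independent set), not ``across the other part and the hub''; the conclusion that they lie in no triangle, and hence~\ref{obs:triangle mindeg}, is unaffected.
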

A \emph{$(\Delta,Y)$-operation} is an operation on a triangle $T$ of a graph $G$ that constructs a graph from $G-E(T)$ by adding a new vertex with neighbourhood $V(T)$.

\begin{lemma}\label{lem:DeltaY}
    Let $G\in\mathcal{F}_3$ and $G'$ be a graph obtained from $G$ by applying a $(\Delta,Y)$-operation.
    Then $G'$ has a spanning subgraph isomorphic to a graph in $\mathcal{F}_3$.
\end{lemma}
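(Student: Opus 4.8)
The plan is a finite case analysis. Since $Q_3$ and $V_8$ have girth~$4$, neither contains a triangle, so neither admits a $(\Delta,Y)$-operation, and we may assume $G\in\{K_6^-,\overline{C_7},C_5+\overline{K_2},K_{3,\widehat{1},3}\}$. A direct inspection of Figure~\ref{fig:forbidden} (using Observation~\ref{obs:forbidden} to locate the higher-degree vertices) shows that $\overline{C_7}$, $C_5+\overline{K_2}$, and $K_{3,\widehat{1},3}$ each have a single orbit of triangles under their automorphism groups, while $K_6^-=K_6-e$ has exactly two orbits: the triangles avoiding both ends of $e$, and the triangles using exactly one end of $e$. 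Hence there are only five cases, and in each of them the graph $G'$ is determined up to isomorphism; note that $G'$ has $\abs{V(G)}+1$ vertices and $\abs{E(G)}$ edges.

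In each case I would exhibit an explicit set of edges of $G'$ whose removal yields a graph isomorphic to a member of $\mathcal{F}_3$, the choice being essentially dictated by a degree count. If $G=K_6^-$, then $G'$ has $7$ vertices and $14$ edges; since $C_5+\overline{K_2}$ has $15$ edges and $\overline{C_7}$ is $4$-regular whereas $G'$ has a vertex of degree~$5$, the target must be $K_{3,\widehat{1},3}$, which has $13$ edges and degree sequence $(4,4,4,4,4,3,3)$. As $G'$ has degree sequence $(5,4,4,4,4,4,3)$ or $(5,5,4,4,4,3,3)$ according to the triangle type, the only deletions producing the degree sequence of $K_{3,\widehat{1},3}$ remove a single edge between two vertices of maximum degree, and one then checks directly that the result is $K_{3,\widehat{1},3}$ (the two degree-$3$ vertices, which are adjacent in $K_{3,\widehat{1},3}$, pin down which edge to remove, after which the neighbourhoods match up). If $G\in\{\overline{C_7},C_5+\overline{K_2},K_{3,\widehat{1},3}\}$, then $G'$ has $8$ vertices, so the target is $Q_3$ or $V_8$, each cubic with $12$ edges; the set of vertices of $G'$ of degree greater than~$3$ is small, so there are only a few sets of edges whose deletion restores $3$-regularity, and the point is to verify that at least one such deletion destroys every triangle of $G'$.

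To recognise the resulting $8$-vertex cubic graph, I would use the elementary fact that a connected cubic graph on $8$ vertices of girth at least~$4$ is isomorphic to $Q_3$ if it is bipartite and to $V_8$ otherwise. The bipartite case holds because a bipartite cubic graph with both parts of size~$4$ is $K_{4,4}$ minus a perfect matching, which is $Q_3$; the non-bipartite case can be obtained from the classification of cubic graphs on $8$ vertices, from a short analysis of the possible $5$-cycles, or simply by producing a Hamiltonian cycle of the graph whose remaining four edges join antipodal vertices. Thus in each $8$-vertex case it suffices to delete the prescribed edges, confirm the result is connected, cubic, and triangle-free, and then test it for bipartiteness.

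The main obstacle is the three $8$-vertex cases: a careless choice of edges to delete from $G'$ typically leaves a triangle, so one must check that among the few deletions preserving $3$-regularity at least one leaves the graph triangle-free. Together with the neighbourhood-matching verifications in the $K_6^-$ cases, this is where essentially all of the (routine and elementary) computation lies; since there are only five cases and each $G'$ has at most $8$ vertices, the argument is manifestly finite.
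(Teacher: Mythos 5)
Your plan follows the same overall route as the paper's proof: reduce to one triangle per symmetry class (your orbit counts match the paper's ``by symmetry'' reductions, including the two orbits in $K_6^-$, which the paper disposes of in a single observation) and then exhibit a spanning member of $\mathcal{F}_3$ inside $G'$. The difference is only in how the resulting graph is recognised. The paper writes down explicit certificates: for $K_6^-$ a spanning copy of $K_{3,\widehat{1},3}$, and for $\overline{C_7}$, $C_5+\overline{K_2}$, $K_{3,\widehat{1},3}$ two disjoint $4$-cycles of $G'$ together with four connecting edges, giving a spanning $V_8$, $Q_3$, $V_8$ respectively. You instead pin down the target by degree counts and then invoke the fact that a connected cubic triangle-free graph on $8$ vertices is $Q_3$ if bipartite and $V_8$ otherwise; that fact is correct (your $K_{4,4}$-minus-a-perfect-matching argument settles the bipartite half, and a Hamiltonian cycle with four antipodal chords is a legitimate certificate for $V_8$), and it gives a cleaner finish once a cubic triangle-free spanning subgraph is in hand.

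Two caveats. First, the decisive content of the lemma --- naming, in each of the three $8$-vertex cases, the edges to delete and verifying that the result is connected, cubic and triangle-free --- is deferred as a ``routine finite check'' rather than carried out; this is precisely what the paper's explicit $4$-cycle-plus-matching certificates accomplish, and such deletions do exist, but as written your argument is a verified plan rather than a complete proof. Second, in the $K_6^-$ case with degree sequence $(5,4,4,4,4,4,3)$ (triangle avoiding both ends of the missing edge), the edge whose removal matches the degree sequence of $K_{3,\widehat{1},3}$ joins the unique degree-$5$ vertex to a degree-$4$ vertex, not ``two vertices of maximum degree''; your requirement that the two degree-$3$ vertices end up adjacent still excludes deleting the edge towards an end of the missing edge and forces an edge from the degree-$5$ vertex to a triangle vertex, after which the neighbourhood check succeeds, so the slip is harmless but should be corrected.
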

\begin{proof}
    Since neither $Q_3$ nor $V_8$ has a triangle, we may assume that $G\notin\{Q_3,V_8\}$.
    Observe that if $G=K_6^-$, then $G'$ has $K_{3,\widehat{1},3}$ as an induced subgraph.
    Thus, we may assume that $G\neq K_6^-$.
    We now show that $G'$ has a spanning subgraph isomorphic to~$Q_3$ or~$V_8$.
    We label the vertices of~$G$ as in Figure~\ref{fig:forbidden}.
    Let~$T$ be the triangle of~$G$ on which the $(\Delta,Y)$-operation is applied, and let $v_8$ be the new vertex of~$G'$ with neighbourhood $V(T)$.

    Suppose first that $G=\overline{C_7}$.
    By symmetry, we may assume that $T=v_1v_2v_3$.
    Then $G'$ has two disjoint cycles with vertex sets $\{v_8,v_1,v_7,v_2\}$ and $\{v_3,v_5,v_6,v_4\}$.
    Since $G'$ has edges $v_8v_3$, $v_1v_6$, $v_7v_5$, and $v_2v_4$, it has a spanning subgraph isomorphic to~$V_8$.
    
    Suppose that $G=C_5+\overline{K_2}$.
    Since every triangle of~$G$ contains exactly one of~$v_4$ and~$v_5$, which are twins in~$G$, we may assume that $T$ contains~$v_4$.
    By symmetry, we may assume that $T=v_1v_2v_4$.
    Then $G'$ has two disjoint cycles with vertex sets $\{v_8,v_4,v_7,v_1\}$ and $\{v_2,v_3,v_6,v_5\}$.
    Since $G'$ has edges $v_8v_2$, $v_4v_3$, $v_7v_6$, and~$v_1v_5$, it has a spanning subgraph isomorphic to~$Q_3$.
    
    Suppose now that $G=K_{3,\widehat{1},3}$.
    Since every triangle of $G$ contains $v_1$, by symmetry, we may assume that $T=v_1v_4v_5$.
    Then $G'$ has two disjoint cycles with vertex sets $\{v_1,v_3,v_5,v_8\}$ and $\{v_7,v_2,v_6,v_4\}$.
    Since $G'$ has edges $v_1v_7$, $v_3v_6$, $v_5v_2$, and~$v_8v_4$, it has a spanning subgraph isomorphic to~$V_8$.
\end{proof}

We remark that Observation~\ref{obs:forbidden} and Lemma~\ref{lem:DeltaY} are sufficient to prove Proposition~\ref{prop:mainforward}.
Combined with Theorem~\ref{thm:main}\ref{main:atmost3}, this implies that given any class~$\mathcal{F}$ of graphs which satisfies Observation~\ref{obs:forbidden} and Lemma~\ref{lem:DeltaY}, every graph in $\mathcal{F}$ contains some graph in $\mathcal{F}_3$ as a minor\footnote[2]{This fails if we allow graphs to be infinite.
For example, $\mathcal{F}$ may consist of a single infinite tree of minimum degree~$3$, which is $\mathcal{F}_3$-minor-free.}.
Since we may take $\mathcal{F}$ equal to $\mathcal{F}_3$, this uniquely describes the class $\mathcal{F}_3$.

An \emph{internally $4$-connected} graph is a $3$-connected graph $G$ on at least five vertices such that if $G$ has a separator $S$ of size~$3$, then $S$ is an independent set of $G$ and $G-S$ has a vertex $v$ whose neighbourhood is $S$.
For the backward direction of Theorem~\ref{thm:main}, we need the following additional property of the graphs in $\mathcal{F}_3$.

\begin{lemma}\label{lem:internally}
    Every graph $G\in\mathcal{F}_3$ is internally $4$-connected.
\end{lemma}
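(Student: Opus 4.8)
The plan is to verify internal $4$-connectedness for each of the six graphs in $\mathcal{F}_3$ essentially by inspection, but organised so that the checking is short. First I would establish $3$-connectedness: each graph in $\mathcal{F}_3$ has at least five vertices (indeed seven or eight), so it remains to show there is no separator of size at most $2$. For $K_6^-$ this is immediate since it has minimum degree $4$ on six vertices and is obtained from a complete graph by one edge deletion. For $\overline{C_7}$ every vertex has degree $4$ and removing any two vertices leaves a $5$-vertex graph that is easily seen to be connected (the complement of a path on five vertices together with extra edges). For $C_5+\overline{K_2}$, the two apex vertices $v_4,v_5$ each dominate the $C_5$, so deleting any two vertices still leaves a connected graph through whichever apex survives, or through the $C_5$ if both apexes are among the two deleted vertices. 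For $K_{3,\widehat 1,3}$ one checks the low-degree vertices (the part of size one has degree $4$ after the two deletions of edges, and the two size-three parts have degree $3$ or $4$) and confirms no $2$-cut. For $Q_3$ and $V_8$, both are $3$-regular and well known to be $3$-connected (the cube is vertex-transitive and $3$-connected; $V_8$ is the Möbius–Kantor-type Wagner graph, also $3$-connected), so this part is standard.

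Next I would handle the extra condition on $3$-separators. By Observation~\ref{obs:forbidden}\ref{obs:mindeg}, $\delta(G)\ge 3$, and $Q_3$ and $V_8$ are $3$-regular while the other four graphs may have some degree-$3$ vertices. For a $3$-separator $S$, I want to show $S$ is independent and that $G-S$ has a vertex whose neighbourhood is exactly $S$. The cleanest route: for the $3$-regular graphs $Q_3$ and $V_8$, a minimal $3$-separator $S$ in a $3$-connected cubic graph must be the neighbourhood of a vertex (since removing the three neighbours of any vertex $v$ isolates $v$, and a short parity/edge-count argument shows these are the only minimal $3$-cuts in these two small graphs — one can simply enumerate, as each has only $8$ vertices); since $Q_3$ and $V_8$ are triangle-free, such an $S=N(v)$ is automatically independent. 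For the remaining four graphs I would argue that any $3$-separator $S$ must consist of vertices each of degree $3$ or $4$; using Observation~\ref{obs:forbidden}\ref{obs:pairneighbour} (any two vertices have at least four neighbours outside themselves) and \ref{obs:forbidden}\ref{obs:triangle mindeg} (vertices in triangles have degree $\ge 4$), I can rule out most candidate separators, and for the few that remain, a direct check identifies the degree-$3$ vertex in $G-S$ dominated by $S$ and confirms independence of $S$.

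The main obstacle I expect is the sheer case bookkeeping for $K_{3,\widehat 1,3}$ and for $V_8$: these have the least symmetry among the six, so enumerating candidate $2$- and $3$-cuts without missing a case is the delicate part. I would mitigate this by exploiting the automorphism groups — $\overline{C_7}$ is vertex- and edge-transitive, $C_5+\overline{K_2}$ has the dihedral action on the $C_5$ together with the swap of $v_4,v_5$, $K_{3,\widehat 1,3}$ has the $S_3\times S_3$-type symmetry permuting the two triples plus the swap exchanging them, $Q_3$ is vertex-transitive, and $V_8$ has a dihedral symmetry of order $16$ — so that in each case only one or two orbits of pairs and of triples need to be examined. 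With the symmetry reductions in place, each individual check is a routine finite verification, and the lemma follows by combining the $3$-connectedness from the first paragraph with the $3$-separator analysis from the second.
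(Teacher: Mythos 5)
Your proposal is correct and follows essentially the same route as the paper: a direct, symmetry-assisted finite verification that each of the six graphs is $3$-connected and that every $3$-separator is an independent set arising as the neighbourhood of a degree-$3$ vertex (the paper merely organises the checks slightly differently, noting that $K_6^-$, $\overline{C_7}$, and $C_5+\overline{K_2}$ are in fact $4$-connected and treating $K_{3,\widehat{1},3}$ via common neighbourhoods of nonadjacent pairs). One caution: your parenthetical principle that minimal $3$-separators of $3$-connected cubic graphs are always vertex neighbourhoods is false in general, but since you invoke it only for $Q_3$ and $V_8$ and back it up by enumeration---where triangle-freeness and the fact that these graphs have only $8$ vertices force a singleton component---this does not affect the argument.
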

\begin{proof}
    If $G\in\{Q_3,V_8\}$, then for every pair $\{u,v\}$ of adjacent vertices, $G-\{u,v\}$ has a cycle.
    Thus, both~$Q_3$ and~$V_8$ are internally $4$-connected.
    We can easily see that each of $K_6^-$, $\overline{C_7}$, and $C_5+\overline{K_2}$ is $4$-connected.
    Thus, we may assume that $G=K_{3,\widehat{1},3}$ with vertex-labelling as in Figure~\ref{fig:forbidden}.
    If $S$ is a separator of $G$ having size $3$, then $S$ contains the common neighbourhood of some pair of nonadjacent vertices, and therefore either $\{v_3,v_4\}\subseteq S$ or $\{v_5,v_6\}\subseteq S$.
    Observe that $G-\{v_3,v_4\}$ is isomorphic to $G-\{v_5,v_6\}$, and has a unique separator $\{v_2\}$ of size $1$.
    Thus, we may assume that $S=\{v_2,v_3,v_4\}$.
    Since $S$ is an independent set and $N_G(v_7)=S$, we conclude that $K_{3,\widehat{1},3}$ is internally $4$-connected.
\end{proof}

\section{Subdivisions of twin-width at least {\boldmath$4$}}\label{sec:forward}

Proposition~\ref{prop:mainforward} directly follows from Lemma~\ref{lem:detecting} and the following proposition.

\begin{proposition}\label{prop:segregated}
    If a trigraph $H$ contains a \sgre{} of some graph in $\mathcal{F}_3$, then $\tww(H)\geq4$.
\end{proposition}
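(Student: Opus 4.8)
The plan is to establish, by a reduction to the case of a minimal such trigraph, a contradiction with the hypothesis that $\DeltaR(H)\le 3$ throughout some contraction sequence. Suppose $\tww(H)\le 3$; since twin-width does not increase under refined subgraphs (Lemma~\ref{lem:refinedtww}), we may pass to a refined subgraph $H$ which contains a \sgre{} of some $F\in\mathcal{F}_3$ but no proper refined subgraph of which does; among all such \sgre{}s $\mathcal{X}=(X_v)_{v\in V(F)}$ we fix one minimising $|\bigcup\mathcal{X}|$. By Observation~\ref{obs:forbidden}\ref{obs:mindeg} we have $\delta(F)\ge 3$, so Lemma~\ref{lem:minsgre} applies: every vertex of $H$ outside $\bigcup\mathcal{X}$ lies on exactly one $\mathcal{X}$-path, the $\mathcal{X}$-paths are in bijection with $E(F)$, $\delta(H)\ge 2$, and degree-$2$ vertices of a part $X_p$ have both neighbours inside $X_p$. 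In particular $H$ looks, combinatorially, like $F$ with each branch-vertex blown up into a connected ``blob'' $X_v$ and each edge subdivided into a path carrying at least one red edge (since it is a \sgre{}, i.e.\ $t=1$, each such path just needs one red edge — length is not constrained).

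**Key steps in order.** First I would record the structural normal form just described and note that because $\delta(F)\ge 3$ (and actually the stronger degree conditions of Observation~\ref{obs:forbidden}\ref{obs:triangle mindeg}), after the first contraction of the optimal \cont{3} the resulting trigraph already has high red degree somewhere unless the contraction is ``aligned'' with the blob/path structure. The core of the argument is to track, along the purported \cont{3}, the interface between what has been contracted and what remains. Second, I would argue that at the moment the sequence first contracts a vertex that ``merges'' information from two distinct $\mathcal{X}$-paths meeting at a part $X_v$, the vertex created has red edges toward each path incident with $X_v$ that has not yet been fully absorbed; using $\deg_F(v)\ge 3$ and the pairwise-neighbourhood bound $|N_F(\{u,v\})|\ge 4$ (Observation~\ref{obs:forbidden}\ref{obs:pairneighbour}), one shows at least four such red edges are forced, contradicting $\DeltaR\le 3$. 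The triangle condition \ref{obs:triangle mindeg} is what rules out the degenerate escape in which a triangle of $F$ could be collapsed cheaply (this is exactly why the $(\Delta,Y)$-closure in Lemma~\ref{lem:DeltaY} was needed, but for Proposition~\ref{prop:segregated} itself it enters as the degree-$\ge 4$ bound on triangle vertices). I would organise this as: (1) show no contraction can be "internal" to a single path without eventually spilling over; (2) show the first "spill-over" at a branch part creates red degree $\ge 4$; (3) conclude.

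**Main obstacle.** The delicate part is the bookkeeping in step~(2): a \cont{3} need not respect the blob/path decomposition at all, so one must argue about an arbitrary contraction sequence and identify the first ``bad'' contraction robustly — defining the right potential or the right colouring of vertices (contracted vs.\ untouched, and on which path) so that the red-degree count is clean. In particular one has to handle the possibility that a branch part $X_v$ is itself gradually contracted with pieces of several incident paths simultaneously, so that no single step is the obvious culprit; the fix is to look at the last moment before $\bigcup\mathcal{X}$ stops being ``spread out'' and to use the $4$-element pair-neighbourhood bound to guarantee four distinct still-separated incident paths at that moment. I expect the write-up to case on which vertex of $F$ (or which configuration of two vertices) is being collapsed, leaning on Observation~\ref{obs:forbidden} uniformly across all six members of $\mathcal{F}_3$ rather than treating them one by one, and on Lemma~\ref{lem:minsgre}\ref{minsgre:uniqueness} to know the incident paths are genuinely distinct. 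The remaining steps — reducing to the minimal refined subgraph, extracting the normal form, and the final contradiction — are routine given Lemmas~\ref{lem:refinedtww} and~\ref{lem:minsgre}.
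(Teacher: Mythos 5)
There is a genuine gap, and it sits exactly where you put the weight of the argument. The central claim of your step~(2) --- that the first contraction which ``spills over'' a branch part must create red degree at least $4$ --- is false, and Observation~\ref{obs:forbidden}\ref{obs:triangle mindeg} alone does not repair it. Concretely, in the normal form given by Lemma~\ref{lem:minsgre}, let $pq$ be an edge of $F$ lying in a triangle $pqr$, let $u\in X_p$ and $v\in X_q$ be degree-$3$ vertices joined by the unique $\mathcal{X}$-path between $X_p$ and $X_q$ and sharing a red common neighbour $w\in X_r$. Contracting $u$ and $v$ can keep every red degree at most $3$: the new vertex sees only $w$, one vertex of $X_p$, and one vertex of $X_q$ in red. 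The obstruction does not vanish at such a step --- it mutates: the contracted trigraph contains a \sgre{} not of $F$ but of a spanning subgraph of the graph obtained from $F$ by a $(\Delta,Y)$-operation on $pqr$, which lies in $\mathcal{F}_3$ precisely because of Lemma~\ref{lem:DeltaY}. So your remark that Lemma~\ref{lem:DeltaY} ``is not needed for Proposition~\ref{prop:segregated} itself'' is exactly backwards: without allowing the target graph to change inside $\mathcal{F}_3$ under these moves, no red-degree count of the kind you propose can be closed (this closure property is also why $\mathcal{F}_3$ has six members rather than one).

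The bookkeeping difficulty you flag is likewise not merely delicate but structurally unresolved in your plan: the intermediate trigraphs of an arbitrary \cont{3} are quotients, not refined subgraphs, of $H$, so the conclusions of Lemma~\ref{lem:minsgre} do not persist along the sequence, and ``the last moment before $\bigcup\mathcal{X}$ stops being spread out'' has no definition that supports a four-red-edge count. The paper avoids following the sequence altogether: it takes a counterexample minimising $\abs{V(H)}$ and then $\abs{R(H)}$, looks only at the \emph{first} contraction $\{u,v\}$ of an optimal sequence, and proves (Lemma~\ref{lem:segregated}, by the three cases $\abs{\{u,v\}\cap\bigcup\mathcal{X}}\in\{0,1,2\}$) that any single contraction with $\DeltaR(H/\{u,v\})\leq3$ again yields a trigraph containing a \sgre{} of \emph{some} graph in $\mathcal{F}_3$; minimality of the counterexample then gives the contradiction immediately. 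Your reduction to a minimal refined subgraph, the choice of $\mathcal{X}$ minimising $\abs{\bigcup\mathcal{X}}$, and the use of Lemma~\ref{lem:minsgre} all match the paper's setup, but to complete the proof you must replace the ``first bad step'' claim by this single-step preservation lemma (including the $(\Delta,Y)$ mutation) and run the induction on the minimal counterexample rather than tracking an entire contraction sequence.
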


We first prove Proposition~\ref{prop:segregated} by using the following lemma.

\begin{lemma}\label{lem:segregated}
    Let $F\in\mathcal{F}_3$ and let $H$ be a trigraph containing a \sgre{} of $F$ such that no proper refined subgraph of~$H$ 
    contains a \sgre{} of $F$.
    For every pair $\{u,v\}$ of distinct vertices of $H$, either $\DeltaR(H/\{u,v\})\geq4$, or $H/\{u,v\}$ contains a \sgre{} of some graph in $\mathcal{F}_3$.
\end{lemma}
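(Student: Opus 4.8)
The plan is to fix a segregated-model $\mathcal{X}=(X_p)_{p\in V(F)}$ of $F$ in $H$ minimising $|\bigcup\mathcal{X}|$, so that Lemma~\ref{lem:minsgre} applies: every vertex of $H$ lies in some $X_p$ or on exactly one $\mathcal{X}$-path, the $\mathcal{X}$-paths are in bijection with $E(F)$, $\delta(H)\geq 2$, and any degree-$2$ vertex of $X_p$ has both neighbours in $X_p$. Now take an arbitrary pair $\{u,v\}$ and let $x$ be the vertex of $H':=H/\{u,v\}$ resulting from the contraction; assume $\DeltaR(H')\leq 3$, so in particular $\rdeg_{H'}(x)\leq 3$. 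The goal is to exhibit a segregated-model of some graph in $\mathcal{F}_3$ inside $H'$. The natural strategy is a case analysis on how $u$ and $v$ sit relative to $\mathcal{X}$: (i) both in the same part $X_p$; (ii) in different parts $X_p$, $X_q$; (iii) one in $X_p$ and one internal to an $\mathcal{X}$-path; (iv) both internal to $\mathcal{X}$-paths (same path or different paths). In each case one tries to either keep the same model $\mathcal{X}$ (with $u,v$ replaced by $x$ in the relevant set) or, when that creates a red edge violating property~\ref{sgre:length} or merges two paths/parts in a bad way, pass to a refined subgraph realising a $(\Delta,Y)$-operation or a minor of $F$, then invoke Observation~\ref{obs:forbidden} and Lemma~\ref{lem:DeltaY} (which guarantee that after a $(\Delta,Y)$-operation we still see a spanning subgraph of a member of $\mathcal{F}_3$, hence a segregated-model of it).

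The key structural inputs are: (a) the bound $\rdeg_{H'}(x)\leq 3$ forces the contracted pair to have very limited interaction with the rest of $H$, which combined with $\delta(F)\geq 3$ and Observation~\ref{obs:forbidden}\ref{obs:pairneighbour} ($|N(\{u,v\})|\geq 4$ for vertices of $F$) severely restricts where $u,v$ can lie — roughly, $u$ and $v$ cannot both be ``deep inside'' parts, because contracting two vertices of a large part $X_p$ does not disturb anything and just gives $\mathcal{X}$ again, whereas contracting across parts produces red edges on $x$ from both sides; (b) when $u\in X_p$ and $v$ is internal to an $\mathcal{X}$-path $P_{pq}$, contracting shortens $P_{pq}$ and may turn $X_p$-to-$X_q$ into a length-$\leq 1$ red-edge path — but then $pq\in E(F)$ and we can delete the rest of $P_{pq}$, effectively identifying $X_p$ and $X_q$, i.e.\ contracting the edge $pq$ in $F$; the resulting graph $F/pq$ or $F-e$ still has a member of $\mathcal{F}_3$ as a minor by the minor-minimality discussion after Lemma~\ref{lem:DeltaY}; (c) the triangle/$(\Delta,Y)$ scenario: if $u\in X_p$, $v\in X_q$ with $pq\in E(F)$ and the $\mathcal{X}$-path $P_{pq}$ has length $2$, contracting $u,v$ into $x$ makes $x$ adjacent to the midpoint of $P_{pq}$ and to $X_r$ for every neighbour $r$ of $p$ or $q$ — one checks using $\rdeg_{H'}(x)\leq 3$ and $\delta(F)\geq 3$ that this can only happen if $p,q$ have a common neighbour, yielding exactly the configuration of a $(\Delta,Y)$-operation on $F$, at which point Lemma~\ref{lem:DeltaY} finishes.

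I expect the main obstacle to be the bookkeeping in case (ii), contracting vertices from two different parts $X_p$ and $X_q$: here $x$ inherits red edges toward the interiors of all $\mathcal{X}$-paths leaving $X_p$ and all those leaving $X_q$ (at least when $|X_p|,|X_q|>1$, or red edges toward the parts $X_r$ themselves when a part is a singleton), and to keep $\rdeg_{H'}(x)\leq 3$ one needs $\deg_F(p)+\deg_F(q)$ to be small, which collides with $\delta(F)\geq 3$ unless $p$ and $q$ share several common neighbours or are adjacent with short connecting path. Disentangling which members of $\mathcal{F}_3$ admit such a pair $\{p,q\}$, and verifying that in each the resulting merged configuration still contains a segregated-model of some member of $\mathcal{F}_3$ (possibly via a minor contraction $F/pq$ followed by Lemma~\ref{lem:DeltaY}-type arguments), is where the real work lies; the other cases should reduce either to ``$\mathcal{X}$ itself still works'' or to one application of Lemma~\ref{lem:DeltaY}. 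Throughout, property~\ref{sgre:length} (every short $\mathcal{X}$-path has a red edge) is automatically inherited because contraction only identifies vertices and the red edge incident with $x$ persists, so the burden is entirely on re-establishing \ref{sgre:connected}, \ref{sgre:degree2}, and \ref{sgre:isomorphic} after possibly deleting a few now-redundant vertices.
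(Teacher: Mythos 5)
Your overall skeleton matches the paper's proof: take a \sgre{} $\mathcal{X}$ minimising $\abs{\bigcup\mathcal{X}}$ so that Lemma~\ref{lem:minsgre} applies, use $\rdeg_{H/\{u,v\}}(x)\leq3$ to force $N_H[u]\cap N_H[v]\neq\emptyset$ and to pin down the degrees of $u$ and $v$, split into cases according to $\abs{\{u,v\}\cap\bigcup\mathcal{X}}$, and in the hard cases pass to the graph obtained from $F$ by a $(\Delta,Y)$-operation and invoke Lemma~\ref{lem:DeltaY}. However, your fallback mechanism in item (b) is genuinely wrong. You propose, when the contraction turns the connection between $X_p$ and $X_q$ into a very short path, to ``identify $X_p$ and $X_q$, i.e.\ contract the edge $pq$ in $F$'' (or delete an edge) and to claim that $F/pq$ or $F-e$ ``still has a member of $\mathcal{F}_3$ as a minor by the minor-minimality discussion''. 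Minor-minimality gives exactly the opposite: every graph in $\mathcal{F}_3$ is minor-minimal with the relevant property, so a proper minor of $F$ has \emph{no} minor in $\mathcal{F}_3$, and a \sgre{} of $F/pq$ or $F-e$ in $H/\{u,v\}$ would prove nothing. Moreover the situation you are worried about is not a problem in the first place: condition~\ref{sgre:length} only requires short $\mathcal{X}$-paths to carry a red edge, so a length-$1$ red connection between $X_p$ and $X_q$ is perfectly admissible. The correct treatment of the case $u\in X_p$, $v$ internal to an $\mathcal{X}$-path is the one in the paper: if that path has an end $w\in X_p$, absorb the subpath from $v$ to $w$ (together with $x$) into $X_p$ and keep the model of $F$ itself; if it has no vertex in $X_p$, the red-degree bound forces $\deg_H(u)=3$, $\deg_H(v)=2$ and two common neighbours lying in parts $X_q,X_{q'}$ with $pqq'$ a triangle of $F$, and one then applies the $(\Delta,Y)$-operation — never an edge contraction or deletion in $F$.

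Two smaller points. First, your claim that \ref{sgre:length} is ``automatically inherited because the red edge incident with $x$ persists'' is too quick: when $u$ and $v$ have a common \emph{black} neighbour, the corresponding edge at $x$ stays black, so in the cases where new length-$1$ $\mathcal{X}'$-paths appear at $x$ one must actually argue (as the paper does) that $u$ and $v$ have no common black neighbour, using \ref{sgre:length} for the original model. Second, in your case (ii) (both vertices in distinct parts) you still need to rule out the subcase in which there is no $\mathcal{X}$-path between $u$ and $v$ at all; this requires Observation~\ref{obs:forbidden}\ref{obs:pairneighbour} and \ref{obs:triangle mindeg} and is not covered by the mechanisms you list. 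With item (b) repaired along the lines above and these verifications added, your plan coincides with the paper's argument, but as stated it contains a step that would fail.
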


\begin{proof}[Proof of Proposition~\ref{prop:segregated} assuming Lemma~\ref{lem:segregated}]
    Suppose that $H$ is a counterexample to Proposition~\ref{prop:segregated} which minimises $\abs{V(H)}$, and subject to this minimum $\abs{R(H)}$.
    Since $\tww(H)\leq3$,
    there exists a pair $\{u,v\}$ of distinct vertices of $H$ such that $H/\{u,v\}$ has twin-width at most~$3$.
    Since the twin-width of a refined subgraph of $H$ is no larger than the twin-width of~$H$, by the assumption, $H$ has no proper refined subgraph containing a \sgre{} of a graph in $\mathcal{F}_3$.
    Since $\DeltaR(H/\{u,v\})\leq3$, by Lemma~\ref{lem:segregated}, $H/\{u,v\}$ contains a \sgre{} of some graph in $\mathcal{F}_3$, so by the assumption on $H$, we deduce that $\tww(H/\{u,v\})\geq4$, a contradiction.
\end{proof}

We now prove Lemma~\ref{lem:segregated}.

\begin{proof}[Proof of Lemma~\ref{lem:segregated}]
    Let $\mathcal{X}:=(X_s)_{s\in V(F)}$ be a \sgre{} of $F$ in~$H$ such that $\abs{\bigcup\mathcal{X}}$ is minimised.
    Let $u$ and $v$ be distinct vertices of~$H$.
    Suppose that $\DeltaR(H/\{u,v\})\leq3$.
    By Lemma~\ref{lem:minsgre}\ref{minsgre:delta}, we have that $N_H[u]\cap N_H[v]\neq\emptyset$.
    We are going to show that $H/\{u,v\}$ contains a \sgre{} of some graph in~$\mathcal{F}_3$.

    By the assumption and Lemma~\ref{lem:minsgre}, $H$ and $\mathcal{X}$ satisfy~\ref{minsgre:internal}--\ref{minsgre:deg2}.
    Let $\mathcal{P}$ be the set of $\mathcal{X}$-paths in~$H$, let $H':=H/\{u,v\}$, and let~$x$ be the new vertex of~$H'$ obtained by contracting~$u$ and~$v$.
    We consider three cases depending on the size of $\{u,v\}\cap(\bigcup\mathcal{X})$.
    Unless specified otherwise, whenever we verify \ref{sgre:connected}--\ref{sgre:isomorphic} for a \sgre{} of a graph in $\mathcal{F}_3$ in $H'$, the \sgre{} will be witnessed by $H'$.
    
    \medskip
    \noindent\textbf{Case 1.} $\abs{\{u,v\}\cap(\bigcup\mathcal{X})}=0$.

    Let $P_u$ and $P_v$ be the $\mathcal{X}$-paths in~$H$ containing $u$ and $v$, respectively.
    If $P_u=P_v$, then for the set~$W$ of internal vertices of the subpath of $P_u$ between~$u$ and~$v$, the refined subgraph $H'-W$ witnesses that $\mathcal{X}$ is a \sgre{} of $F$ in $H'$.
    Thus, we may assume that $P_u\neq P_v$.
    By~\ref{sgre:degree2}, $P_u$ and $P_v$ are internally disjoint in~$H$, and $N[u]\cap N[v]\subseteq V(P_u)\cap V(P_v)$.
    Since $N_H[u]\cap N_H[v]\neq\emptyset$, $P_u$ and $P_v$ share an end~$w$ which is a common neighbour of~$u$ and~$v$ in~$H$.
    By Lemma~\ref{lem:minsgre}\ref{minsgre:uniqueness}, $w$ is the unique common neighbour of~$u$ and~$v$.
    Note that $w\in\bigcup\mathcal{X}$.
    Let $p$ be the vertex of $F$ with $w\in X_p$.
    Let $X'_p:=X_p\cup\{x\}$ and for every $q\in V(F)\setminus\{p\}$, let $X'_q:=X_q$.
    
    We show that $\mathcal{X}':=(X'_s)_{s\in V(F)}$ is a \sgre{} of $F$ in $H'$.
    Since $x$ is adjacent to $w$ in~$H'$, we have that $H'[X'_p]$ is connected, so $\mathcal{X}'$ satisfies~\ref{sgre:connected}.
    For each $y\in\{u,v\}$, let $P'_y:=H'[(V(P_y)\setminus\{y,w\})\cup\{x\}]$.
    Note that each $P'_y$ is an $\mathcal{X}'$-path in~$H'$ with at least one red edge.
    Since $H'-x=H-\{u,v\}$, we have that $(\mathcal{P}\setminus\{P_u,P_v\})\cup\{P'_u,P'_v\}$ is the set of $\mathcal{X}'$-paths in $H'$ and every vertex in $V(H')\setminus \bigcup \mathcal{X}'$ is an internal vertex of one of these paths, and therefore $\mathcal{X}'$ satisfies~\ref{sgre:degree2}, \ref{sgre:length}, and~\ref{sgre:isomorphic}.
    Hence, $\mathcal{X}'$ is a \sgre{} of~$F$ in~$H'$.

    \medskip
    \noindent\textbf{Case 2.} $\abs{\{u,v\}\cap(\bigcup\mathcal{X})}=1$.

    By symmetry, we may assume that $u\in\bigcup\mathcal{X}$.
    Let $p$ be the vertex of $F$ with $u\in X_p$, and let $P_v$ be the $\mathcal{X}$-path in~$H$ containing $v$.
    Suppose first that $P_v$ has an end $w\in X_p$.
    Let $W$ be the set of internal vertices of the subpath $P_v$ between $v$ and $w$.
    Let $X'_p:=(X_p\setminus\{u\})\cup W\cup\{x\}$, and for every $q\in V(F)\setminus\{p\}$, let $X'_q:=X_q$.
    
    We show that $\mathcal{X}':=(X'_s)_{s\in V(F)}$ is a \sgre{} of $F$ in $H'$.
    Since $H[X_p\cup W\cup\{v\}]$ is connected, by the definition of a contraction, $H'[X'_p]$ is connected.
    Thus, $\mathcal{X}'$ satisfies~\ref{sgre:connected}.
    Every vertex $y\in V(H')\setminus\bigcup\mathcal{X}'$ has degree~$2$ in $H'$ because because $\deg_H(y)=2$ and $N_H(y)\neq \{u,v\}$, proving \ref{sgre:degree2} for $\mathcal{X}'$ with~$H'$.
    Let $P'_v:=H'[(V(P_v)\setminus(W\cup\{v,w\}))\cup\{x\}]$.
    Note that~$P'_v$ is an $\mathcal{X}'$-path in $H'$ with at least one red edge.
    Since $\deg_{H'}(y)=2$ for every $y\in W$, no $\mathcal{X}'$-path in~$H'$ has an end in $W$.
    Thus, if $P'$ is an $\mathcal{X}'$-path in $H'$, then $P'=P_v'$ or there is an $\mathcal{X}$-path $P\neq P_v$ in $H$ such that $P'=P$ if $u\notin V(P)$ and $P'=H'[(V(P)\setminus\{u\})\cup \{x\}]$ otherwise.
    Therefore, $\mathcal{X}'$ satisfies~\ref{sgre:length}.
    Conversely, for every $\mathcal{X}$-path~$P\neq P_v$ of $H$ between a vertex in $X_s$ and a vertex in $X_t$, 
    there is an $\mathcal{X}$-path $P'$ in $H'$ between a vertex in $X_s'$ and a vertex in $X_t'$ 
    by $P'=P$ if $u\notin V(P)$
    and 
    $P'=H'[(V(P)\setminus\{u\})\cup \{x\}]$ otherwise,
    verifying~\ref{sgre:isomorphic} for $\mathcal{X}'$ with $H'$.
    Hence, $\mathcal{X}'$ is a \sgre{} of $F$ in~$H'$.

    Thus, we may assume that $P_v$ has no vertex in $X_p$.
    Since $u$ and $v$ are nonadjacent in~$H$ and $N_H[u]\cap N_H[v]\neq\emptyset$, $u$ and~$v$ have a common neighbour in~$H$.
    Since $P_v$ has no vertex in $X_p$, every common neighbour of $u$ and $v$ in~$H$ is an end of~$P_v$.
    
    We show that $u$ and $v$ have no common black neighbour in~$H$.
    Suppose for contradiction that~$u$ and~$v$ have a common black neighbour~$w$ in~$H$.
    Let $q$ be the vertex of $F$ with $w\in X_q$.
    Note that $X_p\neq X_q$.
    Since $u$ is adjacent to~$w$, $H[\{u,w\}]$ is an $\mathcal{X}$-path in~$H$ between $X_p$ and $X_q$, so by~\ref{sgre:length}, $uw$ is a red edge of~$H$, a contradiction.
    Hence,~$u$ and $v$ have no common black neighbour in~$H$, and therefore $\{xy:y\in N_H(u)\cup N_H(v)\}\subseteq R(H')$.
    
    Since $\rdeg_{H'}(x)\leq3$, by Lemma~\ref{lem:minsgre}\ref{minsgre:delta}, we have that $2\leq\deg_H(u)\leq3$.
    Since $P_v$ has no vertex in~$X_p$, and $N_H[u]\cap N_H[v]\neq\emptyset$, by Lemma~\ref{lem:minsgre}\ref{minsgre:deg2}, $u$ has degree~$3$ in~$H$.
    Since $\deg_H(v)=2$ and $\rdeg_{H'}(x)\leq3$, $u$ and $v$ have exactly two common neighbours~$w$ and~$w'$ in~$H$.
    Let~$u'$ be the other neighbour of~$u$ in~$H$.
    Let $q$ and $q'$ be the vertices of $F$ such that $w\in X_q$ and $w'\in X_{q'}$.
    Note that $pqq'$ is a triangle of $F$.
    
    Let $F'$ be the graph obtained from $F$ by applying a $(\Delta,Y)$-operation on $pqq'$.
    By Lemma~\ref{lem:DeltaY}, $F'$ has a spanning subgraph $F''$ isomorphic to a graph in $\mathcal{F}_3$.
    Let $z$ be the vertex in $V(F')\setminus V(F)$.
    Since $H[X_p]$ is connected, if $u'\notin X_p$, then $X_p=\{u\}$ so that $H$ has at most three $\mathcal{X}$-paths from $X_p$, contradicting~\ref{sgre:isomorphic} and Observation~\ref{obs:forbidden}\ref{obs:triangle mindeg}.
    Thus, $u'\in X_p$.
    Let $X'_p:=X_p\setminus\{u\}$, $X'_z:=\{x\}$, and for every $q\in V(F'')\setminus\{p,z\}$, let $X'_q:=X_q$.

    We show that $\mathcal{X}':=(X'_s)_{s\in V(F)}$ is a \sgre{} of~$F''$ in~$H'$.
    Since $u'$ is the only neighbour of~$u$ in~$X_p$, $H[X'_p]$ is connected, so by the definition of a contraction, $H'[X'_p]$ is also connected.
    Thus,~$\mathcal{X}'$ satisfies~\ref{sgre:connected}.
    Since $H'-x=H-\{u,v\}$, by the construction of~$\mathcal{X}'$, we have that
    \[
        (\mathcal{P}\setminus\{P_v,H[\{u,w\}],H[\{u,w'\}]\})\cup\{H'[\{x,w\}],H'[\{x,w'\}],H'[x,u']\}
    \]
    is the set of $\mathcal{X}'$-paths in $H'$ and every vertex in $V(H')\setminus \bigcup \mathcal{X}'$ is an internal vertex of one of these paths, and therefore $\mathcal{X}'$ satisfies~\ref{sgre:degree2}.
    Since $\{xy:y\in N_H(u)\cup N_H(v)\}\subseteq R(H')$, $\mathcal{X}'$ satisfies~\ref{sgre:length}.
    Since $H'$ has an $\mathcal{X}'$-path of length~$1$ from $x\in X'_z$ to each of $u'\in X'_p$, $w\in X'_q$, and $w'\in X'_{q'}$, $\mathcal{X}'$ satisfies~\ref{sgre:isomorphic}.
    Hence,~$\mathcal{X}'$ is a \sgre{} of~$F''$ in $H'$.

    \medskip
    \noindent\textbf{Case 3.} $\abs{\{u,v\}\cap(\bigcup\mathcal{X})}=2$.

    Let $p$ and $q$ be the vertices of $F$ such that $u\in X_p$ and $v\in X_q$.
    If $X_p=X_q$, then
    by Lemma~\ref{lem:minsgre}\ref{minsgre:internal}, no vertex in $V(H)\setminus \bigcup \mathcal{X}$ is adjacent to both $u$ and $v$, and therefore     
    we obtain a \sgre{} of $F$ in $H'$ by replacing $X_p$ with $(X_p\setminus \{u,v\})\cup \{x\}$ in $\mathcal{X}$.
    Thus, we may assume that $X_p\neq X_q$.

    Suppose for contradiction that there is no $\mathcal{X}$-path of~$H$ from $u$ to $v$.
    Since $X_p\neq X_q$, $u$ and~$v$ are nonadjacent in~$H$
    and every common neighbour of~$u$ and~$v$ is in~$\bigcup\mathcal{X}$.
    
    We show that $\deg_H(u)=\deg_H(v)=3$.
    Since $\mathcal{X}$ satisfies~\ref{sgre:length}, we have that $\{xy:y\in N_H(u)\cup N_H(v)\}\subseteq R(H')$.
    Thus, each of $u$ and $v$ has degree at most $3$ in~$H$.
    Since $N_H[u]\cap N_H[v]\neq\emptyset$, by Lemma~\ref{lem:minsgre}\ref{minsgre:delta} and~\ref{minsgre:deg2},~$u$ or~$v$ has degree~$3$ in~$H$.
    If the other vertex has degree~$2$ in~$H$, then by Lemma~\ref{lem:minsgre}\ref{minsgre:uniqueness} and~\ref{minsgre:deg2},~$u$ and~$v$ have at most one common neighbour in~$H$, so $\rdeg_{H'}(x)\geq4$, a contradiction.
    Therefore, $\deg_H(u)=\deg_H(v)=3$.

    Since $u$ and $v$ are nonadjacent and $\rdeg_{H'}(x)\leq3$, $u$ and~$v$ have the same neighbourhood in~$H$.
    By Lemma~\ref{lem:minsgre}\ref{minsgre:uniqueness}, $u$ and $v$ have at most one common neighbour in $X_p\cup X_q$, and therefore $X_p$ or $X_q$ is a singleton since $\mathcal{X}$ satisfies~\ref{sgre:connected}.
    If both $X_p$ and $X_q$ are singletons, then $p$ and $q$ have the same neighbourhood in~$F$ by Lemma~\ref{lem:minsgre}\ref{minsgre:uniqueness}, contradicting Observation~\ref{obs:forbidden}\ref{obs:pairneighbour}.
    By symmetry, we may assume that $X_p=\{u\}$, and that $u$ and $v$ have a common neighbour in $X_q$.
    Let $w$ be a vertex in $(N_H(u)\cap N_H(v))\setminus X_q$, and let $r$ be the vertex of $F$ with $w\in X_r$.
    Thus, by Lemma~\ref{lem:minsgre}\ref{minsgre:uniqueness}, $pqr$ is a triangle of~$F$ and $\deg_F(p)=3$, contradicting Observation~\ref{obs:forbidden}\ref{obs:triangle mindeg}.
    
    Thus, there is an $\mathcal{X}$-path $P$ in~$H$ from $u$ to $v$.
    By Lemma~\ref{lem:minsgre}\ref{minsgre:uniqueness}, $u$ and $v$ have no common neighbour in $X_p\cup X_q$.
    Since~$u$ and~$v$ are the ends of~$P$, by Lemma~\ref{lem:minsgre}\ref{minsgre:delta} and \ref{minsgre:deg2}, each of~$u$ and~$v$ has degree at least~$3$ in~$H$.
    Since $\rdeg_{H'}(x)\leq3$, $u$ and $v$ have at least one common neighbour in $V(H)\setminus V(P)$.
    Let $N_{uv}:=(N_H(u)\cap N_H(v))\setminus V(P)$.

    By Lemma~\ref{lem:minsgre}\ref{minsgre:uniqueness}, $N_{uv}\subseteq(\bigcup\mathcal{X})\setminus (X_p\cup X_q)$.
    For every $y\in N_{uv}$, let $y^*$ be the vertex of $F$ with $y\in X_{y^*}$, and note that $pqy^*$ is a triangle of~$F$.
    Additionally note that both $H[\{u,y\}]$ and $H[\{v,y\}]$ are $\mathcal{X}$-paths in~$H$ of length~$1$, so by~\ref{sgre:length}, both $uy$ and $vy$ are red edges of $H$.
    This implies that $\{xy:y\in (N_H(u)\cup N_H(v))\setminus V(P)\}\subseteq R(H')$, and both~$u$ and~$v$ have degree at most~$4$ in~$H$ because $\rdeg_{H'}(x)\le 3$.

    We are going to show that both~$u$ and~$v$ have degree~$3$ in~$H$.
    First, suppose for contradiction that both~$u$ and~$v$ have degree $4$ in~$H$.
    Since $\rdeg_{H'}(x)\leq3$, we have that $N_H(u)\setminus V(P)=N_H(v)\setminus V(P)=N_{uv}$.
    Since $N_{uv}\subseteq(\bigcup\mathcal{X})\setminus (X_p\cup X_q)$, we deduce that both $X_p$ and $X_q$ are singletons, so that $\abs{N_F(\{p,q\})}\leq3$, contradicting~\ref{sgre:isomorphic} and Observation~\ref{obs:forbidden}\ref{obs:pairneighbour}.
    Hence, $u$ or $v$ has degree~$3$ in~$H$.
    By symmetry, we may assume that $v$ has degree $3$ in~$H$.

    Suppose for contradiction that $u$ has degree $4$ in~$H$.
    Since $\{xy:y\in(N_H(u)\cup N_H(v))\setminus V(P)\}\subseteq R(H')$ and $\deg_H(v)=3$, $N_{uv}$ contains at least two vertices, and therefore $X_q$ is a singleton.
    Thus, $\deg_F(q)=3$.
    However, since $pqy^*$ is a triangle of $F$ for each $y\in N_{uv}$, $q$ is a vertex of some triangle of $F$, contradicting~\ref{sgre:isomorphic} and Observation~\ref{obs:forbidden}\ref{obs:triangle mindeg}.
    Hence, both $u$ and $v$ have degree $3$ in~$H$.

    Since $F$ has a triangle containing the edge $pq$, by~\ref{sgre:isomorphic} and Observation~\ref{obs:forbidden}\ref{obs:triangle mindeg}, both $p$ and $q$ have degree at least~$4$ in~$F$, and therefore neither $X_p$ nor $X_q$ is a singleton as $\deg_H(u)=\deg_H(v)=3$.
    Since $N_{uv}$ is nonempty, both $u$ and $v$ have neighbours in $P$, and 
    $\deg_{H'}(x)\le 3$, we deduce that $\abs{N_{uv}}=1$, $\abs{V(P)}\leq3$, and $\deg_{H[X_p]}(u)=\deg_{H[X_q]}(v)=1$.
    Let $w$ be the vertex in $N_{uv}$, and $r$ be the vertex of $F$ with $w\in X_r$.
    Note that $pqr$ is a triangle of $F$.
    Let $u'\in N_H(u)\cap X_p$ and $v'\in N_H(v)\cap X_q$.
    Note that $H[X_p\setminus\{u\}]$ and $H[X_q\setminus\{v\}]$ are connected, and so are $H'[X_p\setminus\{u\}]$ and $H'[X_q\setminus\{v\}]$.

    Let $F'$ be the graph obtained from $F$ by applying a $(\Delta,Y)$-operation on $pqr$.
    By Lemma~\ref{lem:DeltaY}, $F'$ has a spanning subgraph $F''$ isomorphic to a graph in~$\mathcal{F}_3$.
    Let $z$ be the vertex in $V(F')\setminus V(F)$.
    Let $X'_p:=X_p\setminus\{u\}$, $X'_q:=X_q\setminus\{v\}$, $X'_z:=(V(P)\cup\{x\})\setminus\{u,v\}$, and for every $t\in V(F'')\setminus\{p,q,z\}$, let $X'_t:=X_t$.
    
    We show that $\mathcal{X}':=(X'_s)_{s\in V(F)}$ is a \sgre{} of~$F''$ in~$H'$.
    Since $H[X'_p]$ and $H[X'_q]$ are connected, by the definition of a contraction, $H'[X'_p]$ and $H'[X'_q]$ are connected.
    Since $P$ is connected, $H'[X'_z]$ is connected.
    Since $H'-x=H-\{u,v\}$, we have that
    \[
        (\mathcal{P}\setminus\{V(P),H[\{u,w\}],H[\{v,w\}]\})\cup\{H'[\{u',x\}],H'[\{v',x\}],H'[\{w,x\}]\}
    \]
    is the set of $\mathcal{X}'$-paths in $H'$ and every vertex in $V(H')\setminus \bigcup \mathcal{X}'$ is an internal vertex of one of these paths, and therefore $\mathcal{X}'$ satisfies~\ref{sgre:degree2}.
    Since every $\mathcal{X}'$-path containing $x$ has a red edge incident with $x$, $\mathcal{X}'$ satisfies~\ref{sgre:length}.
    Since $H'$ has an $\mathcal{X}'$-path from $X'_z$ to each of $X'_p$, $X'_q$, and $X'_r$, $\mathcal{X}'$ satisfies~\ref{sgre:isomorphic}.
    Hence, $\mathcal{X}'$ is a \sgre{} of~$F''$ in~$H'$, and this completes the proof.
\end{proof}

\section{Subdivisions of twin-width at most {\boldmath$3$}}\label{sec:backward}

In this section, we complete the proof of Theorem~\ref{thm:main}\ref{main:atmost3}.
In fact, we prove the following stronger result.

\begin{proposition}\label{prop:mainbackward}
    Let $G$ be a multigraph and let $H$ be a trigraph with $\DeltaR(H)\leq3$ whose underlying graph is an \subd{2} of~$G$.
    Then $\tww(H)\leq3$ if and only if $G$ is $\mathcal{F}_3$-minor-free.
\end{proposition}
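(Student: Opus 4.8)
This is immediate from the earlier results. An \subd{2} is in particular an \subd{1}, so if $G$ had a minor in $\mathcal F_3$, then Proposition~\ref{prop:mainforward} would give that the underlying graph of $H$ has twin-width at least~$4$; recolouring every red edge of $H$ black exhibits this graph as a refined subgraph of $H$, so Lemma~\ref{lem:refinedtww} yields $\tww(H)\ge 4$, contradicting $\tww(H)\le 3$. Hence it remains to show that $\mathcal F_3$-minor-freeness of $G$ implies $\tww(H)\le 3$, equivalently that $H$ admits a \cont{3}.

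\textbf{Backward direction.} The plan is induction on $\abs{V(G)}$, and it is essential that $H$ is allowed to be a trigraph of maximum red degree at most~$3$ rather than a graph: cutting $H$ at a small separator produces trigraphs with red edges along the interface, and these must again satisfy the hypothesis for the induction to close. After routine preprocessing one may assume $G$ is connected; vertices of $G$ of degree at most~$2$, which correspond in $H$ to pendant paths and to internal segments of longer subdivision paths, can be absorbed with some bookkeeping — contracting a pendant path toward its branch vertex one vertex at a time never raises the red degree past~$3$, since each internal vertex of a subdivision path has exactly two neighbours — so we may further assume $\delta(G)\ge 3$.

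The heart of the argument is a structural description of $\mathcal F_3$-minor-free multigraphs of minimum degree at least~$3$. Such multigraphs have bounded treewidth (large walls and grids contain $Q_3\in\mathcal F_3$ as a minor), and, more usefully, the plan is to show that they admit a tree decomposition of adhesion at most~$3$ in which every torso is either of bounded order or $K_4$-minor-free. Given such a decomposition, the \cont{3} of $H$ is assembled by processing the decomposition tree from the leaves inward. A $K_4$-minor-free torso gives a region of $H$ which, up to its bounded red interface, is an \subd{2} of a $K_4$-minor-free multigraph and therefore has twin-width at most~$2$; this is exactly what Section~\ref{sec:2tree} establishes (and in particular proves Theorem~\ref{thm:main}\ref{main:atmost2}). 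A bounded-order torso gives an \subd{2} of a bounded multigraph, for which a \cont{3} is written down directly, the subdivision paths being long enough to absorb every contraction while keeping red degree at most~$3$. When a fully processed region is merged into the remainder along a separator $S$, it is kept as a single part (or small red blob) attached to $S$; since $\abs{S}\le 3$, the interface never forces a fourth red neighbour, and a branch vertex shared by several torsos is handled by prescribing the order in which the subdivision paths incident with it are contracted, so that its red degree does not accumulate.

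The main obstacle is that the budget~$3$ is tight, so nearly all of the work is red-degree bookkeeping: verifying that at every step of the constructed sequence — above all at the branch vertices of the subdivision, where up to $\Delta(G)$ long paths meet, and at the order-$\le 3$ separators — no part acquires a fourth red or mixed neighbour. This is precisely where the exact list $\mathcal F_3$ enters: forbidding $K_6^-$, $\overline{C_7}$, $C_5+\overline{K_2}$, $K_{3,\widehat{1},3}$, $Q_3$, and $V_8$ as minors is exactly what rules out the local configurations that would otherwise force a fourth red edge somewhere along the sequence. Establishing the structural description and matching it step by step to a red-degree-$3$ contraction sequence of the subdivision is the bulk of Sections~\ref{sec:backward} and~\ref{sec:2tree}.
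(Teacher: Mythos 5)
Your forward direction is fine: the underlying graph of $H$ is a refined subgraph of $H$, so Lemma~\ref{lem:refinedtww} together with Proposition~\ref{prop:mainforward} gives $\tww(H)\geq4$ whenever $G$ has an $\mathcal{F}_3$-minor. The backward direction, however, is only a plan, and the two places you declare routine are exactly where the content of the proof lies. First, the assertion that a bounded-order torso ``gives an \subd{2} of a bounded multigraph, for which a \cont{3} is written down directly'' and that ``since $\abs{S}\le 3$, the interface never forces a fourth red neighbour'' is false without a further global condition on \emph{which} adhesion triangles attach to a bag. Concretely, take a torso isomorphic to $K_4$ in which all four triangles are adhesion sets of neighbouring parts: after the neighbouring regions are processed into attached blobs, the relevant piece of $H$ contains an \subd{1} of $Y(K_4,\mathcal{A})$, which contains a subdivided $Q_3\in\mathcal{F}_3$, so by Proposition~\ref{prop:mainforward} \emph{no} ordering of contractions keeps the red degree at~$3$. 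This is why the paper does not work with an arbitrary adhesion-$\le3$ decomposition: it builds a $3$-contractible tree decomposition whose bags induce graphs in a specific list $\mathcal{K}$ and whose size-$3$ adhesion sets form an \emph{addable} set in each bag (Proposition~\ref{prop:treedecomp}), and then proves, graph by graph (Propositions~\ref{prop:K_4 addable}--\ref{prop:line addable}, Corollary~\ref{cor:base}), that addability is equivalent to the existence of an interface-stable partial \cont{3}. Nothing in your sketch constructs or exploits such a condition, and without it the induction does not close.

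Second, your treatment of the interface is not sound as stated. Contracting a processed region to a single blob attached to a separator $S$ adds a red edge at \emph{each} vertex of $S$, and a vertex of $G$ can lie in many adhesion sets, so its red degree can be driven past~$3$ by the merging itself; moreover your appeal to Section~\ref{sec:2tree} for $K_4$-minor-free torsos uses Theorem~\ref{thm:main2}, whose hypothesis requires red degree~$0$ at the branch vertices, which is precisely what fails at interface vertices after earlier regions have been absorbed. The paper's solution is the machinery you would have to reinvent: leaf bags are contracted not to a blob but to the gadget $\psi^*(\theta(G),\sigma_G(X))$ attached to $X$ by length-$2$ paths, and all partial sequences are required to be $X$-stable (no interface vertex ever exceeds its original red degree), with the red-degree bookkeeping carried out explicitly in Lemmas~\ref{lem:theta}, \ref{lem:yoperation}, \ref{lem:protection1}, \ref{lem:protection2}, \ref{lem:routine} and assembled in Proposition~\ref{prop:stronger}. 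Finally, the claim that forbidding the six graphs in $\mathcal{F}_3$ ``is exactly what rules out'' the bad local configurations is the statement to be proved, not an observation; in the paper it is established through the if-and-only-if characterisations of addable sets, which your proposal does not supply.
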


To prove Proposition~\ref{prop:mainbackward}, we first observe that it suffices to consider multigraphs $G$ obtained from edge-maximal $\mathcal{F}_3$-minor-free simple graphs by adding parallel edges or loops, since an \subd{2} of a subgraph of~$G$ is an induced subgraph of an \subd{2} of~$G$.
This allows us to construct a special type of a tree decomposition $(T,(B_t)_{t\in V(T)})$ of~$G$, called a $3$-contractible tree decomposition, where every bag induces one of a small set of specific graphs and every adhesion set is a clique of size at most~$3$.
We then prove the statement by induction on $\abs{V(T)}$.

We organise this section as follows.
In Subsection~\ref{subsec:psi}, we introduce \sena{}s of trigraphs and present specific partial contraction sequences.
In Subsection~\ref{subsec:treedecomp}, we fully describe the specific graphs that may be induced by a bag of a $3$-contractible tree decomposition.
In Subsection~\ref{subsec:construct}, we construct a $3$-contractible tree decomposition, and in Subsection~\ref{subsec:backwardproof}, we prove Proposition~\ref{prop:mainbackward}.

\subsection{{\boldmath$\nabla$}-separations and reduction to smaller trigraphs}\label{subsec:psi}

For a graph $G$, we denote by $\theta(G)$ the union of a $2$-subdivision~$G_1$ of~$G$ and a $3$-subdivision~$G_2$ of~$G$ with $V(G_1)\cap V(G_2)=V(G)$.
If $G'$ is a subgraph of~$G$, then we assume that $\theta(G')$ is a subgraph of $\theta(G)$.
Note that for adjacent vertices $u$ and $v$ in~$G$, $\theta(G[\{u,v\}])$ is a cycle of length~$7$ in $\theta(G)$ containing~$u$ and~$v$.

For an \subd{2} of a multigraph~$G$, the following lemma presents a partial contraction sequence from the subdivision to a trigraph whose underlying graph is $\theta(G_0)$ for the simplification~$G_0$ of~$G$.

\begin{lemma}\label{lem:theta}
    Let $G$ be a multigraph, let $G_0$ be the simplification of $G$, and let $H$ be a trigraph whose underlying graph is an \subd{2} of~$G$.
    Let $\mu:=1$ if at least one vertex in $V(G)$ has red degree at least~$1$ in~$H$, and otherwise let~$\mu:=0$.
    For $d:=\max\{\DeltaR(H),2+\mu\}$, there exists a partial \cont{d} from~$H$ to a trigraph $H'$ such that the underlying graph of~$H'$ is an induced subgraph of $\theta(G_0)$, and $\rdeg_{H'}(u)\leq\rdeg_H(u)$ for every $u\in V(G)$.
\end{lemma}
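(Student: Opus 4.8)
The plan is as follows. The underlying graph of $H$ is an \subd{2} of $G$, so each edge $e$ of $G$ is replaced by a path $P_e$ of length at least $3$ in $H$; when $e$ is a loop or one of several parallel edges, $H$ still contains such a path, and distinct edges of $G$ give internally disjoint paths meeting only at vertices of $V(G)$. The target graph $\theta(G_0)$ is obtained by, for each edge $uv$ of the simplification $G_0$, gluing a path of length $3$ and a path of length $4$ between $u$ and $v$ (forming a $7$-cycle through $u$ and $v$). So the whole task is local: for each edge $uv\in E(G_0)$, I must use the paths of $H$ between $u$ and $v$ (there is at least one, coming from some edge of $G$ with ends $u,v$) to reach exactly the configuration of $\theta(G_0)$ restricted to that $7$-cycle, while never creating red degree exceeding $d$ anywhere, and in particular never increasing the red degree of any vertex of $V(G)$ beyond its current value in $H$.

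The key steps, carried out edge by edge (i.e.\ processing the paths of $H$ one parallel class at a time), are: (1) First handle loops and surplus parallel edges of $G$: if $e$ is a loop at $u$, then $P_e$ is a cycle of length $\ge 3$ through $u$; contract it down along the cycle, working away from $u$, until it disappears into $u$ — each contraction creates a vertex with at most two red neighbours on the cycle plus possibly one pre-existing red edge at $u$, so red degree stays at most $2+\mu\le d$. Similarly, if $uv$ has more than two parallel edges in $G$, collapse all but two of the corresponding paths by contracting each extra path, vertex by vertex toward $u$, into $u$; again the transient red degree is at most $2+\mu$. (2) Now each edge $uv\in E(G_0)$ is represented in $H$ by one or two internally disjoint $u$–$v$ paths, each of length $\ge 3$. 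If there are two such paths, of lengths $\ell_1,\ell_2\ge 3$, I shorten the longer internal segments by repeatedly contracting two consecutive internal vertices of a path — this reduces that path's length by one and only ever produces a vertex of red degree $2$ internally — until the two paths have lengths exactly $3$ and $4$, giving the $7$-cycle of $\theta(G_0[\{u,v\}])$ on the nose. (3) If $uv$ is represented by only one path $P$ of length $\ell\ge 3$ in $H$: I cannot ``grow'' paths, but $\theta(G_0)$ is only required to contain the output as an \emph{induced subgraph}, so I may delete the other branch; wait — we cannot delete vertices in a contraction sequence. Instead, I will be more careful: the hypothesis that $H$'s underlying graph is an \subd{2} of $G$ means for each edge of $G_0$ there is at least one path of length $\ge 3$; I shorten it to length $3$ by the same consecutive-vertex contractions, and this single path of length $3$ between $u$ and $v$ is an induced subgraph of the $7$-cycle $\theta(G_0[\{u,v\}])$. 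So in all cases, after processing edge $uv$, the induced subgraph on $\{u,v\}$ together with the remaining internal vertices is an induced subgraph of the corresponding $7$-cycle. Performing these operations for every edge of $G_0$ independently (the internal vertices for distinct edges of $G_0$ are disjoint and have degree $2$, so contractions for one edge do not interact with another) yields a trigraph $H'$ whose underlying graph is an induced subgraph of $\theta(G_0)$. Throughout, every vertex of $V(G)$ only ever has red neighbours among the neighbours it already had (the paths incident to it), since we never route a contraction so as to attach a new vertex class to it — so $\rdeg_{H'}(u)\le\rdeg_H(u)$ for $u\in V(G)$, and every transient trigraph has maximum red degree at most $\max\{\DeltaR(H),2+\mu\}=d$.

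The main obstacle is bookkeeping the red-degree bound \emph{at the vertices of $V(G)$} during steps~(1) and, secondarily, being careful that the ``$2+\mu$'' slack is genuinely enough: when we collapse a loop or an extra parallel path into a vertex $u\in V(G)$, the last contraction merges a vertex adjacent to $u$ into $u$, and at intermediate stages a vertex of the collapsing path adjacent to $u$ may have $u$ as a (black or red) neighbour plus one red neighbour further along the path — we must verify this never exceeds $2+\mu$, and that $u$ itself never acquires a red neighbour it did not already have (it doesn't, because its neighbour set never changes and the only edges toward $u$ that could turn red are ones already incident to $u$). The edge-shortening steps~(2)--(3) are routine: contracting two consecutive degree-$2$ internal vertices produces a degree-$2$ vertex with red degree exactly $2$, comfortably within $d\ge 2+\mu$. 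Finally I note that the process does not increase $\DeltaR$ beyond $d$ at the shared vertices because distinct edges of $G_0$ contribute disjoint sets of internal vertices, so the red edges created for one edge of $G_0$ are never incident to the work done for another.
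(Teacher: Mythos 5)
The step in your plan that handles loops and surplus parallel edges by collapsing the corresponding cycle or path ``into $u$'' is where the argument breaks. At the final contraction $\{u,z\}$ (with $z$ the last remnant of the collapsed cycle or path), every vertex adjacent to exactly one of $u$ and $z$ becomes a red neighbour of the resulting vertex; since $z$ has no neighbours on the other subdivision paths at $u$, \emph{every} other edge incident with $u$ turns red. Thus the red degree of $u$ jumps to the number of other subdivision paths at $u$, which exceeds $d=\max\{\DeltaR(H),2+\mu\}$ as soon as $\deg_G(u)$ is moderately large, and which already violates the required conclusion $\rdeg_{H'}(u)\leq\rdeg_H(u)$ whenever $u$ has one other incident black edge. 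For a surplus parallel path the collapse moreover leaves a (red) edge between $u$ and $v$, and a $uv$-edge cannot occur in an induced subgraph of $\theta(G_0)$, where $u$ and $v$ are at distance $3$. This is why the paper never contracts path vertices into a branch vertex (except in the degenerate case $\abs{V(G)}=1$): loops and extra parallel edges are eliminated by merging their internal vertices pairwise into the internal vertices of a surviving parallel path, or of a path corresponding to an incident non-loop edge, which leaves the neighbourhoods and edge colours at $V(G)$ untouched.

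The shortening steps have a second, independent problem. Any contraction that strictly shortens a subdivision path must at some point merge the neighbour of an end $u$ with a vertex non-adjacent to $u$, turning the edge at $u$ red. Hence your step (3), which shortens a lone path of length at least $4$ down to length $3$, increases the red degree of one of its ends; for example, if $G=K_7$ and every edge is subdivided into a path of length $5$ with no red edges, then $d=2$, yet after shortening all $21$ paths to length $3$ some vertex of $G$ has red degree at least $3$, and in every case $\rdeg_{H'}(u)\leq\rdeg_H(u)$ fails for some $u\in V(G)$. This is exactly why $\theta(G_0)$ contains length-$4$ paths in addition to length-$3$ paths: a path of length at least $4$ should only be contracted down to length exactly $4$, by merging $w_2,\ldots,w_{\ell-2}$ into a single vertex and never touching the two vertices adjacent to the ends. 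Relatedly, in your step (2) two parallel paths that both have length $3$ cannot be brought to lengths $3$ and $4$; they must be merged into a single length-$3$ path (the paper's Case~1), and two parallel paths both of length at least $4$ must likewise be merged, since $\theta(G_0)$ has only one length-$4$ path per edge of $G_0$. Finally, the justification that $u$ never gains red edges ``because its neighbour set never changes'' is not sound: the danger is precisely that an existing black edge at $u$ turns red, and the operations above do this. Your overall per-edge, local framing is close in spirit to the paper's induction on $\abs{E(G)}$, but these two operations must be replaced by the merging and middle-contraction moves before the proof goes through.
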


Before presenting the proof of this lemma, we sketch the idea of the proof.
First, observe that for a loop~$e$ of~$G$ incident with a vertex~$v$, it is possible to perform contractions to convert the cycle of~$H$ corresponding to~$e$ to a path of length at most~$2$, without increasing the red degree of~$v$.
For a non-loop edge~$e'$ of~$G$ with ends~$v$ and~$w$, if the path corresponding to~$e'$ in $H$ has length at least~$4$ then it can be contracted down to a path of length exactly~$4$ without increasing the red degree of~$v$ or~$w$.

After performing these reductions, we can proceed to merge a short path~$P$ which we obtain from the cycle corresponding to any loop~$e$ with any path corresponding to a non-loop edge incident with~$e$ whose length is at least~$2$ more than the length of~$P$.
We can also merge two paths corresponding to parallel edges of~$G$, provided that they have the same length.
Finally, suppose that~$G$ has loops~$e_1$ and~$e_2$ incident with distinct adjacent vertices~$v$ and~$w$, and we have contracted the cycles corresponding to these loops down to paths~$P_1$ and~$P_2$, each of length~$2$.
We can contract an end of~$P_1$ with an end of~$P_2$, to form a path of length~$4$ from~$v$ to~$w$.

When it is no longer possible to perform any of these reductions, we must have an induced subgraph of $\theta(G_0)$.
We now provide a formal proof of Lemma~\ref{lem:theta}.

\begin{proof}[Proof of Lemma~\ref{lem:theta}]
    We proceed by induction on $t:=\abs{E(G)}$.
    The statement vacuously holds for $t=0$.
    Thus, we may assume that $t>0$.
    
    We may assume that $G$ is connected.
    For each edge~$e$ of~$G$, we denote by $P_e$ the path or the cycle in~$H$ replacing $e$, and by $\ell(e)$ the length of~$P_e$.
    Since the underlying graph of $H$ is an \subd{2} of~$G$, for every edge $e$ of $G$, we have that $\ell(e)\geq3$.
    For every edge~$e$ of~$G$, we arbitrarily select an end $w_0^e$ of $e$ and denote by $w_0^e,w_1^e,\ldots,w_{\ell(e)}^e$ the vertices of $P_e$ such that for each $i\in[\ell(e)]$, $w_i^e$ is adjacent to $w_{i-1}^e$ in~$P_e$.
    Thus, $\{w_0^e,w_{\ell(e)}^e\}$ is the set of ends of $e$.
    Let $L$ be the set of loops of~$G$, let $E_1:=\{e\in E(G):\ell(e)=3\}$, and let $E_2:=\{e\in E(G):\ell(e)\geq4\}$.
    For each $i\in[2]$, let $G_i:=G-E_{3-i}$.

    Suppose first that $\abs{V(G)}=1$.
    If $E_2=\emptyset$, then we obtain a \cont{d} from~$H$ by iteratively contracting pairs of adjacent degree-$2$ vertices or pairs of degree-$1$ vertices, until only the vertex of $G$ remains.
    If $E_2\neq\emptyset$, then we arbitrarily choose $e\in E_2$, and for each $e'\in E(G)\setminus\{e\}$, apply the following partial \cont{d}.
    \begin{enumerate}[label=\bf{Step \arabic*.},leftmargin=*]
        \item Contract $\{w_1^{e'},w_{\ell(e')-1}^{e'}\}$ to~$w_1^{e'}$. Observe that the red degree of $w_1^{e'}$ in the resulting trigraph is $2$ if both $w_1^{e'}$ and $w_{\ell(e')-1}^{e'}$ are black neighbours of $w_0^{e'}$ in~$H$, and $3$ otherwise.
        \item If $\ell(e')\geq5$, then for~$j$ from~$3$ to~$\ell(e')-2$, contract $\{w_2^{e'},w_j^{e'}\}$ to~$w_2^{e'}$.
        \item Contract $\{w_1^e,w_1^{e'}\}$ to $w_1^e$, and then contract $\{w_2^e,w_2^{e'}\}$ to $w_2^e$.
    \end{enumerate}
    Finally, we iteratively select a neighbour~$x$ of~$w_0^e$ and contract $\{w_0^e,x\}$ to $w_0^e$ to obtain a \cont{d} of the resulting trigraph.

    We now assume that $\abs{V(G)}\geq2$.
    Since $G$ is connected, we can construct a function $\eta:L\to E(G)\setminus L$ such that for every $e\in L$, $\eta(e)$ is incident with $e$, and $\eta(e)\in E_2$ unless no edge in $E_2\setminus L$ is incident with $e$.

    \medskip
    \noindent\textbf{Case 1.} $G_1$ has parallel edges~$e$ and~$e'$.
    
    Without loss of generality, we may assume that $w_0^e=w_0^{e'}$.
    We contract $\{w_1^e,w_1^{e'}\}$ to $w_1^{e'}$, and then contract $\{w_2^e,w_2^{e'}\}$ to $w_2^{e'}$.
    Let $H_e$ be the resulting trigraph.
    Observe that the underlying graph of~$H_e$ is an \subd{2} of~$G-\{e\}$ and $\rdeg_{H_e}(u)\leq\rdeg_H(u)$ for every $u\in V(G)$.
    Since $G-\{e\}$ and $G$ have the same simplification, the result now follows by induction.

    \medskip
    \noindent\textbf{Case 2.} $G_1$ has a loop $e$.
    
    Without loss of generality, we may assume that $w_0^e=w_0^{\eta(e)}$.
    We contract $\{w_1^e,w_2^e\}$ to~$w_1^e$, and then contract $\{w_1^e,w_1^{\eta(e)}\}$ to~$w_1^{\eta(e)}$.
    Let $H_e$ be the resulting trigraph.
    Again, the underlying graph of~$H_e$ is an \subd{2} of~$G-\{e\}$ and $\rdeg_{H_e}(u)\leq\rdeg_H(u)$ for every $u\in V(G)$.
    Since $G-\{e\}$ and $G$ have the same simplification, the result now follows by induction.

    \medskip
    \noindent\textbf{Case 3.} $G_2$ has parallel edges $e$ and $e'$.
    
    Without loss of generality, we may assume that $w_0^e=w_0^{e'}$.
    We apply the following partial \cont{d} from~$H$.
    \begin{enumerate}[label=\bf{Step \arabic*.},leftmargin=*]
        \item For each $f\in\{e,e'\}$, if $\ell(f)\geq5$, then for $i$ from $3$ to $\ell(f)-2$, contract $\{w_2^f,w_i^f\}$ to $w_2^f$.
        \item Contract the following three pairs of vertices in order: $\{w_1^e,w_1^{e'}\}$ to $w_1^{e'}$, $\{w_{\ell(e)-1}^e,w_{\ell(e')-1}^{e'}\}$ to~$w_{\ell(e')-1}^{e'}$, and $\{w_2^e,w_2^{e'}\}$ to~$w_2^{e'}$.
    \end{enumerate}
    Let $H_e$ be the resulting trigraph.
    Again, the underlying graph of~$H_e$ is an \subd{2} of~$G-\{e\}$, and $\rdeg_{H_e}(u)\leq\rdeg_H(u)$ for every $u\in V(G)$.
    Since $G-\{e\}$ and $G$ have the same simplification, the result now follows by induction.

    \medskip
    \noindent\textbf{Case 4.} $G_2$ has a loop $e$ with $\eta(e)\in E_2$.
    
    Without loss of generality, we may assume that $w_0^e=w_0^{\eta(e)}$.
    We apply the following partial \cont{d} from~$H$.
    \begin{enumerate}[label=\bf{Step \arabic*.},leftmargin=*]
        \item Contract $\{w_1^e,w_{\ell(e)-1}^e\}$ to $w_1^e$.
        \item If $\ell(e)\geq5$, then for $i$ from $3$ to $\ell(e)-2$, contract $\{w_2^e,w_i^e\}$ to $w_2^e$.
        \item Contract $\{w_1^e,w_1^{\eta(e)}\}$ to $w_1^{\eta(e)}$, and then contract $\{w_2^e,w_2^{\eta(e)}\}$ to~$w_2^{\eta(e)}$.
    \end{enumerate}
    Let $H_e$ be the resulting trigraph.
    Again, the underlying graph of~$H_e$ is an \subd{2} of~$G-\{e\}$, and $\rdeg_{H_e}(u)\leq\rdeg_H(u)$ for every $u\in V(G)$.
    Since $G-\{e\}$ and $G$ have the same simplification, the result now follows by induction.

    \medskip
    \noindent\textbf{Case 5.} $G_1$ and $G_2-L$ are both simple, and for every $e\in L$, $\eta(e)\in E_1$.
    
    Note that no loop of $G$ is incident with a non-loop edge of $G_2$.
    By Case 3, we may assume that every vertex of~$G$ is incident with at most one loop.
    For each $e\in L$, we apply the following partial \cont{d}.
    \begin{enumerate}[label=\bf{Step \arabic*.},leftmargin=*]
        \item Contract $\{w_1^e,w_{\ell(e)-1}^e\}$ to~$w_1^e$.
        \item If $\ell(e)\geq5$, then for~$j$ from~$3$ to~$\ell(e)-2$, contract $\{w_2^e,w_j^e\}$ to~$w_2^e$.
    \end{enumerate}
    Let $H^*$ be the resulting trigraph.
    For each edge $f$ in $E(G)\setminus L$, we now apply the following partial \cont{d}.
    \begin{enumerate}[label=\bf{Step \arabic*.},leftmargin=*]
        \setcounter{enumi}{2}
        \item If $\ell(f)\geq5$, then for $i$ from $3$ to $\ell(f)-2$, contract $\{w_2^f,w_i^f\}$ to $w_2^f$.
        \item If there exist distinct loops $e_1$ and $e_2$ of~$G$ such that $\eta(e_1)=\eta(e_2)=f$, then contract $w_2^{e_1}$ and $w_2^{e_2}$.
    \end{enumerate}
    Let $H'$ be the resulting trigraph.
    Note that $\rdeg_{H'}(u)\leq\rdeg_H(u)$ for every $u\in V(G)$.
    Since there is no loop~$e$ of $G$ such that $\eta(e)$ is parallel to an edge in $E_2$, the underlying graph of~$H'$ is an induced subgraph of~$\theta(G_0)$.
    Hence, the resulting partial \cont{d} is as desired.
\end{proof}

A \emph{\sena{}} of a trigraph $H$ is a separation $(U,U')$ of $H$ such that $\abs{U\cap U'}\geq2$, $H[U]$ is a cycle, $H[U']$ is connected, and $\dist_{H[U]}(v,w)\geq3$ for distinct $v,w\in U\cap U'$.
Note that for a \sena{} $(U,U')$ of~$H$, every vertex in $U\setminus U'$ has degree~$2$ in~$H$.

Let $H_0$ be the underlying graph of a trigraph~$H$.
For a \sena{} $S=(U,U')$ of $H$, let $\psi^*(H,S)$ be the graph such that for some set $W_S:=\{x_S\}\cup\{y_{u,S}:u\in U\cap U'\}$ disjoint from $V(H)$,
\begin{linenomath}
\begin{align*}
    V(\psi^*(H,S))&:=U\cup W_S,\\
    E(\psi^*(H,S))&:=E(H_0[U])\cup\{uy_{u,S}:u\in U\cap U'\}\cup\{x_Sy_{u,S}:u\in U\cap U'\}.
\end{align*}
\end{linenomath}
In other words, $\psi^*(H,S)$ is a graph obtained from $H_0[U]$ by adding a new vertex~$x_S$ and a length-$2$ path from~$x_S$ to every vertex in~$U\cap U'$.
For a set~$\mathcal{S}$ of \sena{}s of~$H$, let 
\[
    \psi(H,\mathcal{S}):=H_0\cup\bigcup_{S\in\mathcal{S}}\psi^*(H,S).
\]
If $\mathcal{S}:=\{S\}$, then we may write $\psi(H,S)$ for $\psi(H,\{S\})$.

For a $3$-vertex clique $S$ in a graph~$G$, let $C_{G,S}$ be the unique length-$9$ cycle in~$\theta(G)$ with $V(C_{G,S})\cap V(G)=S$, let $C'_{G,S}$ be the unique length-$12$ cycle in~$\theta(G)$ with $V(C'_{G,S})\cap V(G)=S$, and let
\[
    \sigma_G(S):=(V(C_{G,S}),(V(\theta(G))\setminus V(C_{G,S}))\cup S),
\]
which is a \sena{} of~$\theta(G)$ if $G$ is connected.
For a set $\mathcal{A}$ of $3$-vertex cliques in~$G$, we denote by~$\sigma_G(\mathcal{A})$ the set $\{\sigma_G(S):S\in\mathcal{A}\}$.

For a trigraph $H$ and a set $X\subseteq V(H)$, we say that a partial contraction sequence from~$H$ is \emph{$X$-fixing} if no vertex in $X$ is contracted in the sequence, and is \emph{$X$-stable} if it is $X$-fixing and each vertex $v\in X$ has red degree at most $\rdeg_H(v)$ in every trigraph of the sequence.

We often use the following partial contraction sequences of Lemma~\ref{lem:yoperation}--\ref{lem:routine}.

\begin{lemma}\label{lem:yoperation}
    Let $\mathcal{A}$ be a set of $3$-vertex cliques in a connected graph~$G$ and let $H$ be a trigraph with underlying graph~$\psi(\theta(G),\sigma_G(\mathcal{A}))$.
    Let
    \[
        W:=\bigcup_{S\in\mathcal{A}}(V(C'_{G,S})\setminus S)\quad\text{ and }\quad Z:=V(\theta(G))\setminus W.
    \]
    For $d:=\max\{\DeltaR(H),3\}$, there exists a $Z$-stable partial \cont{d} from~$H$ to a trigraph with underlying graph $\psi(\theta(G),\sigma_G(\mathcal{A}))-W$.
\end{lemma}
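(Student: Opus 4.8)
The plan is to process the cliques in $\mathcal{A}$ one at a time and, for each $S\in\mathcal{A}$, describe an explicit $Z$-stable partial $3$-contraction sequence that removes the vertices of $V(C'_{G,S})\setminus S$ from the underlying graph while leaving everything outside this set untouched; since the sets $V(C'_{G,S})\setminus S$ for distinct $S\in\mathcal{A}$ are pairwise disjoint and disjoint from $Z$, concatenating these sequences gives the desired result. So we fix one clique $S=\{a,b,c\}$ and work locally inside $\psi^*(\theta(G),\sigma_G(S))$, which consists of the $9$-cycle $C_{G,S}$ on vertex set $V(C_{G,S})$ meeting $G$ exactly in $S$, together with the added vertex $x_S$ and three length-$2$ paths $x_S y_{a,S} a$, $x_S y_{b,S} b$, $x_S y_{c,S} c$. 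The internal vertices of the three arcs of $C_{G,S}$ between consecutive elements of $S$, together with $x_S$ and the $y$-vertices, are exactly the vertices of $V(C'_{G,S})\setminus S$ that we must eliminate; note each such vertex has degree $2$ in the underlying graph, so all edges we create by contracting two of them are black unless an endpoint already carried a red edge.

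First I would reduce the three arcs of $C_{G,S}$ (each has two internal degree-$2$ vertices in $\theta(G)$, but after adding the $y$-vertices two of the arc-internal positions coincide with attachment points — one should check the exact vertex count from the definitions of $C_{G,S}$ as the length-$9$ cycle and $C'_{G,S}$ as the length-$12$ cycle, but the key point is that all vertices to be removed have degree $2$) by iteratively contracting adjacent degree-$2$ vertices along each arc toward $S$. This keeps red degree at $\le 2$ at every newly created vertex and never touches $a$, $b$, $c$, or anything in $Z$, and the red degrees of $a,b,c$ do not increase because we only contract pairs of their common or private neighbours along a single arc. After this, each of $a,b,c$ is joined to the "hub gadget" (the remnant of $x_S$ and the $y_{v,S}$'s) by a single short path. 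Then I would collapse the hub: contract the three $y$-vertices successively into $x_S$, or contract $x_S$ toward one of them; because $x_S$ has degree $3$, the worst red degree encountered is $3$, and one checks that the vertices $a,b,c$ each only ever see one red edge toward the shrinking hub, so the $Z$-stability condition $\rdeg(v)\le\rdeg_H(v)$ is preserved for $v\in Z\supseteq\{a,b,c\}$ — here I would use the fact that $a,b,c$ may already have positive red degree in $H$, so I must route the contractions so that the hub always presents a \emph{single} (possibly red) edge to each of them rather than two. Finally, once the gadget is reduced to a single vertex adjacent to $a$, $b$, and $c$ along the remaining arc-paths, a last round of contractions absorbs it, leaving precisely the underlying graph $\psi(\theta(G),\sigma_G(\mathcal{A}))-W$ with $a$, $b$, $c$ now joined pairwise by the original $9$-cycle arcs (which form the triangle-free gadget that $\psi^*$ contributes minus $W$) — one should verify against the definition of $\psi(\theta(G),\sigma_G(\mathcal{A}))-W$ that this is exactly what remains.

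The main obstacle I expect is the bookkeeping of red degrees at the three "boundary" vertices $a$, $b$, $c$ and at $x_S$ throughout the hub-collapse phase: we must order the contractions so that no contraction ever merges two distinct neighbours of $a$ (or of $b$, or of $c$) that lie on \emph{different} attachment structures, since that would create a red edge at $a$ beyond what it had in $H$ and break $Z$-stability; the safe strategy is to first contract along each of the three $x_S$–$y$–$S$ paths so that $x_S$ becomes directly adjacent to $a$, $b$, $c$ (red degree of $x_S$ rises to at most $3$, acceptable since $d\ge 3$), and only \emph{then} merge $x_S$ into the arc system one arc at a time. A second, more cosmetic obstacle is pinning down the exact cycle lengths and vertex sets: I would carefully invoke the definitions of $C_{G,S}$, $C'_{G,S}$, $\sigma_G(S)$, and $\psi^*$ to confirm that $V(C'_{G,S})\setminus S$ is precisely the set of vertices eliminated and that the resulting graph matches $\psi(\theta(G),\sigma_G(\mathcal{A}))-W$, but this is routine once the local picture is drawn. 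Since all contractions are local to pairwise disjoint gadgets, the partial sequences for different $S$ commute and concatenate, and the maximum red degree over the whole sequence is $\max\{\DeltaR(H),3\}=d$, as required.
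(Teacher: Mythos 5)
There is a genuine gap, and it starts with a misreading of which vertices form $W$ and hence of what the target graph is. For a clique $S=\{a,b,c\}\in\mathcal{A}$, the set $V(C'_{G,S})\setminus S$ consists of the internal vertices of the three length-$4$ paths of $\theta(G)$ between the vertices of $S$ (the $12$-cycle coming from the $3$-subdivision); it contains neither the internal vertices of the $9$-cycle $C_{G,S}$ nor $x_S$ nor the vertices $y_{v,S}$, all of which survive in $\psi(\theta(G),\sigma_G(\mathcal{A}))-W$. Your plan does the opposite: it eliminates the arcs of $C_{G,S}$ together with the hub gadget and keeps the length-$4$ paths, so even if every contraction you describe were legal, the resulting underlying graph would not be $\psi(\theta(G),\sigma_G(\mathcal{A}))-W$. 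Moreover, the vertices that actually must disappear lie outside $\psi^*(\theta(G),\sigma_G(S))$, so ``working locally inside $\psi^*$'' cannot reach them at all.

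Beyond the misidentification, the contraction scheme would break $Z$-stability. Contracting degree-$2$ vertices along a subdivided path ``toward $S$'', or contracting $y_{a,S}$ into $x_S$ so that the hub becomes directly adjacent to $a$, $b$, $c$, merges a neighbour of $a$ with a non-neighbour of $a$ and therefore creates a red edge at $a\in Z$; if $\rdeg_H(a)=0$ this already violates the requirement that $\rdeg(a)\le\rdeg_H(a)$ in every trigraph of the sequence, and since a $Z$-stable sequence is also $Z$-fixing you cannot repair it by absorbing the path into $a$ itself. The paper's proof is built precisely to dodge this: each component $P$ of $H[W]$ is a path of length $2$; its end $p$ adjacent to $u\in S$ is contracted with $y_{u,S(P)}$, the one available vertex with $N_H(y_{u,S(P)})\cap Z=\{u\}=N_H(p)\cap Z$, and its middle vertex is contracted with $x_{S(P)}$ (both having no neighbours in $Z$), so the red degrees of the clique vertices never increase and the merged gadget vertices have red degree at most $3$. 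Finally, your concatenation step assumes the sets $V(C'_{G,S})\setminus S$ are pairwise disjoint over $S\in\mathcal{A}$, which fails when two cliques of $\mathcal{A}$ share an edge of $G$; the paper instead processes the components of $H[W]$ one by one and assigns each component a single clique $S(P)$, which is the correct bookkeeping for that overlap.
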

\begin{proof}
    Note that every component $P$ of $H[W]$ is a path of length $2$.
    For every component $P$ of $H[W]$, we denote by $z_P$ its internal vertex, and choose $S(P):=\sigma_G(S)$ for some $S\in\mathcal{A}$ with $V(P)\subseteq V(C'_{G,S})$.
    If there is more than one such $S$, then we arbitrarily select one of them to define $S(P)$.
    For each component~$P$ of~$H[W]$, we do the following in order.
    \begin{enumerate}[label=\bf{Step \arabic*.},leftmargin=*]
        \item For each end $p$ of $P$, contract $\{p,y_{u,S(P)}\}$ to $y_{u,S(P)}$, where $u$ is the neighbour of $p$ in~$V(G)$.
        \item Contract $\{x_{S(P)},z_P\}$ to $x_{S(P)}$.
    \end{enumerate}

    Note that the underlying graph of the resulting trigraph is $\psi(\theta(G),\sigma_G(\mathcal{A}))-W$.
    Throughout the resulting partial contraction sequence, we always keep the following.
    \begin{itemize}
        \item For each contraction of a pair $\{a,b\}$, we have that $\{a,b\}\subseteq V(H)\setminus Z$ and $N_H(a)\cap Z=N_H(b)\cap Z$.
        This implies that the sequence is $Z$-stable.
        \item No vertex resulting from a contraction has degree more than $3$, and the red edges which are newly created are not incident with any vertices of red degree more than $d$.
        This implies that the sequence is a partial \cont{d}.
    \end{itemize}
    Thus, the partial contraction sequence is as desired.
\end{proof}

\begin{lemma}\label{lem:protection1}
    Let $\{t_1,t_2,t_3\}$ be a clique of size $3$ in a graph $G$, let $F$ be a trigraph with underlying graph $\psi^*(\theta(G),\sigma_G(\{t_1,t_2,t_3\}))$, and let $t_4$ be the unique degree-$3$ vertex in $F-\{t_1,t_2,t_3\}$.
    For all distinct~$i$ and~$j$ in~$[3]$, let $t_{i,j}$ and $t_{j,i}$ be the internal vertices of the unique length-$3$ path in $F$ between~$t_i$ and~$t_j$ such that $t_it_{i,j}\in E(F)$, and let $y_i$ be the common neighbour of $t_i$ and $t_4$ in $F$.
    Let $t'_1:=t_{1,2}$ if $t_1t_{1,3}\in B(F)$, and otherwise let $t'_1:=t_{1,3}$.
    If $t_1t_{1,3}\in R(F)$ or $t_2t_{2,3}\in B(F)$, then there is a $\{t_3\}$-stable $\{t_1,t_2\}$-fixing partial \cont{3} $H_1,\ldots,H_9$ with $H_1=F$ such that
    \begin{itemize}
        \item $H_2$ is obtained from $H_1$ by contracting $\{t'_1,y_1\}$ to $y_1$,
        \item for each $i\in\{2,3\}$, $H_{i+1}$ is obtained from $H_i$ by contracting $\{t_{i,1},y_i\}$ to $y_i$,
        \item for each $i\in [9]$ and each $j\in [2]$, we have that $\rdeg_{H_i}(t_j)\leq\max\{\rdeg_F(t_j),1\}$, and
        \item the underlying graph of~$H_9$ is the graph on $\{t_1,t_2,t_3,t_4,y_3\}$ with the edges $t_1t_4$, $t_2t_4$, $t_3y_3$, and $t_4y_3$.
    \end{itemize}
\end{lemma}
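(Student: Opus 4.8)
The plan is to write down an explicit $8$-step contraction sequence and then verify the red-degree conditions by a direct bookkeeping.

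First I would unwind the definitions of $\theta$, $\sigma_G$, and $\psi^*$: the underlying graph of $F$ is the $9$-cycle $C_{G,\{t_1,t_2,t_3\}}$ passing through $t_1,t_{1,2},t_{2,1},t_2,t_{2,3},t_{3,2},t_3,t_{3,1},t_{1,3}$ in this cyclic order, together with $t_4$ and three internally disjoint length-$2$ paths from $t_4$ to $t_1,t_2,t_3$ with midpoints $y_1,y_2,y_3$ respectively; every vertex other than $t_1,t_2,t_3,t_4$ has degree~$2$, and $\deg_F(t_i)=3$ for each $i\in[4]$. Write $t''_1$ for the neighbour of $t_1$ on $C_{G,\{t_1,t_2,t_3\}}$ other than $t'_1$, so that $\{t'_1,t''_1\}=\{t_{1,2},t_{1,3}\}$.

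Take $H_1=F$, let $H_2,H_3,H_4$ be the three prescribed contractions (which are plainly well-defined), and then obtain $H_5,\dots,H_9$ by successively contracting $\{y_1,t_4\}$ to $t_4$, then $\{t''_1,t_4\}$ to $t_4$, then $\{t_{3,2},y_3\}$ to $y_3$, then $\{t_{2,3},y_2\}$ to $y_2$, and finally $\{y_2,t_4\}$ to $t_4$. Since each of $H_2,H_3,H_4$ coalesces two neighbours of one of $t_1,t_2,t_3$, the underlying graph of $H_4$ is again a $9$-cycle — now through $y_1,y_2,y_3$ and the three surviving subdivision vertices $t''_1,t_{2,3},t_{3,2}$ — with $t_4$ adjacent to exactly $y_1,y_2,y_3$, and with $t_1$ adjacent to $y_1$ and $t''_1$. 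Checking the five remaining contractions in turn shows that each contracted pair has enough common neighbours to keep every degree at most~$3$: $H_5$ absorbs the $y_1$-branch into $t_4$ and opens the cycle; $H_6$ merges the two remaining neighbours of $t_1$ (here $t''_1$ and $t_4$ have two common neighbours, namely $t_1$ and the second neighbour of $t''_1$ on the cycle), so that $t_1$ becomes pendant at $t_4$ and what is left is a $7$-cycle on $t_4,y_2,t_2,t_{2,3},t_{3,2},t_3,y_3$; then $H_7$ makes $t_3$ pendant at $y_3$, $H_8$ makes $t_2$ pendant at $y_2$, and $H_9$ merges $y_2$ into $t_4$. The underlying graph of $H_9$ is then the graph on $\{t_1,t_2,t_3,t_4,y_3\}$ with edges $t_1t_4,t_2t_4,t_3y_3,t_4y_3$, as required; and since no contracted pair meets $\{t_1,t_2,t_3\}$, the sequence is $\{t_1,t_2\}$-fixing and never contracts $t_3$.

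The remaining work, which is where I expect the only real effort (although it is routine), is the red-degree accounting. The organising fact is that each of $t_1,t_2,t_3$ has its incident edges coalesced into a single edge at exactly two steps — $t_1$ at $H_2$ and $H_6$, $t_2$ at $H_3$ and $H_8$, and $t_3$ at $H_4$ and $H_7$. For $t_2$ and $t_3$, between those two steps their incident edges are merely relabelled at a far endpoint and never recoloured, and since coalescing two edges at a vertex yields an edge that is red only if one of the two was, one reads off that at every step $\rdeg_{H_i}(t_3)$ is at most the number of red edges among $t_3t_{3,1},t_3t_{3,2},t_3y_3$, i.e.\ $\rdeg_F(t_3)$; hence the sequence is $\{t_3\}$-stable, and the identical count yields $\rdeg_{H_i}(t_2)\le\rdeg_F(t_2)\le\max\{\rdeg_F(t_2),1\}$. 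For $t_1$ there is exactly one exceptional step: at $H_5$ the edge from $t_1$ to $y_1$ is rerouted to $t_4$, and because $t_1$ was not previously adjacent to $t_4$, this edge is forced to become red — which is precisely why the promised bound for $t_1$ is $\max\{\rdeg_F(t_1),1\}$ and not $\rdeg_F(t_1)$. Here the definition of $t'_1$ together with the hypothesis is used. If $t'_1=t_{1,3}$, then the hypothesis gives $t_1t_{1,3}\in R(F)$, so $\rdeg_F(t_1)\ge1$, and the accounting yields $\rdeg_{H_i}(t_1)\le\rdeg_F(t_1)$. If $t'_1=t_{1,2}$, then $t_1t_{1,3}\in B(F)$, and a short check gives $\rdeg_{H_i}(t_1)\le1$ for all $i$: for $i\ge2$ the vertex $t_1$ has at most two incident edges, and the one descending from $t_1t_{1,3}$ stays black until the two are merged at $H_6$. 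Finally, inspecting for each contracted vertex which of its (at most three) incident edges become red confirms $\DeltaR(H_i)\le3$ throughout, so $H_1,\dots,H_9$ is a partial \cont{3} with all the stated properties.
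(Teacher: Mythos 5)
Your proof is correct, and it is essentially the paper's argument: an explicit eight-contraction sequence verified by direct bookkeeping. Your contractions $H_2,\dots,H_6$ coincide with the paper's in both cases, and in the case $t'_1=t_{1,3}$ your last three contractions are exactly the paper's as well. The one real difference is the case $t'_1=t_{1,2}$: there the paper changes the order, contracting $\{t_4,y_2\}$ before eliminating $t_{2,3}$, which creates the forced red edge $t_2t_4$ while $t_2t_{2,3}$ is still present — this is precisely why the paper needs the hypothesis $t_2t_{2,3}\in B(F)$ in that case. Your uniform order (absorb $t_{3,2}$ into $y_3$ and $t_{2,3}$ into $y_2$ first, and only then merge $y_2$ into $t_4$) leaves $t_2$ with a single incident edge at the moment its forced red edge appears; I checked the red degrees step by step in both cases and all stated requirements hold, so your construction in fact never uses the hypothesis at all (which is harmless — you prove a slightly stronger statement). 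Two small inaccuracies in your accounting, neither of which affects the conclusion: the parenthetical that ``the hypothesis gives $t_1t_{1,3}\in R(F)$'' when $t'_1=t_{1,3}$ should be attributed to the definition of $t'_1$, not to the hypothesis; and your claim $\rdeg_{H_i}(t_2)\le\rdeg_F(t_2)$ fails at $i=9$, since contracting $\{y_2,t_4\}$ forces the edge $t_2t_4$ to be red, so $\rdeg_{H_9}(t_2)=1$ even when $\rdeg_F(t_2)=0$ — the same phenomenon you correctly describe for $t_1$ at $H_5$ — but the bound actually required for $t_2$ is $\max\{\rdeg_F(t_2),1\}$, which still holds.
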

\begin{proof}
    We present a desired partial \cont{d} from~$H_4$ to~$H_9$ as follows.
    
    If $t'_1=t_{1,3}$, then we contract the following five pairs of vertices in order: $\{t_4,y_1\}$ to~$t_4$, $\{t_{1,2},t_4\}$ to~$t_4$, $\{t_{3,2},y_3\}$ to~$y_3$, $\{t_{2,3},y_2\}$ to~$y_2$, and $\{t_4,y_2\}$ to~$t_4$.
    
    If $t'_1=t_{1,2}$, then we contract the following five pairs of vertices in order: $\{t_4,y_1\}$ to~$t_4$, $\{t_{1,3},t_4\}$ to~$t_4$, $\{t_4,y_2\}$ to~$t_4$, $\{t_{3,2},y_3\}$ to $y_3$, and $\{t_{2,3},t_4\}$ to~$t_4$.

    It is straightforward to deduce that the partial contraction sequence is as desired.
\end{proof}

For convenience, we often use the following corollary of Lemma~\ref{lem:protection1}.

\begin{corollary}\label{cor:protection1}
    Let $\{t_1,t_2,t_3\}$ be a clique of size $3$ in a graph $G$, let $F$ be a trigraph with underlying graph $\psi^*(\theta(G),\sigma_G(\{t_1,t_2,t_3\}))$, and let $t_4$ be the unique degree-$3$ vertex in $F-\{t_1,t_2,t_3\}$.
    Let $y_3$ be the common neighbour of~$t_3$ and~$t_4$ in~$F$.
    There is a $\{t_3\}$-stable $\{t_1,t_2\}$-fixing partial \cont{3} from $F$ to a trigraph whose underlying graph is the graph on $\{t_1,t_2,t_3,t_4,y_3\}$ with the edges $t_1t_4$, $t_2t_4$, $t_3y_3$, and $t_4y_3$.
\end{corollary}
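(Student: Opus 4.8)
The plan is to derive Corollary~\ref{cor:protection1} directly from Lemma~\ref{lem:protection1}; the only discrepancy is that Lemma~\ref{lem:protection1} carries the colour hypothesis ``$t_1t_{1,3}\in R(F)$ or $t_2t_{2,3}\in B(F)$'', while the corollary has no hypothesis on the colours of $F$. The first thing I would observe is that every object and property named in the corollary is invariant under swapping the labels $t_1$ and $t_2$: since $\{t_1,t_2,t_3\}$ is the same clique regardless of labelling, the underlying graph $\psi^*(\theta(G),\sigma_G(\{t_1,t_2,t_3\}))$ and hence the trigraph $F$ do not change; the vertex $t_4$ (the unique degree-$3$ vertex of $F-\{t_1,t_2,t_3\}$) and the vertex $y_3$ (the common neighbour of $t_3$ and $t_4$) are defined without reference to the ordering of $t_1,t_2$; the target graph on $\{t_1,t_2,t_3,t_4,y_3\}$ with edges $t_1t_4$, $t_2t_4$, $t_3y_3$, and $t_4y_3$ is symmetric in $t_1$ and $t_2$; and the properties of being $\{t_3\}$-stable and $\{t_1,t_2\}$-fixing are likewise symmetric in $t_1$ and $t_2$.

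With this in hand, I would split into two cases. If $t_1t_{1,3}\in R(F)$ or $t_2t_{2,3}\in B(F)$, then Lemma~\ref{lem:protection1} applies directly to the triple $(t_1,t_2,t_3)$, and the partial \cont{3} $H_1,\ldots,H_9$ it produces is exactly the sequence required by the corollary (retaining only the properties that $H_1=F$, that the sequence is $\{t_3\}$-stable and $\{t_1,t_2\}$-fixing, and that the underlying graph of $H_9$ is as specified). Otherwise $t_1t_{1,3}\in B(F)$ and $t_2t_{2,3}\in R(F)$; in this case I would apply Lemma~\ref{lem:protection1} to the triple $(t_2,t_1,t_3)$ instead. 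For that triple the hypothesis of the lemma becomes ``$t_2t_{2,3}\in R(F)$ or $t_1t_{1,3}\in B(F)$'', and both disjuncts now hold, so the lemma supplies a $\{t_3\}$-stable $\{t_2,t_1\}$-fixing partial \cont{3} from $F$ to a trigraph whose underlying graph is the graph on $\{t_2,t_1,t_3,t_4,y_3\}$ with edges $t_2t_4$, $t_1t_4$, $t_3y_3$, and $t_4y_3$. By the symmetry noted above, this is precisely the conclusion of Corollary~\ref{cor:protection1}.

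There is no real obstacle here: the argument is a single case split, and the only point that needs a moment's care is the symmetry claim, namely that $t_4$, $y_3$, and the target graph genuinely do not depend on which of $t_1,t_2$ is listed first, which is immediate from their definitions. Equivalently, one may phrase the whole proof as: after interchanging $t_1$ and $t_2$ if necessary, we may assume $t_1t_{1,3}\in R(F)$ or $t_2t_{2,3}\in B(F)$, and then invoke Lemma~\ref{lem:protection1}.
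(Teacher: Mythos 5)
Your proposal is correct and matches the paper's own proof: the paper likewise derives the corollary by applying Lemma~\ref{lem:protection1} directly when $t_1t_{1,3}$ is red and after swapping $t_1$ and $t_2$ when it is black, exactly the relabelling argument you describe. The symmetry of $t_4$, $y_3$, and the target graph in $t_1,t_2$ that you verify is the (implicit) justification the paper relies on as well.
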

\begin{proof}
    Let $t_{1,3}$ be the internal vertices of the unique length-$3$ path in $F$ between~$t_1$ and~$t_3$ such that $t_1t_{1,3}\in E(F)$.
    If $t_1t_{1,3}$ is a red edge, then we apply Lemma~\ref{lem:protection1}.
    If $t_1t_{1,3}$ is a black edge, then we apply Lemma~\ref{lem:protection1} after swapping $t_1$ and $t_2$.
\end{proof}

\begin{lemma}\label{lem:protection2}
    Let $\{t_1,t_2,t_3\}$ be a clique of size~$3$ in a graph $G$, let $F$ be a trigraph with underlying graph $\psi^*(\theta(G),\sigma_G(\{t_1,t_2,t_3\})$, and let $t_4$ be the unique degree-$3$ vertex in $F-\{t_1,t_2,t_3\}$.
    For all distinct~$i$ and~$j$ in~$[3]$, let $t_{i,j}$ and $t_{j,i}$ be the internal vertices of the unique length-$3$ path in~$F$ between~$t_i$ and~$t_j$ such that $t_it_{i,j}\in E(F)$, and let $y_i$ be the common neighbour of $t_i$ and $t_4$ in~$F$.
    There is a $\{t_1,t_2\}$-stable $\{t_3\}$-fixing partial \cont{3} $H_1,\ldots,H_5$ with $H_1=F$ such that
    \begin{itemize}
        \item for each $i\in[5]$, $\rdeg_{H_i}(t_3)\leq\max\{\rdeg_F(t_3),1\}$, and
        \item the underlying graph of $H_5$ is the graph obtained from the underlying graph of $F-\{t_{1,3},t_{3,1},t_{2,3},t_{3,2},y_3\}$ by adding an edge $t_3t_4$.
    \end{itemize}
\end{lemma}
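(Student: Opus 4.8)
The plan is to prove Lemma~\ref{lem:protection2} by exhibiting the partial contraction sequence $H_1,\dots,H_5$ explicitly. Recall that $t_4$ (the hub of the spokes) is the unique degree-$3$ vertex of $F-\{t_1,t_2,t_3\}$, so the underlying graph of $F$ is the nine-vertex cycle $t_1t_{1,2}t_{2,1}t_2t_{2,3}t_{3,2}t_3t_{3,1}t_{1,3}t_1$ together with the three length-$2$ spokes $t_iy_it_4$. Comparing with the target, the underlying graph of $H_5$ retains the sub-path $t_1t_{1,2}t_{2,1}t_2$ and the spokes at $t_1$ and $t_2$, deletes the two cycle-arcs $t_3t_{3,1}t_{1,3}t_1$ and $t_3t_{3,2}t_{2,3}t_2$, and collapses the spoke $t_3y_3t_4$ to the single edge $t_3t_4$, so that $t_3$ becomes a pendant vertex at $t_4$. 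Since $\abs{V(F)}=13$ and $\abs{V(H_5)}=8$, five contractions must be carried out over the four steps, one step performing two of them.

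First I would organise the moves into three groups. Group~(i): collapse $t_{1,3}$ and $t_{2,3}$ towards their spokes, contracting $\{t_{1,3},y_1\}\to y_1$ and $\{t_{2,3},y_2\}\to y_2$; these are $\{t_1,t_2\}$-stable, since the contracted pairs lie in $N_F(t_1)$ and $N_F(t_2)$ and deleting a black cycle-edge at $t_i$ never raises $\rdeg(t_i)$. Group~(ii): fuse the two cycle-neighbours $t_{3,1}$ and $t_{3,2}$ of $t_3$ into a single vertex; this is forced, because $t_3$ is never contracted, so the edges $t_3t_{3,1}$ and $t_3t_{3,2}$ can only disappear if $t_{3,1}$ and $t_{3,2}$ are eventually contracted together, coalescing their two ``red traces'' at $t_3$ into one. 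Group~(iii): collapse the spoke at $t_3$ by contracting $y_3$ into $t_4$, which makes $t_3t_4$, $t_4y_1$, $t_4y_2$ red so that $t_4$ reaches red degree~$3$ and must not be contracted thereafter. The order in which groups~(ii) and~(iii) are interleaved depends on the colours in $F$ of the three edges $t_3t_{3,1},t_3t_{3,2},t_3y_3$ at $t_3$ (and of $t_1t_{1,3},t_2t_{2,3}$ for group~(i)); so the argument splits into a bounded number of cases, up to the symmetry swapping $t_1\leftrightarrow t_2$, and in each case I would write down the corresponding five-contraction sequence and verify the three bulleted conditions by direct inspection, as in the proofs of Lemma~\ref{lem:protection1} and Corollary~\ref{cor:protection1}.

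The main obstacle is the red-degree bound at $t_3$. Here $t_3$ is only kept \emph{fixed}, with $\rdeg(t_3)\le\max\{\rdeg_F(t_3),1\}$, rather than \emph{stable} as in Lemma~\ref{lem:protection1}, yet all three edges at $t_3$ must be dismantled, and each dismantling step tends to turn an edge at $t_3$ red whenever it merges a neighbour of $t_3$ with a non-neighbour. For example, if $F$ has a red edge at $t_3$ on one of the cycle-arcs while $t_3y_3$ is black, then performing group~(iii) first would leave $\rdeg(t_3)=2$ although only~$1$ is allowed; in that case one must instead first contract that red-edge endpoint with $y_3$, fusing a red and a (possibly black) edge at $t_3$ into a single red edge and dropping the count. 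Balancing this bookkeeping at $t_3$ against $\{t_1,t_2\}$-stability (which calls for routing cycle vertices through the black cycle-edge at each $t_i$, in the spirit of Lemma~\ref{lem:protection1}) and against the global bound $\DeltaR\le3$, whose tight vertex is the hub $t_4$, is precisely what forces the case distinction; once the correct order is fixed in each case, the remaining verification is routine.
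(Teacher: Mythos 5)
Your overall plan is the paper's: the blob structure you describe is indeed forced ($t_{1,3}$ into $y_1$, $t_{2,3}$ into $y_2$, and $t_{3,1},t_{3,2},y_3$ all into $t_4$), and the paper also resolves the bookkeeping at $t_3$ by a colour case distinction, assuming after a possible swap of $t_1$ and $t_2$ that $t_3t_{3,1}\in R(F)$ or $t_3t_{3,2}\in B(F)$ and then contracting $\{t_{1,3},y_1\}$, $\{t_{3,1},y_3\}$, $\{y_3,t_4\}$, $\{t_{2,3},y_2\}$, $\{t_{3,2},t_4\}$ in this order. However, two of the organising claims on which you hang the deferred case analysis are wrong, and they concern the red degree of $t_4$'s blob rather than of $t_3$, which your sketch never checks. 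First, the assertion that after $y_3$ is contracted into $t_4$ the vertex $t_4$ ``must not be contracted thereafter'' is false and, if imposed, makes the plan impossible whenever $\rdeg_F(t_3)\geq1$: in that case the contraction merging $y_3$'s blob into $t_4$ cannot be the last one (feeding both $t_{3,1}$ and $t_{3,2}$ into $y_3$ beforehand gives that blob red edges to $y_1$, $y_2$, $t_4$ and to $t_3$, i.e.\ red degree $4$, and merging a cycle-neighbour of $t_3$ directly into $t_4$ before $y_3$ is absorbed also produces red degree at least $4$). So some cycle-neighbour of $t_3$ must be merged into $t_4$ \emph{after} $t_4$ already has red degree $3$; this is perfectly legal because the merge absorbs one of $t_4$'s red neighbours, and it is exactly the paper's final contraction $\{t_{3,2},t_4\}$ to $t_4$.

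Second, your group~(ii) as stated, contracting $t_{3,1}$ with $t_{3,2}$ into one vertex, is not forced (what is forced is only that both end in the same blob as $y_3$ and $t_4$) and it is dangerous for the same reason: pushing the fused vertex through $y_3$ creates a red-degree-$4$ vertex whenever $t_3$ has an incident red edge. The safe pattern is to feed the three neighbours of $t_3$ into $t_4$ one at a time, as in the paper: route exactly one cycle-neighbour of $t_3$ through $y_3$, chosen via the $t_1\leftrightarrow t_2$ symmetry so that $\rdeg(t_3)$ never exceeds $\max\{\rdeg_F(t_3),1\}$, then contract $y_3$ into $t_4$, and only afterwards absorb the other cycle-neighbour into $t_4$. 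With this correction your case analysis does close, and the verification is the routine inspection you anticipate. As a minor point, five contractions are needed since $\abs{V(F)}=13$ and $\abs{V(H_5)}=8$, so the sequence should really be written $H_1,\ldots,H_6$ (the indexing in the statement is off by one); there is no need for a step performing two contractions.
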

\begin{proof}
    By symmetry, we may assume that $t_3t_{3,1}\in R(F)$ or $t_3t_{3,2}\in B(F)$.
    We present a desired partial \cont{3} by contracting the following five pairs of vertices in order: $\{t_{1,3},y_1\}$ to~$y_1$, $\{t_{3,1},y_3\}$ to~$y_3$, $\{y_3,t_4\}$ to~$t_4$, $\{t_{2,3},y_2\}$ to~$y_2$, and $\{t_{3,2},t_4\}$ to~$t_4$.
\end{proof}

\begin{lemma}\label{lem:routine}
    Let $H$ be a trigraph with underlying graph $H'$, let $H_0$ be an induced subgraph of~$H'$ isomorphic to the $1$-subdivision of $K_{1,3}$, and let $L$ be the set of leaves of $H_0$.
    If $H'$ is either an \subd{1} of $K_{3,3}$ or a subdivision of $K_{3,3}$ such that $H'-V(H_0)$ contains a subdivision of $K_{2,3}$ as a subgraph, then there exists an $L$-stable partial \cont{3} from~$H$ to a trigraph with underlying graph $H_0$.
\end{lemma}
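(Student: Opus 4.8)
Since $H'$ is a subdivision of $K_{3,3}$, the only vertices of degree at least $3$ in $H'$ are the six branch vertices, so the centre $c$ of $H_0$ is one of them; writing $K_{3,3}$ with parts $\{a_1,a_2,a_3\}$ and $\{b_1,b_2,b_3\}$, we may assume $c=a_1$. The three arms of $H_0$ use the three edges of $H'$ at $a_1$, hence follow initial segments of the three $a_1$–$b_j$ paths of $H'$; after relabelling, the arm towards $b_j$ is $a_1,m_j,\ell_j$, where $m_j$ is the neighbour of $a_1$ on that path and $\ell_j$ is the leaf. For each $j$ let $B_j$ consist of $m_j$ together with the neighbour(s) of $\ell_j$ in $H'$ other than $m_j$: so $B_j=\{m_j,t_j\}$, where $t_j$ is the successor of $\ell_j$ on the $a_1$–$b_j$ path, when $\deg_{H'}(\ell_j)=2$; and $B_j$ is the set of all three neighbours of $\ell_j$ when $\ell_j$ is a branch vertex (necessarily $\ell_j=b_j$). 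Put $C:=V(H')\setminus(\{\ell_1,\ell_2,\ell_3\}\cup B_1\cup B_2\cup B_3)$; note $a_1,a_2,a_3\in C$. The plan is to realise the partition $\mathcal{Q}:=\{C,B_1,B_2,B_3,\{\ell_1\},\{\ell_2\},\{\ell_3\}\}$ by a contraction sequence that never contracts an $\ell_j$ and always names the part containing $m_j$ (resp.\ $a_1$) by $m_j$ (resp.\ $a_1$); the resulting trigraph then has vertex set $V(H_0)$, and one checks its underlying graph is exactly $H_0$: the two edges of $H'$ at $\ell_j$ both land inside $B_j$, so $\ell_j$ becomes a leaf; every other $H'$-neighbour of a vertex of $B_j$ (namely $a_1$, and the path interiors running towards $a_2,a_3$, reached through $t_j$ or through $b_j$) lies in $C$, so $B_j$ meets only $\{\ell_j\}$ and $C$; and $C$ meets exactly $B_1,B_2,B_3$. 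In case~(a) with all $a_1$–$b_j$ edges subdivided exactly once this is the ``forced'' partition $C=\{a_1,a_2,a_3\}$ with each $B_j$ the triple of subdivision vertices over $b_j$.

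\textbf{The contraction order.} I would proceed in two phases. In Phase~1, for each $j$ with $\deg_{H'}(\ell_j)=2$ I contract $t_j$ into $m_j$ (making $\ell_j$ a leaf), then collapse the remaining tail of the $a_1$–$b_j$ path, and the interiors of the $a_i$–$b_j$ paths for $i\in\{2,3\}$, by repeatedly contracting consecutive degree-$2$ vertices; throughout Phase~1 every contracted pair lies on a path, so no part gains a fourth neighbour and no new red edge touches any $\ell_j$ beyond those forced by the colours of its two incident edges in $H$, which keeps $L$-stability. After Phase~1 the underlying graph is a ``core'' on $\{a_1,a_2,a_3,b_1,b_2,b_3\}$ — in which $a_1$ is joined to each $B_j$, each $B_j$ to $b_j$, and $a_2,a_3$ to $b_1,b_2,b_3$ by single edges — together with the leaves $\ell_j$ hanging off the $B_j$. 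In Phase~2 I collapse the core into $C$: first, for each $j$, contract the $b_j$–$a_2$ and $b_j$–$a_3$ links into one part $R_j$ (giving $R_j$ red degree $3$, and raising $\rdeg(a_2),\rdeg(a_3)$ to $3$); then contract $a_2$ with $a_3$, whose common neighbours are exactly $R_1,R_2,R_3$, so the result has red degree $3$; then absorb $R_1,R_2,R_3$ into it one at a time, and then $b_1,b_2,b_3$ one at a time, each step leaving red degree $3$ (the part sees only the not-yet-absorbed $R_j$'s and the $B_j$'s of already absorbed $b_j$'s, at most three in all); finally contract $a_1$ into it, obtaining $C$ with red degree $3$. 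The parts $B_j$ keep red degree at most $2$ throughout Phase~2 (one red edge towards the centre side, one towards the far side, plus the black or inherited edge to $\ell_j$). In the regime where $\ell_j=b_j$ for some $j$ (only under hypothesis~(a), and where $H$ may already carry red edges at $\ell_j$), a half-formed part $\{m_j,s_{2j},s_{3j}\}$ would have red degree $4$; I avoid this by forming each such $B_j$ in two stages with the merge of $a_1$ and $a_2$ in between, i.e.\ first form $\{m_j,s_{2j}\}$ for all $j$, then contract $a_1$ with $a_2$, then absorb $s_{3j}$, then absorb $a_3$.

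\textbf{Verification and the main obstacle.} The bulk of the work, and the only real difficulty, is the red-degree bookkeeping: one must check that every intermediate trigraph has maximum red degree at most $3$ and that every $\ell_j$ keeps red degree at most $\rdeg_H(\ell_j)$. Naive contraction orders fail badly — merging two branch vertices before their incident paths have been routed through a common part pushes red degree up to $6$, and eliminating a subdivision vertex over $b_j$ by contracting it towards $a_2$ or $a_3$ rather than towards another neighbour of $b_j$ creates a forbidden red edge at the leaf $\ell_j=b_j$ — and the order above is arranged precisely to dodge both, by forming the $B_j$'s and the $R_j$'s before touching any branch vertex and by collapsing the $K_{2,3}$-like core along its natural degree-$2$ symmetry. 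The $K_{2,3}$-subdivision hypothesis in case~(b) is exactly what guarantees that this core is connected and rich enough to exist and to be swept into a single centre part. A secondary, purely administrative, nuisance is that the placement of $H_0$ in $H'$ breaks into a handful of sub-cases according to which $a_1$–$b_j$ and $a_i$–$b_j$ paths are unsubdivided or of length $2$; each requires only a minor adjustment to Phase~1, and the hybrid placements (some $\ell_j$ a branch vertex, others not) interpolate between the two regimes above.
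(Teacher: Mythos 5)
Your target partition is the same one the paper's proof realises (each leaf a singleton, each $B_j$ consisting of the middle vertex of an arm together with the remaining neighbour(s) of its leaf, everything else swept into the centre part $C$), and your Phase~1 matches its first step; the issue is the explicit Phase~2 orders, and it is a genuine gap rather than the ``administrative'' sub-casing you describe. Under hypothesis (b) the edges between $a_2,a_3$ and the $b_j$ may be subdivided for one of $a_2,a_3$ and not the other; then the ``$b_j$--$a_2$ and $b_j$--$a_3$ links'' cannot be merged into a common part $R_j$, so $a_2$ and $a_3$ do \emph{not} have common neighbourhood $\{R_1,R_2,R_3\}$, and contracting them yields a vertex red-adjacent to up to six distinct parts (e.g.\ when for each $j$ exactly one of the two edges is unsubdivided). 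Likewise, your two-stage trick for the regime $\ell_j=b_j$ hinges on contracting $a_1$ with $a_2$ at a moment when their neighbourhoods coincide; this holds only when \emph{all three} arms end at branch vertices, whereas hypothesis (a) allows mixed placements (some $\ell_j=b_j$, some not), in which $a_1$ is red-adjacent to the final parts $B_k$ of the generic arms while $a_2$ is red-adjacent to the parts on the far side of the corresponding $b_k$, so the contraction has red degree at least $4$. Both failures are in Phase~2 and cannot be repaired by ``a minor adjustment to Phase~1''; the repair is essentially the paper's ordering, which first absorbs \emph{all} degree-$2$ chains away from $N[L]$ into the branch vertices (so that the two branch vertices to be merged are adjacent to exactly $b_1,b_2,b_3$, whatever the subdivision pattern) and pre-merges the two outside neighbours of each degree-$3$ leaf before any branch vertices are touched.

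A second, smaller gap is in your setup: you assert that the arms of $H_0$ follow initial segments of the three $a_1$--$b_j$ paths and that $\ell_j$ can be a branch vertex only under hypothesis (a). Both facts are true, but in case (b) they are consequences of the $K_{2,3}$ hypothesis and need an argument: if $V(H_0)$ contained a second branch vertex $b_j$ (either as $m_j$ or as $\ell_j$), then in $H'-V(H_0)$ any two vertices of degree at least $3$ are joined by at most two internally disjoint paths, so $H'-V(H_0)$ could not contain a subdivision of $K_{2,3}$. This exclusion, rather than the core being ``connected and rich enough'', is where the hypothesis is actually used; without it your labelling of the arms, and hence the definitions of $B_j$ and $C$, is not even well posed.
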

\begin{proof}
    Throughout the following process, we will not contract any pair of vertices in $V(H_0)$, and whenever we contract a pair $\{u,v\}$ with $u\in V(H_0)$, we contract $\{u,v\}$ to $u$.
    We first iteratively contract pairs consisting of a degree-$2$ vertex and one of its neighbours such that both of them are not in $N_H[L]$, until no such pairs remain.
    In the resulting trigraph, every degree-$2$ vertex is in the closed neighbourhood of $L$.
    For every $v\in L$ with degree~$3$ in~$H$, we then contract the two neighbours of $v$ which are not in $H_0$.
    At this point, if we were to delete $V(H_0)$ and every vertex of degree $2$ in the neighbourhood of $L$ from the resulting trigraph, we would have a trigraph $H^*$ whose underlying graph is isomorphic to $K_{2,3}$.
    Let $u$ and $v$ be the degree-$3$ vertices in~$H^*$.
    We contract $u$ and $v$ to obtain a trigraph $H''$ whose underlying graph is isomorphic to an \subd{1} of $K_{2,3}$.

    Let $x$ and $y$ be the degree-$3$ vertices in $H''$ where $x\in V(H_0)$ and $y\notin V(H_0)$.
    Observe that in $H''$, the distance from each vertex of $L$ to $x$
    is $2$ and 
    the distance from each vertex of $L$ to $y$ is $2$ or $3$.
    For each length-$3$ path from $y$ to $L$, we contract $y$ and its neighbour on the path to $y$.
    Then we obtain a trigraph in which the distance between every vertex of $L$ and every degree-$3$ vertex is exactly $2$. 
    We now contract the two neighbours of each vertex in~$L$, and then contract $\{x,y\}$ to $x$.
\end{proof}

\subsection{Addable sets and {\boldmath$3$}-contractible tree decompositions}\label{subsec:treedecomp}

\begin{figure}[t]
    \centering
    \tikzstyle{v}=[circle, draw, solid, fill=black, inner sep=0pt, minimum width=3pt]
    \begin{tikzpicture}
        \draw (0,1.8);
        \draw (0,-1.8);
        \draw (1*60:1.1) node[v,label={[xshift=0.0mm, yshift=0.0mm]$v_6$}](u1){};
        \draw (2*60:1.1) node[v,label={[xshift=0.0mm, yshift=0.0mm]$v_1$}](u2){};
        \draw (3*60:1.1) node[v,label={[xshift=-2.7mm, yshift=-3.0mm]$v_2$}](u3){};
        \draw (4*60:1.1) node[v,label={[xshift=0.0mm, yshift=-6.0mm]$v_3$}](u4){};
        \draw (5*60:1.1) node[v,label={[xshift=0.0mm, yshift=-6.0mm]$v_4$}](u5){};
        \draw (6*60:1.1) node[v,label={[xshift=2.7mm, yshift=-3.0mm]$v_5$}](u6){};
        \draw (u1)--(u3)--(u2);
        \draw (u1)--(u4)--(u2);
        \draw (u1)--(u5)--(u2);
        \draw (u1)--(u6)--(u2);
        \draw (u4)--(u3);
        \draw (u6)--(u5);
        \draw (u3)--(u5);
        \draw (u4)--(u6);
        \draw (0,-1.5) node[label=below:$K_6^\equiv$](){};
    \end{tikzpicture}
    \hspace{0.5cm}
    \begin{tikzpicture}
        \draw (0,1.8);
        \draw (0,-1.8);
        \draw (1*60:1.1) node[v,label={[xshift=0.0mm, yshift=0.0mm]$v_6$}](u1){};
        \draw (2*60:1.1) node[v,label={[xshift=0.0mm, yshift=0.0mm]$v_1$}](u2){};
        \draw (3*60:1.1) node[v,label={[xshift=-2.7mm, yshift=-3.0mm]$v_2$}](u3){};
        \draw (4*60:1.1) node[v,label={[xshift=0.0mm, yshift=-6.0mm]$v_3$}](u4){};
        \draw (5*60:1.1) node[v,label={[xshift=0.0mm, yshift=-6.0mm]$v_4$}](u5){};
        \draw (6*60:1.1) node[v,label={[xshift=2.7mm, yshift=-3.0mm]$v_5$}](u6){};
        \foreach \x in {3,4,5,6}{
            \draw (u1)--(u\x)--(u2);
        }
        \draw (u4)--(u3)--(u6)--(u5);
        \draw (u3)--(u5);
        \draw (u4)--(u6);
        \draw (0,-1.5) node[label=below:$K_6^{=}$](){};
    \end{tikzpicture}
    \hspace{0.5cm}
    \begin{tikzpicture}
        \draw (0,0) circle (1.414);
        \draw (0,0) node[v,label={[xshift=2.0mm, yshift=0.0mm]$v_7$}](o){};
        \draw (150:1.414) node[v,label={[xshift=-2.5mm, yshift=-0.5mm]$v_4$}](v1){};
        \draw (90:1) node[v,label={[xshift=2.5mm, yshift=-2mm]$v_1$}](w1){};
        \draw (270:1.414) node[v,label={[xshift=2.5mm, yshift=-0.5mm]$v_5$}](v2){};
        \draw (210:1) node[v,label={[xshift=-1.0mm, yshift=0.0mm]$v_2$}](w2){};
        \draw (30:1.414) node[v,label={[xshift=2.0mm, yshift=-0.5mm]$v_6$}](v3){};
        \draw (330:1) node[v,label={[xshift=0.0mm, yshift=-5.5mm]$v_3$}](w3){};
        \foreach \x in {1,2,3}{
            \draw (o)--(v\x)--(w\x);
	}
	\draw (w1)--(w2)--(w3)--(w1);
	\draw (w1)--(o)--(w2);
	\draw (o)--(w3);
        \draw (0,-1.5) node[label=below:$\overline{C_6}+K_1$](){};
    \end{tikzpicture}
    \hspace{0.5cm}
    \begin{tikzpicture}
        \draw (0,0) circle (1.414);
        \draw (0,0) circle (1.414);
        \draw (-1,1) node[v,label={[xshift=-1.8mm, yshift=-0.5mm]$v_1$}](a1){};
        \draw (0,1) node[v,label={[xshift=0.0mm, yshift=-0.7mm]$v_2$}](a2){};
        \draw (1,1) node[v,label={[xshift=1.8mm, yshift=-0.5mm]$v_3$}](a3){};
        \draw (-1,0) node[v,label={[xshift=-2.3mm, yshift=-3.0mm]$v_4$}](b1){};
        \draw (0,0) node[v,label={[xshift=2.0mm, yshift=-5.0mm]$v_5$}](b2){};
        \draw (1,0) node[v,label={[xshift=2.3mm, yshift=-3.0mm]$v_6$}](b3){};
	\draw (-1,-1) node[v,label={[xshift=-1.8mm, yshift=-5.5mm]$v_7$}](c1){};
        \draw (0,-1) node[v,label={[xshift=0.0mm, yshift=-5.3mm]$v_8$}](c2){};
        \draw (1,-1) node[v,label={[xshift=1.8mm, yshift=-5.5mm]$v_9$}](c3){};
        \draw (b1) arc (135:45:1.414);
        \draw (c2) arc (225:135:1.414);
	\draw (a1)--(a2)--(a3);
        \draw (b1)--(b2)--(b3);
        \draw (c1)--(c2)--(c3);
        \draw (a1)--(b1)--(c1);
        \draw (a2)--(b2)--(c2);
        \draw (a3)--(b3)--(c3);
        \draw (0,1.8);
        \draw (0,-1.8);
        \draw (0,-1.5) node[label=below:$L(K_{3,3})$](){};
    \end{tikzpicture}
    \caption{Graphs which can be induced by a bag of a $3$-contractible tree decomposition.}
    \label{fig:bagtypes}
\end{figure}
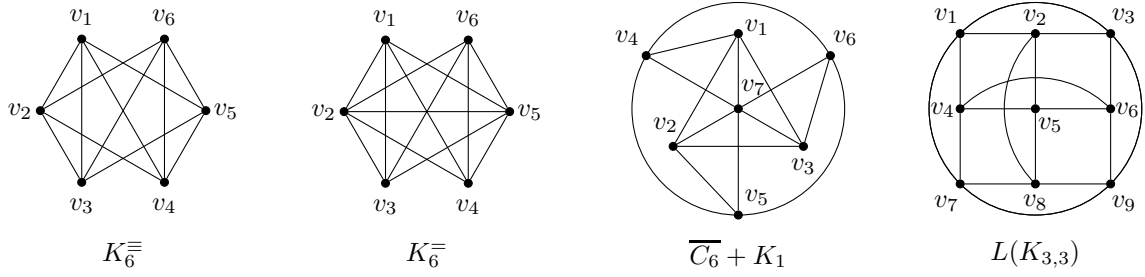

Let $K_6^\equiv$ be the graph obtained from~$K_6$ by removing a matching of size $3$, and $K_6^{=}$ be the graph obtained from~$K_6$ by removing a matching of size~$2$; see Figure~\ref{fig:bagtypes}.
Let $\mathcal{K}:=\{K_n:n\in[5]\}\cup\{K_6^\equiv,K_6^{=},{\overline{C_6}+K_1},L(K_{3,3})\}$.

Let $G$ be a multigraph, and let $\mathcal{A}$ be a set of $3$-vertex cliques in~$G$.
We introduce the following definitions.
Let $Y(G,\mathcal{A})$ be the multigraph obtained from~$G$ by adding a new degree-$3$ vertex $v_A$ with neighbourhood $A$ for each $A\in\mathcal{A}$.
For a set $X\subseteq V(G)$, $\mathcal{A}$ is \emph{addable} to $X$ in~$G$ if $Y(G[X],\{A\in\mathcal{A}:A\subseteq X\})$ is $\mathcal{F}_3$-minor-free.
An \emph{addable set} of~$G$ is a set of $3$-vertex cliques in~$G$ which is addable to $V(G)$ in~$G$.
A tree decomposition $(T,(B_t)_{t\in V(T)})$ of~$G$ is \emph{$3$-contractible} if it satisfies the following conditions.
\begin{enumerate}[label=$(\mathrm{C}\arabic*)$]
    \item\label{def:td contractible edge} For each $vw\in E(T)$, $B_v\cap B_w$ is a minimal separator of~$G$ and a clique of size at most~$3$ in~$G$.
    \item\label{def:td contractible vertex} For each $u\in V(T)$, the simplification of $G[B_u]$ is isomorphic to a graph in $\mathcal{K}$, and $\{B_u\cap B_{u'}:u'\in N_T(u),\abs{B_u\cap B_{u'}}=3\}$ is addable to~$B_u$ in~$G$.
\end{enumerate}

In proving Proposition~\ref{prop:mainbackward}, we will show that every edge-maximal $\mathcal{F}_3$-minor-free graph admits a $3$-contractible tree-decomposition, and that all graphs admitting $3$-contractible tree-decompositions are $\mathcal{F}_3$-minor-free.
To prove Proposition~\ref{prop:mainbackward}, we simultaneously show that $\mathcal{F}_3$-minor-free graphs are subgraphs of graphs which admit $3$-contractible tree-decompositions, and demonstrate that \subd{2}s of these graphs admit $3$-contraction sequences.

We summarise our strategy for finding these contraction sequences as follows.
We will contract the bags of the tree-decomposition one at a time, starting from the leaves.
When we contract a bag, we contract it down to the graph $\psi^*(\theta(G),\sigma_G(X))$, where~$X$ is the adhesion of the leaf bag to its parent in the tree decomposition.
Halfway through this process, the bag of a node whose children have all been contracted will have a subdivision of~$K_4$ attached to each of its adhesion sets.
Thus, for each $G\in\mathcal{K}$, each addable set~$\mathcal{A}$ of~$G$, and each $X\in A$, we require an algorithm for contracting $\psi(\theta(G),\sigma_G(\mathcal{A})$ down to $\psi^*(\theta(G),\sigma_G(X))$, with the added technicality that we must also keep track of the red edges we accumulate throughout the process.
In the following propositions, we present such an algorithm.
The general strategy will be to apply Lemmas~\ref{lem:yoperation}, \ref{lem:protection1}, and~\ref{lem:protection2} to simplify the graph, to the point where we can either apply Lemma~\ref{lem:routine} or explicitly complete the contraction sequence.
To simplify the case analysis, we will repeatedly use the following lemma.

\begin{lemma}\label{lem:repeat}
    Let $G$ be a graph, let $\mathcal{S}$ be a set of $3$-vertex cliques in $G$, let $\mathcal{A}$ be a subset of $\mathcal{S}$, let $X\in\mathcal{A}$, let $H_\mathcal{S}$ be a trigraph with underlying graph $\psi(\theta(G),\sigma_G(\mathcal{S}))$, and let $H$ be the induced subgraph of $H_\mathcal{S}$ with underlying graph $\psi(\theta(G),\sigma_G(\mathcal{A}))$.
    If there exists an $X$-stable partial \cont{3} from $H_\mathcal{S}$ to a trigraph with underlying graph $\psi^*(\theta(G),\sigma_G(X))$, then there exists an $X$-stable partial \cont{3} from $H$ to a trigraph with underlying graph $\psi^*(\theta(G),\sigma_G(X))$.
\end{lemma}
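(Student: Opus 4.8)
The statement of Lemma~\ref{lem:repeat} asserts that if an $X$-stable partial $3$-contraction sequence from $H_\mathcal{S}$ to $\psi^*(\theta(G),\sigma_G(X))$ exists, then the same holds starting from the induced subgraph $H$ on $\psi(\theta(G),\sigma_G(\mathcal{A}))$. The natural approach is to take the given partial $3$-contraction sequence $H_\mathcal{S}=H_\mathcal{S}^{(1)},\dots,H_\mathcal{S}^{(m)}$ with $H_\mathcal{S}^{(m)}$ having underlying graph $\psi^*(\theta(G),\sigma_G(X))$, and to \emph{restrict} it to $H$: intuitively, we run the same sequence of contractions but ignore any contraction involving a vertex outside $V(H)$, exactly as is done for induced subgraphs in the definition of twin-width (see Section~\ref{subsec:tww}). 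First I would set up the bookkeeping: as the sequence on $H_\mathcal{S}$ proceeds, each part of the current partition of $V(H_\mathcal{S})$ is a subset of $V(H_\mathcal{S})$, and we track the corresponding induced partition of $V(H)$ obtained by intersecting each part with $V(H)$ and discarding empty intersections. A contraction in the $H_\mathcal{S}$-sequence either merges two parts both meeting $V(H)$ (in which case we perform the corresponding merge in the $H$-sequence), or merges parts at most one of which meets $V(H)$ (in which case we do nothing in the $H$-sequence).

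\textbf{Why red degrees are controlled.} The key point is that red edges can only disappear, never appear, when passing from $H_\mathcal{S}$ to its induced subgraph on $V(H)$: this is precisely Lemma~\ref{lem:refinedtww}'s underlying mechanism and the standard fact that an induced-subgraph restriction of a $d$-contraction sequence is a $d$-contraction sequence. Concretely, I would argue that at every step $i$, the trigraph induced by the $H$-sequence on its current parts is a refined subgraph — in fact an induced subgraph after the natural identification — of $H_\mathcal{S}^{(i)}$ restricted to the parts meeting $V(H)$; hence each vertex of the $H$-sequence has red degree at most that of the corresponding vertex in $H_\mathcal{S}^{(i)}$. Since the $H_\mathcal{S}$-sequence is a partial $3$-contraction sequence, so is the $H$-sequence. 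For $X$-stability, note $X\subseteq V(C_{G,X})\subseteq V(H)$, so the vertices of $X$ are never contracted away (they are not contracted in the $H_\mathcal{S}$-sequence by hypothesis, and the restricted sequence only omits contractions, never introduces new ones involving $X$), and their red degrees in the $H$-sequence are bounded by their red degrees in the $H_\mathcal{S}$-sequence, which are at most their red degrees in $H_\mathcal{S}$, which equal their red degrees in $H$ since $H$ is induced in $H_\mathcal{S}$ and red edges of $H_\mathcal{S}$ incident with $X$ lie inside $\psi(\theta(G),\sigma_G(X))\subseteq\psi(\theta(G),\sigma_G(\mathcal{A}))$.

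\textbf{Identifying the terminal trigraph.} It remains to check that the $H$-sequence actually ends at a trigraph whose underlying graph is $\psi^*(\theta(G),\sigma_G(X))$, not merely some induced subgraph of it. Here I would use that $\psi^*(\theta(G),\sigma_G(X))$ is an induced subgraph of $\psi^*(\theta(G),\sigma_G(\mathcal{S}))$ — indeed of $H_\mathcal{S}^{(m)}$ — all of whose vertices lie in $V(C_{G,X})\cup W_{\sigma_G(X)}\subseteq V(H)$; hence none of these vertices is ever contracted with a vertex outside $V(H)$ during the $H_\mathcal{S}$-sequence, because in $H_\mathcal{S}^{(m)}$ they already form singleton parts disjoint from everything outside $\psi^*(\theta(G),\sigma_G(X))$. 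Therefore the restricted $H$-sequence merges the vertices of $V(H)$ that do \emph{not} end up in $\psi^*(\theta(G),\sigma_G(X))$ into the same parts as in $H_\mathcal{S}^{(m)}$, and the vertices of $\psi^*(\theta(G),\sigma_G(X))$ remain as distinct singleton parts with the correct adjacencies and colours inherited from $H$. This yields the desired $X$-stable partial $3$-contraction sequence from $H$.

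\textbf{Main obstacle.} The only subtle point — and the one I would write out carefully — is verifying that the induced/restricted sequence genuinely has the claimed endpoint and that no ``spurious'' red edge appears in the $H$-sequence that was absent (as a red edge) in the corresponding trigraph of the $H_\mathcal{S}$-sequence; this is the standard but slightly fiddly inductive invariant that the $i$-th trigraph of the $H$-sequence equals the sub-trigraph of $H_\mathcal{S}^{(i)}$ induced on those parts meeting $V(H)$, with the caveat that a red edge of $H_\mathcal{S}^{(i)}$ between two such parts might correspond to a black edge — never the reverse — in the $H$-sequence, because some non-edge-to-black-edge witness in $H_\mathcal{S}$ may have lived outside $V(H)$. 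This monotonicity (red can only become black or vanish, never the other way) is exactly what makes the red-degree bound go through, and it is the heart of the argument; everything else is bookkeeping.
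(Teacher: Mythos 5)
Your overall route is the same as the paper's: restrict the given sequence to $V(H)$ by skipping contractions that involve vertices of $V(H_\mathcal{S})\setminus V(H)$, observe that red degrees can only drop, and then identify the terminal trigraph. The bookkeeping part is fine, but the one genuinely non-trivial step --- why the restricted sequence ends at a trigraph whose underlying graph is exactly $\psi^*(\theta(G),\sigma_G(X))$ and not a proper refined subgraph of it --- is where your justification fails. You claim that the vertices of $\psi^*(\theta(G),\sigma_G(X))$ ``already form singleton parts'' in the final trigraph $H_\mathcal{S}^{(m)}$ and hence are never contracted with vertices outside $V(H)$. That is false: a vertex of the final trigraph names a part of a partition of $V(H_\mathcal{S})$, and in every application of this lemma (Propositions~\ref{prop:K_4 addable}--\ref{prop:line addable}) these parts absorb many vertices, including vertices outside $V(H)$ such as the apices $x_S$ and the vertices $y_{u,S}$ for $S\in\mathcal{S}\setminus\mathcal{A}$ (for instance, in the proof of Proposition~\ref{prop:K_4 addable} the surviving vertex $x_1$ ends up containing $v_1$, $x_2$ and $x_3$). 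Relatedly, your inductive invariant only allows a red edge of $H_\mathcal{S}^{(i)}$ to become black in the restriction, but it can also disappear entirely (all edges of $H_\mathcal{S}$ witnessing it may lie outside $V(H)$); ruling this out for the edges of $\psi^*(\theta(G),\sigma_G(X))$ is precisely what needs an argument, and ``the parts are singletons'' does not supply one.

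The repair is short and is what the paper does: the restricted endpoint $H''$ is a refined subgraph of $H'$, so its underlying graph is contained in $\psi^*(\theta(G),\sigma_G(X))$; conversely, since $X\in\mathcal{A}$, the graph $\psi^*(\theta(G),\sigma_G(X))$ is an induced subgraph of the underlying graph of $H$, so each of its vertices lies in $V(H)$ (and lies in the part it names, as the contractions keep representatives), and each of its edges $ab$ is itself an edge of $H$ joining a vertex of the part of $a$ to a vertex of the part of $b$; hence no vertex or edge of $H'$ is lost and the underlying graph of $H''$ is exactly $\psi^*(\theta(G),\sigma_G(X))$. One further remark: your $X$-stability chain uses the claim that every red edge of $H_\mathcal{S}$ incident with a vertex of $X$ lies inside $\psi(\theta(G),\sigma_G(\mathcal{A}))$, which is not part of the lemma's hypotheses; you should either add this as an assumption you verify in the applications (where $H_\mathcal{S}$ is chosen with $R(H_\mathcal{S})=R(H)$) or argue the bound $\rdeg_H(v)$ directly, since the inequality $\rdeg_{H_\mathcal{S}}(v)\geq\rdeg_H(v)$ goes the wrong way for the stability you need.
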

\begin{proof}
    In the $X$-stable partial \cont{3} from~$H_{\mathcal S}$ to a trigraph $H'$ with underlying graph $\psi^*(\theta(G),\sigma_G(X))$, we ignore contractions with vertices in $V(H_\mathcal{S})\setminus V(H)$ to obtain a partial \cont{3} from~$H$ to a trigraph $H''$ that is a refined subgraph of $H'$.
    Since the underlying graph of~$H$ contains $\psi^*(\theta(G),\sigma_G(X))$ as an induced subgraph, $H''$ contains all edges of~$H'$, and therefore the underlying graph of $H''$ is precisely $\psi^*(\theta(G),\sigma_G(X))$.
\end{proof}

\begin{figure}[t]
    \centering
    \tikzstyle{v}=[circle, draw, solid, fill=black, inner sep=0pt, minimum width=1.5pt]
    \tikzstyle{w}=[circle, draw, solid, fill=black, inner sep=0pt, minimum width=4pt]
    \tikzstyle{u}=[rectangle, draw, fill=black, inner sep=2pt, minimum width=0.5pt]
    \begin{tikzpicture}[scale=0.8]
        \draw (330:4) node[w,label=below:$v_1$](1){};
        \draw (90:4) node[w,label=above:$v_2$](2){};
        \draw (210:4) node[w,label=below:$v_3$](3){};
        \draw (0,0) node[w,label={[xshift=3mm,yshift=-5mm]$v_4$}](4){};
        \draw (150:1.3) node[v,label=left:$x_1$](x1){};
        \draw (270:1.3) node[v,label=below:$x_2$](x2){};
        \draw (30:1.3) node[v,label=right:$x_3$](x3){};

        \path (x1)--(4) node[u,pos=0.5](y14){};
        \draw (x1)--(y14);
        \draw (y14)--(4);        
        \path (x1)--(2) node[u,pos=0.5](y12){};
        \draw (x1)--(y12);
        \draw (y12)--(2);
        \path (x1)--(3) node[u,pos=0.5](y13){};
        \draw (x1)--(y13);
        \draw (y13)--(3);

        \path (x2)--(4) node[u,pos=0.5](y24){};
        \draw (x2)--(y24);
        \draw (y24)--(4);        
        \path (x2)--(3) node[u,pos=0.5](y23){};
        \draw (x2)--(y23);
        \draw (y23)--(3);
        \draw (x2)--(1);

        \path (x3)--(4) node[u,pos=0.5](y34){};
        \draw (x3)--(y34);
        \draw (y34)--(4);        
        \path (x3)--(2) node[u,pos=0.5](y32){};
        \draw (x3)--(y32);
        \draw (y32)--(2);
        \path (x3)--(1) node[u,pos=0.5](y31){};
        \draw (x3)--(y31);
        \draw (y31)--(1);

        \path (3)--(4) node[v,pos=0.35](){};
        \path (3)--(4) node[v,pos=0.65](){};
        \draw (3)--(4);
        \path (1)--(2) node[v,pos=0.35](){};
        \path (1)--(2) node[v,pos=0.65](){};
        \draw (1)--(2);
        \path (2)--(4) node[v,pos=0.35](){};
        \path (2)--(4) node[v,pos=0.65](){};
        \draw (2)--(4);
        \path (2)--(3) node[v,pos=0.35](){};
        \path (2)--(3) node[v,pos=0.65](){};
        \draw (2)--(3);
    \end{tikzpicture}
    \caption{The underlying graph $G^*_2$ in the proof of Proposition~\ref{prop:K_4 addable} where square vertices are $y_{i,j}$ vertices.}
    \label{fig:K_4 addable}
\end{figure}

\begin{proposition}\label{prop:K_4 addable}
    Let $G:=K_4$, let $\mathcal{A}$ be a set of~$3$-vertex cliques in~$G$, and let $H$ be a trigraph with underlying graph~$\psi(\theta(G),\sigma_G(\mathcal{A}))$ such that $\DeltaR(H)\leq3$.
    The following are equivalent.
    \begin{enumerate}[label=\rm(\alph*)]
        \item\label{cond:K_4 addable1} $\mathcal{A}$ is an addable set of $G$.
        \item\label{cond:K_4 addable2} $\abs{\mathcal{A}}\leq3$.
        \item\label{cond:K_4 addable3} For every $X\in\mathcal{A}$, there exists a trigraph $H'$ with underlying graph $\psi^*(\theta(G),\sigma_G(X))$ and an $X$-stable partial \cont{3} from~$H$ to $H'$.
    \end{enumerate}
\end{proposition}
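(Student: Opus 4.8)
Since $V(K_4)$ has exactly four $3$-element subsets, each of which is a clique of $K_4$, we always have $\abs{\mathcal{A}}\le 4$, so condition~\ref{cond:K_4 addable2} just says that $\mathcal{A}$ is not the set of all four triangles of $G$. The plan is to prove \ref{cond:K_4 addable1} $\Leftrightarrow$ \ref{cond:K_4 addable2} by a short combinatorial argument, then \ref{cond:K_4 addable3} $\Rightarrow$ \ref{cond:K_4 addable2} using Proposition~\ref{prop:segregated}, and finally \ref{cond:K_4 addable2} $\Rightarrow$ \ref{cond:K_4 addable3} by explicitly constructing the required contraction sequences; this last implication is where the real work lies.

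For \ref{cond:K_4 addable1} $\Leftrightarrow$ \ref{cond:K_4 addable2}: if $\abs{\mathcal{A}}=4$, then deleting the six edges of $K_4$ from $Y(K_4,\mathcal{A})$ leaves the bipartite graph between $V(K_4)$ and the four added vertices $v_S$ in which $v_S$ is adjacent to $w$ exactly when $w\in S$; since each triangle of $K_4$ omits exactly one vertex, the non-edges of this bipartite graph form a perfect matching, so it is $K_{4,4}$ minus a perfect matching, which is isomorphic to $Q_3$. Hence $Y(K_4,\mathcal{A})$ has $Q_3\in\mathcal{F}_3$ as a subgraph and $\mathcal{A}$ is not addable. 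Conversely, if $\abs{\mathcal{A}}\le 3$, then $Y(K_4,\mathcal{A})$ has at most seven vertices and at most fifteen edges, has minimum degree $3$, and has every edge in a triangle; a direct verification, comparing orders, sizes and degree sequences with the members of $\mathcal{F}_3$ (and noting that contracting an edge or deleting a vertex leaves at most thirteen edges, which excludes a $K_6^-$-minor), shows that $Y(K_4,\mathcal{A})$ is $\mathcal{F}_3$-minor-free. I expect this implication to be routine.

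For \ref{cond:K_4 addable3} $\Rightarrow$ \ref{cond:K_4 addable2}, argue by contraposition and suppose $\abs{\mathcal{A}}=4$. Take the branch sets to be $\{w\}$ for each $w\in V(K_4)$ together with $\{x_S\}$ for each triangle $S$, where $x_S$ is the degree-$3$ vertex of the gadget $\psi^*(\theta(K_4),\sigma_{K_4}(S))$, and take the witnessing refined subgraph to be the whole underlying graph of $H$. Every path between distinct branch sets is either an arc of $\theta(K_4)$, of length at least $3$, or one of the length-$2$ paths from $w$ to $x_S$ with $w\in S$; so condition~\ref{sgre:length} is vacuous, every vertex outside the branch sets has degree $2$, and these length-$2$ paths realize exactly the edges of the copy of $Q_3$ above. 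Thus $H$ contains a \sgre{} of $Q_3$, and $\tww(H)\ge 4$ by Proposition~\ref{prop:segregated}. On the other hand, for any $X\in\mathcal{A}$, every trigraph with underlying graph $\psi^*(\theta(K_4),\sigma_{K_4}(X))$ and maximum red degree at most $3$ admits a \cont{3}: apply Corollary~\ref{cor:protection1} to reach a five-vertex trigraph and then finish arbitrarily, since any trigraph on at most four vertices has maximum red degree at most $3$. Hence if condition~\ref{cond:K_4 addable3} held, concatenating the partial \cont{3} from $H$ to $H'$ with a \cont{3} of $H'$ would give a \cont{3} of $H$, contradicting $\tww(H)\ge 4$.

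For \ref{cond:K_4 addable2} $\Rightarrow$ \ref{cond:K_4 addable3}, the statement is vacuous if $\mathcal{A}=\emptyset$, so assume $1\le\abs{\mathcal{A}}\le 3$ and fix $X\in\mathcal{A}$. By Lemma~\ref{lem:repeat} it suffices to treat the case where $\mathcal{A}$ consists of three triangles (enlarging $\mathcal{A}$ to three triangles containing $X$), and since the automorphism group of $K_4$ acts transitively on ordered pairs of distinct triangles — equivalently, under complementation, $S_4$ acts transitively on ordered pairs of distinct vertices — it suffices to handle one concrete choice of $(\mathcal{A},X)$. I would first apply Lemma~\ref{lem:yoperation} to $\mathcal{A}$; because the three triangles of $\mathcal{A}$ together cover all six edges of $K_4$, this removes the interiors of all the length-$12$ cycles and produces, by an $X$-stable partial \cont{3}, a trigraph whose underlying graph is the $2$-subdivision of $K_4$ with the three gadgets $\psi^*(\theta(K_4),\sigma_{K_4}(S))$ for $S\in\mathcal{A}$ attached. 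I would then peel off the two gadgets with $S\neq X$, the vertex of $K_4$ not in $X$, and the subdivided edges not on the length-$9$ cycle through $X$, using Lemmas~\ref{lem:protection1} and~\ref{lem:protection2} to collapse the gadgets while controlling the red degrees at the vertices of $X$, invoking Lemma~\ref{lem:routine} wherever a subdivided $K_{3,3}$ appears, and completing the remaining bounded part by explicit contractions, to arrive at a trigraph with underlying graph $\psi^*(\theta(K_4),\sigma_{K_4}(X))$. The hard part is precisely this peeling — keeping the sequence $X$-stable and the maximum red degree at most $3$ at every step — which is why the argument is routed through the specialized Lemmas~\ref{lem:protection1}, \ref{lem:protection2} and Corollary~\ref{cor:protection1}, whose conclusions already track the red degrees at the distinguished boundary vertices, rather than through ad hoc contractions.
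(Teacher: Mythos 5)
Your overall architecture is sound, and the two lower-bound directions are essentially right: the identification of $Y(K_4,\mathcal{A})-E(K_4)$ with $K_{4,4}$ minus a perfect matching, hence $Q_3$, when $\abs{\mathcal{A}}=4$ is exactly the paper's argument for \ref{cond:K_4 addable1}$\Rightarrow$\ref{cond:K_4 addable2}; and your refutation of \ref{cond:K_4 addable3} when $\abs{\mathcal{A}}=4$, by exhibiting the eight singletons $\{w\}$, $\{x_S\}$ as a \sgre{} of $Q_3$ (the length-$1$ condition is indeed vacuous) and invoking Proposition~\ref{prop:segregated}, is a legitimate variant of the paper's route, which instead proves \ref{cond:K_4 addable3}$\Rightarrow$\ref{cond:K_4 addable1} via Proposition~\ref{prop:mainforward}. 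Your direct minor-freeness check for \ref{cond:K_4 addable2}$\Rightarrow$\ref{cond:K_4 addable1} is not needed in the paper (it follows from \ref{cond:K_4 addable2}$\Rightarrow$\ref{cond:K_4 addable3}$\Rightarrow$\ref{cond:K_4 addable1}) and is only gestured at, but it is checkable and harmless.

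The genuine gap is \ref{cond:K_4 addable2}$\Rightarrow$\ref{cond:K_4 addable3}, which is the actual content of the proposition. After the (correct) reductions via Lemma~\ref{lem:repeat} and symmetry, you never produce the required $X$-stable partial \cont{3}: you say you ``would'' apply Lemma~\ref{lem:yoperation} and then ``peel off'' the two gadgets for $S\neq X$ using Lemmas~\ref{lem:protection1} and~\ref{lem:protection2}, ``invoking Lemma~\ref{lem:routine} wherever a subdivided $K_{3,3}$ appears'', and finish ``by explicit contractions'' --- but no contractions are exhibited and neither $X$-stability nor $\DeltaR\le 3$ is verified at any stage, even though you correctly identify this as the hard part. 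This is precisely where the paper's proof does its work: after Lemma~\ref{lem:yoperation} it applies Lemma~\ref{lem:protection2} exactly once, with the specific choice $(t_1,t_2,t_3,t_4)=(v_3,v_4,v_1,x_2)$ (justified by $\deg(v_1)=5$ at that point), and then lists ten explicit contractions in a carefully chosen order; the order matters because $v_1$ becomes adjacent to the gadget centres $x_2,x_3,x_1$ and must be merged into them ($v_1$ ultimately becomes the hub of $\psi^*(\theta(K_4),\sigma_{K_4}(X))$) without its red degree ever exceeding $3$. Moreover your sketch has the wrong shape for $K_4$: no subdivided $K_{3,3}$ arises here, so Lemma~\ref{lem:routine} cannot supply the endgame (it is used only for the larger bags $K_5$, $K_6^{=}$, $\overline{C_6}+K_1$, $L(K_{3,3})$), and the final collapse really does have to be written out and checked by hand. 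Until such a sequence is exhibited and verified, the implication, and hence the proposition, is not proved.
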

\begin{proof}
    Let $v_1$, $v_2$, $v_3$, and $v_4$ be the vertices of~$G$.
    
    \ref{cond:K_4 addable1}$\Rightarrow$\ref{cond:K_4 addable2}:
    If $\abs{\mathcal{A}}=4$, then $Y(G,\mathcal{A})-E(G)$ is isomorphic to~$Q_3$.

    \ref{cond:K_4 addable2}$\Rightarrow$\ref{cond:K_4 addable3}:
    If $\mathcal{A}=\emptyset$, then the statement trivially holds, so we may assume that $\mathcal{A}$ is nonempty.
    We may assume that $\mathcal{A}\subseteq\mathcal{S}:=\{\{v_2,v_3,v_4\},\{v_1,v_3,v_4\},\{v_1,v_2,v_4\}\}$ and $X=\{v_2,v_3,v_4\}\in \mathcal{A}$.
    There exists a trigraph $H_\mathcal{S}$ with underlying graph $\psi(\theta(G),\sigma_G(\mathcal{S}))$ such that~$H$ is an induced subgraph of~$H_\mathcal{S}$ and $\DeltaR(H_\mathcal{S})=\DeltaR(H)\leq3$.

    By Lemma~\ref{lem:repeat}, it suffices to find a trigraph~$H'$ with underlying graph $\psi^*(\theta(G),\sigma_G(X))$ and an $X$-stable partial \cont{3} from~$H_\mathcal{S}$ to~$H'$.
    Let
    \[
        W:=\bigcup_{S\in\mathcal{S}}(V(C'_{G,S})\setminus S)\quad\text{ and }\quad Z:=V(\theta(G))\setminus W.
    \]
    In other words, $W$ is the set of all internal vertices of length-$4$ paths of~$\theta(G)$ between two vertices of~$G$.
    Our first step is to remove~$W$ while keeping all other vertices to have low red degree.
    For that, we use Lemma~\ref{lem:yoperation} to a $Z$-stable partial \cont{3} from~$H_\mathcal{S}$ to a trigraph~$H^*_1$ with underlying graph $G^*_1:=\psi(\theta(G),\sigma_G(\mathcal{S}))-W$.
    For distinct $i,j\in[4]$, let $w_{i,j}$ and $w_{j,i}$ be the internal vertices of the unique length-$3$ path in $H^*_1$ between~$v_i$ and~$v_j$ such that $v_iw_{i,j}\in E(H^*_1)$.
    For each $i\in [3]$ and $j\in [4]\setminus\{i\}$, let $S_i:=\sigma_G(\{v_1,v_2,v_3,v_4\}\setminus\{v_i\})$, let
    $x_i$ be the unique vertex of degree~$3$ in $\psi^*(\theta(G),S_i)-V(G)$, and let $y_{i,j}$ be the common neighbour of~$x_i$ and~$v_j$ in~$H^*_1$.

    Since $\DeltaR(H^*_1)\leq\DeltaR(H_\mathcal{S})\leq3$ and $\deg_{H^*_1}(v_1)=5$, by Lemma~\ref{lem:protection2} with $(t_1,t_2,t_3,t_4):=(v_3,v_4,v_1,x_2)$, there exists a $(V(H^*_1)\setminus V(\psi^*(\theta(G),S_2)))\cup\{v_3,v_4\}$-stable $\{v_1\}$-fixing partial \cont{3} from~$H^*_1$ to a trigraph whose underlying graph $G^*_2$ is the graph obtained from $G^*_1-\{w_{1,3},w_{3,1},w_{1,4},w_{4,1},y_{2,1}\}$ by adding an edge $v_1x_2$; see Figure~\ref{fig:K_4 addable}.
    We then contract the following ten pairs of vertices in order.
    \[
        \begin{array}{llll}
            1)\ \{w_{1,2},y_{3,1}\}\text{ to }y_{3,1}, 
            &2)\ \{w_{2,1},y_{3,2}\}\text{ to }y_{3,2},
            &3)\ \{x_3,y_{3,1}\}\text{ to }x_3,
            &4)\ \{y_{2,4},y_{3,4}\}\text{ to }y_{2,4},\\
            5)\ \{v_1,x_2\}\text{ to }v_1,
            &6)\ \{v_1,x_3\}\text{ to }v_1,
            &7)\ \{y_{1,2},y_{3,2}\}\text{ to }y_{1,2},
            &8)\ \{y_{1,3},y_{2,3}\}\text{ to }y_{1,3},\\
            9)\ \{y_{1,4},y_{2,4}\}\text{ to }y_{1,4},
            &10)\ \{v_1,x_1\}\text{ to }x_1.
        \end{array}
    \]
    Thus, we find an $X$-stable partial \cont{3} from~$H_\mathcal{S}$ to a trigraph~$H'$ with underlying graph $\psi^*(\theta(G),\sigma_G(X))$.

    \ref{cond:K_4 addable3}$\Rightarrow$\ref{cond:K_4 addable1}:
    If $\mathcal{A}$ is empty, then \ref{cond:K_4 addable1} holds because $K_4$ is $\mathcal{F}_3$-minor-free.
    Thus, we may assume that there is $X\in\mathcal{A}$.
    By~\ref{cond:K_4 addable3}, there exists a trigraph~$H'$ with underlying graph $\psi^*(\theta(G),\sigma_G(X))$ and an $X$-stable partial \cont{3} from~$H$ to~$H'$.
    Since the underlying graph of $H'$ is a subdivision of $K_4$, it is readily seen that $\tww(H')\leq3$, so that $\tww(H)\leq3$.
    Since the underlying graph of $H$ has an \subd{1} of~$Y(G,\mathcal{A})$ as an induced subgraph, by Proposition~\ref{prop:mainforward}, $Y(G,\mathcal{A})$ is $\mathcal{F}_3$-minor-free, and therefore~$\mathcal{A}$ is an addable set of~$G$.
\end{proof}

\begin{proposition}\label{prop:K_5 addable}
    Let $G:=K_5$, let $\mathcal{A}$ be a set of $3$-vertex cliques in~$G$, and let $H$ be a trigraph with underlying graph $\psi(\theta(G),\sigma_G(\mathcal{A}))$ such that $\DeltaR(H)\leq3$.
    The following are equivalent.
    \begin{enumerate}[label=\rm(\alph*)]
        \item\label{cond:K_5 addable1} $\mathcal{A}$ is an addable set of $G$.
        \item\label{cond:K_5 addable2} There exists $B\subseteq V(G)$ of size~$2$ such that for all $Q\in \mathcal A$, $\abs{Q\cap B}$ is even.
        \item\label{cond:K_5 addable3} For every $X\in\mathcal{A}$, there exists a trigraph $H'$ with underlying graph $\psi^*(\theta(G),\sigma_G(X))$ and an $X$-stable partial \cont{3} from~$H$ to $H'$.
    \end{enumerate}
\end{proposition}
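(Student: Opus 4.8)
The plan is to establish the cycle of implications \ref{cond:K_5 addable1}$\Rightarrow$\ref{cond:K_5 addable2}$\Rightarrow$\ref{cond:K_5 addable3}$\Rightarrow$\ref{cond:K_5 addable1}, following the template of Proposition~\ref{prop:K_4 addable}. The implication \ref{cond:K_5 addable3}$\Rightarrow$\ref{cond:K_5 addable1} is essentially identical to the corresponding step there: if $\mathcal{A}=\emptyset$ then $K_5$ is $\mathcal{F}_3$-minor-free, so assume $X\in\mathcal{A}$; by~\ref{cond:K_5 addable3} there is a trigraph $H'$ with underlying graph $\psi^*(\theta(K_5),\sigma_{K_5}(X))$ and an $X$-stable partial \cont{3} from $H$ to $H'$, and since $\psi^*(\theta(K_5),\sigma_{K_5}(X))$ is a subdivision of $K_4$ we get $\tww(H')\le 3$ and hence $\tww(H)\le 3$; as the underlying graph of $H$ contains an \subd{1} of $Y(K_5,\mathcal{A})$ as an induced subgraph, Proposition~\ref{prop:mainforward} forces $Y(K_5,\mathcal{A})$ to be $\mathcal{F}_3$-minor-free, that is, $\mathcal{A}$ is addable.

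For \ref{cond:K_5 addable1}$\Rightarrow$\ref{cond:K_5 addable2} I would argue the contrapositive. Identify each $Q\in\mathcal{A}$ with the edge $\overline{Q}:=V(K_5)\setminus Q$ of $K_5$ and set $\mathcal{B}:=\{\overline{Q}:Q\in\mathcal{A}\}$; then $\abs{Q\cap B}$ is even precisely when $\overline{Q}$ is disjoint from $B$ or equal to $B$, so \ref{cond:K_5 addable2} says that $\mathcal{B}$, viewed as a subgraph of $K_5$, is contained in a triangle together with a disjoint edge. A short check of the subgraphs of $K_5$ with maximum degree at most~$2$ and with no path on four vertices (their edge sets of size at least three are a triangle, a triangle plus a disjoint edge, or a $3$-edge path plus a disjoint edge, all of which satisfy~\ref{cond:K_5 addable2}) shows that the failure of~\ref{cond:K_5 addable2} forces $\mathcal{B}$ to contain $P_4$ or $K_{1,3}$ as a subgraph. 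Since $Y(K_5,\mathcal{A}')$ is a subgraph of $Y(K_5,\mathcal{A})$ whenever $\mathcal{A}'\subseteq\mathcal{A}$, addability is closed under taking subsets, so it suffices to produce an $\mathcal{F}_3$-minor of $Y(K_5,\mathcal{A})$ when $\mathcal{B}$ is exactly a $P_4$ or exactly a $K_{1,3}$. In the first case I would exhibit an explicit spanning subgraph of $Y(K_5,\mathcal{A})$ isomorphic to $V_8$ and in the second one isomorphic to $Q_3$; both are routine to write down (for $\mathcal{A}=\{\{v_3,v_4,v_5\},\{v_1,v_4,v_5\},\{v_1,v_2,v_5\}\}$, for instance, keeping exactly the three $K_5$-edges $v_1v_2$, $v_2v_3$, $v_3v_4$ together with all six apex-edges yields the Wagner graph). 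Hence $\mathcal{A}$ is not addable.

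For \ref{cond:K_5 addable2}$\Rightarrow$\ref{cond:K_5 addable3}, which is the technical core, the automorphisms of $K_5$ let us take $B=\{v_4,v_5\}$, so that $\mathcal{A}\subseteq\mathcal{S}:=\{\{v_1,v_4,v_5\},\{v_2,v_4,v_5\},\{v_3,v_4,v_5\},\{v_1,v_2,v_3\}\}$, and by Lemma~\ref{lem:repeat} it is enough to treat $\mathcal{A}=\mathcal{S}$. Fix $X\in\mathcal{S}$; up to the symmetries fixing $\mathcal{S}$ either $X$ is one of the three triangles containing $\{v_4,v_5\}$ or $X=\{v_1,v_2,v_3\}$, so two cases remain. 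In each, starting from the trigraph $H_\mathcal{S}$ with underlying graph $\psi(\theta(K_5),\sigma_{K_5}(\mathcal{S}))$, I would first invoke Lemma~\ref{lem:yoperation} to contract away the interiors of all length-$4$ paths of $\theta(K_5)$, reaching a trigraph with underlying graph $\psi(\theta(K_5),\sigma_{K_5}(\mathcal{S}))-W$; then, for each apex gadget $\sigma_{K_5}(S)$ with $S\in\mathcal{S}\setminus\{X\}$, I would apply Lemma~\ref{lem:protection2} (or Corollary~\ref{cor:protection1}) to collapse that gadget to a single edge at its apex, choosing the roles $t_1,t_2,t_3$ so that the vertices of $X$ and of the as-yet-untreated gadgets keep red degree at most their original value; finally I would finish off the remaining subdivided $K_5$ carrying the single surviving gadget by an explicit short list of contractions, or by locating a suitable \subd{1} of $K_{1,3}$ and applying Lemma~\ref{lem:routine}, to reach a trigraph with underlying graph $\psi^*(\theta(K_5),\sigma_{K_5}(X))$.

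The main obstacle is this last reduction: driving the dense core of $\psi(\theta(K_5),\sigma_{K_5}(\mathcal{S}))$ down to $\psi^*(\theta(K_5),\sigma_{K_5}(X))$ while never exceeding red degree~$3$ and keeping $X$ red-degree-stable. Here $K_5$ is considerably denser than $K_4$ — each $v_i$ has degree $11$ in $\psi(\theta(K_5),\sigma_{K_5}(\mathcal{S}))$, and the gadgets at $v_4$ and $v_5$ overlap — so the order of the contractions and the accumulation of red edges must be tracked carefully. As in Proposition~\ref{prop:K_4 addable}, I expect each of the two cases for $X$ to require its own explicit list of contractions once Lemmas~\ref{lem:yoperation}, \ref{lem:protection1}, \ref{lem:protection2}, and~\ref{lem:routine} have disposed of the routine portions, and verifying the red-degree bounds along these lists is the part demanding the most care.
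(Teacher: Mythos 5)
Your route is the same as the paper's: the cycle (a)$\Rightarrow$(b)$\Rightarrow$(c)$\Rightarrow$(a), with (c)$\Rightarrow$(a) argued via Proposition~\ref{prop:mainforward} exactly as in Proposition~\ref{prop:K_4 addable}, and (a)$\Rightarrow$(b) by exhibiting $Q_3$ or $V_8$ inside $Y(K_5,\mathcal{A})$. Your complement-edge reformulation of (b) (the edges $\overline{Q}:=V(K_5)\setminus Q$ must contain neither a $3$-edge path nor $K_{1,3}$) is a clean repackaging of the paper's two bad configurations and is correct, but two details are off: a ``$3$-edge path plus a disjoint edge'' does not fit in five vertices (you mean a $2$-edge path plus a disjoint edge), and your explicit witness is not the Wagner graph --- with $\mathcal{A}=\{\{v_3,v_4,v_5\},\{v_1,v_4,v_5\},\{v_1,v_2,v_5\}\}$ there are nine apex edges rather than six, and keeping $v_3v_4$ creates the triangle on $v_3$, $v_4$ and the apex of $\{v_3,v_4,v_5\}$, whereas $V_8$ is triangle-free. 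A correct spanning $V_8$ does exist (your configuration is the paper's second case after relabelling), so this is repairable, but as written the witness fails.

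The genuine gap is in (b)$\Rightarrow$(c), which is the bulk of the paper's proof and which you only outline. Saying that you would collapse each gadget $\sigma_G(S)$, $S\in\mathcal{S}\setminus\{X\}$, by Lemma~\ref{lem:protection2} or Corollary~\ref{cor:protection1} ``choosing the roles $t_1,t_2,t_3$'' suitably, and then finish, does not yet establish that such choices exist: those lemmas can only be applied when the attachment vertices have spare red capacity at that moment, and in $\psi(\theta(K_5),\sigma_{K_5}(\mathcal{S}))$ the vertices $v_4,v_5$ (in your labelling) each lie in three gadgets, so a straight one-gadget-at-a-time schedule after Lemma~\ref{lem:yoperation} is not obviously $3$-bounded. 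The paper's proof needs devices your sketch does not contain: it tracks, for the relevant attachment edges, how many are red (the quantities $q(\cdot,\cdot)$) and uses symmetry to assume the problematic ones are few; it makes case distinctions depending on which specific edges of the current trigraph are red or black in order to pick $t_1'$ in Lemma~\ref{lem:protection1}; crucially, it performs only the first two or three contractions of one application of Lemma~\ref{lem:protection1} early, so as to lower $\deg(v_4)$ and $\deg(v_5)$ and create the slack needed for the subsequent applications of Corollary~\ref{cor:protection1} and Lemma~\ref{lem:protection2}, and only completes that interrupted application afterwards; and it closes each case with a verified application of Lemma~\ref{lem:routine} plus short explicit contraction lists. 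Without exhibiting such an order for each of the two cases of $X$ and checking the red-degree hypotheses at every application --- the part you yourself identify as the crux --- the implication (b)$\Rightarrow$(c) remains unproved.
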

\begin{proof}
    Let $v_1$, $v_2$, $v_3$, $v_4$, and $v_5$ be the vertices of~$G$.
    
    \ref{cond:K_5 addable1}$\Rightarrow$\ref{cond:K_5 addable2}:
    If $\abs{\mathcal{A}}\leq2$, then we can easily find the set $B$, so we may assume that $\abs{\mathcal A}\ge 3$.
    Note that $\mathcal{A}$ contains two cliques $A_1$ and $A_2$ with $\abs{A_1\cap A_2}=2$.
    We claim that $B:=A_1\cap A_2$ has the desired property.
    Suppose for contradiction that there is $Q\in\mathcal A$ such that $\abs{Q\cap B}=1$.
    By relabelling, we may assume that $A_1=\{v_1,v_2,v_3\}$, $A_2=\{v_1,v_2,v_4\}$, and
    $Q\in\{\{v_1,v_3,v_4\},\{v_1,v_3,v_5\}\}$.

    If $Q=\{v_1,v_3,v_4\}$, then $Y(G,\mathcal A)$ has two disjoint cycles with vertex sets $\{v_Q,v_3,v_5,v_4\}$ and $\{v_1,v_{A_1},v_2,v_{A_2}\}$.
    Since $Y(G,\mathcal{A})$ has edges $v_Qv_1$, $v_3v_{A_1}$, $v_5v_2$, and $v_4v_{A_2}$, it has a subgraph isomorphic to~$Q_3$, contradicting~\ref{cond:K_5 addable1}.
    
    Hence, we may assume that $Q=\{v_1,v_3,v_5\}$. Now $Y(G,\mathcal{A})$ has two disjoint cycles with vertex sets $\{v_1,v_Q,v_3,v_{A_1}\}$ and $\{v_{A_2},v_4,v_5,v_2\}$.
    Since $Y(G,\mathcal{A})$ has edges $v_1v_{A_2}$, $v_Qv_5$, $v_3v_4$, and~$v_{A_1}v_2$, it has a subgraph isomorphic to~$V_8$, contradicting~\ref{cond:K_5 addable1}.

    \ref{cond:K_5 addable2}$\Rightarrow$\ref{cond:K_5 addable3}:
    If $\mathcal{A}=\emptyset$, then the statement trivially holds, so we may assume that $\mathcal{A}$ is nonempty.
    Suppose without loss of generality that $B=\{v_1,v_2\}$.
    Note that
    \[
        \mathcal{A}\subseteq\mathcal{S}:=\{\{v_1,v_2,v_3\},\{v_1,v_2,v_4\},\{v_1,v_2,v_5\},\{v_3,v_4,v_5\}\}.
    \]
    There exists a trigraph $H_\mathcal{S}$ with underlying graph $\psi(\theta(G),\sigma_G(\mathcal{S}))$ such that~$H$ is an induced subgraph of~$H_\mathcal{S}$ and $\DeltaR(H_\mathcal{S})=\DeltaR(H)\leq3$.
    
    By Lemma~\ref{lem:repeat}, it suffices to find a trigraph~$H'$ with underlying graph $\psi^*(\theta(G),\sigma_G(X))$ and an $X$-stable partial \cont{3} from~$H_\mathcal{S}$ to~$H'$.
    Let
    \[
        W:=\bigcup_{S\in\mathcal{S}}(V(C'_{G,S})\setminus S)\quad\text{ and }\quad Z:=V(\theta(G))\setminus W.
    \]
    In other words, $W$ is the set of all internal vertices of length-$4$ paths of~$\theta(G)$ between two vertices of~$G$.
    We first apply Lemma~\ref{lem:yoperation} to obtain a $Z$-stable partial \cont{3} from~$H_\mathcal{S}$ to a trigraph~$H^*_1$ with underlying graph $G^*_1:=\psi(\theta(G),\sigma_G(\mathcal{S}))-W$.
    For distinct $i,j\in[5]$, let~$w_{i,j}$ and~$w_{j,i}$ be the internal vertices of the unique length-$3$ path in $H^*_1$ between~$v_i$ and~$v_j$ such that $v_iw_{i,j}\in E(H^*_1)$.
    Let $S_0$ be the \sena{} $\sigma_G(\{v_3,v_4,v_5\})$ of $\theta(G)$ and 
    let $S_i$ be the \sena{} $\sigma_G(\{v_1,v_2,v_i\})$ of $\theta(G)$ for each $i\in\{3,4,5\}$.
    For each $i\in\{0,3,4,5\}$, let $x_i$ be the unique vertex of degree~$3$ in $\psi^*(\theta(G),S_i)-V(G)$. 
    For each $i\in\{3,4,5\}$ and each $j\in \{1,2,i\}$, let~$y_{0,i}$ be the common neighbour of~$x_0$ and~$v_i$ in~$H^*_1$, and let $y_{i,j}$ be the common neighbour of $x_i$ and $v_j$ in~$H^*_1$.
    For each $i\in[2]$ and $j\in\{3,4,5\}$, let $q(i,j):=\abs{\{v_iw_{i,j},v_iy_{j,i}\}\cap R(H^*_1)}$.
    Since $\DeltaR(H^*_1)\leq\DeltaR(H_{\mathcal S})\leq3$, 
    we have that for each $i\in[2]$, 
    $q(i,3)+q(i,4)+q(i,5)\le 3$ 
    and therefore there is at most one $j\in \{3,4,5\}$ such that $q(i,j)=2$.
    
    Suppose that $X=\{v_3,v_4,v_5\}$.
    By symmetry, we may assume that $\max\{q(1,4),q(1,5),q(2,5)\}\leq1$.
    Let~$H^*_2$ be the trigraph obtained from~$H^*_1$ by contracting $\{w_{2,4},y_{4,2}\}$ to $y_{4,2}$, and let $G^*_2$ be the underlying graph of~$H^*_2$.
    Let $F$ be the subgraph of $H_{\mathcal{S}}$ with underlying graph $\psi^*(\theta(G),S_3)$.
    Note that
    \[
        \DeltaR(H^*_2)\leq3,\quad 
        \rdeg_{H^*_2-E(F)}(v_1)=q(1,4)+q(1,5)\le 2,\ \text{ and}\ 
        \rdeg_{H^*_2-E(F)}(v_2)\le q(2,5)+1\le 2.
    \]
    Applying Corollary~\ref{cor:protection1} to $F$ with $(t_1,t_2,t_3,t_4):=(v_1,v_2,v_3,x_3)$, we find a $(V(H^*_2)\setminus V(F))\cup\{v_3\}$-stable $\{v_1,v_2\}$-fixing partial \cont{3} from~$H^*_2$ to a trigraph $H^*_3$ whose underlying graph is the graph obtained from $G^*_2-\{w_{1,3},w_{3,1},w_{2,3},w_{3,2},y_{3,1},y_{3,2}\}$ by adding edges $v_1x_3$ and $v_2x_3$.
    Let $H_4^*$ be the trigraph obtained from $H_3^*$ by applying the following sequence of contractions.
    \begin{enumerate}[label=\bf{Step \arabic*.},leftmargin=*]
        \item Contract the following five pairs of vertices in order: $\{w_{4,2},y_{4,4}\}$ to~$y_{4,4}$, $\{x_4,y_{4,2}\}$ to~$x_4$, $\{w_{1,4},y_{4,1}\}$ to~$y_{4,1}$, $\{w_{4,1},y_{4,4}\}$ to $y_{4,4}$, and $\{x_4,y_{4,1}\}$ to~$x_4$.
        \item For each $i\in[2]$, contract the following three pairs of vertices in order: $\{w_{i,5},y_{5,i}\}$ to~$y_{5,i}$, $\{w_{5,i},y_{5,5}\}$ to~$y_{5,5}$, $\{x_5,y_{5,i}\}$ to~$x_5$.
    \end{enumerate}
    Since $\max\{q(1,4),q(1,5),q(2,5)\}\leq1$, 
    this partial contraction sequence from $H_3^*$ to $H_4^*$ is an $X$-stable partial \cont{3}.
    Let $G^*_4$ be the underlying graph of $H^*_4$ and let $H_0:=G^*_4[N^2_{G^*_4}[x_0]]$, 
    which is isomorphic to the $1$-subdivision of $K_{1,3}$. 
    Note that 
    $G_0:=G^*_4-\{w_{3,4},w_{4,3},w_{3,5},w_{5,3},w_{4,5},w_{5,4}\}$ is a subdivision of $K_{3,3}$ and that in $G_0-V(H_0)$, 
    vertices $v_1$, $v_2$ and their neighbours induce a subgraph isomorphic to $K_{2,3}$.
    By Lemma~\ref{lem:routine} 
    applied to $H_4^*-\{w_{3,4},w_{4,3},w_{3,5},w_{5,3},w_{4,5},w_{5,4}\}$ with $H_0$, there exists an $X$-stable partial \cont{3} from~$H^*_4$ to a trigraph $H'$ with underlying graph $\psi^*(\theta(G),\sigma_G(X))$.
    Thus, we find an $X$-stable partial \cont{3} from~$H_\mathcal{S}$ to~$H'$.   

    Hence, we may assume that $X\neq\{v_3,v_4,v_5\}$.
    By symmetry, we may assume that $X=\{v_1,v_2,v_3\}$ and that $v_4w_{4,3}\in R(H^*_1)$ or $v_5w_{5,3}\in B(H^*_1)$.

    The following two contractions are the first two contractions 
    given by Lemma~\ref{lem:protection1} applied with $(t_1,t_2,t_3,t_4):=(v_4,v_5,v_3,x_0)$.
    Let $t'_1:=w_{4,5}$ if $v_4w_{4,3}\in B(H^*_1)$, and otherwise let $t'_1:=w_{4,3}$.
    Let~$H^*_2$ be the trigraph obtained from $H^*_1$ by contracting $\{t'_1,y_{0,4}\}$ to~$y_{0,4}$, let $H^*_3$ be the trigraph obtained from $H^*_2$ by contracting $\{w_{5,4},y_{0,5}\}$ to~$y_{0,5}$, and let $G^*_3$ be the underlying graph of~$H^*_3$.
    Note that $H^*_1,H^*_2,H^*_3$ is an $X$-stable partial \cont{3}, and $\deg_{H^*_3}(v_4)=\deg_{H^*_3}(v_5)=5$.

    Thus, for each $i\in \{4,5\}$, $v_i$ has at most two incident red edges in $H_3^*-E(\psi^*(\theta(G),S_i))$.
    For an integer $i$ from~$4$ to~$5$, by Lemma~\ref{lem:protection2} with $(t_1,t_2,t_3,t_4):=(v_1,v_2,v_i,x_i)$, there exists a $(V(H^*_{i-1})\setminus V(\psi^*(\theta(G),S_i)))\cup \{v_1,v_2\}$-stable $\{v_i\}$-fixing partial \cont{3} from~$H^*_{i-1}$ to a trigraph~$H^*_i$ whose underlying graph $G^*_i$ is the graph obtained from $G^*_{i-1}-\{w_{1,i},w_{i,1},w_{2,i},w_{i,2},y_{i,i}\}$ by adding an edge~$v_ix_i$.
    Note that $\deg_{H^*_5}(v_4)=\deg_{H^*_5}(v_5)=3$.
    We now apply all but the first contractions given by Lemma~\ref{lem:protection1} applied with $(t_1,t_2,t_3,t_4):=(v_4,v_5,v_3,x_0)$
    to obtain a $(V(H^*_5)\setminus V(\psi^*(\theta(G),S_0)))\cup \{v_3\}$-stable $\{v_4,v_5\}$-fixing partial \cont{3} from~$H^*_5$ to a trigraph~$H^*_6$ whose underlying graph~$G^*_6$ is the graph obtained from
    \[
        G^*_5-\{w_{3,4},w_{4,3},w_{3,5},w_{5,3},w_{4,5},y_{0,4},y_{0,5}\}
    \]
    by adding edges $v_4x_0$ and $v_5x_0$.
    Recall that we have already performed the first two contractions given by Lemma~\ref{lem:protection1}.
    Since $G^*_6-\{w_{1,2},w_{2,1},w_{1,3},w_{3,1},w_{2,3},w_{3,2}\}$ is an \subd{1} of $K_{3,3}$, 
    we can apply Lemma~\ref{lem:routine}
    to $H^*_6-\{w_{1,2},w_{2,1},w_{1,3},w_{3,1},w_{2,3},w_{3,2}\}$ with $H_0:=G^*_6[N_{G^*_6}^2[x_3]]$
    and obtain an $X$-stable partial \cont{3} from~$H^*_6$ to a trigraph~$H'$ with underlying graph $\psi^*(\theta(G),\sigma_G(X))$.
    Thus, we find an $X$-stable partial \cont{3} from~$H_\mathcal{S}$ to~$H'$.
    
    \ref{cond:K_5 addable3}$\Rightarrow$\ref{cond:K_5 addable1}:
    If $\mathcal{A}$ is empty, then \ref{cond:K_4 addable1} holds because $K_5$ is $\mathcal{F}_3$-minor-free.
    Thus, we may assume that there is $X\in\mathcal{A}$.
    By \ref{cond:K_5 addable3}, there exists a trigraph $H'$ with underlying graph $\psi^*(\theta(G),\sigma_G(X))$ and an $X$-stable partial \cont{3} from~$H$ to $H'$.
    Since the underlying graph of $H'$ is a subdivision of $K_4$, $\tww(H')\leq3$, so that $\tww(H)\leq3$.
    Since the underlying graph of $H$ has an \subd{1} of~$Y(G,\mathcal{A})$ as an induced subgraph, by Proposition~\ref{prop:mainforward}, $Y(G,\mathcal{A})$ is $\mathcal{F}_3$-minor-free, and therefore~$\mathcal{A}$ is an addable set of~$G$.
\end{proof}

\begin{figure}[t]
    \centering
    \tikzstyle{v}=[circle, draw, solid, fill=black, inner sep=0pt, minimum width=1.5pt]
    \tikzstyle{w}=[circle, draw, solid, fill=black, inner sep=0pt, minimum width=4pt]
    \tikzstyle{u}=[rectangle, draw, fill=black, inner sep=2pt, minimum width=0.5pt]
    \begin{tikzpicture}[scale=0.8]
        \draw (120+60*0:4) node[w,label={[xshift=0.0mm,yshift=0.0mm]$v_1$}](1){};
        \draw (120+60*1:4) node[w,label={[xshift=-2.0mm,yshift=0.0mm]$v_2$}](2){};
        \draw (120+60*2:4) node[w,label={[xshift=0.0mm,yshift=-6.0mm]$v_3$}](3){};
        \draw (120+60*3:4) node[w,label={[xshift=0.0mm,yshift=-6.0mm]$v_4$}](4){};
        \draw (120+60*4:4) node[w,label={[xshift=0.5mm,yshift=0.0mm]$v_5$}](5){};
        \draw (120+60*5:4) node[w,label={[xshift=0.0mm,yshift=0.0mm]$v_6$}](6){};
        \draw (2)+(1,-0.8) node[w,label={[xshift=3.2mm,yshift=-2.0mm]$x_0$}](123){};
        \draw (2)+(3,0) node[w,label={[xshift=4.2mm,yshift=-3.0mm]$x_2$}](246){};
        \draw (6)+(0,-1.5) node[w,label={[xshift=3.2mm,yshift=-2.0mm]$x_3$}](356){};
        \draw (4)+(0,1.5) node[w,label={[xshift=3.2mm,yshift=-3.5mm]$x_1$}](145){};

        \path (1)--(2) node[v,pos=0.35](w12){};
        \path (1)--(2) node[v,pos=0.65](w21){};
        \path (1)--(3) node[v,pos=0.35](w13){};
        \path (1)--(3) node[v,pos=0.65](w31){};
        \path (2)--(3) node[v,pos=0.35](w23){};
        \path (2)--(3) node[v,pos=0.65](w32){};
        \draw (1)--(2)--(3)--(1);

        \path (123)--(1) node[u,pos=0.5](x1){};
        \draw (123)--(x1);
        \draw (x1)--(1);
        \path (123)--(2) node[u,pos=0.5](x2){};
        \draw (123)--(x2);
        \draw (x2)--(2);
        \path (123)--(3) node[u,pos=0.5](x3){};
        \draw (123)--(x3);
        \draw (x3)--(3);

        \path (246)--(2) node[u,pos=0.5](x4){};
        \draw (246)--(x4);
        \draw (x4)--(2);
        \draw (246)--(4);
        \draw (246)--(6);
        
        \path (356)--(3) node[u,pos=0.7](x7){};
        \draw (356)--(x7);
        \draw (x7)--(3);
        \draw (356)--(5);
        \draw (356)--(6);

        \path (145)--(1) node[u,pos=0.7](x10){};
        \draw (145)--(x10);
        \draw (x10)--(1);
        \draw (145)--(4);
        \draw (145)--(5);

        \path (2)--(5) node[pos=0.3](y1){};
        \path (2)--(5) node[pos=0.5](y2){};
        \path (2)--(5) node[pos=0.7](y3){};
        \draw (y1)+(0,0.5) node[v,label={[xshift=0.0mm,yshift=-5.0mm]$z_1$}](z1){};
        \draw (y2)+(0,0.5) node[v,label={[xshift=0.0mm,yshift=-5.0mm]$z_2$}](z2){};
        \draw (y3)+(0,0.5) node[v,label={[xshift=0.0mm,yshift=-5.0mm]$z_3$}](z3){};
        \draw (2)--(z1);
        \draw (z1)--(z2);
        \draw (z2)--(z3);
        \draw (z3)--(5);

        \path (2)--(5) node[pos=0.35](z4){};
        \path (2)--(5) node[pos=0.65](z5){};
        \draw (z4)+(0,-0.5) node[v,label={[xshift=0.0mm,yshift=-5.0mm]$w_{2,5}$}](y4){};
        \draw (z5)+(0,-0.5) node[v,label={[xshift=3.0mm,yshift=-5.0mm]$w_{5,2}$}](y5){};
        \draw (2)--(y4)--(y5)--(5);        
    \end{tikzpicture}
    \caption{The underlying graph $G^*_6$ in the proof of Proposition~\ref{prop:K_6 addable} where square vertices are~$y_{i,j}$ vertices.}
    \label{fig:K_6 addable}
\end{figure}
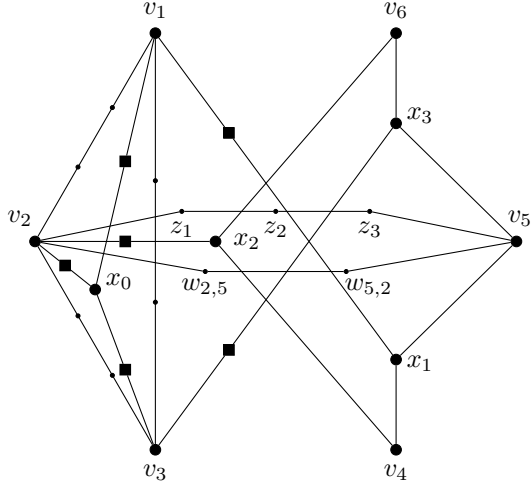

\begin{proposition}\label{prop:K_6 addable}
    Let $G\in\{K_6^\equiv,K_6^{=}\}$, let $\mathcal{A}$ be a set of $3$-vertex cliques in~$G$, and let $H$ be a trigraph with underlying graph $\psi(\theta(G),\sigma_G(\mathcal{A}))$ such that $\DeltaR(H)\leq3$.
    The following are equivalent.
    \begin{enumerate}[label=\rm(\alph*)]
        \item\label{cond:K_6 addable1} $\mathcal{A}$ is an addable set of $G$.
        \item\label{cond:K_6 addable2} There exists $B\subseteq V(G)$ such that both $B$ and $V(G)\setminus B$ are cliques of size $3$ in $G$ and for all $Q\in\mathcal{A}$, $\abs{Q\cap B}$ is even and $Q$ contains at most one degree-$5$ vertex of $G$.
        \item\label{cond:K_6 addable3} For every $X\in\mathcal{A}$, there exists a trigraph $H'$ with underlying graph $\psi^*(\theta(G),\sigma_G(X))$ and an $X$-stable partial \cont{3} from~$H$ to $H'$.
    \end{enumerate}
\end{proposition}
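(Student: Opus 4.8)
The plan is to prove the cycle of implications \ref{cond:K_6 addable1}$\Rightarrow$\ref{cond:K_6 addable2}$\Rightarrow$\ref{cond:K_6 addable3}$\Rightarrow$\ref{cond:K_6 addable1}, following the template of Propositions~\ref{prop:K_4 addable} and~\ref{prop:K_5 addable}. Fix the labelling $V(G)=\{v_1,\dots,v_6\}$ so that the non-edges of $G$ are $\{v_1v_6,v_2v_5,v_3v_4\}$ if $G=K_6^\equiv$ and $\{v_1v_6,v_3v_4\}$ if $G=K_6^{=}$, in which case $v_2,v_5$ are exactly the degree-$5$ vertices of $G$; in both cases the three pairs $\{v_1,v_6\},\{v_3,v_4\},\{v_2,v_5\}$ play a special role. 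A $3$-clique $B$ of $G$ for which $V(G)\setminus B$ is also a clique is precisely a \emph{transversal}, a set choosing one vertex from each of these pairs, and for $K_6^{=}$ the remaining $3$-cliques are exactly the four containing both $v_2$ and $v_5$. A short parity argument shows that there is a transversal $B$ with $\abs{Q\cap B}$ even for every member $Q$ of a family $\mathcal{T}$ of transversals if and only if every two distinct members of $\mathcal{T}$ share exactly one vertex. For \ref{cond:K_6 addable3}$\Rightarrow$\ref{cond:K_6 addable1} I would argue verbatim as in Proposition~\ref{prop:K_4 addable}: if $\mathcal{A}=\emptyset$ then $G$ is $\mathcal{F}_3$-minor-free since $G$ has at most $13<14$ edges and $K_6^-$ is the only member of $\mathcal{F}_3$ on at most six vertices; otherwise pick $X\in\mathcal{A}$, use \ref{cond:K_6 addable3} to obtain a partial \cont{3} from $H$ to a trigraph $H'$ whose underlying graph $\psi^*(\theta(G),\sigma_G(X))$ is a subdivision of $K_4$, deduce $\tww(H')\le 3$ and hence $\tww(H)\le 3$, and conclude by Proposition~\ref{prop:mainforward} that $Y(G,\mathcal{A})$ is $\mathcal{F}_3$-minor-free, since the underlying graph of $H$ contains an \subd{1} of $Y(G,\mathcal{A})$ as an induced subgraph.

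For \ref{cond:K_6 addable1}$\Rightarrow$\ref{cond:K_6 addable2} I would prove the contrapositive. If some $Q=\{v_2,v_5,w\}\in\mathcal{A}$ contains the two degree-$5$ vertices (so $G=K_6^{=}$), then deleting the three edges of $G[Q]$ from $Y(G,\{Q\})$ gives a graph isomorphic to $K_{3,\widehat{1},3}$, so $Y(G,\mathcal{A})$ has a $K_{3,\widehat{1},3}$-minor and $\mathcal{A}$ is not addable; the four choices of $w$ are equivalent under $\mathrm{Aut}(K_6^{=})$, so one computation suffices. Otherwise every member of $\mathcal{A}$ is a transversal and, since \ref{cond:K_6 addable2} fails, no transversal meets every member of $\mathcal{A}$ in an even-sized set, so by the parity fact $\mathcal{A}$ contains transversals $Q_1,Q_2$ with $\abs{Q_1\cap Q_2}\in\{0,2\}$. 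As $K_6^\equiv$ is a spanning subgraph of $K_6^{=}$ with the same three pairs, it suffices to treat $G=K_6^\equiv$; there, in each of the two cases $\abs{Q_1\cap Q_2}=2$ and $\abs{Q_1\cap Q_2}=0$, one checks that $Y(G,\{Q_1,Q_2\})$ has a spanning subgraph consisting of two vertex-disjoint $4$-cycles joined by a perfect matching pairing their vertices in cyclic order, hence isomorphic to $Q_3$, so $\mathcal{A}$ is not addable.

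The heart of the proof is \ref{cond:K_6 addable2}$\Rightarrow$\ref{cond:K_6 addable3}, which I would carry out exactly as the corresponding implication of Proposition~\ref{prop:K_5 addable}. Fix a transversal $B$ witnessing \ref{cond:K_6 addable2}; after relabelling, $B=\{v_1,v_2,v_3\}$, and then $\mathcal{A}$ is contained in the $4$-element set $\mathcal{S}$ of transversals meeting $B$ evenly, which contains $V(G)\setminus B=\{v_4,v_5,v_6\}$. Since $\mathrm{Aut}(G)$ acts transitively on $\mathcal{S}$, we may assume $X=V(G)\setminus B$. By Lemma~\ref{lem:repeat}, it suffices to produce, from a trigraph $H_\mathcal{S}$ with underlying graph $\psi(\theta(G),\sigma_G(\mathcal{S}))$ and $\DeltaR(H_\mathcal{S})\le 3$, an $X$-stable partial \cont{3} to a trigraph with underlying graph $\psi^*(\theta(G),\sigma_G(X))$. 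First apply Lemma~\ref{lem:yoperation} with $W:=\bigcup_{S\in\mathcal{S}}(V(C'_{G,S})\setminus S)$ to delete the interiors of all length-$4$ paths of $\theta(G)$ lying inside a clique of $\mathcal{S}$; for $K_6^{=}$ this leaves in addition a $7$-cycle through $v_2$ and $v_5$ coming from the edge $v_2v_5$, which lies in no transversal. Next collapse the three gadgets $\psi^*(\theta(G),\sigma_G(S))$ with $S\in\mathcal{S}\setminus\{X\}$ using Corollary~\ref{cor:protection1}, or Lemma~\ref{lem:protection1}, or Lemma~\ref{lem:protection2}, according to which of the relevant subdivided edges of $G$ are black and which are red; since each vertex of $G$ lies in exactly two members of $\mathcal{S}$ and each edge of $G$ in at most one, these collapses can be sequenced so that no vertex ever exceeds red degree $3$, always choosing $v_2$ and $v_5$ among the ``protected'' vertices of these lemmas and absorbing the surplus $v_2v_5$-cycle along the way. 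Once the interiors of the three surviving length-$3$ paths of $\theta(G)$ between pairs of $X$ are deleted, the graph is one to which Lemma~\ref{lem:routine} applies (an \subd{1} of $K_{3,3}$, or a subdivision of $K_{3,3}$ satisfying its hypothesis), with the relevant copy of the $1$-subdivision of $K_{1,3}$ formed by $x_{\sigma_G(X)}$ together with $X$; Lemma~\ref{lem:routine} then completes the contraction to a trigraph with underlying graph $\psi^*(\theta(G),\sigma_G(X))$, possibly after a handful of explicit contractions in boundary cases, exactly as in Proposition~\ref{prop:K_5 addable}.

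The main obstacle is precisely this last implication: arranging the three gadget-collapses so that the red degree of every vertex remains at most $3$ at every step --- the binding constraint being the two degree-$5$ vertices $v_2,v_5$ of $K_6^{=}$, which can already carry red degree $3$ --- and, in the $K_6^{=}$ case, disposing of the $7$-cycle through $v_2$ and $v_5$ that survives Lemma~\ref{lem:yoperation}, since $v_2v_5$ is an edge of $K_6^{=}$ belonging to none of the transversals in $\mathcal{S}$.
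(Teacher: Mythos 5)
Your treatments of \ref{cond:K_6 addable1}$\Rightarrow$\ref{cond:K_6 addable2} and \ref{cond:K_6 addable3}$\Rightarrow$\ref{cond:K_6 addable1} are sound and essentially coincide with the paper's (the parity reformulation and the two $Q_3$-subgraph checks are exactly the computations the paper performs, and your normalisation $X=V(G)\setminus B$ is legitimate, since one may simply replace $B$ by $V(G)\setminus X$, which has the opposite parity to every member of $\mathcal{A}$). The gap is in \ref{cond:K_6 addable2}$\Rightarrow$\ref{cond:K_6 addable3}: you describe a strategy but never exhibit the required $X$-stable partial \cont{3}, and you yourself flag the decisive step as an unresolved ``obstacle''. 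Asserting that the three gadget collapses ``can be sequenced so that no vertex ever exceeds red degree~$3$'' while ``absorbing the surplus $v_2v_5$-cycle along the way'' is precisely the statement that needs proof, and it is not routine: Lemma~\ref{lem:protection1}, Corollary~\ref{cor:protection1}, and Lemma~\ref{lem:protection2} can only be invoked when the relevant $t_i$'s have small red degree outside the gadget being collapsed, the degree-$5$ vertices $v_2,v_5$ may already carry red degree~$3$ in $H$, and $X$-stability forces the degree-$5$ vertex lying in $X$ never to exceed its initial red degree (possibly $0$) while the leftover length-$3$ and length-$4$ paths between $v_2$ and $v_5$ are absorbed.

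This is where the actual content of the proposition lies, and the paper's proof is correspondingly bespoke: after a swap normalisation it performs only the \emph{first two} contractions of Lemma~\ref{lem:protection1} on the gadget at $\{v_2,v_4,v_6\}$ so as to reduce $\deg(v_4)$ and $\deg(v_6)$ to $5$ before anything else; it then chooses which of the gadgets at $\{v_1,v_4,v_5\}$ and $\{v_3,v_5,v_6\}$ to collapse first according to the red degree of $v_5$ inside them (using that $w_{5,2}$ and $z_3$ are the only neighbours of $v_5$ outside those gadgets); and it finishes not with Lemma~\ref{lem:routine} but with an explicit list of fourteen contractions that simultaneously dispose of the two $v_2$--$v_5$ paths and fold the remaining structure into $\psi^*(\theta(G),\sigma_G(X))$. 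Your proposed endgame via Lemma~\ref{lem:routine} is likewise unverified: after the collapses the surviving graph is not an \subd{1} of $K_{3,3}$ (the extra $v_2$--$v_5$ cycle and the three $x_i$-gadgets attached to the vertices of $X$ interfere), and it is not clear that a configuration meeting the hypotheses of Lemma~\ref{lem:routine} is reachable within the red-degree budget, which is presumably why the paper resorts to the explicit finish. Without a concrete sequence of contractions, together with the ordering and parameter choices and the accompanying red-degree bookkeeping, the implication \ref{cond:K_6 addable2}$\Rightarrow$\ref{cond:K_6 addable3} remains unproved.
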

\begin{proof}
    Assume that the vertices of~$G$ are labelled as in Figure~\ref{fig:bagtypes}.

    \ref{cond:K_6 addable1}$\Rightarrow$\ref{cond:K_6 addable2}:
    If $\mathcal{A}$ contains a clique~$T$ with $\{v_2,v_5\}\subseteq T$, then it is easy to check that $Y(G,\{T\})$ has a subgraph isomorphic to $K_{3,\widehat{1},3}$, where $T$ corresponds to one of the independent sets of size~$3$, and so $\mathcal{A}$ is not an addable set of $G$.
    Thus, $\mathcal A$ has no cliques containing both~$v_2$ and~$v_5$, and therefore each clique in $\mathcal A$ is also a clique in $K_6^\equiv$.
    Since $K_6^\equiv=K_6^{=}-\{v_2v_5\}$ and $Y(K_6^\equiv,\mathcal A)$ is a subgraph of $Y(K_6^=,\mathcal A)$, it is enough to prove this for the case that $G=K_6^\equiv$.

    We may assume that $\mathcal A\neq\emptyset$.
    Let $X\in\mathcal A$.
    By symmetry between $v_i$ and $v_{7-i}$ for each $i\in\{1,2,3\}$, we may assume that $X=\{v_1,v_2,v_3\}$.
    We claim that $B:=\{v_4,v_5,v_6\}$ satisfies the required properties.
    Suppose for contradiction that~$\mathcal{A}$ contains $Q$ such that $\abs{Q\cap B}$ is odd.
    If $Q=B$, then $Y(G,\mathcal A)$ has two disjoint cycles with vertex sets $\{v_1,v_X,v_3,v_5\}$ and $\{v_4,v_2,v_6,v_Q\}$.
    Since $Y(G,\mathcal{A})$ has edges $v_1v_4$, $v_Xv_2$, $v_3v_6$, and $v_5v_Q$, it has a subgraph isomorphic to~$Q_3$, contradicting~\ref{cond:K_6 addable1}.
    
    Hence, $\abs{Q\cap B}=1$.
    By symmetry in $\{v_4,v_5,v_6\}$, we may assume that $v_5\in Q$, and therefore $Q=\{v_1,v_3,v_5\}$.
    Then $Y(G,\mathcal A)$ has two disjoint cycles with vertex sets $\{v_1,v_X,v_3,v_Q\}$ and $\{v_4,v_2,v_6,v_5\}$.
    Since $Y(G,\mathcal{A})$ has edges $v_1v_4$, $v_Xv_2$, $v_3v_6$, and $v_Qv_5$, it has a subgraph isomorphic to~$Q_3$, contradicting~\ref{cond:K_6 addable1}.
    
    \ref{cond:K_6 addable2}$\Rightarrow$\ref{cond:K_6 addable3}:
    If $\mathcal{A}=\emptyset$, then the statement trivially holds, so we assume that $\mathcal{A}$ is nonempty.
    Let $X\in \mathcal A$.
    Since $X$ contains exactly one of $v_i$ and $v_{7-i}$ for each $i\in\{1,2,3\}$ by \ref{cond:K_6 addable2}, we may assume without loss of generality that $X=\{v_1,v_2,v_3\}$.
    Then $B$ is equal to one of $\{v_4,v_5,v_6\}$, $\{v_1,v_2,v_4\}$, 
    $\{v_1,v_3,v_5\}$, and $\{v_2,v_3,v_6\}$.
    In all cases of~$B$, we deduce that 
    \[
        \mathcal{A}\subseteq\mathcal{S}:=\{\{v_1,v_2,v_3\},\{v_2,v_4,v_6\},\{v_3,v_5,v_6\},\{v_1,v_4,v_5\}\}.
    \]
    Let $\Gamma:=K_6^{=}$.
    There exists a trigraph $H_\mathcal{S}$ with underlying graph $\psi(\theta(\Gamma),\sigma_{\Gamma}(\mathcal{S}))$ such that~$H$ is an induced subgraph of $H_\mathcal{S}$ and $\DeltaR(H_\mathcal{S})=\DeltaR(H)\leq3$.

    By Lemma~\ref{lem:repeat}, it suffices to find a trigraph~$H'$ with underlying graph $\psi^*(\theta(\Gamma),\sigma_G(X))$ and an $X$-stable partial \cont{3} from~$H_\mathcal{S}$ to~$H'$.
    Let
    \[
        W:=\bigcup_{S\in\mathcal{S}}(V(C'_{\Gamma,S})\setminus S)\quad\text{ and }\quad Z:=V(\theta(\Gamma))\setminus W.
    \]
    In other words, $W$ is the set of all internal vertices of length-$4$ paths of~$\theta(\Gamma)$ between two vertices of~$\Gamma$, except for the length-$4$ path of $\theta(\Gamma)$ between~$v_2$ and~$v_5$.
    We first apply Lemma~\ref{lem:yoperation} to obtain a $Z$-stable partial \cont{3} from~$H_\mathcal{S}$ to a trigraph~$H^*_1$ with underlying graph $G^*_1:=\psi(\theta(\Gamma),\sigma_\Gamma(\mathcal{S}))-W$.
    For each edge $v_iv_j$ of $\Gamma$, let $w_{i,j}$ and $w_{j,i}$ be the internal vertices of the unique length-$3$ path in $H^*_1$ between $v_i$ and $v_j$ such that $v_iw_{i,j}\in E(H^*_1)$.
    Let $z_1$, $z_2$, and $z_3$ be the internal vertices of the unique length-$4$ path in $H^*_1$ between~$v_2$ and~$v_5$ such that $\{v_2z_1,v_5z_3\}\subseteq E(H^*_1)$.
    Let $x_0$ be the unique vertex of degree~$3$ in $\psi^*(\theta(\Gamma),\sigma_\Gamma(X))-V(\Gamma)$.
    For each $i\in[3]$, let $y_{0,i}$ be the common neighbour of~$x_0$ and~$v_i$ in~$H^*_1$, let $S_i:=\sigma_\Gamma(\{v_i,v_4,v_5,v_6\}\setminus\{v_{7-i}\})$, let $x_i$ be the unique vertex of degree~$3$ in $\psi^*(\theta(\Gamma),S_i)-V(\Gamma)$, and for each $j\in\{i,4,5,6\}\setminus\{7-i\}$, let~$y_{i,j}$ be the common neighbour of $x_i$ and~$v_j$ in~$H^*_1$.

    By swapping $v_1$ and $v_3$ and swapping $v_4$ and $v_6$ if necessary, we may assume that $v_4w_{4,2}\in R(H^*_1)$ or $v_6w_{6,2}\in B(H^*_1)$.
    The following two contractions are the first and the second contractions given by Lemma~\ref{lem:protection1} with $(t_1,t_2,t_3,t_4):=(v_4,v_6,v_2,x_2)$.
    Let $t'_1:=w_{4,6}$ if $v_4w_{4,2}\in B(H^*_1)$, and otherwise let $t'_1:=w_{4,2}$.
    Let $H^*_2$ be the trigraph obtained from $H^*_1$ by contracting $\{t'_1,y_{2,4}\}$ to~$y_{2,4}$, let~$H^*_3$ be the trigraph obtained from $H^*_2$ by contracting $\{w_{6,4},y_{2,6}\}$ to~$y_{2,6}$, and let $G^*_3$ be the underlying graph of~$H^*_3$.
    Note that $H^*_1,H^*_2,H^*_3$ is an $X$-stable partial \cont{3}, and $\deg_{H^*_3}(v_4)=\deg_{H^*_3}(v_6)=5$.
    
    For each $i\in\{1,3\}$, let $F_i$ be the subgraph of $H_\mathcal{S}$ with underlying graph $\psi^*(\theta(\Gamma),S_i)$.
    Let $\ell_1$ be an integer in $\{1,3\}$ such that $\rdeg_{F_{\ell_1}}(v_5)\geq\rdeg_{F_{4-{\ell_1}}}(v_5)$, and let $\ell_2:=4-\ell_1$.
    Note that if $\rdeg_{F_{\ell_1}}(v_5)=0$, then $\rdeg_{H^*_3}(v_5)\leq2$, because $w_{5,2}$ and $z_3$ are the only neighbours of $v_5$ in $H^*_3$ which are not in $V(F_1)\cup V(F_3)$.
    Therefore, $v_5$ has at most two incident red edges in $H^*_3-E(F_{\ell_1})$.
    For $i$ from $1$ to~$2$, by applying Corollary~\ref{cor:protection1} to $F:=F_{\ell_i}$ with $(t_1,t_2,t_3,t_4):=(v_{\ell_i+3},v_5,v_{\ell_i},x_{\ell_i})$, we find a $(V(H^*_{i+2})\setminus V(F_{\ell_i}))\cup\{v_{\ell_i}\}$-stable $(\{v_4,v_5,v_6\}\setminus\{v_{7-\ell_i}\})$-fixing partial \cont{3} from~$H^*_{i+2}$ to a trigraph~$H^*_{i+3}$ whose underlying graph $G^*_{i+3}$ is the graph obtained from
    \[
        G^*_{i+2}-\{w_{\ell_i,\ell_i+3},w_{\ell_i+3,\ell_i},w_{\ell_i,5},w_{5,\ell_i},w_{\ell_i+3,5},w_{5,\ell_i+3},y_{\ell_i,\ell_i+3},y_{\ell_i,5}\}
    \]
    by adding edges $v_{\ell_i+3}x_{\ell_i}$ and $v_5x_{\ell_i}$.
    
    We now apply Lemma~\ref{lem:protection1} with $(t_1,t_2,t_3,t_4):=(v_4,v_6,v_2,x_2)$.
    Note that we have already performed the first two contractions of the partial \cont{3} given by Lemma~\ref{lem:protection1}.
    By performing the remaining contractions, we find a $(V(H^*_5)\setminus V(F_2))\cup\{v_2\}$-stable $\{v_4,v_6\}$-fixing partial \cont{3} from $H^*_5$ to a trigraph~$H^*_6$ whose underlying graph $G^*_6$ is the graph obtained from
    \[
        G^*_5-\{w_{2,4},w_{4,2},w_{2,6},w_{6,2},w_{4,6},y_{2,4},y_{2,6}\}
    \]
    by adding edges $v_4x_2$ and $v_6x_2$; see Figure~\ref{fig:K_6 addable}.
    We then contract the following fourteen pairs of vertices in order.
    \[
        \begin{array}{llll}
            1)\ \{v_4,x_2\}\text{ to }x_2,
            &2)\ \{v_6,x_2\}\text{ to }x_2,
            &3)\ \{w_{2,5},z_1\}\text{ to }w_{2,5},
            &4)\ \{w_{5,2},z_3\}\text{ to }w_{5,2},\\
            5)\ \{w_{5,2},z_2\}\text{ to }w_{5,2},
            &6)\ \{w_{5,2},v_5\}\text{ to }v_5,
            &7)\ \{w_{2,5},y_{2,2}\}\text{ to }y_{2,2},
            &8)\ \{x_2,v_5\}\text{ to }v_5,\\
            9)\ \{v_5,x_1\}\text{ to }v_5,
            &10)\ \{v_5,x_3\}\text{ to }v_5,
            &11)\ \{y_{0,1},y_{1,1}\}\text{ to }y_{0,1},
            &12)\ \{y_{0,2},y_{2,2}\}\text{ to }y_{0,2},\\
            13)\ \{y_{0,3},y_{3,3}\}\text{ to }y_{0,3},
            &14)\ \{x_0,v_5\}\text{ to }x_0.
        \end{array}
    \]
    Thus, we find an $X$-stable partial \cont{3} from~$H_\mathcal{S}$ to a trigraph~$H'$ with underlying graph $\psi^*(\theta(G),\sigma_G(X))$.

    \ref{cond:K_6 addable3}$\Rightarrow$\ref{cond:K_6 addable1}:
    If $\mathcal{A}$ is empty, then \ref{cond:K_4 addable1} holds because $K_6^{=}$ is $\mathcal{F}_3$-minor-free.
    Thus, we may assume that there is $X\in\mathcal{A}$.
    By~\ref{cond:K_6 addable3}, there exists a trigraph~$H'$ with underlying graph $\psi^*(\theta(G),\sigma_G(X))$ and an $X$-stable partial \cont{3} from~$H$ to~$H'$.
    Since the underlying graph of $H'$ is a subdivision of $K_4$, $\tww(H')\leq3$, so that $\tww(H)\leq3$.
    Since the underlying graph of $H$ has an \subd{1} of~$Y(G,\mathcal{A})$ as an induced subgraph, by Proposition~\ref{prop:mainforward}, $Y(G,\mathcal{A})$ is $\mathcal{F}_3$-minor-free, and therefore~$\mathcal{A}$ is an addable set of~$G$.
\end{proof}

\begin{proposition}\label{prop:spinner addable}
    Let $G:=\overline{C_6}+K_1$, let $\mathcal{A}$ be a set of $3$-vertex cliques in~$G$, and let $H$ be a trigraph with underlying graph $\psi(\theta(G),\sigma_G(\mathcal{A}))$ such that $\DeltaR(H)\leq3$.    
    The following are equivalent.
    \begin{enumerate}[label=\rm(\alph*)]
        \item\label{cond:spinner addable1} $\mathcal{A}$ is an addable set of $G$.
        \item\label{cond:spinner addable2} $\abs{Q\cap B}$ is even
        for all $Q\in \mathcal{A}$
        and all $B\subseteq V(G)$ for which $G[B]$ is a cycle of length $4$.
        \item\label{cond:spinner addable3} For every $X\in\mathcal{A}$, there exists a trigraph $H'$ with underlying graph $\psi^*(\theta(G),\sigma_G(X))$ and an $X$-stable partial \cont{3} from~$H$ to $H'$.
    \end{enumerate}
\end{proposition}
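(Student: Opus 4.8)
The plan is to follow the three-step scheme used for Propositions~\ref{prop:K_4 addable}--\ref{prop:K_6 addable}. I first fix the labelling of $G$ from Figure~\ref{fig:bagtypes}: $v_7$ is the apex adjacent to all other vertices, $\{v_1,v_2,v_3\}$ and $\{v_4,v_5,v_6\}$ are the two triangles of the prism $\overline{C_6}$, and $v_1v_4$, $v_2v_5$, $v_3v_6$ are its matching edges. The key preliminary observation is that, since $v_7$ is adjacent to every other vertex, no set inducing $C_4$ can contain $v_7$; hence the sets $B$ with $G[B]\cong C_4$ are precisely the three ``square faces'' $\{v_1,v_2,v_4,v_5\}$, $\{v_2,v_3,v_5,v_6\}$, and $\{v_1,v_3,v_4,v_6\}$ of the prism. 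A $3$-clique meets each such $B$ in at most two vertices, since $C_4$ is triangle-free, and a short check then shows that condition~\ref{cond:spinner addable2} is equivalent to: every $Q\in\mathcal A$ lies in
\[
    \mathcal S:=\bigl\{\{v_1,v_2,v_3\},\ \{v_4,v_5,v_6\},\ \{v_7,v_1,v_4\},\ \{v_7,v_2,v_5\},\ \{v_7,v_3,v_6\}\bigr\}.
\]
These five triangles partition $E(G)$, and $\operatorname{Aut}(G)$ acts on $\mathcal S$ with exactly two orbits: the two prism triangles, and the three triangles through $v_7$.

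For \ref{cond:spinner addable1}$\Rightarrow$\ref{cond:spinner addable2} I argue by contraposition. If some $Q\in\mathcal A$ is not in $\mathcal S$, then up to an automorphism of $G$ we may take $Q=\{v_7,v_1,v_2\}$, and then in $Y(G,\{Q\})$ the two vertex-disjoint $4$-cycles $v_Qv_1v_3v_2$ and $v_7v_4v_5v_6$, together with the matching $\{v_Qv_7,\,v_1v_4,\,v_2v_5,\,v_3v_6\}$ between them, form a subgraph isomorphic to $V_8$. Since $Y(G,\{Q\})$ is a subgraph of $Y(G,\mathcal A)$, the set $\mathcal A$ is not addable.

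The implication \ref{cond:spinner addable3}$\Rightarrow$\ref{cond:spinner addable1} is word for word as in Propositions~\ref{prop:K_4 addable}--\ref{prop:K_6 addable}: the underlying graph of $H'$ is a subdivision of $K_4$, so $\tww(H')\le 3$ and hence $\tww(H)\le 3$; and the underlying graph of $H$ contains an \subd{1} of $Y(G,\mathcal A)$ as an induced subgraph, so by Proposition~\ref{prop:mainforward} the multigraph $Y(G,\mathcal A)$ is $\mathcal F_3$-minor-free, that is, $\mathcal A$ is an addable set of $G$.

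The substance of the proof, and the step I expect to be the main obstacle, is \ref{cond:spinner addable2}$\Rightarrow$\ref{cond:spinner addable3}. Assuming \ref{cond:spinner addable2} we have $\mathcal A\subseteq\mathcal S$, and since $\mathcal A$ may equal $\mathcal S$, Lemma~\ref{lem:repeat} reduces the task to producing, for every $X\in\mathcal S$, an $X$-stable partial \cont{3} from a trigraph $H_{\mathcal S}$ with underlying graph $\psi(\theta(G),\sigma_G(\mathcal S))$ and $\DeltaR(H_{\mathcal S})=\DeltaR(H)\le 3$ down to a trigraph with underlying graph $\psi^*(\theta(G),\sigma_G(X))$; by the orbit description it suffices to treat $X=\{v_1,v_2,v_3\}$ and $X=\{v_7,v_1,v_4\}$. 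As in Propositions~\ref{prop:K_5 addable} and~\ref{prop:K_6 addable}, I would first apply Lemma~\ref{lem:yoperation} with $W:=\bigcup_{S\in\mathcal S}(V(C'_{G,S})\setminus S)$ to obtain a $Z$-stable partial \cont{3} from $H_{\mathcal S}$ to a trigraph with underlying graph $\psi(\theta(G),\sigma_G(\mathcal S))-W$, that is, the $2$-subdivision of $G$ together with the vertex $x_S$ and its three incident length-$2$ paths at each triangle $S\in\mathcal S$. Then, for each of the four triangles $S\in\mathcal S\setminus\{X\}$, I would collapse $x_S$ and its three incident subdivided edges by means of Corollary~\ref{cor:protection1} and Lemma~\ref{lem:protection2}, and finally either invoke Lemma~\ref{lem:routine} on an induced subdivision of $K_{3,3}$, taking $H_0$ to be the $1$-subdivided $K_{1,3}$ centred at $x_X$, or complete the contraction by an explicit short list of contractions, exactly as in the closing steps of the proofs of Propositions~\ref{prop:K_4 addable} and~\ref{prop:K_6 addable}. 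The delicate point is that $v_7$ belongs to three of the five triangles of $\mathcal S$, so it carries many incident edges throughout; the four collapses must therefore be carried out in a carefully chosen order, and at each triangle one must pick the correct one of the two initial contractions offered by Lemma~\ref{lem:protection1} and Corollary~\ref{cor:protection1}, so that no red degree ever exceeds $3$. This is where a small case analysis on which incident edges are already red will be required, in the style of the $q(i,j)$ bookkeeping in the proof of Proposition~\ref{prop:K_5 addable}.
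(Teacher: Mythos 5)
Your two outer implications are essentially right. For \ref{cond:spinner addable1}$\Rightarrow$\ref{cond:spinner addable2} the paper exhibits a $Q_3$ subgraph (disjoint $4$-cycles $v_1v_Qv_2v_3$ and $v_4v_7v_5v_6$ joined by the order-preserving matching $v_1v_4,v_Qv_7,v_2v_5,v_3v_6$), whereas your cycles $v_Qv_1v_3v_2$ and $v_7v_4v_5v_6$ with the twisted matching yield $V_8$; since both $Q_3$ and $V_8$ lie in $\mathcal{F}_3$, either witness works (you should justify the isomorphism with $V_8$ rather than assert it, e.g.\ via non-bipartiteness, but this is minor). Your identification of condition \ref{cond:spinner addable2} with $\mathcal{A}\subseteq\mathcal{S}$, the reduction via Lemma~\ref{lem:repeat}, and the symmetry reduction to $X=\{v_1,v_2,v_3\}$ or $X=\{v_1,v_4,v_7\}$ all coincide with the paper, as does your \ref{cond:spinner addable3}$\Rightarrow$\ref{cond:spinner addable1}.

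The genuine gap is \ref{cond:spinner addable2}$\Rightarrow$\ref{cond:spinner addable3}: you describe the right strategy but explicitly defer the part that constitutes the proof, namely an actual $X$-stable partial \cont{3} with the red-degree bookkeeping at $v_7$. The paper's proof is exactly this execution, and it is not a routine ordering argument: after Lemma~\ref{lem:yoperation} it performs only the \emph{first three} contractions of Lemma~\ref{lem:protection1} on the triangle $\{v_4,v_5,v_6\}$ so that $v_4,v_5,v_6$ drop to degree~$5$ before anything else; then, for $X=\{v_1,v_2,v_3\}$, it collapses the three apex triangles via Corollary~\ref{cor:protection1} in order of decreasing red degree $q(i)$ of $v_7$ inside $F_i$ (using $q(1)+q(2)+q(3)\le 3$), completes the deferred contractions of Lemma~\ref{lem:protection1} on $F_5$, and only then invokes Lemma~\ref{lem:routine}; for $X=\{v_1,v_4,v_7\}$ it needs a different interleaving, starting with two contractions of Lemma~\ref{lem:protection1} on $\{v_1,v_2,v_3\}$ with the pivot $k$ chosen according to the colour of $v_2w_{2,1}$, then Corollary~\ref{cor:protection1} on $F_2$ and $F_3$, then the two deferred completions, then Lemma~\ref{lem:routine}. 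Asserting that ``a carefully chosen order'' with the correct initial contractions exists is precisely the claim to be proven; until you exhibit such a schedule and verify that no red degree ever exceeds~$3$ (in particular at $v_7$, which after Lemma~\ref{lem:yoperation} has nine neighbours and may already carry three red edges spread over its three apex triangles), condition \ref{cond:spinner addable3} is not established. A small additional remark: the paper's proof of this proposition does not use Lemma~\ref{lem:protection2} at all; the mechanism is partial applications of Lemma~\ref{lem:protection1} interleaved with Corollary~\ref{cor:protection1}, so your proposed toolkit would also need adjusting.
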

\begin{proof}
    Assume that the vertices of~$G$ are labelled as in Figure~\ref{fig:bagtypes}.

    \ref{cond:spinner addable1}$\Rightarrow$\ref{cond:spinner addable2}:
    Suppose for contradiction that $\abs{Q\cap B}$ is odd for some $Q\in \mathcal A$ and some $B\subseteq V(G)$ such that that $G[B]$ is a cycle of length $4$.
    Since no subset of $B$ is a clique of size $3$ in $G$, we have $\abs{Q\cap B}=1$.
    By symmetry, we may assume that $B=\{v_1,v_4,v_6,v_3\}$ and that $Q=\{v_1,v_2,v_7\}$.
    Then $Y(G,\mathcal A)$ has two disjoint cycles with vertex sets $\{v_1,v_Q,v_2,v_3\}$ and $\{v_4,v_7,v_5,v_6\}$.
    Since $Y(G,\mathcal{A})$ has edges $v_1v_4$, $v_Qv_7$, $v_2v_5$, and $v_3v_6$, it has a subgraph isomorphic to~$Q_3$, contradicting~\ref{cond:spinner addable1}.

    \ref{cond:spinner addable2}$\Rightarrow$\ref{cond:spinner addable3}:
    If $\mathcal{A}=\emptyset$, then the statement trivially holds, so we may assume that $\mathcal{A}$ is nonempty.
    We can deduce that $\mathcal{A}\subseteq\mathcal{S}:=\{\{v_1,v_2,v_3\},\{v_4,v_5,v_6\},\{v_1,v_4,v_7\},\{v_2,v_5,v_7\},\{v_3,v_6,v_7\}\}$ by considering the length-$4$ induced cycles on $\{v_1,v_4,v_6,v_3\}$ and $\{v_2,v_5,v_4,v_1\}$.
    There exists a trigraph $H_\mathcal{S}$ with underlying graph $\psi(\theta(G),\sigma_G(\mathcal{S}))$ such that~$H$ is an induced subgraph of~$H_\mathcal{S}$ and $\DeltaR(H_\mathcal{S})=\DeltaR(H)\leq3$.

    By Lemma~\ref{lem:repeat}, it suffices to find a trigraph~$H'$ with underlying graph $\psi^*(\theta(G),\sigma_G(X))$ and an $X$-stable partial \cont{3} from~$H_\mathcal{S}$ to~$H'$.
    Let
    \[
        W:=\bigcup_{S\in\mathcal{S}}(V(C'_{G,S})\setminus S)\quad\text{ and }\quad Z:=V(\theta(G))\setminus W.
    \]
    In other words, $W$ is the set of all internal vertices of length-$4$ paths of~$\theta(G)$ between two vertices of~$G$.
    We first apply Lemma~\ref{lem:yoperation} to obtain a $Z$-stable partial \cont{3} from~$H_\mathcal{S}$ to a trigraph~$H^*_1$ with underlying graph $G^*_1:=\psi(\theta(G),\sigma_G(\mathcal{S}))-W$.
    For each edge $v_iv_j$ of~$G$, let~$w_{i,j}$ and~$w_{j,i}$ be the internal vertices of the unique length-$3$ path in $H^*_1$ between $v_i$ and $v_j$ such that $v_iw_{i,j}\in E(H^*_1)$.
    For each $i\in[3]$, let $S_i:=\sigma_G(\{v_i,v_{i+3},v_7\})$.
    Let $S_4:=\sigma_G(\{v_1,v_2,v_3\})$ and let $S_5:=\sigma_G(\{v_4,v_5,v_6\})$.
    For each $i\in[5]$, let~$F_i$ be the subgraph of~$H_\mathcal{S}$ with underlying graph $\psi^*(\theta(G),S_i)$ and let $x_i$ be the unique vertex of degree~$3$ in $F_i-V(G)$.
    For each $i\in[3]$ and each $j\in\{i,i+3,7\}$, let~$y_{i,j}$ be the common neighbour of~$x_i$ and~$v_j$ in~$H^*_1$.
    For each $j\in[3]$, let~$y_{4,j}$ be the common neighbour of~$x_4$ and~$v_j$ in~$H^*_1$.
    For each $j\in\{4,5,6\}$, let~$y_{5,j}$ be the common neighbour of~$x_5$ and~$v_j$.
    For each $i\in[3]$, let $q(i)$ be the red degree of~$v_7$ in~$F_i$.
    Note that $q(1)+q(2)+q(3)=\rdeg_{H^*_1}(v_7)\leq3$.

    By symmetry, we may assume that $X$ is either $\{v_1,v_2,v_3\}$ or $\{v_1,v_4,v_7\}$, depending on whether $X$ contains~$v_7$ or not.
    By swapping~$v_2$ and $v_3$ and swapping $v_5$ and $v_6$ if necessary, we may assume that $v_5w_{5,4}\in R(H^*_1)$ or $v_6w_{6,4}\in B(H^*_1)$.
    The following three contractions are the first three contractions given by Lemma~\ref{lem:protection1} with $(t_1,t_2,t_3,t_4):=(v_5,v_6,v_4,x_5)$.
    Let $t'_1:=w_{5,6}$ if $v_5w_{5,4}\in B(H^*_1)$, and otherwise let $t'_1:=w_{5,4}$.
    Let $H^*_2$ be the trigraph obtained from~$H^*_1$ by contracting $\{t'_1,y_{5,5}\}$ to $y_{5,5}$, let $H^*_3$ be the trigraph obtained from $H^*_2$ by contracting $\{w_{6,5},y_{5,6}\}$ to $y_{5,6}$, and let $H^*_4$ be the trigraph obtained from $H^*_3$ by contracting $\{w_{4,5},y_{5,4}\}$ to~$y_{5,4}$.
    Note that $H^*_1,\ldots,H^*_4$ is an $X$-stable partial \cont{3}, and $\deg_{H^*_4}(v_j)=5$ for every $j\in\{4,5,6\}$.
    We are going to find an $X$-stable partial \cont{3} from~$H^*_4$ to a trigraph $H'$ with underlying graph $\psi^*(\theta(G),\sigma_G(X))$.

    Suppose first that $X=\{v_1,v_2,v_3\}$.
    Let $\ell_1$, $\ell_2$, and $\ell_3$ be distinct integers in $[3]$ with $q(\ell_1)\geq q(\ell_2)\geq q(\ell_3)$.
    For~$i$ from~$1$ to~$3$, by applying Corollary~\ref{cor:protection1} to $F:=F_{\ell_i}$ with $(t_1,t_2,t_3,t_4):=(v_{\ell_i+3},v_7,v_{\ell_i},x_{\ell_i})$, we find a $(V(H^*_{i+3})\setminus V(F_{\ell_i}))\cup\{v_{\ell_i}\}$-stable $\{v_{\ell_i+3},v_7\}$-fixing partial \cont{3} from~$H^*_{i+3}$ to a trigraph~$H^*_{i+4}$ whose underlying graph $G^*_{i+4}$ is the graph obtained from
    \[
        G^*_{i+3}-\{w_{\ell_i,\ell_i+3},w_{\ell_i+3,\ell_i},w_{\ell_i,7},w_{7,\ell_i},w_{\ell_i+3,7},w_{7,\ell_i+3},y_{\ell_i,\ell_i+3},y_{\ell_i,7}\}
    \]
    by adding edges $v_{\ell_i+3}x_{\ell_i}$ and $v_7x_{\ell_i}$.
    
    We apply Lemma~\ref{lem:protection1} to $F:=F_5$ with $(t_1,t_2,t_3,t_4):=(v_5,v_6,v_4,x_5)$.
    Note that we have already performed the first three contractions of the partial \cont{3} given by Lemma~\ref{lem:protection1}.
    By performing the remaining contractions, we find a $(V(H^*_7)\setminus V(F_5))\cup\{v_4\}$-stable $\{v_5,v_6\}$-fixing partial \cont{3} from $H^*_7$ to a trigraph~$H^*_8$ whose underlying graph $G^*_8$ is the graph obtained from
    \[
        G^*_7-\{w_{5,4},w_{4,6},w_{6,4},w_{5,6},y_{5,5},y_{5,6}\}
    \]
    by adding edges $v_5x_5$ and $v_6x_5$.
    Let $H_0:=G^*_8[N_{G^*_8}^2[x_4]]$.
    Note that $G_0:=G^*_8-\{w_{1,2},w_{2,1},w_{1,3},w_{3,1},w_{2,3},w_{3,2}\}$ is a subdivision of~$K_{3,3}$ and that $G_0-V(H_0)$ contains a subdivision of~$K_{2,3}$ as a subgraph.
    By Lemma~\ref{lem:routine} with $H_0$, there exists an $X$-stable partial \cont{3} from~$H^*_8$ to a trigraph $H'$ with underlying graph $\psi^*(\theta(G),\sigma_G(X))$.
    Thus, we find an $X$-stable partial \cont{3} from~$H_\mathcal{S}$ to~$H'$.

    We now assume that $X=\{v_1,v_4,v_7\}$.
    Let $k:=2$ if $v_2w_{2,1}\in R(H^*_4)$, and otherwise let $k:=3$.
    We first contract the first two contractions given by Lemma~\ref{lem:protection1} with $(t_1,t_2,t_3,t_4):=(v_k,v_{5-k},v_1,x_4)$.
    Let $t'_1:=w_{k,5-k}$ if $v_kw_{k,1}\in B(H^*_4)$, and otherwise let $t'_1:=w_{k,1}$.
    Let $H^*_5$ be the trigraph obtained from $H^*_4$ by contracting $\{t'_1,y_{4,k}\}$ to~$y_{4,k}$, let $H^*_6$ be the trigraph obtained from $H^*_5$ by contracting $\{w_{5-k,k},y_{4,5-k}\}$ to~$y_{4,5-k}$, and let $G^*_6$ be the underlying graph of $H^*_6$. 
    Note that $H^*_1,\ldots,H^*_6$ is an $X$-stable partial \cont{3} and $\deg_{H^*_6}(v_2)=\deg_{H^*_6}(v_3)=5$.

    Thus, for each $i\in\{2,3\}$, $v_i$ has at most two incident red edges in $H^*_6-E(F_i)$.
    For~$i$ from $2$ to $3$, by applying Corollary~\ref{cor:protection1} to $F:=F_i$ with $(t_1,t_2,t_3,t_4):=(v_i,v_{i+3},v_7,x_i)$, we find a $(V(H^*_{i+4})\setminus V(F_i))\cup\{v_7\}$-stable $\{v_i,v_{i+3}\}$-fixing partial \cont{3} from~$H^*_{i+4}$ to a trigraph~$H^*_{i+5}$ whose underlying graph~$G^*_{i+5}$ is the graph obtained from
    \[
        G^*_{i+4}-\{w_{i,i+3},w_{i+3,i},w_{i,7},w_{7,i},w_{i+3,7},w_{7,i+3},y_{i,i},y_{i,i+3}\}
    \]
    by adding edges $v_ix_i$ and $v_{i+3}x_i$.
    
    We apply Lemma~\ref{lem:protection1} to $F:=F_4$ with $(t_1,t_2,t_3,t_4):=(v_k,v_{5-k},v_1,x_4)$.
    Note that we have already performed the first two contractions of the partial \cont{3} given by Lemma~\ref{lem:protection1}.
    By performing the remaining contractions, we find a $(V(H^*_8)\setminus V(F_4))\cup\{v_1\}$-stable $\{v_2,v_3\}$-fixing partial \cont{3} from~$H^*_8$ to a trigraph~$H^*_9$ whose underlying graph $G^*_9$ is the graph obtained from
    \[
        G^*_8-\{w_{1,2},w_{2,1},w_{1,3},w_{3,1},w_{2,3},w_{3,2},y_{4,2},y_{4,3}\}
    \]
    by adding edges $v_2x_4$ and $v_3x_4$.
    
    We now apply Lemma~\ref{lem:protection1} to $F:=F_5$ with $(t_1,t_2,t_3,t_4):=(v_5,v_6,v_4,x_5)$.
    We have already performed the first three contractions of the partial \cont{3} given by Lemma~\ref{lem:protection1}.
    By performing the remaining five contractions, we find a $(V(H^*_9)\setminus V(F_5))\cup\{v_4\}$-stable $\{v_5,v_6\}$-fixing partial \cont{3} from $H^*_9$ to a trigraph~$H^*_{10}$ whose underlying graph $G^*_{10}$ is the graph obtained from
    \[
        G^*_9-\{w_{5,4},w_{4,6},w_{6,4},w_{5,6},y_{5,5},y_{5,6}\}
    \]
    by adding edges $v_5x_5$ and $v_6x_5$.
    Note that $G^*_{10}-\{w_{1,4},w_{4,1},w_{1,7},w_{7,1},w_{4,7},w_{7,4}\}$ is an \subd{1} of~$K_{3,3}$.
    By Lemma~\ref{lem:routine} applied to $H^*_{10}-\{w_{1,4},w_{4,1},w_{1,7},w_{7,1},w_{4,7},w_{7,4}\}$ with $H_0:=G^*_{10}[N_{G^*_{10}}^2[x_1]]$, there exists an $X$-stable partial \cont{3} from~$H^*_{10}$ to a trigraph~$H'$ with underlying graph $\psi^*(\theta(G),\sigma_G(X))$.
    Thus, we find an $X$-stable partial \cont{3} from~$H_\mathcal{S}$ to~$H'$.
    
    \ref{cond:spinner addable3}$\Rightarrow$\ref{cond:spinner addable1}:
    If $\mathcal{A}$ is empty, then \ref{cond:K_4 addable1} holds because $\overline{C_6}+K_1$ is $\mathcal{F}_3$-minor-free.
    Thus, we may assume that there is $X\in\mathcal{A}$.
    By~\ref{cond:spinner addable3}, there exists a trigraph~$H'$ with underlying graph $\psi^*(\theta(G),\sigma_G(X))$ and an $X$-stable partial \cont{3} from~$H$ to~$H'$.
    Since the underlying graph of $H'$ is a subdivision of $K_4$, $\tww(H')\leq3$, so that $\tww(H)\leq3$.
    Since the underlying graph of $H$ has an \subd{1} of~$Y(G,\mathcal{A})$ as an induced subgraph, by Proposition~\ref{prop:mainforward}, $Y(G,\mathcal{A})$ is $\mathcal{F}_3$-minor-free, and therefore~$\mathcal{A}$ is an addable set of~$G$.
\end{proof}

\begin{proposition}\label{prop:line addable}
    Let $G:=L(K_{3,3})$, let $\mathcal{A}$ be a set of $3$-vertex cliques in~$G$, and let $H$ be a trigraph with underlying graph $\psi(\theta(G),\sigma_G(\mathcal{A}))$ such that $\DeltaR(H)\leq3$.
    The following hold.
    \begin{enumerate}[label=\rm(\roman*)]
        \item\label{cond:line addable1} $\mathcal{A}$ is an addable set of $G$.
        \item\label{cond:line addable2} For every $X\in\mathcal{A}$, there exists a trigraph $H'$ with underlying graph $\psi^*(\theta(G),\sigma_G(X))$ and an $X$-stable partial \cont{3} from~$H$ to $H'$.
    \end{enumerate}
\end{proposition}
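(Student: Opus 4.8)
The plan is to prove~\ref{cond:line addable2} by constructing explicit contraction sequences, and then to deduce~\ref{cond:line addable1} from it exactly as in the previous propositions. Recall that $L(K_{3,3})\cong K_3\square K_3$, that its only $3$-vertex cliques are its six triangles, namely the three rows and the three columns, with every edge of $L(K_{3,3})$ lying in exactly one of them, and that the automorphism group of $L(K_{3,3})$ acts transitively on these six triangles. Hence for~\ref{cond:line addable2} we may assume that $X$ is one of the rows, and since $\mathcal{A}$ is contained in the set $\mathcal{S}$ of all six triangles of $G$, Lemma~\ref{lem:repeat} reduces the problem to the following: for some trigraph $H_\mathcal{S}$ with underlying graph $\psi(\theta(G),\sigma_G(\mathcal{S}))$, $\DeltaR(H_\mathcal{S})=\DeltaR(H)\leq3$, and $H$ an induced subgraph of $H_\mathcal{S}$, find an $X$-stable partial \cont{3} from $H_\mathcal{S}$ to a trigraph with underlying graph $\psi^*(\theta(G),\sigma_G(X))$.

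First I would set $W:=\bigcup_{S\in\mathcal{S}}(V(C'_{G,S})\setminus S)$ and $Z:=V(\theta(G))\setminus W$; since every edge of $G$ lies in a triangle, $W$ is the set of all internal vertices of the length-$4$ paths of $\theta(G)$, and Lemma~\ref{lem:yoperation} produces a $Z$-stable partial \cont{3} from $H_\mathcal{S}$ to a trigraph $H^*_1$ whose underlying graph $G^*_1$ is the $2$-subdivision of $L(K_{3,3})$ with, for each triangle $S$, a vertex $x_S$ joined to each vertex of $S$ by a length-$2$ path. The core of the argument is then to collapse, one at a time, the five gadgets belonging to the triangles other than $X$: for the current triangle $S$ I would apply Corollary~\ref{cor:protection1}, or Lemma~\ref{lem:protection1} or Lemma~\ref{lem:protection2} when a vertex needs sharper protection, to contract $\psi^*(\theta(G),\sigma_G(S))$ down to the small tree joining $x_S$ to the vertices of $S$ described in those lemmas, choosing the roles $(t_1,t_2,t_3,t_4)$ so as to protect the vertices of $X$ and the grid vertices whose other gadget has not yet been processed. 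Each grid vertex lies in exactly two triangles, one row and one column, and has red degree at most $3$ in $H^*_1$, its red edges being spread over its two gadgets and the two length-$3$ paths of $\theta(G)$ joining it inside its row and column; a counting argument on these red edges, like the one using the counters $q(\cdot)$ in the proofs of Propositions~\ref{prop:K_5 addable} and~\ref{prop:spinner addable}, shows that one can order the five collapses and assign the protected vertices so that every trigraph in the sequence has maximum red degree at most $3$. Once all five gadgets are collapsed and the length-$3$ paths between grid vertices outside $X$ are deleted, the underlying graph contains a subdivision of $K_{3,3}$ together with an induced $1$-subdivision of $K_{1,3}$ whose removal leaves a subdivision of $K_{2,3}$, so Lemma~\ref{lem:routine}, exactly as in Propositions~\ref{prop:K_5 addable} and~\ref{prop:spinner addable}, completes the reduction to $\psi^*(\theta(G),\sigma_G(X))$, which is a subdivision of $K_4$. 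This proves~\ref{cond:line addable2}.

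To obtain~\ref{cond:line addable1}, let $\mathcal{A}_0$ be the set of all six triangles of $G$ and fix $X_0\in\mathcal{A}_0$. Viewing $\psi(\theta(G),\sigma_G(\mathcal{A}_0))$ as a trigraph with no red edge and applying~\ref{cond:line addable2}, we get a partial \cont{3} from it to a trigraph whose underlying graph $\psi^*(\theta(G),\sigma_G(X_0))$ is a subdivision of $K_4$ and hence has twin-width at most $3$, so $\psi(\theta(G),\sigma_G(\mathcal{A}_0))$ has twin-width at most $3$. Since it has an \subd{1} of $Y(G,\mathcal{A}_0)$ as an induced subgraph, Proposition~\ref{prop:mainforward} implies that $Y(G,\mathcal{A}_0)$ is $\mathcal{F}_3$-minor-free; and for an arbitrary set $\mathcal{A}$ of $3$-vertex cliques of $G$ we have $\mathcal{A}\subseteq\mathcal{A}_0$, so $Y(G,\mathcal{A})$ is an induced subgraph of $Y(G,\mathcal{A}_0)$ and is therefore also $\mathcal{F}_3$-minor-free, i.e.\ $\mathcal{A}$ is an addable set of $G$. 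The main obstacle I anticipate is exactly this red-edge bookkeeping during the five gadget collapses: because each grid vertex sits in two of the gadgets being collapsed, a bad order could raise its red degree above $3$, so most of the work of a full proof is the case analysis showing that a safe order of collapses and a safe choice of protected vertices always exist.
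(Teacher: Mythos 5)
Your overall architecture is the same as the paper's: reduce to the set $\mathcal{S}$ of all six triangles via Lemma~\ref{lem:repeat}, remove the length-$4$ paths with Lemma~\ref{lem:yoperation}, eliminate the five gadgets not belonging to $X$ with the protection lemmas, finish with Lemma~\ref{lem:routine}, and deduce~\ref{cond:line addable1} from~\ref{cond:line addable2} via Proposition~\ref{prop:mainforward} (your derivation of~\ref{cond:line addable1} using the all-black trigraph for the full set of six triangles is fine, and even slightly cleaner than the paper's). The gap is exactly where the real content of the proposition lies: you assert that ``a counting argument on these red edges, like the one using the counters $q(\cdot)$,'' shows that the five gadgets can be fully collapsed one at a time in some safe order with safe role assignments, and you yourself note that this case analysis is most of the work --- but you never give it, and it is not a routine transfer of the $q(\cdot)$ arguments from Propositions~\ref{prop:K_5 addable} and~\ref{prop:spinner addable}. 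Collapsing whole gadgets one at a time is genuinely delicate here: in a column triangle $\{v_i,v_{i+3},v_{i+6}\}$ the vertex $v_i\in X$ must (in the worst case, when it carries no red edge) take the stable role $t_3$, so $v_{i+3}$ and $v_{i+6}$ must be $t_1,t_2$; such a vertex may have all three of its red edges inside its row gadget, and Lemma~\ref{lem:protection1}/Corollary~\ref{cor:protection1} then permit it one additional red edge inside the column gadget, giving red degree $4$. So no column need be collapsible first, and a symmetric obstruction can block a row; showing that some full-collapse order always exists requires an argument (for instance: each of $v_4,\dots,v_9$ can be ``dangerous'' for at most one of its two triangles, blocking both rows and all three columns would need seven dangerous vertices among six, and after a gadget is collapsed its triangle's vertices can never again be dangerous), and one must additionally check that every order and role assignment the argument produces still yields a graph meeting the hypotheses of Lemma~\ref{lem:routine} while never touching the length-$3$ paths joining the vertices of $X$.

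The paper sidesteps the ordering question entirely by interleaving rather than ordering: it performs only the first three contractions of Lemma~\ref{lem:protection1} on the two rows outside $X$, which lowers the degree of each of $v_4,\dots,v_9$ to $5$ and hence forces at most two red edges outside each column gadget; only then are the three columns collapsed via Corollary~\ref{cor:protection1}, the two rows completed, and Lemma~\ref{lem:routine} applied. Until you either adopt such an interleaving or actually prove that a safe full-collapse order always exists (and that the resulting leftover is always an \subd{1} of $K_{3,3}$ suitable for Lemma~\ref{lem:routine}), the proof is incomplete. A minor further point: Lemma~\ref{lem:protection2} does not contract a gadget down to the small tree you describe --- it only removes the $t_3$ side of the gadget --- so it cannot serve as a substitute for Lemma~\ref{lem:protection1} in your scheme.
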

\begin{proof}
    We first show~\ref{cond:line addable2}.
    Assume that the vertices of~$G$ are labelled as in Figure~\ref{fig:bagtypes}.
    Suppose without loss of generality that $X=\{v_1,v_2,v_3\}$.
    Let $\mathcal{S}$ be the set of $3$-vertex cliques in~$G$.
    There exists a trigraph $H_\mathcal{S}$ with underlying graph $\psi(\theta(G),\sigma_G(\mathcal{S}))$ such that~$H$ is an induced subgraph of~$H_\mathcal{S}$ and $\DeltaR(H_\mathcal{S})=\DeltaR(H)\leq3$.

    By Lemma~\ref{lem:repeat}, it suffices to find a trigraph~$H'$ with underlying graph $\psi^*(\theta(G),\sigma_G(X))$ and an $X$-stable partial \cont{3} from~$H_\mathcal{S}$ to~$H'$.
    Let
    \[
        W:=\bigcup_{S\in\mathcal{S}}(V(C'_{G,S})\setminus S)\quad\text{ and }\quad Z:=V(\theta(G))\setminus W.
    \]
    In other words, $W$ is the set of all internal vertices of length-$4$ paths of~$\theta(G)$ between two vertices of~$G$.
    We first apply Lemma~\ref{lem:yoperation} to obtain a $Z$-stable partial \cont{3} from~$H_\mathcal{S}$ to a trigraph~$H^*_1$ with underlying graph $G^*_1:=\psi(\theta(G),\sigma_G(\mathcal{S}))-W$.
    For each edge $v_iv_j$ of~$G$, let $w_{i,j}$ and $w_{j,i}$ be the internal vertices of the unique length-$3$ path in $H^*_1$ between $v_i$ and $v_j$ such that $v_iw_{i,j}\in E(H^*_1)$.
    For each $i\in[3]$, let $S_i:=\sigma_G(\{v_i,v_{i+3},v_{i+6}\})$ and let $S_{i+3}:=\sigma_G(\{v_{3i-2},v_{3i-1},v_{3i}\})$.
    For each $\ell\in[6]$, let $F_\ell$ be the subgraph of~$H_\mathcal{S}$ with underlying graph $\psi^*(\theta(G),S_\ell)$, and let $x_\ell$ be the unique vertex of degree~$3$ in~$F_\ell-V(G)$.
    
    For each $i\in[3]$ and each $j\in\{i,i+3,i+6\}$, let $y_{i,j}$ be the common neighbour of~$x_i$ and~$v_j$ in~$H^*_1$.
    For each $i\in[3]$ and each $j\in\{3i-2,3i-1,3i\}$, let $y_{i+3,j}$ be the common neighbour of~$x_{i+3}$ and~$v_{j}$ in~$H^*_1$.

    By swapping $v_4$ and $v_5$ if necessary, we may assume that $v_4w_{4,6}\in R(H^*_1)$ or $v_5w_{5,6}\in B(H^*_1)$.
    We are going to apply the first three contractions given by Lemma~\ref{lem:protection1} with $(t_1,t_2,t_3,t_4):=(v_4,v_5,v_6,x_5)$.
    Let $z_1:=w_{4,5}$ if $v_4w_{4,6}\in B(H^*_1)$, and otherwise let $z_1:=w_{4,6}$.
    Let $H^*_2$ be the trigraph obtained from $H^*_1$ by contracting $\{y_{5,4},z_1\}$ to $y_{5,4}$, let $H^*_3$ be the trigraph obtained from $H^*_2$ by contracting $\{w_{5,4},y_{5,5}\}$ to~$y_{5,5}$, and let~$H^*_4$ be the trigraph obtained from $H^*_3$ by contracting $\{w_{6,4},y_{5,6}\}$ to~$y_{5,6}$.

    Let $\ell_1:=7$ if $v_7w_{7,9}\in R(H^*_4)$, and otherwise let $\ell_1:=8$.
    Let $\ell_2:=15-\ell_1$.
    The following are the first three contractions given by Lemma~\ref{lem:protection1} with $(t_1,t_2,t_3,t_4):=(v_{\ell_1},v_{\ell_2},v_9,x_6)$.
    Let $z_2:=w_{\ell_1,\ell_2}$ if $v_{\ell_1}w_{\ell_1,9}\in B(H^*_4)$, and otherwise let $z_2:=w_{\ell_1,9}$.
    Let $H^*_5$ be the trigraph obtained from~$H^*_4$ by contracting $\{y_{6,\ell_1},z_2\}$ to $y_{6,\ell_1}$, let~$H^*_6$ be the trigraph obtained from $H^*_5$ by contracting $\{w_{\ell_2,\ell_1},y_{6,\ell_2}\}$ to~$y_{6,\ell_2}$, let $H^*_7$ be the trigraph obtained from $H^*_6$ by contracting $\{w_{9,\ell_1},y_{6,9}\}$ to~$y_{6,9}$, and let $G^*_7$ be the underlying graph of $H^*_7$.
    Note that $H^*_1,\ldots,H^*_7$ is an $X$-stable partial \cont{3}, and $\deg_{H^*_7}(v_\ell)=5$ for every $\ell\in[9]\setminus[3]$.

    Thus, for each $i\in[3]$ and each $j\in\{i+3,i+6\}$, $v_j$ has at most two incident red edges in $H^*_7-E(F_i)$.
    For $i$ from $1$ to $3$, by applying Corollary~\ref{cor:protection1} to $F:=F_i$ with $(t_1,t_2,t_3,t_4):=(v_{i+3},v_{i+6},v_i,x_i)$, we find a $(V(H^*_{i+6})\setminus V(F_i))\cup\{v_i\}$-stable $\{v_{i+3},v_{i+6}\}$-fixing partial \cont{3} from $H^*_{i+6}$ to a trigraph~$H^*_{i+7}$ whose underlying graph $G^*_{i+7}$ is the graph obtained from
    \[
        G^*_{i+6}-\{w_{i,i+3},w_{i+3,i},w_{i,i+6},w_{i+6,i},w_{i+3,i+6},w_{i+6,i+3},y_{i,i+3},y_{i,i+6}\}
    \]
    by adding edges $v_{i+3}x_i$ and $v_{i+6}x_i$.

    We apply Lemma~\ref{lem:protection1} to $F:=F_5$ with $(t_1,t_2,t_3,t_4):=(v_4,v_5,v_6,x_5)$.
    Note that we have already performed the first three contractions of the partial \cont{3} given by Lemma~\ref{lem:protection1}.
    By performing the remaining contractions, we find a $(V(H^*_{10})\setminus V(F_5))\cup\{v_6\}$-stable $\{v_4,v_5\}$-fixing partial \cont{3} from~$H^*_{10}$ to a trigraph~$H^*_{11}$ whose underlying graph $G^*_{11}$ is the graph obtained from
    \[
        G^*_{10}-\{w_{4,5},w_{4,6},w_{5,6},w_{6,5},y_{5,4},y_{5,5}\}
    \]
    by adding edges $v_4x_5$ and $v_5x_5$.
    
    We now apply Lemma~\ref{lem:protection1} to $F:=F_6$ with $(t_1,t_2,t_3,t_4):=(v_{\ell_1},v_{\ell_2},v_9,x_6)$.
    We have already performed the first three contractions of the partial \cont{3} given by Lemma~\ref{lem:protection1}.
    By performing the remaining five contractions, we find a $(V(H^*_{11})\setminus V(F_6))\cup\{v_9\}$-stable $\{v_7,v_8\}$-fixing partial \cont{3} from~$H^*_{11}$ to a trigraph~$H^*_{12}$ whose underlying graph $G^*_{12}$ is the graph obtained from
    \[
        G^*_{11}-\{w_{\ell_1,\ell_2},w_{\ell_1,9},w_{\ell_2,9},w_{9,\ell_2},y_{6,\ell_1},y_{6,\ell_2}\}
    \]
    by adding edges $v_7x_6$ and $v_8x_6$.
    Note that $G^*_{12}-\{w_{1,2},w_{2,1},w_{1,3},w_{3,1},w_{2,3},w_{3,2}\}$ is an \subd{1} of $K_{3,3}$.
    By Lemma~\ref{lem:routine} applied to $H^*_{12}-\{w_{1,2},w_{2,1},w_{1,3},w_{3,1},w_{2,3},w_{3,2}\}$ with $H_0:=G^*_{12}[N_{G^*_{12}}^2[x_4]]$, there exists an $X$-stable partial \cont{3} from~$H^*_{12}$ to a trigraph~$H'$ with underlying graph $\psi^*(\theta(G),\sigma_G(X))$.
    Thus, we find an $X$-stable partial \cont{3} from~$H_\mathcal{S}$ to~$H'$.

    We now show~\ref{cond:line addable1}.
    If $\mathcal{A}$ is empty, then \ref{cond:line addable1} holds because $L(K_{3,3})$ is $\mathcal{F}_3$-minor-free.
    Thus, we may assume that there is $X\in\mathcal{A}$.
    By~\ref{cond:line addable2}, there exists a trigraph~$H'$ with underlying graph $\psi^*(\theta(G),\sigma_G(X))$ and an $X$-stable partial \cont{3} from~$H$ to~$H'$.
    Since the underlying graph of $H'$ is a subdivision of~$K_4$, $\tww(H')\leq3$, so that $\tww(H)\leq3$.
    Since the underlying graph of $H$ has an \subd{1} of~$Y(G,\mathcal{A})$ as an induced subgraph, by Proposition~\ref{prop:mainforward}, $Y(G,\mathcal{A})$ is $\mathcal{F}_3$-minor-free, and thus~$\mathcal{A}$ is an addable set of~$G$.
\end{proof}

We have the following corollary of Propositions~\ref{prop:K_4 addable}--\ref{prop:line addable}.

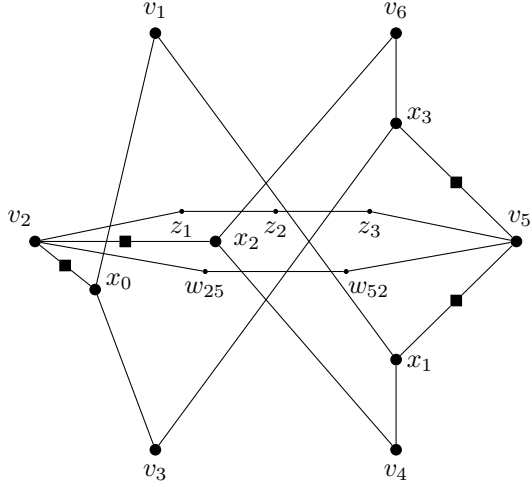
\begin{figure}[t]
    \centering
    \tikzstyle{v}=[circle, draw, solid, fill=black, inner sep=0pt, minimum width=1.5pt]
    \tikzstyle{w}=[circle, draw, solid, fill=black, inner sep=0pt, minimum width=4pt]
    \tikzstyle{u}=[rectangle, draw, fill=black, inner sep=2pt, minimum width=0.5pt]
    \begin{tikzpicture}[scale=0.8]
        \draw (120+60*0:4) node[w,label={[xshift=0.0mm,yshift=0.0mm]$v_1$}](1){};
        \draw (120+60*1:4) node[w,label={[xshift=-2.0mm,yshift=0.0mm]$v_2$}](2){};
        \draw (120+60*2:4) node[w,label={[xshift=0.0mm,yshift=-6.0mm]$v_3$}](3){};
        \draw (120+60*3:4) node[w,label={[xshift=0.0mm,yshift=-6.0mm]$v_4$}](4){};
        \draw (120+60*4:4) node[w,label={[xshift=0.5mm,yshift=0.0mm]$v_5$}](5){};
        \draw (120+60*5:4) node[w,label={[xshift=0.0mm,yshift=0.0mm]$v_6$}](6){};
        \draw (2)+(1,-0.8) node[w,label={[xshift=3.2mm,yshift=-2.0mm]$x_0$}](123){};
        \draw (2)+(3,0) node[w,label={[xshift=4.2mm,yshift=-3.0mm]$x_2$}](246){};
        \draw (6)+(0,-1.5) node[w,label={[xshift=3.2mm,yshift=-2.0mm]$x_3$}](356){};
        \draw (4)+(0,1.5) node[w,label={[xshift=3.2mm,yshift=-3.5mm]$x_1$}](145){};

        \path (123)--(2) node[u,pos=0.5](x1){};
        \draw (123)--(x1);
        \draw (x1)--(2);
        \draw (123)--(1);
        \draw (123)--(3);

        \path (246)--(2) node[u,pos=0.5](x4){};
        \draw (246)--(x4);
        \draw (x4)--(2);
        \draw (246)--(4);
        \draw (246)--(6);
        
        \path (356)--(5) node[u,pos=0.5](x7){};
        \draw (356)--(x7);
        \draw (x7)--(5);
        \draw (356)--(3);
        \draw (356)--(6);

        \path (145)--(5) node[u,pos=0.5](x10){};
        \draw (145)--(x10);
        \draw (x10)--(5);
        \draw (145)--(1);
        \draw (145)--(4);

        \path (2)--(5) node[pos=0.3](y1){};
        \path (2)--(5) node[pos=0.5](y2){};
        \path (2)--(5) node[pos=0.7](y3){};
        \draw (y1)+(0,0.5) node[v,label={[xshift=0.0mm,yshift=-5.0mm]$z_1$}](z1){};
        \draw (y2)+(0,0.5) node[v,label={[xshift=0.0mm,yshift=-5.0mm]$z_2$}](z2){};
        \draw (y3)+(0,0.5) node[v,label={[xshift=0.0mm,yshift=-5.0mm]$z_3$}](z3){};
        \draw (2)--(z1);
        \draw (z1)--(z2);
        \draw (z2)--(z3);
        \draw (z3)--(5);

        \path (2)--(5) node[pos=0.35](z4){};
        \path (2)--(5) node[pos=0.65](z5){};
        \draw (z4)+(0,-0.5) node[v,label={[xshift=0.0mm,yshift=-5.0mm]$w_{25}$}](y4){};
        \draw (z5)+(0,-0.5) node[v,label={[xshift=3.0mm,yshift=-5.0mm]$w_{52}$}](y5){};
        \draw (2)--(y4)--(y5)--(5);        
    \end{tikzpicture}
    \caption{The underlying graph $G'_9$ in the proof of Corollary~\ref{cor:base} where square vertices are~$y_{i,j}$ vertices.}
    \label{fig:base addable}
\end{figure}

\begin{corollary}\label{cor:base}
    Let $G\in\mathcal{K}\setminus\{K_1\}$, let $\mathcal{A}$ be an addable set of $G$, and let~$H$ be a trigraph with underlying graph $\psi(\theta(G),\sigma_G(\mathcal{A}))$ such that $\DeltaR(H)\leq3$.
    If $X\in\mathcal{A}\cup\{\{u,v\}:uv\in E(G)\}$, then there exists an $X$-stable partial \cont{3} from $H$ to a trigraph whose underlying graph is
    \begin{itemize}
        \item $\psi^*(\theta(G),\sigma_G(X))$ if $X\in\mathcal{A}$, and
        \item $\theta(G[X])$ otherwise.
    \end{itemize}
\end{corollary}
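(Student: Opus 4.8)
The plan is to split according to whether $X$ is a triangle in $\mathcal A$ or an edge of $G$: the triangle case is essentially a restatement of Propositions~\ref{prop:K_4 addable}--\ref{prop:line addable} once the two smallest graphs in $\mathcal K$ are handled by hand, while the edge case needs a direct argument, since $\theta(G[X])$ is a $7$-cycle that must contain the $3$-subdivision path between the two vertices of $X$.

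\emph{The case $X\in\mathcal A$.} Here $X$ is a $3$-vertex clique of $G$. If $G$ is one of $K_4$, $K_5$, $K_6^\equiv$, $K_6^{=}$, $\overline{C_6}+K_1$, or $L(K_{3,3})$, then, since $\mathcal A$ is an addable set of $G$, the corresponding one of Propositions~\ref{prop:K_4 addable}, \ref{prop:K_5 addable}, \ref{prop:K_6 addable}, \ref{prop:spinner addable}, \ref{prop:line addable} immediately supplies an $X$-stable partial \cont{3} from $H$ to a trigraph with underlying graph $\psi^*(\theta(G),\sigma_G(X))$. Since $K_2$ has no $3$-vertex clique, this subcase is vacuous for $G=K_2$. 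Finally, if $G=K_3$ then $V(G)=X$ is the only $3$-vertex clique, so $\mathcal A=\{X\}$; applying Lemma~\ref{lem:yoperation} with $W:=V(C'_{K_3,X})\setminus X$ gives a $Z$-stable partial \cont{3} from $H$ to a trigraph with underlying graph $\psi(\theta(K_3),\sigma_{K_3}(\{X\}))-W$, and this graph is exactly $\psi^*(\theta(K_3),\sigma_{K_3}(X))$, because deleting the interiors of the three $3$-subdivision paths of $\theta(K_3)$ leaves the $9$-cycle $C_{K_3,X}$; as $X\cap W=\emptyset$, the sequence is $X$-stable.

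\emph{The case $X=\{u,v\}$ with $uv\in E(G)$.} If $G=K_2$ then $\mathcal A=\emptyset$, so $\psi(\theta(G),\sigma_G(\mathcal A))=\theta(G)=\theta(G[X])$ and the empty sequence works. Otherwise every edge of $G$ lies in a triangle, and I would use Lemmas~\ref{lem:yoperation}, \ref{lem:protection1}, \ref{lem:protection2}, and~\ref{lem:routine} together with Lemma~\ref{lem:theta} — in the pattern of Propositions~\ref{prop:K_4 addable}--\ref{prop:line addable} — to contract $\psi(\theta(G),\sigma_G(\mathcal A))$ down while keeping $u$ and $v$ stable, with the aim of collapsing everything outside $\theta(G[\{u,v\}])$ into the $2$-subdivision path between $u$ and $v$ (leaving the $3$-subdivision $u$--$v$ path untouched). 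The key moves are merging pairs of internally disjoint $u$--$v$ paths, which places red edges only along the merged paths and never at $u$ or $v$, and $(\Delta,Y)$-type reductions around the remaining branch vertices; carried out carefully, these keep the maximum red degree at most $3$. Since each $G\in\mathcal K$ is small and highly symmetric, this should reduce to a short finite case analysis, with a little extra care needed when some $A\in\mathcal A$ contains both $u$ and $v$.

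\emph{The main obstacle.} The difficulty is concentrated in the edge case with $G\neq K_2$. It cannot be shortcut through $\psi^*(\theta(G),\sigma_G(T))$ for a triangle $T\supseteq\{u,v\}$, because $\psi^*(\theta(G),\sigma_G(T))$ omits the $3$-subdivision $u$--$v$ path that $\theta(G[X])$ must contain; so the collapse of all of $\theta(G)$ together with every gadget onto a single $7$-cycle has to be performed directly, and the bookkeeping that keeps $\DeltaR\le 3$ throughout while never letting $u$ or $v$ acquire a red edge is the technical heart of the argument.
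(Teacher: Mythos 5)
Your treatment of the case $X\in\mathcal{A}$ is correct and coincides with the paper's: Propositions~\ref{prop:K_4 addable}--\ref{prop:line addable} for $\abs{V(G)}\geq4$, Lemma~\ref{lem:yoperation} for $G=K_3$, and vacuity for $K_2$. The edge case, however, is only a sketch, and the gap there is genuine. Moreover, the ``main obstacle'' you identify rests on a misreading of the target: the corollary only requires the \emph{underlying graph} of the final trigraph to be $\theta(G[X])$, i.e.\ a $7$-cycle through $u$ and $v$ with the two of them at distances $3$ and $4$ along it; the vertices on the length-$4$ side need not be the untouched $3$-subdivision vertices of the original $\theta(G)$. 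The paper does exactly the shortcut you declare impossible. It first reduces to a \emph{maximal} addable set $\mathcal{A}$ via Lemmas~\ref{lem:repeat} and~\ref{lem:repeat2} (a reduction you omit entirely, and which is also what lets you handle, e.g., $\mathcal{A}=\emptyset$). Then, whenever $X\subsetneq X'$ for some $X'\in\mathcal{A}$, it contracts to $\psi^*(\theta(G),\sigma_G(X'))$ by the first case, applies Lemma~\ref{lem:protection2} with $(t_1,t_2,t_3,t_4):=(v_1,v_2,v_3,x)$ to delete the $2$-subdivision paths at the third vertex $v_3$ and create the edge $v_3x$, and finally contracts $\{v_3,x\}$; the result is precisely a $7$-cycle of the required shape, its length-$4$ side being formed by $y_1$, the contracted vertex, and $y_2$.

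After the maximality reduction, the only configuration in which no triangle of $\mathcal{A}$ contains the edge $X$ is $G=K_6^{=}$ with $X=\{v_2,v_5\}$ the two degree-$5$ vertices (this follows from Proposition~\ref{prop:K_6 addable}\ref{cond:K_6 addable2} together with maximality), and that single exceptional case is where essentially all of the work in the paper's proof lies: an explicit $X$-stable partial \cont{3} built from Lemma~\ref{lem:yoperation}, several invocations of Lemma~\ref{lem:protection1} and Corollary~\ref{cor:protection1}, and a final list of twelve prescribed contractions. Your plan (``merging internally disjoint $u$--$v$ paths'', ``$(\Delta,Y)$-type reductions'', a ``short finite case analysis'', plus an out-of-place appeal to Lemma~\ref{lem:theta}) neither isolates this exceptional case nor supplies any concrete contraction sequence keeping $\DeltaR\leq3$ while leaving $u$ and $v$ stable, so the edge case of the corollary remains unproven in your write-up.
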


To prove the corollary, we use the following lemma whose proof is similar to that of Lemma~\ref{lem:repeat}.

\begin{lemma}\label{lem:repeat2}
    Let $G$ be a graph, let $\mathcal{S}$ be a set of $3$-vertex cliques in $G$, let $\mathcal{A}$ be a subset of $\mathcal{S}$, let $Y\in\mathcal{S}$, let~$X$ be a subset of $Y$ with $\abs{X}=2$, let $H_\mathcal{S}$ be a trigraph with underlying graph $\psi(\theta(G),\sigma_G(\mathcal{S}))$, and let~$H$ be the induced subgraph of $H_\mathcal{S}$ with underlying graph $\psi(\theta(G),\sigma_G(\mathcal{A}))$.
    If there exists an $X$-stable partial \cont{3} from $H_\mathcal{S}$ to a trigraph with underlying graph $\theta(G[X])$, then there exists an $X$-stable partial \cont{3} from $H$ to a trigraph with underlying graph $\theta(G[X])$.
\end{lemma}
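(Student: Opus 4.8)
The plan is to mimic the proof of Lemma~\ref{lem:repeat}. Starting from an $X$-stable partial \cont{3} from $H_\mathcal{S}$ to a trigraph $H'$ whose underlying graph is $\theta(G[X])$, I would delete from this sequence every contraction that involves a vertex of $V(H_\mathcal{S})\setminus V(H)$; equivalently, I would follow the induced partition of $V(H)$ and perform a merge only when both of the corresponding parts meet $V(H)$. Since $H$ is an induced subgraph of $H_\mathcal{S}$, this produces a partial contraction sequence from $H$ in which the $i$-th trigraph $H_i''$ is a refined subgraph of the $i$-th trigraph of the original sequence; in particular no red degree increases, so this is again a partial \cont{3}, and since $X\subseteq V(G)\subseteq V(H)$ and no vertex of $X$ is contracted in the original sequence, none is contracted in the projected one and the red degree of each vertex of $X$ never exceeds its red degree in $H$, so the projected sequence is $X$-stable. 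Writing $H''$ for its final trigraph, $H''$ is a refined subgraph of $H'$, so it remains only to show that the underlying graph of $H''$ is not a proper subgraph of $\theta(G[X])$.

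For this I would first observe that $\theta(G[X])$ is an induced subgraph of the underlying graph of $H$. Writing $X=\{a,b\}$, the graph $\theta(G[X])$ is the $7$-cycle of $\theta(G)$ through $a$ and $b$; its remaining five vertices are internal vertices of the two subdivided copies of the edge $ab$, hence have degree $2$ in $\theta(G)$, and $a$ and $b$ are nonadjacent in $\theta(G)$, so this cycle is chordless in $\theta(G)$. Moreover $\theta(G)$ is itself an induced subgraph of $\psi(\theta(G),\sigma_G(\mathcal{A}))$, since each gadget $\psi^*(\theta(G),\sigma_G(S))$ contributes only new vertices together with edges incident with these new vertices or with vertices of $G$, and therefore creates no new adjacency among the vertices of $\theta(G)$.

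Finally, since the original sequence ends at a trigraph whose vertex set is exactly $V(\theta(G[X]))$, each vertex $v$ of $\theta(G[X])$ is a vertex of $H'$ and hence represents the part of $V(H_\mathcal{S})$ that was contracted onto it; this part contains $v\in V(\theta(G))\subseteq V(H)$, so it survives in $H''$. Thus $V(H'')=V(\theta(G[X]))$, and for every edge $uv$ of $\theta(G[X])$ the part of $u$ and the part of $v$ contain the $H$-adjacent vertices $u$ and $v$, so $uv$ is an edge of $H''$; hence $H''$ contains all edges of $H'$, and being a refined subgraph of $H'$ its underlying graph equals $\theta(G[X])$. The only place where real care is needed is this last step: without the fact that $\theta(G[X])$ embeds in $H$ as an induced subgraph, a vertex of the target cycle could be absorbed into a part disjoint from $V(H)$ and disappear under the projection — exactly the subtlety resolved in the proof of Lemma~\ref{lem:repeat}.
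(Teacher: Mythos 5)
Your proposal is correct and follows essentially the same route as the paper's own proof: ignore the contractions involving vertices of $V(H_\mathcal{S})\setminus V(H)$ to get a partial \cont{3} from $H$ whose trigraphs are refined subgraphs of those in the original sequence, and then use the fact that $\theta(G[X])$ sits inside the underlying graph of $H$ as an induced subgraph to conclude that the final trigraph has all seven vertices and edges of $\theta(G[X])$, hence underlying graph exactly $\theta(G[X])$. The only difference is that you spell out details the paper leaves implicit (why $\theta(G[X])$ is induced in $\psi(\theta(G),\sigma_G(\mathcal{A}))$, why each target vertex survives the projection, and the stability bookkeeping), which is fine.
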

\begin{proof}
    In the $X$-stable partial \cont{3} from~$H_{\mathcal S}$ to a trigraph $H'$ with underlying graph $\theta(G[X])$, we ignore contractions with vertices in $V(H_\mathcal{S})\setminus V(H)$ to obtain a partial \cont{3} from~$H$ to a trigraph $H''$ that is a refined subgraph of $H'$.
    Since the underlying graph of~$H$ contains $\theta(G[X])$ as an induced subgraph, $H''$ contains all edges of~$H'$, so the underlying graph of $H''$ is precisely $\theta(G[X])$.
\end{proof}

\begin{proof}[Proof of Corollary~\ref{cor:base}]
    We may assume that $\abs{V(G)}\geq3$, because otherwise $\psi(\theta(G),\sigma_G(\mathcal{A}))=\theta(G[X])$.
    By Lemmas~\ref{lem:repeat} and~\ref{lem:repeat2}, we may further assume that $\mathcal{A}$ is a maximal addable set of~$G$.

    Let us first consider the case that $X\in\mathcal{A}$.
    If~$G$ has at least four vertices, then by Propositions~\ref{prop:K_4 addable}--\ref{prop:line addable}, the statement holds.
    If $G=K_3$, then let $W$ be the set of vertices of the unique length-$12$ cycle of $\theta(G)$ which are not in $V(G)$.
    Since $X=V(G)\subseteq V(\theta(G))\setminus W$, by Lemma~\ref{lem:yoperation}, there exists an $X$-stable partial \cont{3} from~$H$ to a trigraph with underlying graph~$\psi(\theta(G),\sigma_G(X))-W$ which is equal to $\psi^*(\theta(G),\sigma_G(X))$.
    
    We now consider the case that $X\subsetneq X'$ for some $X'\in\mathcal{A}$.
    By the previous paragraph, there exists an $X'$-stable partial \cont{3} from $H$ to a trigraph $H^*_1$ with underlying graph $G^*_1:=\psi^*(\theta(G),\sigma_G(X'))$.
    Let $v_1$, $v_2$, and $v_3$ be the vertices in $X'$ with $v_3\in X'\setminus X$ and let~$x$ be the unique vertex of degree $3$ in $G^*_1-V(G)$.
    For distinct $i,j\in[3]$, let~$w_{i,j}$ and~$w_{j,i}$ be the internal vertices of the unique length-$3$ path in~$H^*_1$ between~$v_i$ and~$v_j$ such that $v_iw_{i,j}\in E(H^*_1)$, and let~$y_i$ be the common neighbour of~$v_i$ and~$x$ in~$H^*_1$.
    
    By Lemma~\ref{lem:protection2} with $F:=H^*_1$ and $(t_1,t_2,t_3,t_4):=(v_1,v_2,v_3,x)$, there exists a $\{v_1,v_2\}$-stable $\{v_3\}$-fixing partial \cont{3} from $H^*_1$ to a trigraph whose underlying graph is the graph obtained from $G^*_1-\{w_{1,3},w_{3,1},w_{2,3},w_{3,2},y_3\}$ by adding an edge~$v_3x$.
    We then contract $v_3$ and $x$ to obtain an $X$-stable partial \cont{3} from~$H$ to a trigraph with underlying graph $\theta(G[X])$ as desired.

    We finally assume that $X\nsubseteq X'$ for every $X'\in\mathcal{A}$.
    Since $\mathcal{A}$ is maximal, by Propositions~\ref{prop:K_4 addable}--\ref{prop:line addable}, if $G\neq K_6^{=}$, then every edge of $G$ has both ends in some set in $\mathcal{A}$.
    Hence, $G=K_6^{=}$, and by Proposition~\ref{prop:K_6 addable}, $X$ is the set of degree-$5$ vertices of~$G$.
    We label the vertices of $G$ as in Figure~\ref{fig:bagtypes}.
    Note that $X=\{v_2,v_5\}$.
    By Proposition~\ref{prop:K_6 addable}\ref{cond:K_6 addable2}, there exists $B\subseteq V(G)$ such that both $B$ and $V(G)\setminus B$ are cliques of size~$3$ in~$G$ and for all $Q\in\mathcal{A}$, $\abs{Q\cap B}$ is even and $Q$ contains at most one of~$v_2$ and~$v_5$.
    By symmetry between $v_i$ and $v_{7-i}$ for each $i\in[3]$, we may assume that $B=\{v_4,v_5,v_6\}$.
    Then $\mathcal{A}=\{\{v_1,v_2,v_3\},\{v_2,v_4,v_6\},\{v_3,v_5,v_6\},\{v_1,v_4,v_5\}\}$, because $\mathcal{A}$ is maximal.
    
    Let
    \[
        W:=\bigcup_{S\in\mathcal{A}}(V(C'_{G,S})\setminus S)\quad\text{ and }\quad Z:=V(\theta(G))\setminus W.
    \]
    In other words, $W$ is the set of all internal vertices of length-$4$ paths of~$\theta(G)$ between two vertices of~$G$, except for the length-$4$ path of~$\theta(G)$ between~$v_2$ and~$v_5$.
    We first apply Lemma~\ref{lem:yoperation} to obtain a $Z$-stable partial \cont{3} from~$H$ to a trigraph $H'_1$ with underlying graph $G'_1:=\psi(\theta(G),\sigma_G(\mathcal{A}))-W$.
    For each edge $v_iv_j$ of~$G$, let $w_{i,j}$ and $w_{j,i}$ be the internal vertices of the unique length-$3$ path in $H'_1$ between~$v_i$ and~$v_j$ such that $v_iw_{i,j}\in E(H'_1)$.
    Let $z_1$, $z_2$, and $z_3$ be the internal vertices of the unique length-$4$ path in~$H'_1$ between~$v_2$ and~$v_5$ such that $\{v_2z_1,v_5z_3\}\subseteq E(H'_1)$.

    Let $S_0:=\sigma_G(\{v_1,v_2,v_3\})$, and let $x_0$ be the unique vertex of degree~$3$ in $\psi^*(\theta(G),S_0)-V(G)$.
    For each $i\in[3]$, let~$y_{0,i}$ be the common neighbour of~$x_0$ and~$v_i$ in~$H'_1$, let $S_i:=\sigma_G(\{v_i,v_4,v_5,v_6\}\setminus\{v_{7-i}\})$, let $x_i$ be the unique vertex of degree~$3$ in $\psi^*(\theta(G),S_i)-V(G)$, and for each $j\in\{i,4,5,6\}\setminus\{7-i\}$, let $y_{i,j}$ be the common neighbour of $x_i$ and~$v_j$ in~$H'_1$.

    By swapping $v_1$ and $v_3$ and swapping $v_4$ and $v_6$ if necessary, we may assume that $v_4w_{4,2}\in R(H'_1)$ or $v_6w_{6,2}\in B(H'_1)$.
    We apply the first two contractions given by Lemma~\ref{lem:protection1} with $(t_1,t_2,t_3,t_4):=(v_4,v_6,v_2,x_2)$.
    Let $s_1:=w_{4,6}$ if $v_4w_{4,2}\in B(H'_1)$, and otherwise let $s_1:=w_{4,2}$.
    Let $H'_2$ be the trigraph obtained from $H'_1$ by contracting $\{s_1,y_{2,4}\}$ to~$y_{2,4}$, and let~$H'_3$ be the trigraph obtained from~$H'_2$ by contracting $\{w_{6,4},y_{2,6}\}$ to~$y_{2,6}$.

    Let $k:=1$ if $v_1w_{1,2}\in R(H'_3)$, and otherwise let $k:=3$.
    We now apply the first two contractions given by Lemma~\ref{lem:protection1} with $(t_1,t_2,t_3,t_4):=(v_k,v_{4-k},v_2,x_0)$.
    Let $s_2:=w_{k,4-k}$ if $v_kw_{k,2}\in B(H'_3)$, and otherwise let $s_2:=w_{k,2}$.
    Let $H'_4$ be the trigraph obtained from $H'_3$ by contracting $\{s_2,y_{0,k}\}$ to~$y_{0,k}$, and let $H'_5$ be the trigraph obtained from $H'_4$ by contracting $\{w_{4-k,k},y_{0,4-k}\}$ to~$y_{0,4-k}$.
    Note that $H'_1,\ldots,H'_5$ is an $X$-stable partial \cont{3}, and $\deg_{H'_5}(v_\ell)=5$ for every $\ell\in[6]\setminus\{2,5\}$.

    Thus, for each $j\in\{1,3\}$, each of $v_j$ and $v_{j+3}$ has at most two incident red edges in $H'_5-E(F_j)$.
    By applying Corollary~\ref{cor:protection1} to $F:=F_1$ with $(t_1,t_2,t_3,t_4):=(v_1,v_4,v_5,x_1)$, there exists a $(V(H'_5)\setminus V(\psi^*(\theta(G),S_1)))\cup\{v_5\}$-stable $\{v_1,v_4\}$-fixing partial \cont{3} from~$H'_5$ to a trigraph~$H'_6$ whose underlying graph $G'_6$ is the graph obtained from
    \[
        G'_5-\{w_{1,4},w_{4,1},w_{1,5},w_{5,1},w_{4,5},w_{5,4},y_{1,1},y_{1,4}\}
    \]
    by adding edges $v_1x_1$ and $v_4x_1$.
    Again, by applying Corollary~\ref{cor:protection1} to $F:=F_3$ with $(t_1,t_2,t_3,t_4):=(v_3,v_6,v_5,x_3)$, there exists a $(V(H'_6)\setminus V(\psi^*(\theta(G),S_3)))\cup\{v_5\}$-stable $\{v_3,v_6\}$-fixing partial \cont{3} from~$H'_6$ to a trigraph~$H'_7$ whose underlying graph $G'_7$ is the graph obtained from
    \[
        G'_6-\{w_{3,6},w_{6,3},w_{3,5},w_{5,3},w_{5,6},w_{6,5},y_{3,3},y_{3,6}\}
    \]
    by adding edges $v_3x_3$ and $v_6x_3$.

    We apply Lemma~\ref{lem:protection1} with $(t_1,t_2,t_3,t_4):=(v_4,v_6,v_2,x_2)$.
    Recall that we have performed the first two contractions given by Lemma~\ref{lem:protection1}.
    By applying the remaining contractions, we find a $(V(H'_7)\setminus V(\psi^*(\theta(G),S_2)))\cup\{v_2\}$-stable $\{v_4,v_6\}$-fixing partial \cont{3} from~$H'_7$ to a trigraph $H'_8$ whose underlying graph~$G'_8$ is the graph obtained from
    \[
        G'_7-\{w_{2,4},w_{4,2},w_{2,6},w_{6,2},w_{4,6},y_{2,4},y_{2,6}\}
    \]
    by adding edges $v_4x_2$ and $v_6x_2$.
    
    We now apply Lemma~\ref{lem:protection1} with $(t_1,t_2,t_3,t_4):=(v_k,v_{4-k},v_2,x_0)$.
    We have performed the first two contractions given by Lemma~\ref{lem:protection1}.
    Thus, with the remaining contractions, we find a $(V(H'_8)\setminus V(\psi^*(\theta(G),S_0)))\cup\{v_2\}$-stable $\{v_1,v_3\}$-fixing partial \cont{3} from~$H'_8$ to a trigraph whose underlying graph $G'_9$ is the graph obtained from
    \[
        G'_8-\{w_{1,2},w_{2,1},w_{1,3},w_{3,1},w_{2,3},w_{3,2},y_{0,1},y_{0,3}\}
    \]
    by adding edges $v_1x_0$ and $v_3x_0$; see Figure~\ref{fig:base addable}.
    We then contract the following twelve pairs of vertices in order.
    \[
        \begin{array}{llll}
            1)\ \{v_1,x_0\}\text{ to }x_0,
            &2)\ \{v_3,x_0\}\text{ to }x_0,
            &3)\ \{v_4,x_2\}\text{ to }x_2,
            &4)\ \{v_6,x_2\}\text{ to }x_2,\\
            5)\ \{y_{0,2},y_{2,2}\}\text{ to }y_{0,2},
            &6)\ \{y_{1,5},y_{3,5}\}\text{ to }y_{1,5},
            &7)\ \{x_0,x_2\}\text{ to }x_0,
            &8)\ \{x_1,x_3\}\text{ to }x_1,\\
            9)\ \{x_0,x_1\}\text{ to }x_0,
            &10)\ \{y_{0,2},z_1\}\text{ to }z_1,
            &11)\ \{y_{1,5},z_3\}\text{ to }z_3,
            &12)\ \{x_0,z_2\}\text{ to }z_2.
        \end{array}
    \]
    Thus, we find an $X$-stable partial \cont{3} from $H$ to a trigraph with underlying graph $\theta(G[X])$ as desired.
\end{proof}
    
\subsection{Constructing {\boldmath$3$}-contractible tree decompositions}\label{subsec:construct}

In this subsection, we show the following proposition which shows that every $\mathcal{F}_3$-minor-free multigraph admits a $3$-contractible tree decomposition if we add some edges without creating $\mathcal{F}_3$-minors.

\begin{proposition}\label{prop:treedecomp}
    Let $G$ be an $\mathcal{F}_3$-minor-free multigraph.
    There exists an $\mathcal{F}_3$-minor-free multigraph~$G'$ admitting a $3$-contractible tree decomposition such that $G$ is a spanning subgraph of~$G'$.
    In particular, if~$G$ is simple, then $G'$ can be chosen as a simple graph.
\end{proposition}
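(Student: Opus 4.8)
The plan is to pass first to a simple graph, then to an edge-maximal one, and finally to prove a clique-sum structure theorem by induction on the number of vertices. Write $G$ as the union of its simplification $G_0$ with its set of loops and parallel edges. Since every member of $\mathcal{F}_3$ is simple, a loop or a second edge between two vertices is never used in a minor model, so adding loops and parallel edges to an $\mathcal{F}_3$-minor-free multigraph keeps it $\mathcal{F}_3$-minor-free; and if $G_0$ is a spanning subgraph of a simple $\mathcal{F}_3$-minor-free graph $G_0'$ with a $3$-contractible tree decomposition, then reinserting the loops and parallel edges of $G$ into bags containing both of their ends (such bags exist by $(\mathrm{T}2)$ applied to $G_0'$) gives a $3$-contractible tree decomposition of the resulting multigraph $G'$, because $G'$ and $G_0'$ have the same simplification and hence the same separators, cliques, bag-simplifications, and graphs $Y(\cdot,\cdot)$ up to simplification. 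Thus we may assume $G$ is simple, and, adding edges greedily while $\mathcal{F}_3$-minor-freeness is preserved, that $G$ is an edge-maximal $\mathcal{F}_3$-minor-free simple graph; it then suffices to show such a graph admits a $3$-contractible tree decomposition.

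We induct on $|V(G)|$. If $G$ has no clique separator, we claim $G$ is isomorphic to a graph in $\mathcal{K}$, and then the one-node tree decomposition works: $(\mathrm{C}1)$ is vacuous, and $(\mathrm{C}2)$ holds since the associated set of size-$3$ adhesions is empty and $G$ is $\mathcal{F}_3$-minor-free. Otherwise, choose a clique separator $S$ of minimum size, which is then a minimal clique separator. That $|S|\le 3$ follows from $K_6^-\in\mathcal{F}_3$: if $|S|\ge 4$, contracting one component of $G-S$ to a single vertex $x$ and another to $y$ yields, by minimality of $S$ (so $x$ and $y$ are complete to $S$), a $K_6^-$-minor on $x$, $y$, and any four vertices of $S$. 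Now split along $S$: for each component $C$ of $G-S$, apply the induction hypothesis to $G[C\cup S]$, which has fewer vertices, obtaining $3$-contractible tree decompositions in each of which $S$ lies in a bag by Lemma~\ref{lem:complete bag}; gluing these along a new node with bag $S$ produces a tree decomposition of $G$. Its adhesions are minimal clique separators of size at most $3$, so $(\mathrm{C}1)$ holds; the bag-simplification part of $(\mathrm{C}2)$ is inherited; and the addability part of $(\mathrm{C}2)$ holds because, for any bag $B_u$ and the set $\mathcal{A}$ of its size-$3$ adhesions, contracting for each $A\in\mathcal{A}$ the union of the components of $G-A$ on the side away from $B_u$ down to a single vertex exhibits $Y(G[B_u],\mathcal{A})$ as a minor of $G$, hence $\mathcal{F}_3$-minor-free.

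The point requiring care is that $G[C\cup S]$ need not be edge-maximal as a standalone graph: an edge we could safely add inside it might create an $\mathcal{F}_3$-minor once the other pieces are glued back on. The correct inductive object is therefore a graph $\Gamma$ together with an addable set $\mathcal{A}_\Gamma$ of ``attachment cliques'' for which $Y(\Gamma,\mathcal{A}_\Gamma)$ is $\mathcal{F}_3$-minor-free, maximal subject to this; and the statement to prove by induction is that every such $(\Gamma,\mathcal{A}_\Gamma)$ admits a $3$-contractible tree decomposition. That $G[C\cup S]$, equipped with the attachment clique $S$, is maximal in this relativized sense rests on the internal $4$-connectivity of the members of $\mathcal{F}_3$ (Lemma~\ref{lem:internally}): since members of $\mathcal{F}_3$ are $3$-connected and their $3$-separators are independent sets dominated by a single vertex, an $\mathcal{F}_3$-minor created by adding an edge to $G$ cannot genuinely straddle a clique separator of size at most $3$, and so it can be pulled back into the appropriate piece-plus-apexes.

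The hard part is the base case: showing that a relativized edge-maximal $\mathcal{F}_3$-minor-free graph with no clique separator is isomorphic to one of $K_1,\dots,K_5,K_6^\equiv,K_6^{=},\overline{C_6}+K_1,L(K_{3,3})$. Because $K_6^-\in\mathcal{F}_3$ and $K_6^-$-minor-free graphs have only a linear number of edges, such a graph $G$ is $2$-connected (a cut vertex would be a clique separator) and has a vertex $v$ of bounded degree; as $G$ has no clique separator, $N_G(v)$ is a separator that is not a clique. I would then analyse $G[N_G^2[v]]$: adding the edges forced by edge-maximality, forbidding those that would create an $\mathcal{F}_3$-minor, and repeatedly invoking Observation~\ref{obs:forbidden}, the internal $4$-connectivity of $\mathcal{F}_3$, and the $(\Delta,Y)$-stability of $\mathcal{F}_3$ (Lemma~\ref{lem:DeltaY}), one is driven into finitely many configurations, simultaneously bounding $|V(G)|$ and pinning $G$ down to the list $\mathcal{K}$. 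I expect this enumeration, organised by the isomorphism type of the closed second neighbourhood of a minimum-degree vertex, to be by far the longest and most delicate component of the argument; once it is in place, the assembly of the tree decomposition from the clique-sum pieces and the verification of $(\mathrm{C}1)$ and $(\mathrm{C}2)$ are routine.
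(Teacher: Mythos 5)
Your overall architecture (decompose along clique separators of size at most $3$, get addability of the adhesion sets by contracting the far components to single vertices, glue the pieces' tree decompositions, and reduce the multigraph/simple issue first) is the same as the paper's, and those parts are essentially fine. The problem is that the mathematical heart of the proposition is exactly the part you leave as a plan: the claim that a (relativized) edge-maximal $\mathcal{F}_3$-minor-free graph with no clique separator belongs to $\mathcal{K}$ is not proved, only announced as an enumeration of "finitely many configurations" organised around $G[N_G^2[v]]$ that you "expect" to work. This classification is not routine. In the paper it is obtained in two steps: Proposition~\ref{prop:completing} shows that completing any \emph{minimal} separator of size at most $3$ to a clique preserves $\mathcal{F}_3$-minor-freeness (this is where Lemma~\ref{lem:internally} and the $(\Delta,Y)$-stability of Lemma~\ref{lem:DeltaY} are actually used), which upgrades "no clique separator" to "no separator of size at most $3$ at all"; and then Proposition~\ref{prop:4connected} invokes the published classification of $4$-connected $\{Q_3,V_8\}$-minor-free graphs (Theorem~\ref{thm:4connected} of Ananchuen and Lewchalermvongs) together with a short exclusion argument for $K_6^-$, $\overline{C_7}$, $C_5+\overline{K_2}$, and $K_{3,\widehat{1},3}$ on $6$ and $7$ vertices. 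Without either citing such a classification or genuinely carrying out your enumeration, the base case is a gap — and note that your setting is in fact harder than the paper's, because "no clique separator" does not by itself yield $4$-connectivity: you would first need the content of Proposition~\ref{prop:completing}, which you never state or prove (your internal-$4$-connectivity remark is invoked only for the relativized maximality of the pieces, not for completing non-clique small separators).

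A secondary, related weakness: the relativized edge-maximality device $(\Gamma,\mathcal{A}_\Gamma)$ is asserted rather than developed. You do not say how the attachment cliques enter condition (C2) for the pieces, and the key "pull-back" claim — that an $\mathcal{F}_3$-minor created by adding an edge inside a piece can be rerouted into the piece plus its apex vertices — is precisely the three-case analysis (including the $(\Delta,Y)$ case when a branch set meets all of the separator) that the paper writes out in the proof of Proposition~\ref{prop:completing}; it needs to be proved, not cited as a slogan about internal $4$-connectivity. The paper sidesteps all of this by never requiring edge-maximality at all: it only adds edges at minimal separators of size at most $3$, splits bags along small separators while maintaining (C1), and gets addability of adhesions for free from their minimality. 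I would recommend either adopting that route or, if you keep edge-maximality, writing out the pull-back lemma and replacing the hoped-for enumeration by an appeal to the $4$-connected classification theorem.
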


To prove Proposition~\ref{prop:treedecomp}, we will use the following theorem and propositions.

\begin{theorem}[Ananchuen and Lewchalermvongs~\cite{AL2021}]\label{thm:4connected}
    If a graph is $4$-connected and $\{Q_3,V_8\}$-minor-free, then it either has at most $7$ vertices, or is isomorphic to~$L(K_{3,3})$.
\end{theorem}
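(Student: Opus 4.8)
The plan is to argue by strong induction on $n:=|V(G)|$ that a $4$-connected $\{Q_3,V_8\}$-minor-free graph $G$ either has $n\le 7$ or is isomorphic to $L(K_{3,3})$ (which forces $n=9$). Since $Q_3$ and $V_8$ are $3$-connected cubic graphs on $8$ vertices, any minor of $G$ using all of $V(G)$ when $n=8$ is actually a spanning subgraph, so the base cases $n\in\{8,9\}$ reduce to a finite verification (organised by degree sequence, or by computer): every $4$-connected graph on $8$ or on $9$ vertices contains $Q_3$ or $V_8$ as a minor, the sole exception being $L(K_{3,3})$ on $9$ vertices.

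For the inductive step take $n\ge 10$. First suppose $G$ has a $4$-contractible edge $e$, i.e.\ one whose contraction preserves $4$-connectivity. Then $G/e$ is $4$-connected, $\{Q_3,V_8\}$-minor-free, and has $n-1\ge 9$ vertices, so by the induction hypothesis $G/e\cong L(K_{3,3})$ and $n=10$. In that case $G$ arises by splitting a single ($4$-valent) vertex of $L(K_{3,3})$ into an edge $uv$; from $\deg u,\deg v\ge 4$ one sees that $u$ and $v$ share at least two neighbours, which leaves only finitely many candidates for $G$, each of which is checked to be either not $4$-connected or to contain a $Q_3$- or $V_8$-minor, a contradiction. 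Hence $G$ has no $4$-contractible edge, and by Martinov's theorem on $4$-connected graphs without contractible edges, $G$ is the square $C_n^2$ of a cycle or the line graph $L(H)$ of a cyclically $4$-edge-connected cubic graph $H$.

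If $G=C_n^2$, then contracting a suitable subpath of the Hamilton cycle to one vertex exhibits $C_8^2$ as a minor of $C_n^2$, and $C_8^2$ contains $Q_3$ as a spanning subgraph: deleting the perfect matching $\{\,\{0,1\},\{2,3\},\{4,5\},\{6,7\}\,\}$ of ``short'' edges destroys all eight triangles and leaves a connected triangle-free cubic planar graph on $8$ vertices, which must be $Q_3$ (the only triangle-free cubic graph on $8$ vertices other than the non-planar $V_8$) --- contradicting $\{Q_3,V_8\}$-minor-freeness. If $G=L(H)$ then $|V(H)|=2n/3$ is an even integer which is at least $7$, hence at least $8$; when $|V(H)|=8$, cyclic $4$-edge-connectivity forces girth at least $4$, so $H\in\{Q_3,V_8\}$, and $L(H)$ contains $H$ itself as a minor (a standard fact for cubic $H$), again a contradiction; when $|V(H)|\ge 10$ one reduces $H$ to a smaller cyclically $4$-edge-connected cubic graph $H'$ whose line graph is a minor of $L(H)$ --- e.g.\ via the generation of girth-$\ge 4$ cyclically $4$-edge-connected cubic graphs from $Q_3$ and $K_{3,3}$, each generating operation lifting to a minor operation on line graphs --- and recurses on $|V(H)|$.

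The crux is the two ``uncontractible'' subcases, above all the line-graph subcase, where one must show that the line graph of every cyclically $4$-edge-connected cubic graph not yielding an $L(H)$ of order $\le 9$ has a $Q_3$- or $V_8$-minor. A cleaner way to package both subcases is to split on planarity from the outset: if $G$ is planar it has no $V_8$-minor, so it suffices that a $4$-connected planar graph on $\ge 8$ vertices always has a $Q_3$-minor (which follows from the structure of $4$-connected planar embeddings, or from Maharry's characterisation of $Q_3$-minor-free graphs); if $G$ is non-planar it has a $K_5$- or $K_{3,3}$-minor by Wagner's theorem, and one uses $4$-connectivity together with the ($\ge 2$) spare vertices to route four internally disjoint paths (Menger) and assemble a $V_8$-minor, the only obstruction being the rigid linkage pattern realised by $L(K_{3,3})$ itself. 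Making this last rerouting/extension argument precise --- ruling out every near-miss configuration except $L(K_{3,3})$ --- is where essentially all the work lies.
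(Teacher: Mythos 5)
This theorem is not proved in the paper at all: it is imported verbatim from Ananchuen and Lewchalermvongs~\cite{AL2021}, whose proof is a lengthy structural analysis of internally $4$-connected graphs with no cube- or $V_8$-minor. So there is no internal argument to compare yours against; what can be assessed is whether your proposal stands on its own, and it does not. What you have written is a strategy outline in which every load-bearing step is deferred. The base cases $n=8,9$ (``finite verification, organised by degree sequence, or by computer''), and the $n=10$ vertex-splitting analysis of $L(K_{3,3})$, are asserted but not carried out, and they are not small checks --- the $n=8,9$ claim is essentially the theorem restricted to small orders. You yourself concede at the end that ``ruling out every near-miss configuration except $L(K_{3,3})$ \dots\ is where essentially all the work lies,'' which is an admission that the core of the theorem is missing.

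Beyond the deferred case analyses, two specific steps would fail or need repair as stated. First, contracting one subpath of the Hamilton cycle of $C_n^2$ does not yield $C_8^2$: the two distance-two chords ``through'' the contracted vertex are not both present (one is missing), so you only get $C_8^2$ minus an edge; the conclusion ($Q_3$-minor in $C_n^2$, $n\ge 8$) is true and fixable, e.g.\ by exhibiting a spanning triangle-free cubic subgraph of the graph you actually obtain, but the argument as written is incorrect. Second, the claim that $L(H)$ contains $H$ as a minor ``a standard fact for cubic $H$'' is not standard and is not obvious: the natural attempts (one edge per vertex via a spanning tree, or via a perfect matching/2-factor) do not produce a valid minor model, so this needs a genuine argument even just for $H\in\{Q_3,V_8\}$. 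Likewise, the recursion for $|V(H)|\ge 10$ leans on an unspecified generation theorem for cyclically $4$-edge-connected cubic graphs together with an unproved assertion that each generation step ``lifts to a minor operation on line graphs.'' Martinov's theorem is correctly invoked and the induction scaffolding (contractible edge versus the two uncontractible families) is a reasonable plan, but as it stands the proposal is a plan, not a proof; if you want a complete argument you should either fill in these steps or simply cite~\cite{AL2021} as the paper does.
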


\begin{proposition}\label{prop:4connected}
    If a multigraph $G$ is $\mathcal{F}_3$-minor-free and has no separator of size at most~$3$, then the simplification of~$G$ is isomorphic to a graph in $\mathcal{K}$.
\end{proposition}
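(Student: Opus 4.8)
The plan is to pass to the simplification $G_0$ of $G$, observe that $G_0$ is a $4$-connected simple graph which is $\mathcal{F}_3$-minor-free, and then list all such graphs using Theorem~\ref{thm:4connected} together with a short case analysis for the small ones.

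First I would dispose of tiny $G$. If $\abs{V(G)}\le 4$, then since no set of at most~$3$ vertices separates $G$, in particular no set of $\abs{V(G)}-2$ vertices separates $G$, so deleting any $\abs{V(G)}-2$ vertices leaves a connected graph on the remaining two, i.e.\ every pair of vertices of $G$ is adjacent; hence the simplification of $G$ is $K_{\abs{V(G)}}\in\mathcal{K}$. So assume $\abs{V(G)}\ge 5$. A set $S\subseteq V(G)$ separates $G_0$ if and only if it separates $G$, so $G_0$ has no separator of size at most~$3$ and $\abs{V(G_0)}=\abs{V(G)}>4$; thus $G_0$ is $4$-connected. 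Moreover $G_0$ is a spanning subgraph of $G$, hence $\mathcal{F}_3$-minor-free, so in particular $\{Q_3,V_8\}$-minor-free. By Theorem~\ref{thm:4connected}, either $G_0\cong L(K_{3,3})\in\mathcal{K}$, and we are done, or $\abs{V(G_0)}\le 7$.

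It remains to handle $5\le\abs{V(G_0)}\le 7$. Since $G_0$ is $4$-connected, $\delta(G_0)\ge 4$, so $\overline{G_0}$ has maximum degree at most~$2$; hence $\overline{G_0}$ is a disjoint union of paths and cycles. If $\abs{V(G_0)}=5$ this forces $G_0=K_5\in\mathcal{K}$. If $\abs{V(G_0)}=6$, then $\overline{G_0}$ is a matching, so $G_0$ is one of $K_6$, $K_6^-$, $K_6^{=}$, $K_6^\equiv$; the first two contain $K_6^-\in\mathcal{F}_3$ as a minor, so $G_0\in\{K_6^{=},K_6^\equiv\}\subseteq\mathcal{K}$. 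Now suppose $\abs{V(G_0)}=7$. If $\overline{G_0}$ is a linear forest it extends to a spanning $7$-cycle, so $\overline{C_7}$ is a spanning subgraph of $G_0$, contradicting $\mathcal{F}_3$-minor-freeness; hence $\overline{G_0}$ has a cycle component $C_k$ with $3\le k\le 7$. For $k=7$ we get $G_0=\overline{C_7}\in\mathcal{F}_3$, impossible. For $k=6$, $\overline{G_0}=C_6\sqcup K_1$, so $G_0\cong\overline{C_6}+K_1\in\mathcal{K}$. For $k=5$ the other two vertices induce $K_2$ or $\overline{K_2}$ in $\overline{G_0}$, so $G_0$ is $C_5+\overline{K_2}$ or $C_5+K_2\supseteq C_5+\overline{K_2}$, and $C_5+\overline{K_2}\in\mathcal{F}_3$, impossible. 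If $\overline{G_0}$ has a $C_4$ component (so $k=4$, or $k=3$ with the remaining four vertices inducing $C_4$), then those four vertices induce $\overline{C_4}=2K_2$ in $G_0$ and are joined to everything else, so deleting the other three vertices disconnects $G_0$, contradicting $4$-connectivity. Finally, if $k=3$ and the four non-cycle vertices do not induce a $C_4$ in $\overline{G_0}$, then (having maximum degree at most $2$ and no $4$-cycle) they induce a subgraph of the ``paw'' (a triangle with a pendant edge), which is precisely the subgraph of $\overline{K_{3,\widehat{1},3}}$ spanned by one of its two triangles together with the attached degree-$2$ vertex; hence $\overline{G_0}$ embeds into $\overline{K_{3,\widehat{1},3}}$, so $K_{3,\widehat{1},3}\in\mathcal{F}_3$ is a spanning subgraph of $G_0$, impossible. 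This exhausts all cases, so $G_0\in\mathcal{K}$.

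The main obstacle is the bookkeeping in the seven-vertex case: one must check that the enumeration of possible $\overline{G_0}$ (unions of paths and cycles on seven vertices) is complete and that each candidate is eliminated by the correct member of $\mathcal{F}_3$ or by a size-$3$ separator. The two simplifications that make this routine are that $\delta(G_0)\ge 4$ reduces everything to the very restricted graph $\overline{G_0}$, and that ``$G_0$ has $F$ as a spanning subgraph'' is equivalent to ``$\overline{G_0}$ embeds into $\overline{F}$'', which is immediate to test for $F\in\{\overline{C_7},\,C_5+\overline{K_2},\,K_{3,\widehat{1},3}\}$ since their complements are, respectively, a $7$-cycle, $C_5\sqcup K_2$, and two triangles joined by a path of length~$2$.
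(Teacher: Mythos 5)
Your proposal is correct and follows essentially the same route as the paper: pass to the simplification, note it is $4$-connected and $\{Q_3,V_8\}$-minor-free, invoke Theorem~\ref{thm:4connected} to reduce to at most $7$ vertices or $L(K_{3,3})$, and then analyse the complement (a graph of maximum degree at most $2$) using $K_6^-$, $\overline{C_7}$, $C_5+\overline{K_2}$, $K_{3,\widehat{1},3}$ and the $3$-separator argument for the $C_4$ case. Your treatment of the seven-vertex, longest-cycle-three case via the embedding of $\overline{G_0}$ into $\overline{K_{3,\widehat{1},3}}$ just spells out a step the paper leaves implicit.
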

\begin{proof}
    It suffices to show the statement for simple graphs, so we assume that~$G$ is simple.
    Since $G$ has no separator of size at most~$3$, if $\abs{V(G)}\leq5$, then~$G$ is a complete graph.
    Thus, we may assume that $\abs{V(G)}\geq6$, and therefore $G$ is $4$-connected.
    Since $L(K_{3,3})\in\mathcal{F}_3$, by Theorem~\ref{thm:4connected}, we may assume that $\abs{V(G)}\leq7$.
    
    Suppose first that $\abs{V(G)}=6$.
    Since~$G$ is $4$-connected, every vertex has degree at least~$4$ in~$G$, and therefore $\Delta(\overline{G})\leq1$.
    Since $G$ has no subgraph isomorphic to~$K_6^-\in\mathcal{F}_3$, $\overline{G}$ has at least two edges.
    Hence,~$G$ is isomorphic to~$K_6^\equiv$ or~$K_6^{=}$.

    Thus, we may assume that $\abs{V(G)}=7$.
    Again by the $4$-connectedness of~$G$, every vertex of $G$ has degree at least~$4$, and therefore $\Delta(\overline{G})\leq2$, which implies that every component of $\overline{G}$ is either a cycle or a path.
    Since~$G$ has no subgraph isomorphic to~$\overline{C_7}\in\mathcal{F}_3$, $\overline{G}$ is not a subgraph of a cycle of length~$7$.
    Therefore, $\overline{G}$ has a cycle of length at most~$6$.
    Let~$C$ be the longest cycle in~$\overline{G}$.
    If $\abs{V(C)}=5$, then~$G$ has a subgraph isomorphic to $C_5+\overline{K_2}\in\mathcal{F}_3$, a contradiction.
    If $\abs{V(C)}=4$, then $V(G)\setminus V(C)$ is a $3$-vertex separator of~$G$, a contradiction.
    If $\abs{V(C)}=3$, then~$G$ has a subgraph isomorphic to $K_{3,\widehat{1},3}\in\mathcal{F}_3$, a contradiction.
    Thus, $\abs{V(C)}=6$, and therefore~$G$ is isomorphic to~$\overline{C_6}+K_1$.
\end{proof}

\begin{proposition}\label{prop:completing}
    Let $S$ be a minimal separator of a multigraph~$G$ with $\abs{S}\leq3$.
    Let $G_S$ be the multigraph obtained from~$G$ by making $S$ a clique.
    Then $G$ is $\mathcal{F}_3$-minor-free if and only if $G_S$ is $\mathcal{F}_3$-minor-free.
\end{proposition}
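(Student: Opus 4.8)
The backward implication is immediate: $G$ is a spanning subgraph of $G_S$, so every minor of $G$ is a minor of $G_S$, and hence $G$ is $\mathcal{F}_3$-minor-free whenever $G_S$ is. For the forward implication I would argue by contraposition. Assume $G$ is $\mathcal{F}_3$-minor-free but some $F\in\mathcal{F}_3$ is a minor of $G_S$. If $\abs S\le 1$ then $G_S=G$ and there is nothing to prove, so assume $\abs S\in\{2,3\}$. It is standard that, since $S$ is a minimal separator, $G-S$ has at least two \emph{full} components, that is, components $C$ with $N_G(C)=S$; fix two of them, $C_1$ and $C_2$. The only edges of $G_S$ that are not edges of $G$ are the at most $\binom{\abs S}{2}\le 3$ non-edges of $G$ inside $S$; call these the \emph{new edges}. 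Over all $F\in\mathcal{F}_3$ and all minor models $\bigl((T_u)_{u\in V(F)},\varphi\bigr)$ of $F$ in $G_S$, I would fix one minimising first the number $k$ of new edges lying in the image of $\varphi$, and then $\sum_{u\in V(F)}\abs{V(T_u)}$. If $k=0$ the model is a minor model of $F$ in $G$, a contradiction; so $k\ge 1$. Minimising $\sum_u\abs{V(T_u)}$ forces each $T_u$ to be a minimal tree, so every leaf of $T_u$ is an end of $\varphi(e)$ for some edge $e$ of $F$ at $u$.

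Next I would analyse the new edges used by $\varphi$. Each is a pair inside $S$ joining two distinct branch sets, and since $\varphi$ is injective and $F$ is simple, a short case check shows that either at most one new edge is used, or $\abs S=3$, the three vertices of $S$ lie in three distinct branch sets $T_{u_1},T_{u_2},T_{u_3}$ with $s_i\in T_{u_i}$, and the used new edges realise a path $u_1u_2u_3$ (when $k=2$) or a triangle $u_1u_2u_3$ (when $k=3$) of $F$. The idea is then to reroute the new edges through a full component. Call a full component $C$ \emph{free} if it is disjoint from $\bigcup_uV(T_u)$. Since a free component $C$ is connected with $N_G(C)=S$, it can be used as follows. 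If $k\le 1$, with the single new edge (if any) joining $T_{u_1}\ni s_1$ and $T_{u_2}$, then enlarging $T_{u_1}$ by all of $C$ produces a minor model of $F$ in $G$ with no new edges (the missing edge $s_1s_2$ is replaced by a $G$-edge from $C$ to $s_2\in N_G(C)$), contradicting $k\ge 1$. If $k=2$ (the path case) then enlarging $T_{u_2}$ (which contains $s_2\in N_G(C)$) by $C$ makes it adjacent in $G$ to both $T_{u_1}$ and $T_{u_3}$, again giving a model of $F$ in $G$. If $k=3$ (the triangle case) then, taking a minimal connected subtree $Y^*\subseteq C$ adjacent to each of $s_1,s_2,s_3$ and using $Y^*$ as a branch set for a new vertex $z$, the graph $F'$ obtained from $F$ by a $(\Delta,Y)$-operation on the triangle $u_1u_2u_3$ becomes a minor of $G$; by Lemma~\ref{lem:DeltaY}, $F'$ has a spanning subgraph isomorphic to a member of $\mathcal{F}_3$, so $G$ has an $\mathcal{F}_3$-minor, a contradiction. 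Thus in all cases the existence of a single free full component already yields a contradiction.

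Therefore the crux is to show that \emph{some full component is free}; this is where I expect the real work to be. The plan is to exploit the minimality of the model together with the structure of $\mathcal{F}_3$. A branch set $T_u$ that is entirely contained in a component $C$ of $G-S$ satisfies $N_{G_S}(T_u)\subseteq S\cup C$, so all of $u$'s (at least three, since $\delta(F)\ge 3$) neighbours in $F$ are represented either inside $C$ or among the at most $\abs S\le 3$ branch sets meeting $S$. Using that $F$ is internally $4$-connected (Lemma~\ref{lem:internally}), this configuration is very constrained: two distinct vertices $u,u'$ of $F$ whose branch sets lie inside two distinct full components would be forced to have the same (size-$\le 3$) neighbourhood, which is impossible by Observation~\ref{obs:forbidden}\ref{obs:pairneighbour}. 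Hence at most one full component can contain a branch set, and every other full component is met only by the at most three branch sets touching $S$. A final step would be to reroute those remaining intrusions out of one such component — again absorbing short paths through the component into the branch sets meeting $S$, and using the minimality of $\sum_u\abs{V(T_u)}$ to see that no essential vertex is lost — thereby producing a free full component and completing the contradiction. The main obstacle, and the part requiring the most care, is precisely this last bookkeeping: ruling out models in which every full component is ``essentially used'' by the branch sets meeting $S$, for which the degree and neighbourhood constraints of Observation~\ref{obs:forbidden} and the internal $4$-connectivity of the members of $\mathcal{F}_3$ are exactly the tools available.
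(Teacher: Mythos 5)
There is a genuine gap, in two places. First, your minimisation counts only the new edges lying in the image of $\varphi$, but new edges can also occur as edges of the trees $T_u$ themselves: a branch set may contain two or three vertices of $S$ and be connected through a new edge. So the base step ``if $k=0$ the model is a minor model of $F$ in $G$'' is false as stated, and your trichotomy (at most one new edge used, versus a path or triangle on three branch sets each holding one vertex of $S$) omits the configuration in which two vertices of $S$ lie in a single branch set. This is precisely Case~1 of the paper's proof, which repairs such a branch set by absorbing an entire component $D$ of $G-S$ into it (each piece of $T_u$ minus its internal new edges contains a vertex of $S$, and every vertex of $S$ has a neighbour in every component because $S$ is a minimal separator); you would need either this absorption argument or a minimisation over the total number of new edges, together with a separate treatment of $\abs{V(T_u)\cap S}\geq 2$, before any of your later cases can be invoked.

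Second, and more seriously, the crux of your plan --- the existence of a full component disjoint from all branch sets --- is not only left unproven but is false in general: if branch sets of two vertices of $F$ lie in two different components of $G-S$ and these are the only components, no free component exists. Your supporting claim that two branch sets inside distinct full components would force equal neighbourhoods of size at most $3$ is incorrect, because the $F$-neighbours of such a vertex can be realised by further branch sets contained in the same component, not only by the at most three branch sets meeting $S$. The paper handles exactly this situation differently: if branch sets lie in two distinct components, then the set $X$ of vertices of $F$ whose branch sets meet $S$ is a separator of $F$ of size at most $3$, and internal $4$-connectivity (Lemma~\ref{lem:internally}) forces $X$ to be an independent triple; each branch set then contains exactly one vertex of $S$, so the given model already lives in $G$. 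In the remaining situation the paper needs only a component $D$ met by no branch set of a vertex outside $X$, and either trims the at most three offending branch sets by deleting $V(D)$ (connectivity survives because each contains at most one vertex of $S$) or absorbs $D$ into one of them; this sidesteps the ``bookkeeping'' you anticipate. Your rerouting for $k\leq 1$, the path case, and the triangle case via a $(\Delta,Y)$-operation and Lemma~\ref{lem:DeltaY} does match the paper's Cases~2 and~3 in spirit, but as written the argument is incomplete at the two points above.
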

\begin{proof}
    We may assume that $S$ is neither an empty set nor a clique, because otherwise the statement clearly holds.
    Since $G$ is a subgraph of $G_S$, the backward direction is obvious.

    For the forward direction, suppose that~$G_S$ has a minor $H\in\mathcal{F}_3$.
    We are going to show that $G$ has a minor in $\mathcal{F}_3$.
    Let $(T_u)_{u\in V(H)}$ be a minor model of~$H$ in~$G_S$, and let $X$ be the set of vertices~$u$ of~$H$ such that $V(T_u)\cap S\neq\emptyset$.
    We may assume that $X\neq \emptyset$, because otherwise the statement clearly holds.
        
    Suppose that there are distinct vertices $v$ and $w$ of $H$ and distinct components $D$ and $D'$ of $G-S$ such that~$T_v$ is a subtree of~$D$ and~$T_w$ is a subtree of~$D'$.
    Thus,~$X$ is a separator of~$H$ of size at most~$3$, so by Lemma~\ref{lem:internally}, we have that~$X$ is an independent set of size $3$ in $H$.
    It follows that for each $u\in X$, we have $\abs{V(T_u)\cap S}=1$, so that for each $u\in V(H)$, $T_u$ is a subtree of~$G$.
    Since $X$ is an independent set in~$H$, $(T_u)_{u\in V(H)}$ is a minor model of~$H$ in~$G$.

    Thus, we may assume that there is a component $D$ of $G-S$ such that for every $v\in V(H)\setminus X$, $V(T_v)\cap V(D)=\emptyset$.
    We now consider the following three cases.

    \medskip
    \noindent\textbf{Case 1.} For some $u\in X$, $\abs{V(T_u)\cap S}\geq2$.
    
    Note that $\abs{X}\leq 2$ as $\abs{S}\leq3$.
    Since $S$ is a minimal separator of $G$, $G[V(T_u)\cup V(D)]$ has a spanning tree, say~$T'_u$.
    For every $v\in V(H)\setminus\{u\}$, let $T'_v:=T_v-V(D)$.
    Since $\abs{X}\leq2$ and $\abs{V(T_u)\cap S}\geq2$, if there exists $u'\in X\setminus \{u\}$, then $\abs{V(T_{u'})\cap S}=1$, so that $T'_{u'}$ is a subtree of~$G$ and there is an edge of~$G$ between a vertex of~$T'_u$ and a vertex of~$T'_{u'}$.
    Hence, $(T'_w)_{w\in V(H)}$ is a minor model of $H$ in $G$.

    \medskip
    \noindent\textbf{Case 2.} $\abs{V(T_v)\cap S}\leq1$ for every $v\in V(H)$ and $X$ is not a clique of size~$3$ in~$H$.
    
    Since $1\leq\abs{X}\leq 3$, $X$ contains some vertex $u$ such that $X\setminus\{u\}$ is an independent set in~$H$.
    Since $S$ is a minimal separator of $G$, there exists an edge~$e$ of~$G$ between the vertex in $V(T_u)\cap S$ and a vertex in~$V(D)$.
    Let $T'_u$ be the subtree of~$G$ obtained from the disjoint union of $T_u-V(D)$ and a spanning tree of~$D$ by adding~$e$.
    For every $v\in V(H)\setminus\{u\}$, let $T'_v:=T_v-V(D)$, which is a subtree of~$G$ since $\abs{V(T_v)\cap S}\leq1$.
    Since $S$ is a minimal separator of~$G$, for every $u'\in X\setminus\{u\}$, $G$ has an edge between a vertex of $T'_u$ and a vertex of $T'_{u'}$.
    Hence, $(T'_w)_{w\in V(H)}$ is a minor model of $H$ in $G$.

    \medskip
    \noindent\textbf{Case 3.} $\abs{V(T_v)\cap S}\leq1$ for every $v\in V(H)$ and $X$ is a clique of size~$3$ in~$H$.

    Let $H'$ be the graph obtained from~$H$ by applying a $(\Delta,Y)$-operation on $H[X]$.
    By Lemma~\ref{lem:DeltaY}, $H'$ has a spanning subgraph isomorphic to a graph in~$\mathcal{F}_3$.
    Thus, it suffices to show that $H'$ is a minor of~$G$.
    Let $x$ be the vertex in $V(H')\setminus V(H)$, and let $T'_x$ be a spanning tree of $D$.
    For every $v\in V(H)$, let $T'_v:=T_v-V(D)$, which is a subtree of~$G$ since $\abs{V(T_v)\cap S}\leq1$.
    Since $S$ is a minimal separator of~$G$, for every $u\in X$, $G$ has an edge between a vertex of $T'_x$ and a vertex of~$T'_u$.
    Thus, $(T'_w)_{w\in V(H')}$ is a minor model of~$H'$ in~$G$, and this completes the proof.
\end{proof}

We now prove Proposition~\ref{prop:treedecomp}.

\begin{proof}[Proof of Proposition~\ref{prop:treedecomp}]
    We proceed by induction on $\abs{V(G)}+\abs{E(G)}$.
    Suppose first that $G$ is not simple.
    Let $e$ be an edge of $G$ which is a loop or a parallel edge.
    By the inductive hypothesis, there exists an $\mathcal{F}_3$-minor-free multigraph~$G^*$ and a $3$-contractible tree decomposition of~$G^*$ such that $G-\{e\}$ is a spanning subgraph of $G^*$.
    Thus, the statement holds for $G$ by taking $G'$ as the multigraph obtained from~$G^*$ by adding~$e$ and taking the same tree decomposition.

    Hence, we may assume that $G$ is simple.
    For a set $S\subseteq V(G)$, let $G_S$ be the graph obtained from $G$ by making $S$ a clique.
    The statement clearly holds for $\abs{V(G)}\leq5$ by $G':=G_{V(G)}$ and a tree decomposition with only one bag.
    Thus, we may assume that $\abs{V(G)}\geq6$.

    Suppose that $G$ is disconnected.
    Let $C_1$ be an arbitrary component of $G$ and let $C_2:=G-V(C_1)$.
    By the inductive hypothesis, for each $i\in[2]$, there exists an $\mathcal{F}_3$-minor-free graph~$C'_i$ and a $3$-contractible tree decomposition $(T_i,(B_t)_{t\in V(T_i)})$ of~$C'_i$ such that $C_i$ is a spanning subgraph of~$C'_i$.
    We may assume that~$T_1$ and~$T_2$ are disjoint.
    For each $i\in[2]$, we choose an arbitrary vertex~$v_i$ of~$C'_i$, and let $G'$ be the graph obtained from $C'_1\cup C'_2$ by adding an edge $v_1v_2$.
    Since every graph in~$\mathcal{F}_3$ is $3$-connected, $G'$ is $\mathcal{F}_3$-minor-free.
    For each $i\in[2]$, let~$u_i$ be a node of $T_i$ such that $v_i\in B_{u_i}$.
    Let $T'$ be a tree obtained from $T_1\cup T_2$ by adding a new node $u$ with neighbourhood $\{u_1,u_2\}$, and let $B'_u:=\{v_1,v_2\}$.
    For every $t\in V(T')\setminus\{u\}$, let $B'_t:=B_t$.
    Thus, $(T',(B'_t)_{t\in V(T')})$ is a $3$-contractible tree decomposition of~$G'$.

    Therefore, we may assume that $G$ is connected.
    Suppose that $G$ has a cut-vertex $v$.
    For a component $C$ of $G-v$, let $C_1:=G[V(C)\cup\{v\}]$ and let $C_2:=G-V(C)$.
    By the inductive hypothesis, for each $i\in[2]$, there exists an $\mathcal{F}_3$-minor-free graph~$C'_i$ and a $3$-contractible tree decomposition $(T_i,(B_t)_{t\in V(T_i)})$ of~$C'_i$ such that $C_i$ is a spanning subgraph of~$C'_i$.
    We may assume that~$T_1$ and~$T_2$ are disjoint.
    Let $G':=C'_1\cup C'_2$.
    Since every graph in~$\mathcal{F}_3$ is $3$-connected, $G'$ is $\mathcal{F}_3$-minor-free.
    For each $i\in[2]$, let~$u_i$ be a node of $T_i$ such that $v\in B_{u_i}$.
    By adding an edge between~$u_1$ and~$u_2$ to~$T_1\cup T_2$, we find a $3$-contractible tree decomposition of $G'$.
    
    Hence, we may assume that $G$ is $2$-connected.
    Suppose that there exists a $2$-vertex separator~$S$ of~$G$.
    For a component~$C$ of $G-S$, let $C_1:=G_S[V(C)\cup S]$ and let $C_2:=G_S-V(C)$.
    By Proposition~\ref{prop:completing}, both~$C_1$ and~$C_2$ are $\mathcal{F}_3$-minor-free.
    By the construction of $G_S$, $S$ is also a minimal separator of~$G_S$, and therefore for each component~$D$ of~$G_S-S$, $G_S$ has at least two edges between~$S$ and~$V(D)$.
    Therefore, for each $i\in[2]$, we have that $\abs{V(C_i)}+\abs{E(C_i)}<\abs{V(G)}+\abs{E(G)}$.
    By the inductive hypothesis, for each $i\in[2]$, there exists an $\mathcal{F}_3$-minor-free graph~$C'_i$ and a $3$-contractible tree decomposition $(T_i,(B_t)_{t\in V(T_i)})$ of~$C'_i$ such that $C_i$ is a spanning subgraph of $C'_i$.
    We may assume that~$T_1$ and~$T_2$ are disjoint.
    Let $G':=C'_1\cup C'_2$.
    Since every graph in~$\mathcal{F}_3$ is $3$-connected and $S$ is a clique in $G'$, $G'$ is $\mathcal{F}_3$-minor-free.
    For each $i\in[2]$, by Lemma~\ref{lem:complete bag}, there exists a node $u_i$ of $T_i$ such that $S\subseteq B_{u_i}$.
    By adding an edge between $u_1$ and $u_2$ to $T_1\cup T_2$, we find a $3$-contractible tree decomposition of $G'$.

    Thus, we may now assume that $G$ is $3$-connected.
    By Proposition~\ref{prop:completing}, we may assume that every $3$-vertex separator of $G$ is a clique in $G$.

    \begin{claim}\label{clm:allocating}
        Let $(T,(B_t)_{t\in V(T)})$ be a tree decomposition of~$G$ satisfying~\ref{def:td contractible edge}.
        If~$T$ has a node $x$ such that there exists a minimal separator $S$ of $G[B_x]$ with $\abs{S}\leq3$, then we choose an arbitrary component $C$ of $G[B_x]-S$ and let $N_1^x:=\{t\in N_T(x):B_t\cap(V(C)\cup S)\neq\emptyset\}$ and $N_2^x:=N_T(x)\setminus N_1^x$.
        Let $T'$ be a tree obtained from $T-x$ by adding new vertices $x_1,x_2$ and the edges in $\{x_it:i\in[2],t\in N_i^x\}\cup\{x_1x_2\}$, let $B'_{x_1}:=V(C)\cup S$, let $B'_{x_2}:=B_x\setminus V(C)$, and for each $t\in V(T)\setminus\{x\}$, let $B'_t:=B_t$.
        Then $(T',(B'_t)_{t\in V(T')})$ is a tree decomposition of~$G$ satisfying~\ref{def:td contractible edge}.
    \end{claim}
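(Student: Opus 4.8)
The plan is to check in turn that $(T',(B'_t)_{t\in V(T')})$ satisfies the three tree-decomposition axioms and then condition~\ref{def:td contractible edge}, inheriting everything away from $x$ from the original decomposition.

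\emph{$T'$ is a tree, and \ref{def:td bag union}.} Each component of $T-x$ contains exactly one neighbour of $x$, and in $T'$ it is joined by a single edge to $x_1$ or to $x_2$ according to whether that neighbour lies in $N^x_1$ or in $N^x_2$; together with the edge $x_1x_2$ this makes $T'$ connected, and since $\abs{V(T')}=\abs{V(T)}+1$ and $\abs{E(T')}=\abs{V(T)}$ we get $\abs{E(T')}=\abs{V(T')}-1$, so $T'$ is a tree. Because $S\subseteq B_x$ we have $B'_{x_1}\cup B'_{x_2}=(V(C)\cup S)\cup(B_x\setminus V(C))=B_x$, hence $\bigcup_{t\in V(T')}B'_t=\bigcup_{t\in V(T)}B_t=V(G)$, which is~\ref{def:td bag union}.

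\emph{\ref{def:td edge} and \ref{def:td vertex}.} For~\ref{def:td edge}, only $B_x$ has been split, so it suffices to see that each edge of $G$ with both ends in $B_x$ has both ends in $B'_{x_1}$ or both in $B'_{x_2}$; as $C$ is a component of $G[B_x]-S$, such an edge lies in $V(C)$, lies in $B_x\setminus(V(C)\cup S)$, or is incident to $S$, and since $S\subseteq B'_{x_1}\cap B'_{x_2}$, $V(C)\subseteq B'_{x_1}$, and $B_x\setminus V(C)\subseteq B'_{x_2}$, both ends always lie in a common new bag. The crux is~\ref{def:td vertex}. Fix $v\in V(G)$ and put $A=\{t\in V(T):v\in B_t\}$, a subtree of $T$. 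If $v\notin B_x$, then $v$ is in no new bag and $A$ avoids $x$, so $A$ lies inside a single component of $T-x$, which is carried unchanged into $T'$. If $v\in B_x$, the key observation is that for every neighbour $t$ of $x$ with $v\in B_t$ the adhesion set $B_t\cap B_x$ is a clique of $G$ by~\ref{def:td contractible edge}, and a clique of $G[B_x]$ meeting a component $C''$ of $G[B_x]-S$ is contained in $V(C'')\cup S$; hence each adhesion set $B_t\cap B_x$ lies entirely in $V(C)\cup S$ or entirely in $B_x\setminus V(C)$. This dichotomy is the one recorded by the split of $N_T(x)$ into $N^x_1$ and $N^x_2$, so a neighbour $t$ of $x$ with $v\in B_t$ is attached in $T'$ to a new node that still contains $v$: to $x_1$ if $v\in V(C)$, to $x_2$ if $v\in B_x\setminus(V(C)\cup S)$, and to either one if $v\in S$ (both new bags contain $v$). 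Thus $\{t\in V(T'):v\in B'_t\}$ is obtained from $A$ by replacing $x$ with the new node(s) containing $v$; every component of $A-x$ is attached in $T'$ to a node of $\{x_1,x_2\}$ lying in this set, and $x_1x_2$ is an edge, so the set induces a connected subtree of $T'$.

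\emph{\ref{def:td contractible edge}.} Adhesion sets of edges of $T'$ not incident to $\{x_1,x_2\}$ are unchanged. For a neighbour $t$ of $x$, the clique-containment just used gives $B'_{x_i}\cap B_t=B_x\cap B_t$ for the relevant $i$, so that adhesion already satisfies~\ref{def:td contractible edge}. It remains to treat the new edge $x_1x_2$, whose adhesion set is $B'_{x_1}\cap B'_{x_2}=(V(C)\cup S)\cap(B_x\setminus V(C))=S$. I would show $S$ is a minimal separator of $G$ and a clique of size at most $3$ by a shortcut argument: given a path in $G-S$ from $V(C)$ to $B_x\setminus(V(C)\cup S)$, each maximal subpath with no vertex in $B_x$ lies in a single branch of the decomposition at $x$ and re-enters through that branch's adhesion set, which is a clique disjoint from $S$, so each such excursion can be replaced by an edge; iterating yields such a path inside $G[B_x]-S$, contradicting that $S$ separates $G[B_x]$. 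Hence $S$ separates $G$, and since at this point of the proof of Proposition~\ref{prop:treedecomp} the graph $G$ is $3$-connected with every $3$-vertex separator a clique, $S$ has size exactly $3$, is a clique, and is therefore automatically a minimal separator of $G$.

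\emph{Main obstacle.} The delicate step is~\ref{def:td vertex}: one must be certain that every neighbour $t$ of $x$ with $v\in B_t$ is reattached in $T'$ to a new node still containing $v$, and this rests entirely on~\ref{def:td contractible edge} forcing adhesion sets to be cliques, so that no adhesion set of $T$ straddles $V(C)$ and a different component of $G[B_x]-S$; closely related is the shortcut argument needed to upgrade $S$ from a separator of $G[B_x]$ to a minimal separator of $G$.
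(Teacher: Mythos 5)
Your overall plan mirrors the paper's proof: verify \ref{def:td bag union}, \ref{def:td edge}, \ref{def:td vertex} and then \ref{def:td contractible edge}, with two genuinely different ingredients. You justify \ref{def:td vertex} via the fact that adhesion sets are cliques, so that each set $B_t\cap B_x$ lies in $V(C)\cup S$ or in $B_x\setminus V(C)$, and you prove that $S$ separates $G$ by a shortcut argument replacing excursions outside $B_x$ by edges inside adhesion cliques, where the paper simply invokes Lemma~\ref{lem:adhesion}. The shortcut argument is sound in substance (the adhesion clique through which an excursion re-enters need not be disjoint from $S$, as you assert, but all you need is that the two endpoints of the excursion avoid $S$ and lie in a common adhesion clique, which you have), and your identification of the clique property of adhesions as the engine behind \ref{def:td vertex} is more explicit than the paper, whose proof of this step is a one-line assertion.

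However, the step you yourself single out as the crux is asserted rather than proved, and with the definitions as literally given it fails. The clique dichotomy says $B_t\cap B_x\subseteq V(C)\cup S$ or $B_t\cap B_x\subseteq B_x\setminus V(C)$, but this is not the dichotomy recorded by $N_1^x=\{t\in N_T(x):B_t\cap(V(C)\cup S)\neq\emptyset\}$: an adhesion set contained in $S\cup V(C'')$ for a component $C''\neq C$ of $G[B_x]-S$ which meets $S$ places $t$ in $N_1^x$, even though $B_t$ contains vertices of $C''\subseteq B'_{x_2}\setminus B'_{x_1}$. For such a vertex $v$, the nodes of $T'$ whose bags contain $v$ include $t$ (attached to $x_1$) and $x_2$ but not $x_1$, so \ref{def:td vertex} fails, and likewise $B'_{x_1}\cap B_t\neq B_x\cap B_t$, which breaks your treatment of \ref{def:td contractible edge} for the edge $x_1t$. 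Concretely, let $G$ have vertices $s_1,s_2,s_3,c,d,e$ with $\{s_1,s_2,s_3\}$ a triangle, $c$ and $d$ each adjacent to $s_1,s_2,s_3$, and $e$ adjacent to $s_1,s_2,d$; take $T$ to be the single edge $xt$ with $B_x=\{s_1,s_2,s_3,c,d\}$ and $B_t=\{s_1,s_2,d,e\}$ (the adhesion $\{s_1,s_2,d\}$ is a triangle and a minimal separator, $G$ is $3$-connected, and every $3$-vertex separator of $G$ is a clique), and apply the construction with $S=\{s_1,s_2,s_3\}$ and $C=\{c\}$: then $t\in N_1^x$ while $d\in B_t\cap B'_{x_2}$ and $d\notin B'_{x_1}$, so the output is not a tree decomposition. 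Your clique argument does establish the claim if $t$ is attached to $x_1$ exactly when $B_t\cap V(C)\neq\emptyset$ (equivalently, when $B_t\cap B_x\subseteq V(C)\cup S$), which is evidently the intended attachment rule; note that the paper's own proof glosses over the same point (it asserts without justification that every node $t$ with $v\in B_t$ and $v\in B'_{x_i}$ reaches $N_i^x$ in $T-x$), so the gap you left open is precisely where the statement, as literally worded, needs repair.
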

    \begin{subproof}
        Let $\mathcal{T}:=(T,(B_t)_{t\in V(T)})$ and $\mathcal{T}':=(T',(B'_t)_{t\in V(T')})$.
        We first show that $\mathcal{T}'$ is a tree decomposition of~$G$.
        Since $\mathcal{T}$ satisfies~\ref{def:td bag union} and $B_x=B'_{x_1}\cup B'_{x_2}$, $\mathcal{T}'$ also satisfies~\ref{def:td bag union}.
        Since $\mathcal{T}$ satisfies~\ref{def:td edge} and $G$ has no edge between $B'_{x_1}\setminus S$ and $B'_{x_2}\setminus S$, $\mathcal{T}'$ also satisfies~\ref{def:td edge}.

        We verify that $\mathcal{T}'$ satisfies~\ref{def:td vertex}.
        Since $T-x=T'-\{x_1,x_2\}$ and $B'_t=B_t$ for every $t\in V(T)\setminus\{x\}$, it suffices to show that for every $v\in B_x$, $T'[\{t\in V(T'):v\in B'_t\}]$ is connected.
        We fix a vertex $v\in B_x$.
        Since~$\mathcal{T}$ satisfies~\ref{def:td vertex} and $B'_{x_1}\cap B'_{x_2}=S$, if $v\in S$, then $T'[\{t\in V(T'):v\in B'_t\}]$ is connected.
        Thus, we may assume that $v\notin S$.
        For every node $t$ of $T$ with $v\in B_t$, by~\ref{def:td vertex}, if $v\in B'_{x_i}$, then $T-x$ has a path between~$t$ and a node in $N_i^x$.
        Thus, $T[\{t\in V(T):v\in B_t\}]$ is isomorphic to $T'[\{t\in V(T'):v\in B'_t\}]$.
        Hence,~$\mathcal{T}'$ satisfies~\ref{def:td vertex}, and therefore it is a tree decomposition of $G$.

        We now show that $\mathcal{T}'$ satisfies~\ref{def:td contractible edge}.
        Since~$G$ is $3$-connected, every $3$-vertex separator of $G$ is minimal.
        Since every $3$-vertex separator of~$G$ is a clique in~$G$, and $\mathcal{T}$ satisfies~\ref{def:td contractible edge}, by the construction of $\mathcal{T}'$, it suffices to show that for every edge $vw$ of $T'$ incident with $x_1$ or $x_2$, $B'_v\cap B'_w$ is a separator of~$G$.
        
        Suppose that $vw=x_1x_2$.
        Since $\mathcal{T}'$ is a tree decomposition of $G$ where both $B'_{x_1}\setminus S$ and $B'_{x_2}\setminus S$ are nonempty, by Lemma~\ref{lem:adhesion}, $S=B'_{x_1}\cap B'_{x_2}$ is a separator of~$G$.
        
        Thus, we may assume that $v=x_i$ and $w\in N_i^x$ for some $i\in[2]$.
        Since $B'_{x_1}\cap B'_{x_2}=S$, by~\ref{def:td vertex}, we have that $B'_w\cap(B'_{x_{3-i}}\setminus S)=\emptyset$, and therefore $B'_v\cap B'_w=B_x\cap B_w$.
        Since $\mathcal{T}$ satisfies~\ref{def:td contractible edge}, $B'_v\cap B'_w$ is a separator of~$G$.
        Hence, $\mathcal{T}'$ satisfies~\ref{def:td contractible edge}.
    \end{subproof}

    We remark that a tree decomposition with only one bag satisfies~\ref{def:td contractible edge}.
    By Claim~\ref{clm:allocating}, there exists a tree decomposition $\mathcal{T}^*:=(T^*,(B_t^*)_{t\in V(T^*)})$ of $G$ satisfying~\ref{def:td contractible edge} where each bag has no separator of size at most~$3$.
    We now show that $\mathcal{T}^*$ satisfies~\ref{def:td contractible vertex}.
    Let $u$ be an arbitrary node of~$T^*$, and let $\mathcal{A}_u:=\{B^*_u\cap B^*_{u'}:u'\in N_{T^*}(u),\abs{B^*_u\cap B^*_{u'}}=3\}$.
    Since~$G$ is $\mathcal{F}_3$-minor-free, by Proposition~\ref{prop:4connected}, $G[B^*_u]$ is isomorphic to a graph in~$\mathcal{K}$.
    By~\ref{def:td contractible edge}, every adhesion set in $\mathcal{T}^*$ is a minimal separator of~$G$, and therefore $Y(G[B^*_u],\mathcal{A}_u)$ is a minor of~$G$.
    Since~$G$ is $\mathcal{F}_3$-minor-free, $\mathcal{A}_u$ is addable to~$B^*_u$ in~$G$.
    By the arbitrary choice of~$u$, $\mathcal{T}^*$ satisfies~\ref{def:td contractible vertex}, and therefore it is a $3$-contractible tree decomposition of~$G$.
    This completes the proof by induction.
\end{proof}

\subsection{Proof of Proposition~\ref{prop:mainbackward}}\label{subsec:backwardproof}

In this subsection, we complete the proof of Proposition~\ref{prop:mainbackward}.
We first prove Proposition~\ref{prop:mainbackward} by using the following proposition.

\begin{proposition}\label{prop:stronger}
    Let $G$ be a graph with a $3$-contractible tree decomposition $(T,(B_t)_{t\in V(T)})$, let~$\mathcal{A}$ be a set of $3$-vertex cliques in~$G$ such that $\{B_v\cap B_w:vw\in E(T),\abs{B_v\cap B_w}=3\}\subseteq\mathcal{A}$, and let~$H$ be a trigraph with underlying graph $\psi(\theta(G),\sigma_G(\mathcal{A}))$ such that $\DeltaR(H)\leq3$.    
    If~$\mathcal{A}$ is addable to $B_t$ in~$G$ for every $t\in V(T)$, then $\tww(H)\leq3$.
\end{proposition}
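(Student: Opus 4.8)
The plan is to prove Proposition~\ref{prop:stronger} by induction on $\abs{V(T)}$. When $\abs{V(T)}=1$, the single bag is $V(G)$, so by~\ref{def:td contractible vertex} the (simple) graph $G$ is isomorphic to a member of $\mathcal{K}$, and the hypothesis that $\mathcal{A}$ is addable to this bag says exactly that $\mathcal{A}$ is an addable set of $G$. If $G=K_1$ then $\psi(\theta(G),\sigma_G(\mathcal{A}))$ is a single vertex and $\tww(H)=0$. Otherwise $G$ has an edge; I take $X\in\mathcal{A}$ if $\mathcal{A}\neq\emptyset$ and $X=\{u,v\}$ for an edge $uv$ of $G$ otherwise. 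Corollary~\ref{cor:base} then gives an $X$-stable partial \cont{3} from $H$ to a trigraph $H'$ whose underlying graph is a subdivision of $K_4$ (if $X\in\mathcal{A}$) or a cycle of length $7$ (otherwise). Any trigraph with underlying graph a subdivision of $K_4$ has twin-width at most $3$—contract the subdivided paths one at a time and then the resulting four-vertex trigraph, keeping the maximum red degree at most $3$—and a $7$-cycle has twin-width $2$ by Lemma~\ref{lem:cycle}; hence $\tww(H')\le 3$ and, concatenating contraction sequences, $\tww(H)\le 3$.

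For the inductive step, suppose $\abs{V(T)}\ge 2$, fix a leaf $t$ with neighbour $t'$, and put $X:=B_t\cap B_{t'}$, which by~\ref{def:td contractible edge} is a clique of size at most $3$ in $G$. Let $\Gamma:=G[B_t]$ and $\mathcal{B}:=\{A\in\mathcal{A}:A\subseteq B_t\}$; by~\ref{def:td contractible vertex} and the hypothesis, $\Gamma$ is isomorphic to a member of $\mathcal{K}$ and $\mathcal{B}$ is an addable set of $\Gamma$, and $X\in\mathcal{B}$ whenever $\abs{X}=3$. Because $t$ is a leaf, property~\ref{def:td vertex} forces every vertex of $B_t\setminus X$ to lie in no other bag and to have no $G$-neighbour outside $B_t$; consequently $\psi(\theta(\Gamma),\sigma_\Gamma(\mathcal{B}))$ is an induced subgraph of $\psi(\theta(G),\sigma_G(\mathcal{A}))$, meeting the remainder only in $V(\theta(G[X]))\cup\{x_X,y_{v,X}:v\in X\}$, and every vertex of it outside this common part has all of its neighbours inside it. Let $H_1$ be the corresponding induced subtrigraph of $H$, so $\DeltaR(H_1)\le 3$. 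Applying Corollary~\ref{cor:base} to $(\Gamma,\mathcal{B},X)$—using $X\in\mathcal{B}$ if $\abs{X}=3$ and $X$ an edge of $\Gamma$ if $\abs{X}=2$—yields an $X$-stable partial \cont{3} from $H_1$ to a trigraph with underlying graph $\psi^*(\theta(\Gamma),\sigma_\Gamma(X))$ (respectively $\theta(\Gamma[X])$). Replaying exactly these contractions inside $H$ is legitimate: they involve only vertices of $H_1$, never a vertex of $X$, and—since the vertices of $H_1$ outside the common part have no neighbours elsewhere in $H$—they leave the rest of $H$ and all red degrees unaffected. The degenerate cases are handled directly: if $\Gamma=K_1$ we simply delete $t$ from $T$; if $\abs{X}=1$, say $X=\{v\}$, we apply Corollary~\ref{cor:base} to an edge of $\Gamma$ at $v$ and then contract the resulting $7$-cycle onto $v$, keeping $\rdeg(v)\le 3$; if $\abs{X}=0$ then $B_t$ forms a component of $G$ and the two sides are treated separately, the side $B_t$ by the base case and the other by induction.

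This produces a partial \cont{3} from $H$ to a trigraph $H^*$ with $\DeltaR(H^*)\le 3$ whose underlying graph is an induced subgraph of $\psi(\theta(G'),\sigma_{G'}(\mathcal{A}'))$, where $G':=G-(B_t\setminus X)$ and $\mathcal{A}':=\{A\in\mathcal{A}:A\cap(B_t\setminus X)=\emptyset\}$; when $\abs{X}\le 2$ equality holds, and when $\abs{X}=3$ the only difference is that the internal vertices of the three length-$4$ paths among the vertices of $X$ have been contracted away (Corollary~\ref{cor:base} removes them), so re-adding those vertices joined by black edges yields a trigraph $H^{**}$ with underlying graph $\psi(\theta(G'),\sigma_{G'}(\mathcal{A}'))$ and $\DeltaR(H^{**})\le 3$, of which $H^*$ is an induced subtrigraph. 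Now $(T-t,(B_s)_{s\neq t})$ is a $3$-contractible tree decomposition of $G'$ (after, if necessary, re-minimalizing the affected adhesion sets), $\mathcal{A}'$ contains all its $3$-element adhesion sets, and $\mathcal{A}'$ is addable to each bag $B_s$ in $G'$, since $B_s$ is disjoint from $B_t\setminus X$ and hence the relevant subfamilies of $\mathcal{A}'$ and $\mathcal{A}$ coincide. By the inductive hypothesis $\tww(H^{**})\le 3$, hence $\tww(H^*)\le 3$, and concatenating the partial \cont{3} from $H$ to $H^*$ with a \cont{3} of $H^*$ gives $\tww(H)\le 3$.

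The main obstacles are all in the inductive step: verifying the claimed induced-subgraph structure of $\psi(\theta(\Gamma),\sigma_\Gamma(\mathcal{B}))$ inside $\psi(\theta(G),\sigma_G(\mathcal{A}))$ together with the fact that replaying Corollary~\ref{cor:base} inside $H$ keeps the maximum red degree at most $3$; identifying precisely the underlying graph of $H^*$ and realizing it as an induced subtrigraph of a trigraph over $\psi(\theta(G'),\sigma_{G'}(\mathcal{A}'))$; and checking that the reduced tree decomposition is again $3$-contractible, which is the subtlest point, since deleting $B_t\setminus X$ can destroy minimality of some adhesion sets and one must tidy the decomposition up again before invoking the inductive hypothesis. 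The small-adhesion and disconnected cases then require only routine verification.
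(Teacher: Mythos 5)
Your overall strategy coincides with the paper's proof: induct on $\abs{V(T)}$, peel off a leaf bag $B_t$ by applying Corollary~\ref{cor:base} to the induced subtrigraph on $\psi(\theta(G[B_t]),\sigma_{G[B_t]}(\mathcal{A}_t))$, re-insert the deleted vertices of $C'_{G,X}$ with black edges when $\abs{X}=3$, and recurse on $G'=G-(B_t\setminus X)$ with $\mathcal{A}'$. However, two steps as written do not go through. First, in your $\abs{X}=1$ case the instruction ``contract the resulting $7$-cycle onto $v$, keeping $\rdeg(v)\le 3$'' fails: by the definition of contraction, merging any vertex $a$ into $v$ turns every black edge from $v$ to a vertex not adjacent to $a$ into a red edge, and $v$ has at least $2\deg_{G'}(v)$ neighbours outside the leaf part (two subdivision vertices per incident edge of $G'$, plus any $y_{v,S}$), so absorbing the cycle into $v$ pushes $\rdeg(v)$ well above $3$ (and $\rdeg_H(v)$ may already be $3$, so even a single contraction of a $v$-neighbour with a non-neighbour is forbidden). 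The paper's proof never contracts anything into the cut vertex $u$: it folds the remaining $7$-cycle into the length-$4$ path of $\theta(G)$ towards a neighbour of $u$ outside $B_p$, and every contraction it performs either merges two neighbours of $u$ or two non-neighbours of $u$, so $\rdeg(u)$ never increases. Some such explicit routine is needed here; your version, taken literally, violates the red-degree bound.

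Second, the assertion that $(T-t,(B_s)_{s\neq t})$ is $3$-contractible ``after, if necessary, re-minimalizing the affected adhesion sets'' is not a proof and not even a well-defined repair: the adhesion sets are determined by the bags, and altering bags to restore minimality of separators would jeopardise~\ref{def:td contractible vertex} (bags inducing graphs in $\mathcal{K}$ and addability of the adhesion family). What is actually true, and what the paper establishes as Claim~\ref{clm:hypothesis}, is that no repair is needed: after the normalization (which you also omit) that every leaf bag and its neighbour's bag each contain a vertex not in the other, one shows, using that $X$ is a clique and $N_G(B_t\setminus X)\subseteq X$, that every old adhesion set $S$ with $S\neq X$ still separates $G'$ and each of its vertices retains a neighbour in every component of $G'-S$, while the case $S=X$ is handled by exhibiting a third component of $G-X$. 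This verification is the substantive part of the inductive step, not routine tidying, and without it (or a correct substitute) the inductive hypothesis cannot be invoked. The remaining bookkeeping in your proposal (that $\mathcal{A}'$ contains the new $3$-element adhesion sets, is addable to every remaining bag, and that replaying Corollary~\ref{cor:base} inside $H$ is legitimate) is correct and agrees with the paper.
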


\begin{proof}[Proof of Proposition~\ref{prop:mainbackward} assuming Proposition~\ref{prop:stronger}]
    Since every graph in~$\mathcal{F}_3$ is connected, and the twin-width of a graph is the maximum over all the twin-width of its components, we may assume that $G$ is connected.
    By Proposition~\ref{prop:mainforward}, if~$G$ has a minor in $\mathcal{F}_3$, then $H$ has twin-width at least~$4$.
    Thus, we may assume that~$G$ is $\mathcal{F}_3$-minor-free.

    We now show that $\tww(H)\leq3$.
    By Proposition~\ref{prop:treedecomp}, there exists an $\mathcal{F}_3$-minor-free multigraph~$G'$ and a $3$-contractible tree decomposition $(T,(B_t)_{t\in V(T)})$ of~$G'$ such that~$G$ is a spanning subgraph of~$G'$.
    There exists a trigraph $H_1$ such that~$H$ is an induced subgraph of~$H_1$, $\DeltaR(H_1)=\DeltaR(H)\leq3$, and the underlying graph of~$H_1$ is an \subd{2} of~$G'$.
    To show that $\tww(H_1)\leq3$, it suffices to prove that $\tww(H_1)\leq3$.

    Let $G_0$ be the simplification of~$G'$, and let $\mathcal{A}:=\{B_v\cap B_w:vw\in E(T),\abs{B_v\cap B_w}=3\}$.
    Note that $(T,(B_t)_{t\in V(T)})$ is a $3$-contractible tree decomposition of~$G_0$, and that $\mathcal{A}$ is addable to $B_t$ in $G_0$ for every $t\in V(T)$.
    Since $\DeltaR(H)\leq3$, by Lemma~\ref{lem:theta}, there exists a partial \cont{3} from $H_1$ to a trigraph $H_2$ whose underlying graph is an induced subgraph of $\theta(G_0)$.
    
    There exists a trigraph $H_3$ with underlying graph $\psi(\theta(G_0),\sigma_{G_0}(\mathcal{A}))$ such that $\DeltaR(H_3)\leq3$ and $H_2$ is an induced subgraph of~$H_3$.
    By Proposition~\ref{prop:stronger} with~$\mathcal{A}$, $\tww(H_3)\leq3$, and therefore $\tww(H_2)\leq3$.
    Since there exists a partial \cont{3} from $H_1$ to $H_2$, we have that $\tww(H)\leq\tww(H_1)\leq3$.
\end{proof}

We now show Proposition~\ref{prop:stronger}.

\begin{proof}[Proof of Proposition~\ref{prop:stronger}]
    We proceed by induction on $\abs{V(T)}$.
    We may assume that $\abs{V(G)}\geq3$, because otherwise the statement trivially holds.
    
    Suppose first that $\abs{V(T)}=1$.
    Note that $G\in\mathcal{K}\setminus\{K_1,K_2\}$.
    Let $X$ be a clique of size~$2$ in $G$.
    By Corollary~\ref{cor:base}, there exists an $X$-stable partial \cont{3} from $H$ to a trigraph with underlying graph $\theta(G[X])$, which is a cycle.
    Hence, $\tww(H)\leq3$.

    Thus, we may assume that $\abs{V(T)}\geq2$.
    By the inductive hypothesis, we may assume that for every leaf~$\ell$ and its neighbour~$\ell'$, both $B_\ell\setminus B_{\ell'}$ and $B_{\ell'}\setminus B_\ell$ are nonempty.
    Let~$p$ be a leaf of~$T$, let $q$ be the neighbour of~$p$ in~$T$, and let $X:=B_p\cap B_q$.
    Note that $\abs{X}\geq1$ as $G$ is connected.
    Let $G':=G-(B_p\setminus X)$ and let $\mathcal{T}:=(T-p,(B_t)_{t\in V(T)\setminus\{p\}})$.

    \begin{claim}\label{clm:hypothesis}
        $\mathcal{T}$ is a $3$-contractible tree decomposition of $G'$.
    \end{claim}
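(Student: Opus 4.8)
The plan is to verify, in this order, that $\mathcal{T}$ is a tree decomposition of $G'$, that it satisfies~\ref{def:td contractible edge}, and that it satisfies~\ref{def:td contractible vertex}. Everything hinges on one observation: every vertex $u\in B_p\setminus X$ lies in $B_p$ and in no other bag of the original decomposition. Indeed, if $u\in B_t$ for some $t\neq p$, then since $p$ is a leaf with unique neighbour $q$, the path in $T$ from $p$ to $t$ passes through $q$, so~\ref{def:td vertex} forces $u\in B_q$ and hence $u\in B_p\cap B_q=X$, a contradiction. Two consequences follow. First, by~\ref{def:td edge}, every edge of $G$ incident with a vertex of $B_p\setminus X$ has both ends in $B_p$; since $G'$ deletes exactly these vertices, $G'$ is an induced subgraph of $G$, and in fact $G'=\bigcup_{t\in V(T)\setminus\{p\}}G[B_t]$, because an edge of $G[B_p]$ with both ends in $V(G')$ has both ends in $B_p\cap V(G')=X\subseteq B_q$. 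Second, $B_t\subseteq V(G')$ for every $t\neq p$, by the same argument applied to any $u\in B_t\cap(B_p\setminus X)$.

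Given these, $\mathcal{T}$ is a tree decomposition of $G'$: \ref{def:td bag union} and~\ref{def:td edge} are immediate from $G'=\bigcup_{t\neq p}G[B_t]$, and~\ref{def:td vertex} holds because deleting the leaf $p$ from the connected subtree $\{t\in V(T):v\in B_t\}$ keeps it connected --- when $v\notin B_p$ it is untouched, and when $v\in B_p$ we have $v\in X\subseteq B_q$, so $q$ still lies in the subtree. Likewise~\ref{def:td contractible vertex} is easy: for each $t\neq p$, the bag $B_t$ is unchanged and $G'[B_t]=G[B_t]$ has its simplification in $\mathcal{K}$; and the set of size-$3$ adhesions at $t$ in $\mathcal{T}$ equals that in the original decomposition if $t\neq q$, and is a subset of it if $t=q$ (since $N_{T-p}(q)=N_T(q)\setminus\{p\}$), so since the graph $Y(G'[B_t],\cdot)$ depends only on $B_t$ and the chosen set of size-$3$ adhesions, coincides with $Y(G[B_t],\cdot)$, and passes to subgraphs when that set shrinks, $\mathcal{F}_3$-minor-freeness is inherited.

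The substantive step is~\ref{def:td contractible edge}. The adhesion sets of $\mathcal{T}$ are exactly the sets $Y:=B_v\cap B_w$ for $vw\in E(T)\setminus\{pq\}$; each is a clique of size at most $3$ in $G$ all of whose vertices lie in $B_v\subseteq V(G')$, hence a clique of size at most $3$ in the induced subgraph $G'$. It remains to show $Y$ is a minimal separator of $G'$. Since $pq\neq vw$, both $p$ and $q$ lie in one component of $T-vw$, say the one containing $w$; so $B_p\setminus X$ lies entirely on the $w$-side of the corresponding separation of $\mathcal{T}$, whereas $B_q\subseteq V(G')$ also lies there. Using Lemma~\ref{lem:adhesion} applied to $\mathcal{T}$ and the edge $vw$ --- the $v$-side union is the same set as in $G$, and the $w$-side union still contains $B_q$ --- together with the nondegeneracy guaranteed by the earlier reduction ($B_p\setminus B_q\neq\emptyset$ and $B_q\setminus B_p\neq\emptyset$), one deduces that $Y$ separates $G'$. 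For minimality, suppose $S\subsetneq Y$ separates $G'$. Write $G=G'\cup G[B_p]$ with $V(G')\cap B_p=X$; the graph $G[B_p]$, whose simplification is in $\mathcal{K}$, remains connected after deleting the at most three vertices of $S\cap B_p$, and it is glued to $G'-S$ along the clique $X\setminus S$, which lies in a single component of $G'-S$ (or is empty, in which case $G[B_p]-S$ is itself a component of $G-S$). Hence $G-S$ is disconnected, contradicting the minimality of $Y$ as a separator of $G$. Therefore $Y$ is a minimal separator of $G'$, and~\ref{def:td contractible edge} holds.

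I expect the minimal-separator part of~\ref{def:td contractible edge} to be the main obstacle: one must both rule out the degenerate situation in which a bag on the $w$-side is absorbed into a neighbouring bag (so that $Y$ could fail to separate $G'$) and show that minimality survives the deletion, and for both the crucial point is that all the deleted vertices sit on a single side of every relevant separation, attached to the rest of the graph only through the clique $X\subseteq B_q$.
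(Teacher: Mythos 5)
Your preliminary reductions (vertices of $B_p\setminus X$ lie in no bag other than $B_p$, hence $G'=\bigcup_{t\neq p}G[B_t]$ and $B_t\subseteq V(G')$ for $t\neq p$), your verification that $\mathcal{T}$ is a tree decomposition, the inheritance of~\ref{def:td contractible vertex}, the clique part of~\ref{def:td contractible edge}, and your \emph{minimality} argument are all sound. In fact the minimality step is a nice alternative to the paper's: you glue the connected graph $G[B_p]-S$ (connectivity after deleting at most two vertices of $S\cap B_p\subseteq X$ is an easy check over $\mathcal{K}$, and $B_p\setminus X\neq\emptyset$ keeps it nonempty) onto the component of $G'-S$ containing the clique $X\setminus S$, concluding that a proper subset of $Y$ separating $G'$ would separate $G$; the paper instead shows directly that every vertex of $Y$ keeps a neighbour in every component of $G'-Y$.

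The genuine gap is exactly where you predicted the obstacle: the claim that $Y$ still \emph{separates} $G'$. To invoke Lemma~\ref{lem:adhesion} for $\mathcal{T}$ at the edge $vw$ you must show that \emph{both} side-unions of $\mathcal{T}$ contain a vertex outside $Y$, and you establish neither; the facts you cite do not imply them. Nothing you say addresses the $v$-side at all: a priori every bag in the component of $T-vw$ containing $v$ could be contained in $Y$ (only leaves are known to be non-nested, and $v$ need not be a leaf); ruling this out requires an argument, e.g.\ taking a leaf $\ell$ on that side and using that its adhesion is a minimal separator of $G$ while, by minimality of $Y$ and the clique property, no proper subset of $Y$ can separate $G$. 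On the $w$-side, ``$U$ contains $B_q$'' plus $B_q\setminus B_p\neq\emptyset$ does not produce a vertex outside $Y$, since $B_q\subseteq Y$ is not excluded; here one needs that if the $w$-side union minus $B_p\setminus X$ were inside $Y$, then minimality of $Y$ together with $N_G(B_p\setminus X)\subseteq X$ forces $Y=X\subseteq B_p$ and hence $B_q\subseteq B_p$, contradicting the leaf reduction. These exclusions are precisely the content of the paper's case analysis ($S=X$ versus $S\neq X$), which sidesteps the tree-sides and works with the components of $G-S$ directly: in the case $S\neq X$ it shows $\emptyset\neq X\setminus S$ lies in the component $G_1\supseteq B_p\setminus X$ and that $G_1-(B_p\setminus X)$ stays connected, and in the case $S=X$ it uses a second leaf $p''$ and the bag $B_q$ to exhibit two components of $G-S$ disjoint from $B_p\setminus X$. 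So the statement you need is true, but ``one deduces'' hides the core of the proof; as written, the separation half of~\ref{def:td contractible edge} is unproven.
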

    \begin{subproof}
        It is readily seen that $\mathcal{T}$ is a tree decomposition of $G'$ as $B_p$ is the unique bag of $(T,(B_t)_{t\in V(T)})$ containing the vertices in $B_p\setminus X$.
        Since $(T,(B_t)_{t\in V(T)})$ satisfies~\ref{def:td contractible vertex}, $\mathcal{T}$ also satisfies~\ref{def:td contractible vertex}.

        We now show that $\mathcal{T}$ satisfies~\ref{def:td contractible edge}.
        We may assume that $T-p$ has an edge, because otherwise we are done.
        Let $vw$ be an arbitrary edge of $T-p$, and let $S:=B_v\cap B_w$.
        Note that $S$ is disjoint from $B_p\setminus X$.
        Since $(T,(B_t)_{t\in V(T)})$ satisfies~\ref{def:td contractible edge}, $S$ is a minimal separator of~$G$ and a clique in~$G'$.
        Thus, to show that $\mathcal{T}$ satisfies~\ref{def:td contractible edge}, it suffices to show that $S$ is a minimal separator of~$G'$.

        Suppose that $S=X$.
        By~\ref{def:td vertex}, $q$ has a neighbour $p'$ in $T-p$ such that $S\subseteq B_{p'}\cap B_q$.
        Since $(T,(B_t)_{t\in V(T)})$ satisfies~\ref{def:td contractible edge}, $B_{p'}\cap B_q$ is a minimal separator of~$G$, so by the minimality of~$S$, we have that $B_{p'}\cap B_q=S$.
        We can choose a leaf $p''$ of $T-p$ such that $p'$ and $p''$ are in the same component of $T-\{p,q\}$.
        By the assumption on the leaves of~$T$, we have that $B_{p''}\setminus X\neq\emptyset$.
        Since $B_q\setminus X\neq\emptyset$ and $B_{p'}\cap B_q=X$, $G-S$ has at least three components where one is $G[B_p\setminus X]$, and therefore $S$ is a minimal separator of~$G'$.
        
        Thus, we may assume that $S\neq X$.
        Let $G_1,\ldots,G_\ell$ be the components of $G-S$.
        Since $X$ is a clique in~$G$, and~$S$ is disjoint from $B_p\setminus X$, there exists $i\in[\ell]$ such that $B_p\setminus X\subseteq V(G_i)$, and $G_i-(B_p\setminus X)$ is connected.
        By symmetry, we may assume that $i=1$.
        Since $S\neq X$ and $S$ is a minimal separator of~$G$, we have that $\emptyset\neq X\setminus S\subseteq V(G_1)\setminus(B_p\setminus X)$, and therefore~$S$ is a separator of $G'$.
        Note that $G_1-(B_p\setminus X),G_2,\ldots,G_\ell$ are the components of $G'-S$.
        Since $S$ is a minimal separator of~$G$, every vertex in $S$ has a neighbour in $V(G_j)$ for each $j\in[\ell]$.
        Since $N_G(B_p\setminus X)=X$ which is a clique, if a vertex in $S$ has a neighbour in $B_p\setminus X$, then it has a neighbour in $X\setminus S$.
        Thus, every vertex $S$ has a neighbour in $V(G_j)\setminus(B_p\setminus X)$ for every $j\in[\ell]$, and therefore $S$ is a minimal separator of~$G'$.
        Hence, $\mathcal{T}$ satisfies~\ref{def:td contractible edge}, and proves the claim.
    \end{subproof}

    Let $\mathcal{A}_p$ be the set of cliques $A\in\mathcal{A}$ with $A\subseteq B_p$ and let $\mathcal{A}'$ be the set of cliques $A\in\mathcal{A}$ with $A\cap(B_p\setminus X)=\emptyset$.
    Let $H_p$ be the induced subgraph of $H$ such that the underlying graph of~$H_p$ is $\psi(\theta(G[B_p]),\sigma_{G[B_p]}(\mathcal{A}_p))$.
    Note that $\mathcal{A}_p$ is an addable set of $G[B_p]$ as~$\mathcal{A}$ is addable to~$B_p$ in~$G$.
    
    Suppose that $\abs{X}=1$.
    For the vertex $u\in X$, let $u'$ and $v$ be neighbours of $u$ in~$G$ such that $u'\in B_p$ and $v\notin B_p$.
    Let~$v_1$, $v_2$, and~$v_3$ be the internal vertices of the unique length-$4$ path in $\theta(G)$ between $u$ and~$v$ such that $\{uv_1,vv_3\}\subseteq E(\theta(G))$.
    By Corollary~\ref{cor:base} applied to $H_p$ with $\mathcal{A}_p$ as the addable set, there exists a $\{u,u'\}$-stable partial \cont{3} from~$H_p$ to a trigraph $H'$ with underlying graph $\theta(G[\{u,u'\}])$.
    Note that $\theta(G[\{u,u'\}])$ is a cycle of length~$7$.
    By applying the same partial \cont{3}, we find a $\{u,u'\}$-stable partial \cont{3} from~$H$ to a trigraph which is the union of $H-(V(H_p)\setminus X)$ and~$H'$.
    We denote by $u_1,\ldots,u_6$ the vertices in $V(H')\setminus\{u\}$ such that for each $i\in[5]$, $u_i$ is adjacent to~$u_{i+1}$ in~$H'$.
    We then contract the following six pairs of vertices in order.
    \[
        \begin{array}{lll}
            1)\ \{u_1,u_6\}\text{ to }u_1,\text{\hspace{0.7cm}}
            &2)\ \{u_2,u_3\}\text{ to }u_2,\text{\hspace{0.7cm}}
            &3)\ \{u_2,u_4\}\text{ to }u_2,\\
            4)\ \{u_2,u_5\}\text{ to }u_2,
            &5)\ \{u_1,v_1\}\text{ to }v_1,
            &6)\ \{u_2,v_2\}\text{ to }v_2.
        \end{array}
    \]
    Thus, we find a partial \cont{3} from~$H$ to a trigraph with underlying graph $\psi(\theta(G'),\sigma_{G'}(\mathcal{A}'))$.
    By Claim~\ref{clm:hypothesis} and the inductive hypothesis, the resulting trigraph has twin-width at most~$3$, and therefore $\tww(H)\leq3$.
    
    Suppose that $\abs{X}=2$.
    By Corollary~\ref{cor:base} applied to $H_p$ with $\mathcal{A}_p$ as the addable set, there exists an $X$-stable partial \cont{3} from~$H$ to a trigraph with underlying graph $\psi(\theta(G'),\sigma_{G'}(\mathcal{A}'))$.
    By Claim~\ref{clm:hypothesis} and the inductive hypothesis, the resulting trigraph has twin-width at most~$3$, and therefore $\tww(H)\leq3$.
    
    Thus, we may assume that $\abs{X}=3$.
    By the assumption on~$\mathcal{A}$, $X$ is contained in~$\mathcal{A}$.
    Thus, by Corollary~\ref{cor:base} applied to $H_p$ with $\mathcal{A}_p$ as the addable set, there exists an $X$-stable partial \cont{3} from~$H$ to a trigraph~$H'$ with underlying graph $\psi(\theta(G'),\sigma_{G'}(\mathcal{A}'))-(V(C'_{G,X})\setminus X)$.
    There exists a trigraph $H''$ with underlying graph $\psi(\theta(G'),\sigma_{G'}(\mathcal{A}'))$ such that $H'$ is an induced subgraph of~$H''$ and $\DeltaR(H'')=\DeltaR(H')\leq3$.
    By Claim~\ref{clm:hypothesis} and the inductive hypothesis, we have that $\tww(H'')\leq3$, and therefore $\tww(H')\leq3$.
    Since there exists a partial \cont{3} from~$H$ to~$H'$, we have that $\tww(H)\leq3$, and this completes the proof by induction.
\end{proof}

We conclude the section by noting that 
Propositions~\ref{prop:treedecomp} and~\ref{prop:stronger}
imply the following structural characterisation of $\mathcal{F}_3$-minor-free graphs.

\begin{corollary}\label{cor:finalstructure}
    A graph is $\mathcal{F}_3$-minor-free if and only if it is a subgraph of some graph which admits a $3$-contractible tree decomposition.
    \qedhere
\end{corollary}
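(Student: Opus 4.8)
The statement is an immediate consequence of Propositions~\ref{prop:treedecomp} and~\ref{prop:stronger} together with the forward implication of Theorem~\ref{thm:main}\ref{main:atmost3} (equivalently Proposition~\ref{prop:mainforward}); the plan is to treat the two directions separately.

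\emph{The forward direction.} Suppose $G$ is an $\mathcal{F}_3$-minor-free graph. Then Proposition~\ref{prop:treedecomp} produces an $\mathcal{F}_3$-minor-free graph $G'$ that admits a $3$-contractible tree decomposition and contains $G$ as a (spanning) subgraph; since $G$ is simple, $G'$ may be taken simple. Hence $G$ is a subgraph of a graph admitting a $3$-contractible tree decomposition, and there is nothing further to check.

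\emph{The backward direction.} Suppose $G$ is a subgraph of a graph $G'$ admitting a $3$-contractible tree decomposition $(T,(B_t)_{t\in V(T)})$. Since a subgraph of an $\mathcal{F}_3$-minor-free graph is again $\mathcal{F}_3$-minor-free, it suffices to show $G'$ is $\mathcal{F}_3$-minor-free. Set $\mathcal{A}:=\{B_v\cap B_w:vw\in E(T),\ \abs{B_v\cap B_w}=3\}$; by condition~\ref{def:td contractible vertex}, $\mathcal{A}$ is addable to $B_t$ in $G'$ for every $t\in V(T)$. Let $H$ be the trigraph $\psi(\theta(G'),\sigma_{G'}(\mathcal{A}))$ with no red edges, so $\DeltaR(H)=0\le 3$. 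Applying Proposition~\ref{prop:stronger} to $G'$, $\mathcal{A}$ and $H$ gives $\tww(\psi(\theta(G'),\sigma_{G'}(\mathcal{A})))\le 3$. Next I would observe that the $2$-subdivision $G'_1$ of $G'$ is an \emph{induced} subgraph of $\psi(\theta(G'),\sigma_{G'}(\mathcal{A}))$: an edge with both endpoints in $V(G'_1)$ must already be an edge of $\theta(G')$ (the vertices $x_S$ and $y_{u,S}$ added by $\psi^*$ and $\sigma_{G'}$ lie outside $V(\theta(G'))$), and every edge of $\theta(G')$ contained in $V(G'_1)$ is an edge of $G'_1$, since each edge coming from the $3$-subdivision half of $\theta(G')$ has an endpoint outside $V(G'_1)$. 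As twin-width does not increase under taking induced subgraphs, $\tww(G'_1)\le 3$. Since $G'_1$ is an \subd{1} of $G'$, the contrapositive of Proposition~\ref{prop:mainforward} (equivalently, Proposition~\ref{prop:mainbackward}) forces $G'$, and hence $G$, to be $\mathcal{F}_3$-minor-free.

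The only point that needs more than quoting an earlier result is the verification that $G'_1$ occurs as an induced subgraph of $\psi(\theta(G'),\sigma_{G'}(\mathcal{A}))$, which I expect to be the main—though still routine—obstacle: one must unwind the definitions of $\theta$, $\psi^*$ and $\sigma_{G'}$ to confirm that neither the apex vertices $x_S$ with their length-$2$ spokes nor the length-$4$ parallel paths of $\theta(G')$ create a chord among the vertices of the plain $2$-subdivision. Everything else is a matter of choosing the right set $\mathcal{A}$ and the right trigraph and invoking Propositions~\ref{prop:treedecomp},~\ref{prop:stronger} and~\ref{prop:mainforward}.
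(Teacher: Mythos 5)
Your proposal is correct and takes essentially the route the paper intends: the forward direction is exactly Proposition~\ref{prop:treedecomp}, and the backward direction applies Proposition~\ref{prop:stronger} to the all-black trigraph on $\psi(\theta(G'),\sigma_{G'}(\mathcal{A}))$, extracts the induced $2$-subdivision of $G'$, and invokes Proposition~\ref{prop:mainforward}, which is how the paper's one-line justification is meant to be unwound. The only step you assert without argument---that~\ref{def:td contractible vertex} yields addability of the full set $\mathcal{A}$ to every bag $B_t$---is routine, since addability only depends on the members of $\mathcal{A}$ contained in $B_t$, and by~\ref{def:td vertex} and~\ref{def:td contractible edge} any such $3$-clique is already an adhesion set at the node $t$ itself.
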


\section{Subdivisions of twin-width at most {\boldmath$2$}}\label{sec:2tree}

In this section, we complete the proof of Theorem~\ref{thm:main}.
We first prove Theorem~\ref{thm:main}\ref{main:atmost2}.
To do this, we actually show the following stronger theorem.

\begin{theorem}\label{thm:main2}
    Let $H$ be a trigraph whose underlying graph is an \subd{2} of a multigraph~$G$ such that $\rdeg_H(v)=0$ for every $v\in V(G)$.
    Then $\tww(H)\leq2$ if and only if $G$ has no $K_4$ as a minor.
\end{theorem}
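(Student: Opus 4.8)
The plan is to prove both directions by exploiting the well-known fact that a multigraph has no $K_4$-minor if and only if its tree-width is at most $2$, equivalently it is a subgraph of a $2$-tree (here allowing parallel edges and loops to be added freely, since they do not affect the presence of a $K_4$-minor).

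For the forward direction, I would argue the contrapositive. Suppose $G$ has a $K_4$-minor. Then $G$ has $K_4$ as a minor in the sense defined in the preliminaries, so by Lemma~\ref{lem:detecting} (applied with $t=2$ and $F=K_4$, noting $\delta(K_4)=3$) the \subd{2} $H$ contains a $2$-\sgre{} of $K_4$. The hope is then to show that any trigraph containing a $2$-\sgre{} of $K_4$ has twin-width at least $3$; since $t=2$ gives a stronger separation condition than $t=1$, and Proposition~\ref{prop:segregated} already handles $\mathcal{F}_3$ (which includes graphs obtainable from $K_4$ by $(\Delta,Y)$-operations), I would adapt the same minimal-counterexample machinery: take a refined subgraph $H$ minimal with respect to containing a $2$-\sgre{} of $K_4$, pick a contraction $H/\{u,v\}$ of red degree at most $2$, and run through the three cases depending on $|\{u,v\}\cap\bigcup\mathcal{X}|$ exactly as in the proof of Lemma~\ref{lem:segregated}, but now with the cruder budget of red degree $2$. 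The key extra slack is that $\delta(K_4)=3$, so in every case a contraction either creates red degree $\ge 3$ directly or produces a $2$-\sgre{} of a graph obtained from $K_4$ by a $(\Delta,Y)$-operation; but every such graph is again a multigraph with minimum degree $3$ containing $K_4$ as a minor, so we stay within the induction. I would phrase this as an analogue of Proposition~\ref{prop:segregated} and Lemma~\ref{lem:segregated} specialised to $F=K_4$.

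For the backward direction, assume $G$ has no $K_4$-minor. Since an \subd{2} of a subgraph of $G$ is an induced subgraph of an \subd{2} of $G$, and since adding loops or parallel edges changes neither the $K_4$-minor status nor (by an argument parallel to Lemma~\ref{lem:theta}) the feasibility of a good contraction sequence, it suffices to treat the case where $G$ is a $2$-tree. I would then proceed by induction on $|V(G)|$, mirroring the structure of Section~\ref{sec:backward} but with everything simplified: a $2$-tree admits a tree decomposition whose bags are triangles and whose adhesion sets are cliques of size at most $2$, so there are no $3$-vertex adhesion sets and hence no $\psi^*$-gadgets to attach — every step is just contracting a leaf triangle of the decomposition down to the $7$-cycle $\theta(G[X])$ for the size-$2$ adhesion $X$, using (the easy part of) Lemma~\ref{lem:theta} and an explicit $2$-contraction of the configuration $\psi(\theta(K_3),\emptyset)$ keeping two designated vertices at red degree $\le 0$, analogous to but strictly easier than Corollary~\ref{cor:base}. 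Carefully tracking that the two kept vertices of each adhesion accumulate no red edges is what makes the induction go through, since at the moment we contract a leaf its adhesion vertices already have their full future neighbourhood.

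The main obstacle, I expect, is the forward direction: verifying that a $2$-\sgre{} of $K_4$ forces twin-width $\ge 3$ requires redoing the case analysis of Lemma~\ref{lem:segregated} with the tighter red-degree bound, and one must check that no contraction of red degree $\le 2$ can ``escape'' — in particular Case 3, where $u$ and $v$ are in distinct blobs joined by an $\mathcal{X}$-path, is delicate because with only a budget of $2$ the counting is tight and one must use $\delta(K_4)=3$ together with minimality of the \sgre{} to rule out the bad configurations or convert them via a $(\Delta,Y)$-operation into a $2$-\sgre{} of $K_4$ again. The backward direction should be routine given the scaffolding already built for Proposition~\ref{prop:mainbackward}, merely specialised and stripped of the $3$-vertex-adhesion complications.
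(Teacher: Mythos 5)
Your overall skeleton is the same as the paper's: forward direction via Lemma~\ref{lem:detecting} plus an analogue of Proposition~\ref{prop:segregated} for $2$-\sgre{}s of $K_4$ proved by a minimal-counterexample version of Lemma~\ref{lem:segregated}, and backward direction by reducing to $2$-trees (Theorem~\ref{thm:2tree}), applying Lemma~\ref{lem:theta} with $\mu=0$, and giving an explicit $X$-stable partial \cont{2} of $\theta$ of a $2$-tree (the paper's Lemma~\ref{lem:2tree}); the backward half is fine. However, the mechanism you give for closing the forward direction has a genuine gap. You claim that a contraction which does not immediately exceed red degree $2$ either yields a $2$-\sgre{} of $K_4$ or a $2$-\sgre{} of a graph obtained from $K_4$ by a $(\Delta,Y)$-operation, and that ``every such graph is again a multigraph with minimum degree $3$ containing $K_4$ as a minor.'' This is false: applying a $(\Delta,Y)$-operation to a triangle of $K_4$ produces $K_{2,3}$, which has minimum degree $2$, is $K_4$-minor-free (it is series-parallel), and does not even satisfy the hypothesis $\delta(F)\geq3$ of Lemma~\ref{lem:minsgre}. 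So the induction cannot ``stay within'' the class you describe, and the fallback you mention of converting the bad configuration ``via a $(\Delta,Y)$-operation into a $2$-\sgre{} of $K_4$ again'' is likewise impossible. The $(\Delta,Y)$-route is available in the $\mathcal{F}_3$ setting only because of Observation~\ref{obs:forbidden}\ref{obs:triangle mindeg} and Lemma~\ref{lem:DeltaY}, and $K_4$ has neither property.

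What actually closes the argument (and what the paper's Lemma~\ref{lem:segregated2} does) is that under the budget $2$ the $(\Delta,Y)$-type configurations never arise: in Case~2, when $P_v$ avoids $X_p$, every edge from the contracted vertex $x$ to $N_H(u)$ is red and $\deg_H(u)\geq3$ by Lemma~\ref{lem:minsgre}\ref{minsgre:deg2}, so $\rdeg_{H'}(x)\geq3$, a contradiction; and in Case~3 the residual configuration forces $\deg_H(u)=\deg_H(v)=3$, two common red neighbours outside the $\mathcal{X}$-path $P$, and $P$ of length at most $2$ containing a red edge, so $u$ or $v$ already has red degree $3$ in $H$ itself. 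Note that this last contradiction uses $\DeltaR(H)\leq2$ and not merely $\DeltaR(H/\{u,v\})\leq2$, so the correct analogue of Lemma~\ref{lem:segregated} must conclude ``either $\max\{\DeltaR(H),\DeltaR(H/\{u,v\})\}\geq3$ or $H/\{u,v\}$ contains a $2$-\sgre{} of $K_4$''; this is still sufficient for the minimal-counterexample argument because the first trigraph of any \cont{2} of $H$ is $H$ itself. Your proposal neither supplies these contradictions nor the strengthened hypothesis, and the mechanism it does rely on fails, so the forward direction as written does not go through.
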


The forward direction of Theorem~\ref{thm:main2} directly follows from Lemma~\ref{lem:detecting} and the following proposition.

\begin{proposition}\label{prop:segregated2}
    If a trigraph $H$ contains a $2$-\sgre{} of $K_4$, then $\tww(H)\geq3$.
\end{proposition}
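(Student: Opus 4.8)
The plan is to follow the template of Proposition~\ref{prop:segregated} and Lemma~\ref{lem:segregated}, now with $K_4$ in place of the graphs in $\mathcal{F}_3$, with $2$-\sgre{}s in place of \sgre{}s, and with the red-degree threshold~$2$ in place of~$3$. Concretely, I would first prove the following analogue of Lemma~\ref{lem:segregated}: if $H$ is a trigraph containing a $2$-\sgre{} of $K_4$ such that no proper refined subgraph of $H$ contains a $2$-\sgre{} of $K_4$, then for every pair $\{u,v\}$ of distinct vertices of $H$, either $\DeltaR(H/\{u,v\})\ge 3$ or $H/\{u,v\}$ still contains a $2$-\sgre{} of $K_4$. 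Proposition~\ref{prop:segregated2} then follows exactly as the deduction of Proposition~\ref{prop:segregated} from Lemma~\ref{lem:segregated}: take a counterexample $H$ minimising $\abs{V(H)}$ and, subject to that, $\abs{R(H)}$; by Lemma~\ref{lem:refinedtww} and minimality, $H$ has no proper refined subgraph containing a $2$-\sgre{} of $K_4$; since $\tww(H)\le 2$, the first contraction $H/\{u,v\}$ of an optimal $2$-contraction sequence satisfies $\DeltaR(H/\{u,v\})\le 2$ and $\tww(H/\{u,v\})\le 2$; by the analogue of Lemma~\ref{lem:segregated}, $H/\{u,v\}$ contains a $2$-\sgre{} of $K_4$, contradicting minimality of $H$ since $\abs{V(H/\{u,v\})}<\abs{V(H)}$.

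For the analogue of Lemma~\ref{lem:segregated} I would fix a $2$-\sgre{} $\mathcal{X}=(X_a,X_b,X_c,X_d)$ of $K_4$ in $H$ minimising $\abs{\bigcup\mathcal{X}}$; since $\delta(K_4)=3$, Lemma~\ref{lem:minsgre} applies, so $H$ and $\mathcal{X}$ satisfy its conclusions~\ref{minsgre:internal}--\ref{minsgre:deg2}, and in particular every non-branch vertex has degree~$2$, every $\mathcal{X}$-path of length at most~$2$ carries a red edge, and by Lemma~\ref{lem:minsgre}\ref{minsgre:deg2} every degree-$2$ vertex of a branch set $X_p$ has both of its neighbours in $X_p$. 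Then I would run the same three-case analysis as in the proof of Lemma~\ref{lem:segregated} according to $\abs{\{u,v\}\cap\bigcup\mathcal{X}}\in\{0,1,2\}$. The crucial point is that the two places where the proof of Lemma~\ref{lem:segregated} invokes Observation~\ref{obs:forbidden} and a $(\Delta,Y)$-operation — the subcase of Case~2 in which $P_v$ avoids $X_p$, and Case~3 with $X_p\ne X_q$ — cannot arise here: $K_4$ satisfies neither Observation~\ref{obs:forbidden}\ref{obs:pairneighbour} nor Observation~\ref{obs:forbidden}\ref{obs:triangle mindeg}, and a $(\Delta,Y)$-operation turns $K_4$ into $K_{2,3}$, which has minimum degree~$2$, so there is no analogue of Lemma~\ref{lem:DeltaY} to fall back on. Instead I would show that in each of these cases the hypothesis $\DeltaR(H/\{u,v\})\le 2$ is violated: every edge from the new vertex $x$ to $N_H(u)\cup N_H(v)$ is red — using that $\mathcal{X}$-paths of length at most~$1$ are red and that any non-branch common neighbour of $u$ and $v$ would have to be an interior vertex of an $\mathcal{X}$-path joining $X_p$ to $X_q$ — so $\rdeg_{H/\{u,v\}}(x)=\abs{N_H(u)\cup N_H(v)}$; a short count using $\delta(H)\ge 2$ forces this to be at least~$3$ unless $\deg_H(u)=\deg_H(v)=2$ and $N_H(u)=N_H(v)$, which contradicts Lemma~\ref{lem:minsgre}\ref{minsgre:deg2} together with $X_p\cap X_q=\emptyset$.

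In the remaining cases — $P_u=P_v$, or $P_u$ and $P_v$ sharing an end, or $P_v$ meeting $X_p$, or $X_p=X_q$ — I would construct the new $2$-\sgre{} by the same manipulations as in Lemma~\ref{lem:segregated}: absorb $x$ (together with, if needed, the finitely many degree-$2$ vertices that would otherwise drop to degree~$1$, which is harmless since passing to an induced subgraph still yields a refined subgraph) into a branch set, or contract down the relevant path. The only genuinely new verification compared with the $t=1$ setting is condition~\ref{sgre:length}: the newly created $\mathcal{X}'$-paths of length at most~$2$ must contain a red edge. I expect to handle this uniformly: each such short path runs through $x$, and for any vertex $y$ adjacent to exactly one of $u$ and $v$ the contracted edge $xy$ is automatically red, so an edge incident with $x$ can be black only if $y$ is a common neighbour of $u$ and $v$ with both edges black; but in our construction every such $y$ is either deleted or absorbed into a branch set, hence is never an interior vertex of a new $\mathcal{X}'$-path. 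The bulk of the work is thus routine case-checking, and I expect the bookkeeping for condition~\ref{sgre:length} across the subcases to be the most delicate part — but no idea beyond this red-inheritance observation should be required.
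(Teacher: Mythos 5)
Your overall strategy is exactly the paper's: it proves a contraction lemma for $2$-\sgre{}s of $K_4$ (Lemma~\ref{lem:segregated2}) and deduces Proposition~\ref{prop:segregated2} by the same minimal-counterexample argument you describe. However, there is a genuine gap in your version of the key lemma. The paper's Lemma~\ref{lem:segregated2} concludes that either $\max\{\DeltaR(H),\DeltaR(H/\{u,v\})\}\geq3$ or $H/\{u,v\}$ contains a $2$-\sgre{} of $K_4$; you drop the $\DeltaR(H)$ term, and your strengthened statement is false. The culprit is your Case-3 count ``$\rdeg_{H/\{u,v\}}(x)=\abs{N_H(u)\cup N_H(v)}$, which is at least $3$ unless $\deg_H(u)=\deg_H(v)=2$ and $N_H(u)=N_H(v)$'': it overlooks that $u$ and $v$ may be adjacent (the unique $\mathcal{X}$-path between $X_p$ and $X_q$ can be the edge $uv$), in which case $u$ and $v$ must be excluded from the union. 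Concretely, let $H$ have vertices $u,v,w_1,w_2,a,b$, red edges $uv,uw_1,uw_2,vw_1,vw_2$, and black path $w_1ab w_2$. Then $(\{u\},\{v\},\{w_1\},\{w_2\})$ is a $2$-\sgre{} of $K_4$ in $H$, and one checks that no proper refined subgraph of $H$ contains a $2$-\sgre{} of $K_4$ (any such \sgre{} must use these four singleton branch sets, so all five listed edges must be present and red). Yet $H/\{u,v\}$ is a $5$-cycle whose maximum red degree is $2$ and which contains no $2$-\sgre{} of $K_4$; here $\rdeg_{H/\{u,v\}}(x)=2$, so no contradiction with $\DeltaR(H/\{u,v\})\leq2$ is available. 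Note that in this configuration $\DeltaR(H)=3$, which is precisely why the paper's formulation with the maximum survives: in the subcase of Case~3 where an $\mathcal{X}$-path $P$ joins $u$ and $v$, the paper shows $\abs{N_{uv}}=2$ and $P$ has length at most $2$, and then the two red edges to $N_{uv}$ together with the red edge of $P$ forced by~\ref{sgre:length} give $u$ or $v$ red degree $3$ \emph{in $H$}, contradicting $\DeltaR(H)\leq2$.

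The damage to your deduction of the proposition is repairable, because in the application the minimal counterexample satisfies $\tww(H)\leq2$, and since a contraction sequence begins with $H$ itself this already gives $\DeltaR(H)\leq2$; so you should state and prove the lemma with $\max\{\DeltaR(H),\DeltaR(H/\{u,v\})\}\geq3$, as the paper does, and in the problematic subcase derive the contradiction from the red degree of $u$ or $v$ in $H$ rather than from $\rdeg_{H/\{u,v\}}(x)$. Your treatment of Case~2 (where $P_v$ avoids $X_p$) is fine as sketched, since there $\rdeg_{H/\{u,v\}}(x)\geq\deg_H(u)\geq3$ does follow; and your red-inheritance remark for verifying~\ref{sgre:length} in the constructive cases matches what the paper does (in particular, for $P_u=P_v$ the common neighbours of $u$ and $v$ must be deleted, as in the paper's Case~1). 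But as written, the central lemma and the counting that is supposed to dispose of Case~3 are incorrect.
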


We first prove Proposition~\ref{prop:segregated2} by using the following lemma.

\begin{lemma}\label{lem:segregated2}
    Let $H$ be a trigraph containing a $2$-\sgre{} of~$K_4$ such that no proper refined subgraph of~$H$ contains a $2$-\sgre{} of~$K_4$.
    For every pair $\{u,v\}$ of distinct vertices of~$H$, either $\max\{\DeltaR(H),\DeltaR(H/\{u,v\})\}\geq3$ or $H/\{u,v\}$ contains a $2$-\sgre{} of~$K_4$.
\end{lemma}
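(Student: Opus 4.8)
The plan is to mimic the structure of the proof of Lemma~\ref{lem:segregated}, but in the simpler setting of a $2$-\sgre{} of $K_4$. Let $\mathcal{X}:=(X_s)_{s\in V(K_4)}$ be a $2$-\sgre{} of $K_4$ in $H$ which minimises $|\bigcup\mathcal{X}|$. Since $\delta(K_4)=3$, Lemma~\ref{lem:minsgre} applies, so $H$ and $\mathcal{X}$ satisfy~\ref{minsgre:internal}--\ref{minsgre:deg2}; in particular $\delta(H)\ge 2$ and every vertex outside $\bigcup\mathcal{X}$ has degree exactly $2$. Let $\mathcal{P}$ be the set of $\mathcal{X}$-paths in $H$, and recall that by~\ref{sgre:length} every $\mathcal{X}$-path of length at most $2$ contains a red edge. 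Fix distinct vertices $u,v$ of $H$, set $H':=H/\{u,v\}$ with new vertex $x$, and assume $\DeltaR(H)\le 2$ and $\DeltaR(H')\le 2$. By Lemma~\ref{lem:minsgre}\ref{minsgre:delta}, $N_H[u]\cap N_H[v]\ne\emptyset$. As before, I would split into three cases according to $|\{u,v\}\cap(\bigcup\mathcal{X})|\in\{0,1,2\}$.

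In Case~$0$, let $P_u,P_v$ be the $\mathcal{X}$-paths containing $u,v$. If $P_u=P_v$ then deleting the internal vertices of the subpath between $u$ and $v$ gives a refined subgraph witnessing $\mathcal{X}$ in $H'$. If $P_u\ne P_v$, then as in Lemma~\ref{lem:segregated} they share exactly one end $w$ which is the unique common neighbour of $u$ and $v$; putting $x$ into the part $X_p$ containing $w$ yields a $2$-\sgre{} of $K_4$ in $H'$, where the two shortened paths $P'_u,P'_v$ still have red edges since they contained one and we only removed the end $w$ and the vertices $u,v$ — here I must check that a path of length $\le 2$ in $H'$ through $x$ inherits a red edge, which follows because the only new red edges are those incident with $x$ coming from edges of $H$ incident with $u$ or $v$, and the relevant $\mathcal{X}'$-paths already had red edges inside the surviving portion. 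In Case~$1$, say $u\in X_p$; if $P_v$ has an end in $X_p$ we absorb the relevant segment into $X_p$ exactly as in Lemma~\ref{lem:segregated}. Otherwise every common neighbour of $u,v$ is an end of $P_v$; since $\DeltaR(H')\le 2$ now forces $\deg_H(u)\le 2$, and $N_H[u]\cap N_H[v]\ne\emptyset$ with $P_v$ avoiding $X_p$, Lemma~\ref{lem:minsgre}\ref{minsgre:deg2} is contradicted unless $u$ has a neighbour in $X_p$; this tight degree bound should let me derive a contradiction outright (there is no room for the $(\Delta,Y)$-operation trick that Case~$1$ of Lemma~\ref{lem:segregated} needed, because $K_4$ is the minimal object here and a $(\Delta,Y)$-operation on $K_4$ produces $K_{2,3}$, whose subdivisions do not contain a $2$-\sgre{} of $K_4$).

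Case~$2$ is where I expect the real work. Let $u\in X_p$, $v\in X_q$. If $X_p=X_q$, then by~\ref{minsgre:internal} no vertex outside $\bigcup\mathcal{X}$ is adjacent to both $u$ and $v$, so replacing $X_p$ by $(X_p\setminus\{u,v\})\cup\{x\}$ gives a $2$-\sgre{} of $K_4$ in $H'$. So assume $X_p\ne X_q$. If there is an $\mathcal{X}$-path $P$ from $u$ to $v$, then since $\DeltaR(H')\le 2$ and both $u,v$ have neighbours on $P$ as well as (by Lemma~\ref{lem:minsgre}\ref{minsgre:delta},\ref{minsgre:deg2}) degree at least $3$, a counting argument on red edges incident with $x$ should pin down $\deg_H(u)=\deg_H(v)=3$ with exactly one common neighbour $w\notin X_p\cup X_q$; then $pqw^*$ would be a triangle of $K_4$, and applying a $(\Delta,Y)$-operation to $K_4$ on $pqw^*$ yields $K_{2,3}$, so I would need to check directly that the natural $\mathcal{X}'$ is a $2$-\sgre{} of $K_4$ (not $K_{2,3}$) — equivalently, that after contracting $u$ and $v$ the structure still contains a $2$-\sgre{} of $K_4$, perhaps by noting that the fourth vertex of $K_4$ together with the path $P$ and the three original parts can be re-grouped. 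If there is no $\mathcal{X}$-path from $u$ to $v$, then $u,v$ are nonadjacent with every common neighbour in $\bigcup\mathcal{X}$, and the red-degree bound $\DeltaR(H')\le 2$ combined with Lemma~\ref{lem:minsgre}\ref{minsgre:uniqueness} should force one of $X_p,X_q$ to be a singleton and then, via Observation-type constraints on $K_4$ (namely $|N_{K_4}(\{p,q\})|=2$ and $\deg_{K_4}(p)=3$), produce a contradiction. The main obstacle, as in Lemma~\ref{lem:segregated}, is the bookkeeping in Case~$2$ with the $\mathcal{X}$-path $P$ present: ensuring that the contracted object genuinely contains a $2$-\sgre{} of $K_4$ rather than merely of $K_{2,3}$, which I expect to handle by a careful regrouping of the parts together with $P$, since $K_4$ minus an edge plus a subdivided edge reroutes back to a $K_4$-structure. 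Once Lemma~\ref{lem:segregated2} is in hand, Proposition~\ref{prop:segregated2} follows by the same minimal-counterexample argument as in the proof of Proposition~\ref{prop:segregated}: a counterexample $H$ minimising $(|V(H)|,|R(H)|)$ has twin-width at most $2$, hence some $H/\{u,v\}$ with $\DeltaR\le 2$, and Lemma~\ref{lem:segregated2} then gives a smaller counterexample.
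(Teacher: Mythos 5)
Your Cases with $\abs{\{u,v\}\cap\bigcup\mathcal{X}}\in\{0,1\}$ and the subcase $X_p=X_q$ do follow the paper's argument, but there is a genuine gap in exactly the subcase you flag as ``the real work'': $u\in X_p$, $v\in X_q$ with $X_p\neq X_q$ and an $\mathcal{X}$-path $P$ between $u$ and $v$. The configuration you anticipate there is wrong, and the resolution you propose (regrouping the parts together with $P$ to exhibit a $2$-\sgre{} of $K_4$ in $H':=H/\{u,v\}$) is neither carried out nor what actually happens. Since $u$ and $v$ are the ends of $P$, each has exactly one neighbour in $V(P)$, and by Lemma~\ref{lem:minsgre}\ref{minsgre:delta} and~\ref{minsgre:deg2} each has degree at least $3$, so $\abs{N_H(u)\setminus V(P)}\geq2$ and likewise for $v$. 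Every vertex $y\in(N_H(u)\cup N_H(v))\setminus V(P)$ lies in $\bigcup\mathcal{X}$ outside $X_p\cup X_q$, so $H[\{u,y\}]$ and $H[\{v,y\}]$ are length-$1$ $\mathcal{X}$-paths and hence red by~\ref{sgre:length}, which makes $xy$ a red edge of $H'$. The bound $\rdeg_{H'}(x)\leq2$ then forces $\deg_H(u)=\deg_H(v)=3$ and $N_H(u)\setminus V(P)=N_H(v)\setminus V(P)=:N_{uv}$ with $\abs{N_{uv}}=2$ --- not the single common neighbour and triangle $pqw^*$ you predict, so there is no $(\Delta,Y)$-type or regrouping step to perform at all.

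What actually closes this subcase is a contradiction that uses the hypothesis $\DeltaR(H)\leq2$, which your sketch never invokes (and which is precisely why the lemma is stated with $\max\{\DeltaR(H),\DeltaR(H/\{u,v\})\}$ rather than with $\DeltaR(H/\{u,v\})$ alone, as in Lemma~\ref{lem:segregated}): both $uy$ and $vy$ are red for each of the two vertices $y\in N_{uv}$, the bound $\rdeg_{H'}(x)\leq2$ further forces $P$ to have length at most $2$, and by~\ref{sgre:length} $P$ contains a red edge, so $u$ or $v$ has red degree $3$ in $H$, a contradiction. Without this step your proof is incomplete, since the case is disposed of by showing it cannot occur, not by producing a new $2$-\sgre{}. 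A milder instance of the same fuzziness occurs in your no-$\mathcal{X}$-path subcase: the contradiction there comes simply from the fact that $\rdeg_{H'}(x)\leq2$ forces $\deg_H(u)=\deg_H(v)=2$, whence Lemma~\ref{lem:minsgre}\ref{minsgre:deg2} puts $N_H(u)\subseteq X_p$ and $N_H(v)\subseteq X_q$, contradicting the existence of a common neighbour; the route via singleton parts and $K_4$-specific neighbourhood counts is not the one that works, although a contradiction is indeed reachable there.
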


\begin{proof}[Proof of Proposition~\ref{prop:segregated2} assuming Lemma~\ref{lem:segregated2}]
    Suppose that $H$ is a counterexample to Proposition~\ref{prop:segregated2} which minimises $\abs{V(H)}+\abs{R(H)}$.
    Since the twin-width of a refined subgraph of~$H$ is no larger than the twin-width of~$H$, by the assumption, $H$ has no proper refined subgraph containing a $2$-\sgre{} of~$K_4$.
    Since $\tww(H)\leq2$ and $\abs{V(H)}\geq2$, there exists a pair $\{u,v\}$ of vertices of $H$ such that $H/\{u,v\}$ has twin-width at most $2$.
    Since $\max\{\DeltaR(H),\DeltaR(H/\{u,v\})\}\leq2$, by Lemma~\ref{lem:segregated2}, $H/\{u,v\}$ contains a $2$-\sgre{} of~$K_4$, so by the assumption on $H$, $\tww(H/\{u,v\})\geq3$, a contradiction.
\end{proof}

We now prove Lemma~\ref{lem:segregated2}.

\begin{proof}[Proof of Lemma~\ref{lem:segregated2}]
    Let $\mathcal{X}:=(X_s)_{s\in V(K_4)}$ be a $2$-\sgre{} of~$K_4$ in~$H$ such that $\abs{\bigcup\mathcal{X}}$ is minimised.
    Let~$u$ and~$v$ be distinct vertices of~$H$.
    Suppose that $\max\{\DeltaR(H),\DeltaR(H/\{u,v\})\}\leq2$.
    By Lemma~\ref{lem:minsgre}\ref{minsgre:delta}, the minimum degree of~$H$ is at least~$2$.
    Since $\DeltaR(H/\{u,v\})\leq2$, we have that $N_H[u]\cap N_H[v]$ is nonempty.
    We are going to show that $H/\{u,v\}$ contains a $2$-\sgre{} of~$K_4$.

    By the assumption and Lemma~\ref{lem:minsgre}, $H$ and $\mathcal{X}$ satisfy~\ref{minsgre:internal}--\ref{minsgre:deg2}.
    Let $\mathcal{P}$ be the set of $\mathcal{X}$-paths in~$H$, let $H':=H/\{u,v\}$, and let $x$ be the new vertex of $H/\{u,v\}$ obtained by contracting~$u$ and~$v$.
    We consider three cases depending on the size of $\{u,v\}\cap\bigcup\mathcal{X}$.
    Unless specified otherwise, whenever we verify~\ref{sgre:connected}--\ref{sgre:isomorphic} for a $2$-\sgre{} of~$K_4$ in~$H'$, the $2$-\sgre{} will be witnessed by~$H'$.

    \medskip
    \noindent\textbf{Case 1.} $\abs{\{u,v\}\cap\bigcup\mathcal{X}}=0$.
    
    Let $P_u$ and $P_v$ be the $\mathcal{X}$-paths in~$H$ containing $u$ and $v$, respectively.
    If $P_u=P_v$, then the refined subgraph $H'-(N_H(u)\cap N_H(v))$ of $H'$ witnesses that~$\mathcal{X}$ is a $2$-\sgre{} of~$K_4$ in~$H'$.
    Thus, we may assume that $P_u\neq P_v$.
    By~\ref{sgre:degree2}, $P_u$ and~$P_v$ are internally disjoint in~$H$, and $N[u]\cap N[v]\subseteq V(P_u)\cap V(P_v)$.
    Since $N_H[u]\cap N_H[v]\neq\emptyset$, $P_u$ and~$P_v$ share an end~$w$ which is a common neighbour of~$u$ and~$v$ in~$H$.
    By Lemma~\ref{lem:minsgre}\ref{minsgre:uniqueness}, $w$ is the unique common neighbour of~$u$ and~$v$ in~$H$.
    Note that $w\in\bigcup\mathcal{X}$.
    Let $p$ be the vertex of $K_4$ with $w\in X_p$.
    Let $X'_p:=X_p\cup\{x\}$ and for every $q\in V(K_4)\setminus\{p\}$, let $X'_q:=X_q$.
    
    We show that $\mathcal{X}':=(X'_s)_{s\in V(K_4)}$ is a $2$-\sgre{} of~$K_4$ in~$H'$.
    Since $x$ is adjacent to~$w$ in~$H'$, we have that $H'[X'_p]$ is connected, so $\mathcal{X}'$ satisfies~\ref{sgre:connected}.
    For each $y\in\{u,v\}$, let $P'_y:=H'[(V(P_y)\setminus\{y,w\})\cup\{x\}]$.
    Note that each $P'_y$ is an $\mathcal{X}'$-path in~$H'$ with at least one red edge.
    Since $H'-x=H-\{u,v\}$, we have that $(\mathcal{P}\setminus\{P_u,P_v\})\cup\{P'_u,P'_v\}$ is the set of $\mathcal{X}'$-paths in $H'$, and therefore $\mathcal{X}'$ satisfies~\ref{sgre:degree2}, \ref{sgre:length}, and~\ref{sgre:isomorphic}.
    Hence,~$\mathcal{X}'$ is a $2$-\sgre{} of~$K_4$ in~$H/\{u,v\}$.

    \medskip
    \noindent\textbf{Case 2.} $\abs{\{u,v\}\cap\bigcup\mathcal{X}}=1$.

    By symmetry, we may assume that $u\in\bigcup\mathcal{X}$.
    Let $p$ be the vertex of $K_4$ with $u\in X_p$ and let $P_v$ be the $\mathcal{X}$-path in~$H$ containing $v$.
    Suppose first that $P_v$ has an end $w\in X_p$.
    Let $W$ be the set of internal vertices of the subpath $P_v$ between $v$ and $w$.
    Let $X'_p:=(X_p\setminus\{u\})\cup W\cup\{x\}$, and for every $q\in V(K_4)\setminus\{p\}$, let~$X'_q:=X_q$.
    
    We show that $\mathcal{X}':=(X'_s)_{s\in V(K_4)}$ is a $2$-\sgre{} of $K_4$ in~$H'$.
    Since $H[X_p\cup W\cup\{v\}]$ is connected, $H'[X'_p]$ is also connected.
    Thus, $\mathcal{X}'$ satisfies~\ref{sgre:connected}.
    Let $P'_v:=H'[(V(P_v)\setminus(W\cup\{v,w\}))\cup\{x\}]$.
    Note that~$P'_v$ is an $\mathcal{X}'$-path in $H'$ with at least one red edge.
    Since $\deg_H(y)=2$ for every $y\in W$, $H'$ has no $\mathcal{X}'$-path having an end in $W$.
    Since $H'-x=H-\{u,v\}$, we have that $(\mathcal{P}\setminus\{P_v\})\cup\{P'_v\}$ is the set of $\mathcal{X}'$-paths in~$H'$, and therefore~$\mathcal{X}'$ satisfies~\ref{sgre:degree2}, \ref{sgre:length}, and~\ref{sgre:isomorphic}.
    Hence,~$\mathcal{X}'$ is a $2$-\sgre{} of~$K_4$ in~$H'$.

    Thus, we may assume that $P_v$ has no vertex in $X_p$.
    Since $u$ and $v$ are nonadjacent in~$H$ with $N_H[u]\cap N_H[v]\neq\emptyset$, they have a common neighbour in~$H$.
    Since $P_v$ has no vertex in $X_p$, every common neighbour of~$u$ and~$v$ in~$H$ is an end of~$P_v$.

    If $u$ and $v$ have a common neighbour $w$ in $H$, then let $q$ be the vertex of $K_4$ with $w\in X_q$.
    Note that $X_p\neq X_q$.
    Since~$u$ is adjacent to~$w$, $H[\{u,w\}]$ is an $\mathcal{X}$-path in~$H$ between $X_p$ and $X_q$, so by~\ref{sgre:length}, $uw$ is a red edge of~$H$.
    Hence, $\{xy:y\in N_H(u)\}\subseteq R(H')$.
    By Lemma~\ref{lem:minsgre}\ref{minsgre:deg2}, $u$ has degree at least~$3$ in~$H$, and therefore $\rdeg_{H'}(x)\geq3$, contradicting the assumption that $\DeltaR(H')\leq2$.

    \medskip
    \noindent\textbf{Case 3.} $\abs{\{u,v\}\cap\bigcup\mathcal{X}}=2$.

    Let $p$ and $q$ be the vertices of $K_4$ such that $u\in X_p$ and $v\in X_q$.
    Since contraction preserves the connectedness, if $X_p=X_q$, then it is straightforward to see that $H'$ contains a $2$-\sgre{} of~$K_4$.
    Thus, we may assume that $X_p\neq X_q$.

    Suppose first that $H$ has no $\mathcal{X}$-path between~$u$ and~$v$.
    Since $X_p\neq X_q$, $u$ and $v$ are nonadjacent in~$H$.
    By Lemma~\ref{lem:minsgre}\ref{minsgre:internal}, every common neighbour of~$u$ and~$v$ is in~$\bigcup\mathcal{X}$.
    Since $X_p\neq X_q$, by~\ref{sgre:length}, we have that $\{xy:y\in N_H(u)\cup N_H(v)\}\subseteq R(H')$.
    Since $\rdeg_{H'}(x)\leq2$, by Lemma~\ref{lem:minsgre}\ref{minsgre:delta}, each of~$u$ and~$v$ has degree~$2$ in~$H$.
    Since $X_p\neq X_q$, by Lemma~\ref{lem:minsgre}\ref{minsgre:deg2}, $u$ and $v$ have no common neighbour in~$H$, a contradiction.

    Thus, we may assume that $H$ has an $\mathcal{X}$-path $P$ between $u$ and $v$.
    By Lemma~\ref{lem:minsgre}\ref{minsgre:uniqueness}, $u$ and $v$ have no common neighbour in $X_p\cup X_q$.
    Since $u$ and $v$ are the ends of $P$, by Lemma~\ref{lem:minsgre}\ref{minsgre:delta} and~\ref{minsgre:deg2}, each of~$u$ and~$v$ has degree at least~$3$ in~$H$.
    Since $\rdeg_{H'}(x)\leq2$, $u$ and $v$ have at least one common neighbour in~$V(H)\setminus V(P)$.
    Let $N_{uv}:=(N_H(u)\cap N_H(v))\setminus V(P)$.

    By Lemma~\ref{lem:minsgre}\ref{minsgre:internal} and~\ref{minsgre:uniqueness}, $N_{uv}\subseteq\bigcup\mathcal{X}$.
    Since $u$ and $v$ have no common neighbour in $X_p\cup X_q$, for each $y\in N_{uv}$, the vertex~$z$ of~$K_4$ with $y\in X_z$ is neither~$p$ nor~$q$.
    Thus, for each $y\in N_{uv}$, both $H[\{u,y\}]$ and $H[\{v,y\}]$ are $\mathcal{X}$-paths in~$H$ of length~$1$, so by~\ref{sgre:length}, both $uy$ and $vy$ are red edges of $H$.
    This implies that $\{xy:y\in(N_H(u)\cup N_H(v))\setminus V(P)\}\subseteq R(H')$, and therefore both~$u$ and~$v$ have degree at most~$3$ in~$H$.

    Since both~$u$ and~$v$ has a neighbour not in $X_p\cup X_q$, by Lemma~\ref{lem:minsgre}\ref{minsgre:deg2}, they have degree~$3$ in~$H$, and therefore $(N_H(u)\cup N_H(v))\setminus V(P)=N_{uv}$ and $\abs{N_{uv}}=2$.
    Since $\rdeg_{H'}(x)\leq2$, we deduce that $P$ is of length at most $2$.
    By~\ref{sgre:length}, $P$ has at least one red edge, and therefore $u$ or $v$ has red degree~$3$ in~$H$, contradicting the assumption that $\DeltaR(H)\leq2$, and this completes the proof.
\end{proof}

To show the backward direction of Theorem~\ref{thm:main2}, we will use the following theorem and lemma.

\begin{theorem}[Wald and Colbourn~\cite{WC1983}]\label{thm:2tree}
    A graph is $K_4$-minor-free if and only if it is a partial $2$-tree.
\end{theorem}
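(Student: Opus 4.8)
The plan is to prove the two implications separately, each by induction, relying on a single classical external fact: by Dirac's theorem that every graph of minimum degree at least $3$ contains a subdivision of $K_4$, every $K_4$-minor-free graph has a vertex of degree at most $2$. Recall that a $2$-tree is obtained from $K_3$ by repeatedly adding a new vertex whose neighbourhood is the set of ends of an existing edge, that a partial $2$-tree is a subgraph of a $2$-tree, and that both ``being a minor of'' and ``being a subgraph of'' preserve $K_4$-minor-freeness.

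First I would handle the easy direction: every partial $2$-tree is $K_4$-minor-free. Since subgraphs of $K_4$-minor-free graphs are $K_4$-minor-free, it suffices to show that every $2$-tree $T$ is $K_4$-minor-free, which I would prove by induction on $\abs{V(T)}$. The base case $T=K_3$ is clear as $K_3$ has only three vertices. For the step, write $T$ as a $2$-tree $T'$ with one additional vertex $v$ whose neighbourhood is the set of ends $\{x,y\}$ of an edge $xy$ of $T'$, and suppose for contradiction that $T$ has a $K_4$ minor with branch sets $B_1,\dots,B_4$, where $v\in B_1$. If $B_1=\{v\}$, then $v$ is adjacent to each of $B_2,B_3,B_4$, forcing $\deg_T(v)\ge 3$, a contradiction. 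Otherwise $B_1\setminus\{v\}\neq\emptyset$; then $v$ is not a cut vertex of $T[B_1]$ (such a cut vertex would have to separate $x$ from $y$ inside $B_1$, which is impossible since $xy\in E(T)$), so $T[B_1\setminus\{v\}]$ is connected, and every edge from $B_1$ to a branch set $B_j$ that is incident with $v$ can be replaced by the edge $xy$ — indeed the unique neighbour of $v$ lying in $B_1$ is one of $x,y$, and the other of $x,y$ then lies in $B_j$. Hence $(B_1\setminus\{v\},B_2,B_3,B_4)$ is a $K_4$ minor in $T-v=T'$, contradicting the inductive hypothesis.

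Next I would prove the converse by induction on $\abs{V(G)}$: every $K_4$-minor-free graph $G$ is a partial $2$-tree. If $\abs{V(G)}\le 3$, then $G$ is a subgraph of $K_3$. If $\abs{V(G)}\ge 4$, choose a vertex $v$ with $\deg_G(v)\le 2$. If $\deg_G(v)\le 1$, then $G-v$ is $K_4$-minor-free, hence by induction a subgraph of a $2$-tree $T$ with $\abs{V(T)}\ge 3$; attaching a new vertex named $v$ to an edge of $T$ incident with the neighbour of $v$ (or to an arbitrary edge of $T$ if $v$ is isolated) yields a $2$-tree $T'$ with $G\subseteq T'$. If $\deg_G(v)=2$ with neighbours $x,y$, then: when $xy\in E(G)$, by induction $G-v$ is a subgraph of a $2$-tree $T$ with $xy\in E(T)$, and attaching a new vertex named $v$ to the edge $xy$ gives the desired $2$-tree; when $xy\notin E(G)$, let $G^\ast$ be the graph obtained from $G-v$ by adding the edge $xy$, which is the result of contracting the edge $vx$ in $G$, hence a minor of $G$ and therefore $K_4$-minor-free, with $\abs{V(G^\ast)}=\abs{V(G)}-1\ge 3$; by induction $G^\ast$ is a subgraph of a $2$-tree $T$ with $xy\in E(G^\ast)\subseteq E(T)$, and attaching a new vertex named $v$ to the edge $xy$ gives a $2$-tree $T'$ with $E(G)=E(G-v)\cup\{vx,vy\}\subseteq E(T)\cup\{vx,vy\}\subseteq E(T')$ and $V(G)=V(T')$, hence $G\subseteq T'$.

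The only non-routine ingredient is the existence of a vertex of degree at most $2$ in a $K_4$-minor-free graph: the extremal inequality $\abs{E(G)}\le 2\abs{V(G)}-3$ only yields a vertex of degree at most $3$, so one genuinely needs Dirac's $K_4$-subdivision theorem for this. Everything else is bookkeeping — verifying that the edge contraction in the degree-$2$ case really produces $(G-v)+xy$ without a parallel edge (which is precisely why the case $xy\in E(G)$ is split off and treated by plain deletion), and verifying that the rerouting of the $v$-incident cross-edges in the forward direction covers all cases. I expect this bookkeeping, rather than any conceptual obstacle, to be the bulk of the write-up.
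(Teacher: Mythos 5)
The paper does not prove this statement at all: it is imported as a known classical theorem with a citation to Wald and Colbourn, so there is no internal proof to compare against. Your self-contained argument is correct. In the forward direction, the key points all check out: $v$ cannot be a cut vertex of $T[B_1]$ because its only neighbours $x,y$ are adjacent, so $B_1\setminus\{v\}$ stays connected, and any cross-edge at $v$ can be rerouted through $xy$ since at least one of $x,y$ must lie in $B_1$ (if both do, there is nothing to reroute). In the converse direction, the split into $xy\in E(G)$ versus $xy\notin E(G)$ correctly ensures that $(G-v)+xy$ is exactly the simple contraction $G/vx$, hence a minor, and the inductive hypothesis automatically supplies $xy\in E(T)$ because $xy$ is an edge of the smaller graph. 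Your one external ingredient — Dirac's theorem that minimum degree $3$ forces a $K_4$-subdivision — is genuinely needed (you are right that the edge bound only yields a degree-$3$ vertex) and is a legitimate classical citation, so the proof is no less self-contained than the paper's outright citation of Wald--Colbourn; what your route buys is an elementary, checkable argument, at the cost of trading one black box for a smaller one. Two cosmetic points: in the forward direction you should also dispose of the trivial case where $v$ lies in no branch set (then the $K_4$-model already lives in $T'$), and at the very end the claim $V(G)=V(T')$ need not hold (the $2$-tree $T$ may have extra vertices), but only the containment $G\subseteq T'$ is needed, which your argument does establish.
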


\begin{lemma}\label{lem:2tree}
    Let $G$ be a $2$-tree and let $H$ be a trigraph with underlying graph~$\theta(G)$ such that $\rdeg_H(v)=0$ for every $v\in V(G)$.
    If $X$ is a clique of size~$2$ in~$G$, then there exists an $X$-stable partial \cont{2} from~$H$ to a trigraph with underlying graph $\theta(G[X])$.
\end{lemma}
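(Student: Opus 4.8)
The plan is to proceed by induction on $\abs{V(G)}$, peeling off the leaves of the $2$-tree one at a time. Recall that a $2$-tree with at least three vertices has a vertex $z$ of degree exactly $2$ whose two neighbours $a,b$ form an edge; removing $z$ leaves a smaller $2$-tree $G'$ in which $\{a,b\}$ is still a clique of size $2$. The base case $\abs{V(G)}=2$ is immediate: $\theta(G[X])=\theta(G)$ and the empty partial contraction sequence works.

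For the inductive step, I would first choose a degree-$2$ vertex $z$ of $G$ with neighbours $a,b$, and treat two cases according to whether $z\in X$ or not. If $z\notin X$, then in $\theta(G)$ the vertex $z$ together with the $7$-cycles $\theta(G[\{a,z\}])$ and $\theta(G[\{b,z\}])$ forms a ``theta-like'' gadget attached to the $2$-tree $\theta(G')$ at $a$ and $b$; concretely, $\theta(G)$ restricted to this gadget is the union of $\theta(G[\{a,z\}])$ and $\theta(G[\{b,z\}])$, sharing only $z$. The idea is to contract this entire gadget down onto the edge $\theta(G'[\{a,b\}])$ (more precisely, onto an $\{a,b\}$-path that already exists in $\theta(G')$ via the edge $ab$ of $G'$), keeping the red degree of every vertex of $G'$ — and in particular of $X$ — equal to $0$ throughout. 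This is exactly the kind of local reduction carried out in Lemma~\ref{lem:theta} and in Corollary~\ref{cor:base}; one walks along the two paths of length $2$ or $3$ emanating from $z$ and contracts pairs of adjacent degree-$2$ vertices, then merges the two short paths into one path whose length matches that of an $\{a,b\}$-path of $\theta(G')$, all while the only red edges created are incident to internal (degree-$2$) vertices. Since $G'$ is a $2$-tree with $\{a,b\}$ a clique of size $2$ and $X\subseteq V(G')$, after this reduction we obtain a trigraph $H'$ with underlying graph $\theta(G')$ and $\rdeg_{H'}(v)=0$ for all $v\in V(G')$, so the inductive hypothesis applied with the clique $X$ finishes the job. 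If $z\in X$, then $X=\{z,c\}$ for some neighbour $c\in\{a,b\}$ of $z$, and $G[X]$ is simply the edge $zc$; here $\theta(G[X])$ is already an induced cycle of length $7$ inside the gadget, and the plan is to contract the rest of $\theta(G)$ (the whole of $\theta(G')$ together with the other path through $z$) down until only that $7$-cycle remains, again using leaf-peeling reductions of the sort in Corollary~\ref{cor:base} and being careful that no vertex ever accumulates red degree more than $2$, and that $z$ and $c$ keep red degree $0$.

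The key steps, in order, are: (1) identify a degree-$2$ vertex $z$ of the $2$-tree with adjacent neighbours $a,b$; (2) in the case $z\notin X$, explicitly write the short partial \cont{2} that collapses the gadget around $z$ onto an $\{a,b\}$-path of $\theta(G')$, verifying that it is $\{a,b\}$-stable (hence $X$-stable since $X\subseteq V(G')$) and that it is a partial \cont{2}; (3) apply the inductive hypothesis to the smaller $2$-tree $G'$ with the same clique $X$; (4) in the case $z\in X$, pick the neighbour $c\in X$ of $z$, collapse everything outside the $7$-cycle $\theta(G[\{z,c\}])$ using an analogous but slightly longer sequence that first reduces $\theta(G')$ down to the single edge-path through $c$ (invoking the inductive hypothesis or Corollary~\ref{cor:base}-style reductions) and then absorbs the remaining path through $z$, again tracking red degrees; (5) conclude that in both cases we reach a trigraph with underlying graph $\theta(G[X])$ via a \cont{2}, which in particular shows $\tww(H)\le 2$ and establishes the lemma.

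I expect the main obstacle to be the bookkeeping in step (2) and step (4): one must make sure that when merging the two length-$2$-or-$3$ arms meeting at $z$ (or when absorbing an arm into an already-reduced $\theta(G')$), the only red edges produced lie on degree-$2$ internal vertices of the resulting cycle, so that the maximum red degree never exceeds $2$ and the ``protected'' vertices $a,b$ (resp.\ $z,c$) never acquire a red edge at all. A subtlety is that a vertex of $G'$ other than $a$ or $b$ could in principle be adjacent in $G$ to $z$ — but this cannot happen since $z$ has degree exactly $2$ in $G$ with neighbourhood $\{a,b\}$, so the gadget around $z$ is genuinely attached only at $a$ and $b$, and the reduction is purely local. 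Another mild point is handling the possibility $a=c$ or $b=c$ uniformly in the $z\in X$ case; this is dealt with by symmetry. Once these local contraction sequences are spelled out — they are entirely analogous to the ones already appearing in Lemmas~\ref{lem:theta} and~\ref{lem:yoperation} and in Corollary~\ref{cor:base}, just in the simpler $K_4$-minor-free regime where everything stays within red degree $2$ — the induction goes through cleanly.
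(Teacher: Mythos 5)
Your proposal is correct and is essentially the paper's own proof: both arguments induct on $\abs{V(G)}$ by peeling a simplicial degree-$2$ vertex $z$ with adjacent neighbours $a,b$, and the real content in both is an end-stable collapse of the trigraph on $\theta(K_3)$ onto one of its $7$-cycles while the two protected branch vertices keep red degree $0$ (the paper isolates this as its explicit triangle base case and then makes the inductive step two applications of the induction hypothesis, which correspond precisely to your cases $z\notin X$ and $z\in X$). The only slips are minor: a $2$-tree has at least three vertices, so the genuine base case is $K_3$ rather than $K_2$, which means the gadget-collapse order you defer is exactly the piece that must be written out (a careless order, e.g.\ contracting the two neighbours of $a$ on the two arms and then the next pair, reaches red degree $3$), but the order you sketch --- shorten the length-$4$ arms, merge the two arms at each end into a single $a$--$b$ path, then absorb it into the length-$4$ $a$--$b$ path of $\theta(G-z)$ by contracting the vertices next to $a$ and $b$ before the middle --- does work, and is what the paper records explicitly.
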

\begin{proof}
    We proceed by induction on $\abs{V(G)}$.
    Let $u$ and $v$ be the vertices in $X$.
    
    Suppose first that $G$ is a triangle.
    Let $w$ be the vertex in $V(G)\setminus X$.
    For every edge $e=ab$ of~$G$, let $x_{a,b}$ and $x_{b,a}$ be the internal vertices of the unique length-$3$ path in $\theta(G)$ between~$a$ and~$b$ such that $\{ax_{a,b},bx_{b,a}\}\subseteq E(\theta(G))$, and let $y_{a,b}$, $y^e$, and $y_{b,a}$ be the internal vertices of the unique length-$4$ path in~$\theta(G)$ between $a$ and $b$ such that $\{ay_{a,b},by_{b,a}\}\subseteq E(\theta(G))$.
    We do the following in order to obtain a desired partial \cont{2} for~$H$.
    \begin{enumerate}[label=\bf{Step \arabic*.},leftmargin=*]
        \item For each $t\in\{u,v\}$, contract the following three pairs of vertices in order: $\{x_{t,w},y_{t,w}\}$ to $x_{t,w}$, $\{y^{tw},y_{w,t}\}$ to $y_{w,t}$, and $\{x_{w,t},y_{w,t}\}$ to $x_{w,t}$.
        \item Contract the following five pairs of vertices in order: $\{w,x_{w,u}\}$ to~$w$, $\{x_{u,w},y_{u,v}\}$ to~$y_{u,v}$, $\{w,x_{w,v}\}$ to~$w$, $\{x_{v,w},y_{v,u}\}$ to~$y_{v,u}$, and $\{w,y^{uv}\}$ to $y^{uv}$.
    \end{enumerate}

    Suppose now that $\abs{V(G)}\geq 4$ and that the result holds for all $2$-trees with fewer vertices.
    By definition of a $2$-tree, $G$ has a vertex~$z$ such that $G_1:=G[N_G[z]]$ is a triangle and $G_2:=G-z$ is a $2$-tree.
    Let us choose an integer $i\in[2]$ such that $X\subseteq V(G_i)$.
    By the inductive hypothesis, there exists an $N_G(z)$-stable partial \cont{2} from the induced subgraph of~$H$ with underlying graph $\theta(G_{3-i})$ to a trigraph with underlying graph $\theta(G[N_G(z)])$.
    Applying this sequence of contractions gives us a $V(G_i)$-stable partial \cont{2} from $H$ to a trigraph $H'$ with underlying graph $\theta(G_i)$.
    By the inductive hypothesis, there exists an $X$-stable partial \cont{2} from $H'$ to a trigraph with underlying graph~$\theta(G[X])$, which completes the $X$-stable partial \cont{2} from~$H$ to a trigraph with underlying graph~$\theta(G[X])$.
\end{proof}

\begin{proof}[Proof of Theorem~\ref{thm:main2}]
    The forward direction holds by Lemma~\ref{lem:detecting} and Proposition~\ref{prop:segregated2}.

    We now show the backward direction.
    Suppose that $G$ has no $K_4$ as a minor.
    If $\abs{V(G)}\leq2$, then by Lemma~\ref{lem:theta} with $\mu=0$, there exists a partial \cont{2} from~$H$ to a refined subgraph of a cycle.
    By iteratively contracting pairs of adjacent vertices in the cycle, until no such pairs remain, we obtain a \cont{2} of $H$.
    Thus, we may assume that $\abs{V(G)}\geq3$.

    Let~$G_0$ be the simplification of~$G$.
    By Theorem~\ref{thm:2tree}, there exists a $2$-tree $G'$ of which $G_0$ is a subgraph.
    Let~$G^*$ be the multigraph obtained from $G'$ by adding the edges $e\in E(G)\setminus E(G')$.
    There exists an \subd{2}~$H^*$ of~$G^*$ such that~$H$ is an induced subgraph of $H^*$, $\DeltaR(H^*)=\DeltaR(H)\leq2$, and $\rdeg_{H^*}(v)=0$ for every $v\in V(G^*)$.
    Since $H$ is an induced subgraph of~$H^*$, to show that $\tww(H)\leq2$, it suffices to show that $\tww(H^*)\leq2$.
    
    By Lemma~\ref{lem:theta} with $\mu=0$, there exists a partial \cont{2} from~$H^*$ to a trigraph $H^*_1$ whose underlying graph is an induced subgraph of~$\theta(G^*)$.
    There exists a trigraph $H^*_2$ with underlying graph~$\theta(G^*)$ such that $H^*_1$ is an induced subgraph of~$H^*_2$, $\DeltaR(H^*_2)=\DeltaR(H^*_1)\leq2$, and $\rdeg_{H^*_2}(v)=0$ for every $v\in V(G^*)$.
    By Lemma~\ref{lem:2tree}, there exists a partial \cont{2} from $H^*_2$ to a cycle.
    Since every cycle has twin-width at most~$2$, we have that $\tww(H^*_2)\leq2$, and therefore $\tww(H^*_1)\leq2$.
    Since there exists a partial \cont{2} from $H^*$ to $H^*_1$ we have that $\tww(H)\leq\tww(H^*)\leq2$.
\end{proof}

We now complete the proof of Theorem~\ref{thm:main}.

\begin{proof}[Proof of Theorem~\ref{thm:main}]
    Proposition~\ref{prop:mainbackward} implies~\ref{main:atmost3}.
    Theorem~\ref{thm:main2} implies~\ref{main:atmost2}.
    
    We show~\ref{main:atmost1}.
    The forward direction follows from Lemma~\ref{lem:cycle} and that the $1$-subdivision of $K_{1,3}$~{\cite[Lemma~6.6]{AHKO2021}} has twin-width~$2$.
    Suppose that $G$ has no minor in $\{K_{1,3},C_1\}$.
    Since $G$ has no~$C_1$ as a minor, $G$ is a forest.
    Since $G$ has no~$K_{1,3}$ as a minor, every component of~$G$ is a path.
    Thus, every component of~$H$ is a path, and therefore $\tww(H)\leq1$.
    
    We show~\ref{main:atmost0}.
    The forward direction follows from Lemma~\ref{lem:cycle} and that an induced path of length~$3$ has twin-width at least~$1$.
    If $G$ has no minor in $\{K_2,C_1\}$, then $G$ has no edge.
    Thus, $H$ has no edge, and therefore $\tww(H)=0$.
\end{proof}

\subsection{Subdivisions of {\boldmath$K_4$} with large girth}

In this subsection, we investigate the twin-width of subdivisions of~$K_4$ with large girth.

\begin{theorem}\label{thm:girth}
    Let $H$ be a trigraph.
    If the underlying graph of $H$ has an induced subgraph of girth at least~$7$ which is a subdivision of $K_4$, then $\tww(H)\geq3$.
\end{theorem}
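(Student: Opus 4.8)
The plan is to reduce to graphs and then run the same machine as Lemma~\ref{lem:segregated2} and Proposition~\ref{prop:segregated2}, but with a different obstruction. Recolouring every red edge of a trigraph black produces a refined subgraph, so Lemma~\ref{lem:refinedtww} gives $\tww(H)\ge\tww(H^{\circ})$, where $H^{\circ}$ is the underlying graph of~$H$; and $\tww(H^{\circ})\ge\tww(H_0)$ for the prescribed induced subgraph~$H_0$. Hence it suffices to prove that every \emph{graph}~$H_0$ that is a subdivision of $K_4$ with girth at least~$7$ has $\tww(H_0)\ge3$.

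I would isolate a trigraph property~$\Pi$, a \emph{red-assisted subdivision of $K_4$ of girth at least~$7$}: four pairwise disjoint nonempty vertex sets $Y_1,\dots,Y_4$ each inducing a connected subgraph, together with internally disjoint $\mathcal{Y}$-paths $Q_{ij}$ for $1\le i<j\le4$ whose internal vertices have degree~$2$, such that every $Q_{ij}$ of length at most~$2$ contains a red edge, and for every triangle $\{i,j,k\}$ of $K_4$ the cycle $Q_{ij}\cup Q_{jk}\cup Q_{ki}$ has length at least~$7$. A girth-$7$ subdivision of $K_4$ has~$\Pi$ with singleton~$Y_i$'s and no red edges, and a one-vertex trigraph has no~$\Pi$ (there are no four disjoint nonempty sets). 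So, exactly as in the proof of Proposition~\ref{prop:segregated2}, it remains to prove the contraction step: if a trigraph~$T$ has~$\Pi$ but no proper refined subgraph of~$T$ does, then for every pair $\{u,v\}$ of distinct vertices of~$T$, either $\max\{\DeltaR(T),\DeltaR(T/\{u,v\})\}\ge3$ or $T/\{u,v\}$ has~$\Pi$. As in Lemma~\ref{lem:minsgre}, minimality forces the anchors and the $Q_{ij}$'s to account for all of~$T$, and the girth-$7$ bound provides the slack: one shows that the only contractions keeping $\DeltaR\le2$ are the ``free'' path moves --- merging two consecutive interior vertices of some $Q_{ij}$, or merging the first interior vertices of two $\mathcal{Y}$-paths that meet at a common anchor~$Y_i$, which demotes~$Y_i$ to an interior vertex (the move used, e.g., in the proof of Lemma~\ref{lem:2tree}) --- and each such move shortens at most one triangle-cycle by at most~$1$ while putting a red edge on the shortened path, so a triangle-cycle whose length drops to at most~$2$ already carries a red edge.

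The main obstacle is carrying out this contraction step in full, and the reason it cannot be bypassed by a direct appeal to Proposition~\ref{prop:segregated2} is worth spelling out: a girth-$7$ subdivision of $K_4$ need not contain a $2$-\sgre{} of $K_4$, since the extremal such graph --- for instance $C_{12}$ together with two crossing ``diameter'' chords --- has only~$12$ vertices, while a $2$-\sgre{} of $K_4$ in a red-edge-free trigraph needs $4+6\cdot 2=16$ vertices. So~$\Pi$ must genuinely trade path length for red edges, and the delicate point is to show, in the contraction step, that whenever $\DeltaR(T/\{u,v\})\le2$ the pair $\{u,v\}$ lies inside a single~$Q_{ij}$ or straddles a common anchor --- this is precisely where the girth-$7$ hypothesis is spent --- and that the resulting shortening keeps all four triangle-cycles long or red.
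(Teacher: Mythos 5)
Your reduction to the red-free case is fine (recolouring all red edges black is a refinement, so Lemma~\ref{lem:refinedtww} applies), and your observation that Proposition~\ref{prop:segregated2} cannot be invoked directly (a girth-$7$ subdivision of $K_4$ such as $C_{12}$ with two crossing chords is too small to contain a $2$-\sgre{} of $K_4$) correctly identifies why a separate argument is needed. But the argument you sketch in its place has a genuine gap, and it sits exactly at the point you flag as ``the main obstacle''. First, your property $\Pi$ already fails in the base case: a subdivision of $K_4$ of girth $7$ may join two branch vertices by a path of length $1$ or $2$ (only the triangle-cycles must be long), so the clause ``every $Q_{ij}$ of length at most $2$ contains a red edge'' is violated with no red edges available. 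Second, and more seriously, $\Pi$ as stated demands that every triangle-cycle have length at least $7$ outright, yet the ``free'' moves you allow strictly shorten these cycles: contracting two adjacent interior vertices shortens one triangle-cycle by $1$, and merging the first interior vertices of two paths meeting at a branch vertex $w$ removes both $w$ and one of the merged vertices from the triangle-cycle using those two paths, shortening it by $2$, not by ``at most $1$'' as you claim. So $\Pi$ is not preserved by contractions keeping $\DeltaR\le2$, and your closing remark that ``a triangle-cycle whose length drops to at most $2$ already carries a red edge'' is not a clause of $\Pi$ and cannot rescue it: an invariant with a hard length bound and no red/length trade-off cannot survive the induction.

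That trade-off is precisely what the paper's proof supplies and what your proposal leaves undefined. The paper proves the stronger Proposition~\ref{prop:girth}: for a trigraph $H$ whose underlying graph is a subdivision of $K_4$, with $S$ the set of vertices of red degree equal to degree equal to $2$, if every cycle $C$ of $H-S$ satisfies $\abs{V(C)}+2q_H(C)\ge7$, where $q_H(C)$ counts degree-$3$ vertices of $H$ with red degree $2$ on $C$, then $\tww(H)=3$. This weighted quantity is what actually survives the contractions: the anchor-merge move above loses $2$ in length but creates a degree-$3$ vertex of red degree $2$ on the cycle, regaining $2$ through $q_H$, while a doubly-red degree-$2$ vertex is absorbed into $S$ so that cycles through it impose no constraint at all. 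The minimal-counterexample case analysis on $\deg_H(u)+\deg_H(v)$ is then carried out against this invariant; that analysis, not the reduction to graphs, is the substance of the theorem, and your proposal both leaves it unexecuted and aims it at an invariant that cannot work.
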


We first point out that Proposition~\ref{prop:segregated2} and Theorem~\ref{thm:girth} are incomparable.
If a subdivision $H$ of $K_4$ has a path of length at most $2$ between degree-$3$ vertices, then it contains no $2$-\sgre{} of $K_4$, so Proposition~\ref{prop:segregated2} tells us nothing about the twin-width of $H$.
However, if $H$ is of girth at least~$7$, then Theorem~\ref{thm:girth} shows that $H$ has twin-width exactly~$3$.
In the other direction, the line graph of an \subd{3} of $K_4$ has a $2$-\sgre{} by Lemma~\ref{lem:detecting} and so has twin-width at least~$3$ by Proposition~\ref{prop:segregated2}, while it does not have an induced subgraph of girth at least~$7$ which is a subdivision of~$K_4$.
We will later show that the girth condition in Theorem~\ref{thm:girth} is tight.

Theorem~\ref{thm:girth} directly follows from the following proposition.

\begin{proposition}\label{prop:girth}
    Let $H$ be a trigraph whose underlying graph is a subdivision of $K_4$.
    Let $S$ be the set of vertices $v$ of $H$ with $\rdeg_H(v)=\deg_H(v)=2$.
    For each cycle $C$ of $H$, let $q_H(C)$ be the number of vertices~$v$ of~$C$ such that~$v$ has degree~$3$ in~$H$ and red degree~$2$ in~$C$.
    If $\abs{V(C)}+2q_H(C)\geq7$ for every cycle~$C$ of~$H-S$, then $\tww(H)=3$.
\end{proposition}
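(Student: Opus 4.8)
\textbf{Proof plan for Proposition~\ref{prop:girth}.}
The plan is to prove the two inequalities separately. The lower bound $\tww(H)\ge 3$ will follow from the earlier machinery once we exhibit a suitable $2$-\sgre{} or invoke Theorem~\ref{thm:girth}: indeed, the hypothesis $\abs{V(C)}+2q_H(C)\ge 7$ for every cycle of $H-S$ is a robust substitute for the girth condition of Theorem~\ref{thm:girth}, and the cleanest route is to mimic the proof of Proposition~\ref{prop:segregated2} directly. So first I would set up the collection $\mathcal{X}=(X_v)_{v\in V(K_4)}$ where each $X_v$ is a singleton consisting of a degree-$3$ vertex of $H$, take $H$ itself as the witnessing refined subgraph after recolouring: for each path $P$ between two degree-$3$ vertices, if $P$ contains a degree-$3$ endpoint whose incident edge on $P$ is already red, keep it; the condition $\abs{V(C)}+2q_H(C)\ge 7$ on each of the four triangles $C$ of $H-S$ is exactly what guarantees that every $\mathcal{X}$-path of length at most $2$ in the relevant refined subgraph carries a red edge, so that $\mathcal{X}$ is a $2$-\sgre{} of $K_4$. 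Then Proposition~\ref{prop:segregated2} gives $\tww(H)\ge 3$. The subtlety is that a branch path of length $1$ or $2$ is allowed here (unlike in a genuine $2$-\sgre{}), so the red-edge count must come from the degree-$3$ endpoints having red degree $2$ within the cycle; I would verify that $q_H(C)$ counts precisely these contributions and that $\abs{V(C)}+2q_H(C)\ge 7$ forces at least one red edge on every short $\mathcal{X}$-path. Alternatively, after recolouring all short branch paths to be entirely red (which the red-degree bookkeeping permits under the hypothesis), $H$ literally contains an induced subdivision of $K_4$ of ``red girth'' at least $7$, and Theorem~\ref{thm:girth} applies.

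For the upper bound $\tww(H)\le 3$, the plan is to give an explicit $3$-contraction sequence. Write $a,b,c,d$ for the four degree-$3$ vertices of $H$ and let $P_{xy}$ denote the branch path between $x$ and $y$; the six paths partition $E(H)$. First I would contract each branch path down as far as possible: for a path $P_{xy}$ of length $\ell\ge 3$, iteratively contracting adjacent interior degree-$2$ vertices reduces it to a path of length $3$ while the new vertices only ever acquire red degree $2$ (they are interior to a path), and the endpoints $x,y$ keep degree $3$; for $\ell\le 2$ nothing is done. This is a partial $3$-contraction sequence because every vertex involved is a path-interior vertex of degree $2$, and no vertex outside the path changes. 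Note this is essentially the argument of Lemma~\ref{lem:cycle} and the loop-handling in Lemma~\ref{lem:theta}, specialised to $K_4$. After this step $H$ has been reduced to a subdivision of $K_4$ in which each branch path has length at most $3$ (with some red edges inherited), and $K_4$ itself is a $2$-tree, so I would then apply essentially the strategy of Lemma~\ref{lem:2tree}: pick a degree-$3$ vertex, say $d$, contract the three short paths emanating from it one at a time into its neighbours while keeping all of $a,b,c$ fixed, thereby absorbing $d$ and reducing to a subdivision of the triangle $abc$, i.e.\ a cycle; then finish with the $2$-contraction sequence for a cycle from Lemma~\ref{lem:cycle}. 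The only care needed is the red-degree bookkeeping: each contraction step involves merging a short-path vertex with a degree-$3$ vertex that has at most one incident red edge from a short path, and I would check case by case (there are only finitely many length patterns, each $\in\{1,2,3\}$, and only finitely many red/black colourings of the up-to-six edges incident to the degree-$3$ vertices) that the red degree never exceeds $3$.

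The main obstacle I expect is the finite-but-fiddly case analysis in the upper bound: after the path-shortening phase, the branch paths have lengths in $\{1,2,3\}$ and carry arbitrary colourings consistent with $\DeltaR(H)\le 3$, and I must route the contractions at each degree-$3$ vertex so that the transient red degrees stay within $3$. The right way to tame this is to prove a small lemma analogous to Corollary~\ref{cor:protection1}/Lemma~\ref{lem:protection2}: ``given a degree-$3$ vertex $x$ with its three short branch paths, there is an $\{(\text{other two endpoints})\}$-stable partial $3$-contraction sequence that contracts away two of the three paths incident to $x$, provided a red-degree inequality holds at $x$'', and then apply it twice (once to eliminate $d$, then work inside the resulting triangle). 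Checking that inequality is exactly where the hypothesis $\abs{V(C)}+2q_H(C)\ge 7$ is \emph{not} needed for the upper bound — the upper bound holds for \emph{every} subdivision of $K_4$ with $\DeltaR(H)\le 3$ — so in fact I would state and prove the upper bound unconditionally (mirroring how Theorem~\ref{thm:main2}\ref{main:atmost2} gives $\tww\le 2$ for all $K_4$-minor-free subdivisions), and only the lower bound uses the cycle condition. Combining the two directions yields $\tww(H)=3$.
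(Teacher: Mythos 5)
Your upper bound is fine (and, as you note, unconditional): the paper disposes of it in one line, since a subdivision of $K_4$ admits a contraction sequence keeping the maximum degree, hence the maximum red degree, at most~$3$; your more elaborate path-shortening plus protection-lemma plan is not wrong, just unnecessary.

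The lower bound, however, has a genuine gap. The hypothesis $\abs{V(C)}+2q_H(C)\geq7$ for every cycle of $H-S$ does \emph{not} force a red edge onto every path of length at most~$2$ between degree-$3$ vertices, so the singleton collection $\mathcal{X}$ need not be a $2$-\sgre{} of $K_4$. Concretely, take all edges black, one branch path of length~$1$ and the other five of length~$10$: every cycle of $H-S=H$ has length at least~$21$, so the hypothesis holds with $q_H\equiv0$, yet the length-$1$ black $\mathcal{X}$-path violates condition~\ref{sgre:length}, and since each part must contain one of the four degree-$3$ vertices (each part needs three emanating $\mathcal{X}$-paths) and induced subgraphs retain that edge, no $2$-\sgre{} of $K_4$ exists at all. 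The paper itself points this out right after Theorem~\ref{thm:girth}: when there is a path of length at most~$2$ between degree-$3$ vertices, Proposition~\ref{prop:segregated2} says nothing. Your fallback also fails on two counts: recolouring black edges red produces a trigraph $H^+$ of which $H$ is a refined subgraph, so Lemma~\ref{lem:refinedtww} gives $\tww(H)\leq\tww(H^+)$ — the wrong direction for a lower bound on $\tww(H)$; and invoking Theorem~\ref{thm:girth} is both circular (in the paper it is deduced from Proposition~\ref{prop:girth}) and insufficient, since the hypothesis of Proposition~\ref{prop:girth} allows underlying girth as low as~$3$ when compensated by $q_H(C)\geq2$, whereas Theorem~\ref{thm:girth} needs girth at least~$7$. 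What the paper actually does, and what your proposal lacks, is a minimal-counterexample induction on $\abs{V(H)}$: assuming $\tww(H)\leq2$, one takes a contraction of a pair $\{u,v\}$ yielding $H'$ with $\tww(H')\leq2$ and, by a case analysis on $\deg_H(u)+\deg_H(v)$, either reaches an immediate contradiction (a cycle of length at most~$4$ with too few red edges) or exhibits a smaller trigraph whose cycles still satisfy $\abs{V(C)}+2q(C)\geq7$, contradicting minimality. The crux is showing this invariant is preserved under the hypothetical $2$-contractions — the lost cycle length is paid for by newly created degree-$3$ vertices of red degree~$2$ — and no mechanism for that bookkeeping appears in your plan.
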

\begin{proof}
    Since every subdivision of $K_4$ admits a contraction sequence keeping the maximum degree at most~$3$, we have that $\tww(H)\leq3$.
    We write $q$ for $q_H$ for convenience.
    
    We now show that $\tww(H)\geq3$.
    Suppose that $H$ is a counterexample to Proposition~\ref{prop:girth} which minimises $\abs{V(H)}$.
    Let~$\mathcal{P}$ be the set of paths $P$ in~$H$ such that each end of $P$ has degree $3$ in~$H$ and each internal vertex of~$P$ has degree~$2$ in~$H$.
    Note that every cycle of $H$ contains at least three vertices of degree~$3$ in~$H$.
    
    We begin with a simple observation.

    \begin{claim}\label{clm:3cycle}
        If $C$ is a cycle in $H-S$ of length at most~$4$, then $C$ has at least three red edges.
    \end{claim}
    \begin{subproof}
        Since $\abs{V(C)}+2q(C)\geq7$, we have that $q(C)\geq2$.
        Thus, $C$ has at least two vertices having red degree~$2$ in~$C$, and therefore $C$ has at least three red edges.
    \end{subproof}

    Claim~\ref{clm:3cycle} implies that every cycle in~$H$ of length at most~$4$ contains at least two red edges.
    Since $\tww(H)\leq2$ and $\abs{V(H)}\geq4$, there exists a pair $\{u,v\}$ of distinct vertices of~$H$ such that the trigraph $H'$ obtained from $H$ by contracting $\{u,v\}$ to~$u$ has twin-width at most~$2$.
    We consider the following three cases by considering $\deg_H(u)+\deg_H(v)$.

    \medskip
    \noindent\textbf{Case 1.} $\deg_H(u)=\deg_H(v)=3$.

    Since $\rdeg_{H'}(u)\leq2$, $u$ and~$v$ have at least one common neighbour in~$H$.
    If $u$ and $v$ are adjacent, then by Claim~\ref{clm:3cycle}, $uv$ is a red edge of~$H$ and every common neighbour of~$u$ and~$v$ in~$H$ is a common red neighbour.
    Since $\rdeg_H(u)\leq2$, $u$ and~$v$ have exactly one common neighbour in~$H$, and therefore $\rdeg_{H'}(u)=\deg_H(u)+\deg_H(v)-2-\abs{N_H(u)\cap N_H(v)}=3$, a contradiction.
    
    Thus, we may assume that~$u$ and~$v$ are nonadjacent.
    Since $\deg_H(u)+\deg_H(v)-2\abs{N_H(u)\cap N_H(v)}\leq\rdeg_{H'}(u)\leq2$, $u$ and~$v$ have at least two common neighbours~$w_1$ and~$w_2$ in~$H$.
    By Claim~\ref{clm:3cycle}, at most one of~$w_1$ and~$w_2$ is a common black neighbour of~$u$ and~$v$ in~$H$.
    Therefore, from $\deg_H(u)=\deg_H(v)=3$, $u$ and~$v$ have three common neighbours in~$H$.
    Let~$w_3$ be the other common neighbour of~$u$ and~$v$ in~$H$.
    Since $\rdeg_{H'}(u)\leq2$, there exists $i\in[3]$ such that~$w_i$ is a common black neighbour of~$u$ and~$v$ in~$H$.
    By symmetry, we may assume that $i=1$.
    Since $H$ has a cycle with vertex set $\{u,v,w_2,w_3\}$, at least one of~$w_2$ and~$w_3$, say~$w_2$, is a vertex of $K_4$.
    Then the cycle~$C$ of~$H$ with vertex set $\{u,v,w_1,w_2\}$ is a cycle of $H-S$ with $q(C)\leq1$, a contradiction.

    \medskip
    \noindent\textbf{Case 2.} $\deg_H(u)+\deg_H(v)=5$.
    
    By symmetry, we may assume that $\deg_H(v)=2$.
    Since $\rdeg_{H'}(u)\leq2$, if~$u$ and~$v$ are nonadjacent, then~$u$ and~$v$ have exactly two common neighbours $w_1$ and $w_2$ in~$H$, and one of $w_1$ and $w_2$ is a common black neighbour of $u$ and $v$ in $H$.
    Since $\deg_H(v)=2$, all of $u$, $w_1$, and $w_2$ have degree $3$ in $H$, and therefore the cycle $C$ of $H$ with vertex set $\{u,v,w_1,w_2\}$ is a cycle of $H-S$ with $q(C)\leq1$, a contradiction.
    
    Thus, we may assume that~$u$ and~$v$ are adjacent.
    Since ${\rdeg_{H'}(u)\leq2}$, $u$ and~$v$ have at least one common neighbour in~$H$.
    Hence, $H$ has a triangle containing both~$u$ and~$v$, and therefore $v$ has to have degree~$3$, contradicting the assumption that $\deg_H(v)=2$.

    \medskip
    \noindent\textbf{Case 3.} $\deg_H(u)=\deg_H(v)=2$.

    Let $P_u$ and $P_v$ be the paths in $\mathcal{P}$ containing $u$ and $v$, respectively.
    We first consider the case that $P_u=P_v$.
    Let $w_1$ and $w_2$ be the ends of $P_u$.
    Since $\rdeg_{H'}(u)\leq2$, $u$ and $v$ either are adjacent in~$H$, or have a common black neighbour in~$H$.
    Let $H'':=H'-(N_H(u)\cap N_H(v))$.
    Note that the underlying graph of~$H''$ is a subdivision of~$K_4$.
    Let~$S''$ be the set of vertices~$w$ of~$H''$ such that $\rdeg_{H''}(w)=\deg_{H''}(w)=2$.
    Note that $(S\cup\{u\})\setminus\{v\}\subseteq S''\subseteq S\cup N_{H'}[u]$.
    Thus, no cycle of $H''-S''$ contains an internal vertex of~$P_u$.
    Since $H''-(V(P_u)\setminus\{w_1,w_2\})=H-(V(P_u)\setminus\{w_1,w_2\})$, the following hold for every cycle $C$ of $H''-S''$.
    \begin{itemize}
        \item $C$ is a cycle of $H-S$.
        \item For every vertex $v$ of $C$, $\deg_{H''}(v)=\deg_{H}(v)$.
        \item For every edge $e$ of $C$, $e$ is a red edge of $H''$ if and only if $e$ is a red edge of $H$.
    \end{itemize}
    Therefore, $\abs{V(C)}+2q_{H''}(C)=\abs{V(C)}+2q_H(C)\geq7$.
    Since $\abs{V(H'')}<\abs{V(H)}$, by the assumption on $H$, $\tww(H'')\geq3$, contradicting the assumption that $\tww(H')\leq2$, because $H''$ is an induced subgraph of~$H'$.

    Hence, $P_u\neq P_v$, and thus $u$ and $v$ are nonadjacent.
    Since $\rdeg_{H'}(u)\leq2$, they have a common black neighbour~$w$ in~$H$, which is a common end of~$P_u$ and~$P_v$.
    We remark that $w$ is the unique common neighbour of~$u$ and~$v$ in~$H$, because otherwise the paths $P_u$ and $P_v$ form a cycle containing only two vertices of degree~$3$ in~$H$.
    Observe that the underlying graph of~$H'$ is a subdivision of $K_4$ where $u$ has degree $3$.
    Let $u'$ and $v'$ be the neighbours in~$H$ of~$u$ and~$v$ other than~$w$, respectively.
    Note that~$u'$ and~$v'$ are distinct.
    For every vertex~$x$ of~$H'$, we observe the following.
    \begin{enumerate}[label=(\roman*)]
        \item\label{cond:deg1} $\deg_{H'}(x)\neq\deg_H(x)$ if and only if $x\in\{u,w\}$.
        \item\label{cond:deg2} If $x\notin\{u,u',v'\}$, then $\rdeg_H(x)=\rdeg_{H'}(x)$.
        \item\label{cond:deg2'} If $x\in\{u',v'\}$, then $\rdeg_H(x)\leq\rdeg_{H'}(x)\leq\rdeg_H(x)+1$.
    \end{enumerate}

    Let $S'$ be the set of vertices $x$ of $H'$ such that $\rdeg_{H'}(x)=\deg_{H'}(x)=2$.
    
    \begin{claim}\label{clm:S'}
        $S\subseteq S'\subseteq S\cup\{u',v'\}$.
    \end{claim}
    \begin{subproof}
        Note that neither~$u$ nor~$v$ is in~$S$, because~$w$ is a common black neighbour of~$u$ and~$v$ in~$H$.
        Since $\deg_{H'}(u)=3$, we have that $u\notin S'$.
        As $uw\in B(H')$, we deduce that $w\notin S'$.
        By~\ref{cond:deg1} and~\ref{cond:deg2}, for every $z\in V(H')\setminus N_{H'}[u]$, we have that $\deg_{H'}(z)=\deg_H(z)$ and $\rdeg_{H'}(z)=\rdeg_H(z)$, and therefore $z\in S$ if and only if $z\in S'$.
        By~\ref{cond:deg2'}, if $y\in S\cap\{u',v'\}$, then $y\in S'$, and this proves the claim.
    \end{subproof}

    Let $C$ be an arbitrary cycle of $H'-S'$.
    To derive a contradiction, we now verify that $\abs{V(C)}+2q_{H'}(C)\geq7$.
    If $C$ does not contain $u$, then by Claim~\ref{clm:S'}, $C$ is a cycle of $H-S$.
    Since the edges of $C$ have the same colour in~$H$ and~$H'$, by~\ref{cond:deg1}, $q_{H'}(C)=q_H(C)$.
    Therefore, $\abs{V(C)}+2q_{H'}(C)=\abs{V(C)}+2q_H(C)\geq7$.
    Hence, we may assume that $C$ contains $u$.
    Note that~$C$ contains exactly two of~$u'$, $v'$, and~$w$.
    
    Suppose first that $\{u',v'\}\subseteq V(C)$.
    Let $D$ be the cycle of~$H$ with vertex set $V(C)\cup\{v,w\}$.
    We show that~$D$ is a cycle of $H-S$.
    Since $\{u',v'\}\subseteq V(C)$ and $V(C)\cap S'=\emptyset$, by Claim~\ref{clm:S'}, neither $u'$ nor~$v'$ is in~$S$.
    Since $w$ is a common black neighbour of~$u$ and~$v$, none of $u$, $v$, and $w$ are in $S$.
    Therefore,~$D$ is a cycle of $H-S$.
    Since $\deg_{H'}(u)=3$ and $\rdeg_{H'}(u)=\rdeg_C(u)=2$, by~\ref{cond:deg1}, $q_{H'}(C)\geq q_H(D)+1$, so
    \[
        \abs{V(C)}+2q_{H'}(C)\geq(\abs{V(D)}-2)+2(q_H(D)+1)=\abs{V(D)}+2q_H(D)\geq7.
    \]

    Suppose now that $\{u',w\}\subseteq V(C)$.
    By Claim~\ref{clm:S'}, $C$ is a cycle of $H-S$.
    Note that for every $e\in E(C)\setminus\{uu'\}$, $e\in R(H)$ if and only if $e\in R(H')$.
    Since $uw$ is a black edge of~$H$, neither~$u$ nor~$v$ contributes $q_H(C)$, and therefore $q_{H'}(C)\geq q_H(C)$.
    Thus, $\abs{V(C)}+2q_{H'}(C)\geq\abs{V(C)}+2q_H(C)\geq7$.

    We finally assume that $\{v',w\}\subseteq V(C)$.
    Let $D$ be the cycle of~$H$ with vertex set $(V(C)\setminus\{u\})\cup\{v\}$.
    We show that $D$ is a cycle of~$H-S$.
    Since $C$ is a cycle of $H'-S'$ and $v'\in V(C)$, by Claim~\ref{clm:S'}, $S$ does not contain~$v'$.
    Since $\rdeg_H(v)\leq1$ and $\deg_H(w)=3$, neither $v$ nor $w$ lies in $S$, and therefore by Claim~\ref{clm:S'},~$D$ is a cycle of $H-S$.
    Since $vw$ is a black edge of~$D$, neither~$v$ nor~$w$ contributes to $q_H(D)$.
    If~$v'$ contributes to $q_H(D)$, then it also contributes to $q_{H'}(C)$.
    Hence, we deduce that $q_{H'}(C)\geq q_H(D)$, and therefore $\abs{V(C)}+2q_{H'}(C)\geq\abs{V(D)}+2q_H(D)\geq7$.

    Thus, for every cycle $C$ of $H'-S'$, $\abs{V(C)}+2q_{H'}(C)\geq7$.
    Since $\abs{V(H')}<\abs{V(H)}$, by the assumption on $H$, $\tww(H')\geq3$, contradicting the assumption that $\tww(H')\leq2$.
    Hence, $\tww(H)\geq3$.
\end{proof}

We now show that the girth condition in Theorem~\ref{thm:girth} is tight by presenting infinitely many subdivisions of~$K_4$ which have girth $6$ and twin-width~$2$.

\begin{proposition}
    Let $G$ be a graph obtained from $K_4$ by subdividing one edge of $K_4$ at least once, and each of the other edges of $K_4$ exactly once.
    Then $\tww(G)=2$.
\end{proposition}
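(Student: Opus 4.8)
The plan is to establish the two inequalities $\tww(G)\ge 2$ and $\tww(G)\le 2$ separately. For the lower bound $\tww(G)\ge 2$, note that $G$ is not a cograph: it contains an induced path on four vertices (for instance, take a degree-$3$ vertex $a$ of $K_4$, its subdivision vertex $s$ on an edge $ab$, the vertex $b$, and a subdivision vertex on an edge of $b$ other than $ab$, provided these are distinct, which they are). Actually it is cleaner to invoke Lemma~\ref{lem:cycle}: $G$ contains an induced cycle of length at least $5$ (the three subdivided edges incident with a single degree-$3$ vertex together with their endpoints, or more directly the boundary triangle of $K_4$ gets subdivided into a cycle of length at least $5$ since each of its edges is subdivided at least once). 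Since twin-width is monotone under induced subgraphs, $\tww(G)\ge\tww(C)=2$ for such a cycle $C$, so $\tww(G)\ge 2$.

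For the upper bound $\tww(G)\le 2$, first I would handle the degenerate observation that $G$ has girth $6$ (the three subdivided edges of the boundary triangle form a $6$-cycle), so Theorem~\ref{thm:girth} and Proposition~\ref{prop:girth} do not apply — this is exactly why the statement is interesting. The key tool is Lemma~\ref{lem:2tree}, or rather its proof technique, combined with Lemma~\ref{lem:theta}. The graph $G$ is a subdivision of $K_4$ where one edge is subdivided $k\ge 1$ times and the other five edges are subdivided exactly once. Observe that $G$ is an \subd{1} of the multigraph $G'$ obtained from $K_4$ by replacing the $k$-times-subdivided edge with an edge subdivided $k-1$ times (equivalently, $G$ is an \subd{1} of a suitable multigraph), but more usefully: $G$ arises from $\theta$-type structure. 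Let me instead argue directly. Write $K_4$ with vertices $a_1,a_2,a_3,a_4$, let $e=a_1a_2$ be the long edge subdivided into a path $P$ of length $k+1$, and let the other five edges be subdivided once. I would construct an explicit $2$-contraction sequence: first contract the long path $P$ down to a path of length exactly $2$ (or $3$) between $a_1$ and $a_2$, using the standard move of contracting adjacent degree-$2$ vertices, which never creates red degree exceeding $2$ and never raises the red degree of $a_1$ or $a_2$ (this is precisely the mechanism in the proof of Lemma~\ref{lem:theta}, Case with $\abs{V(G)}=1$ adapted, or just a direct check). After this reduction, $G$ has become an induced subgraph of a trigraph whose underlying graph is a subdivision of $K_4$ with each edge subdivided either once or twice and all original vertices having red degree $0$.

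At this point I would appeal to the structure: the resulting trigraph $H_1$ has underlying graph that is a subgraph of $\theta(K_3)$-like object, but more simply, $K_4$ is $K_4$-minor-free is false — so I cannot directly cite Theorem~\ref{thm:main2}. Instead, the cleanest route is to check by hand (or by a short explicit sequence) that a subdivision of $K_4$ in which every edge is subdivided once or twice, with all branch vertices having red degree $0$, has a $2$-contraction sequence. One concrete sequence: pick the unique edge $a_3a_4$ of $K_4$ not incident with $\{a_1,a_2\}$, contract its subdivision path down, then carefully contract the "theta subgraph" on $a_3,a_4$ inward as in Step~1–Step~2 of the proof of Lemma~\ref{lem:2tree}, producing a cycle; then finish by contracting the cycle via Lemma~\ref{lem:cycle}. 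The main obstacle is bookkeeping the red degrees during the middle phase: when we collapse the multiple paths between a pair of branch vertices, the branch vertices can temporarily acquire red neighbours, and one must verify that at no point does any vertex reach red degree $3$. I expect this to require choosing the contraction order so that we always merge a path-interior vertex with a neighbour that has the same non-path-neighbourhood (black edges only), keeping branch vertices at red degree $\le 2$; the argument will mirror the "$X$-stable" technology of Lemmas~\ref{lem:yoperation} and~\ref{lem:2tree} but is small enough to spell out directly for this single graph. Once the trigraph is reduced to a cycle of length $\ge 5$, Lemma~\ref{lem:cycle} finishes it, giving $\tww(G)\le 2$ and hence $\tww(G)=2$.
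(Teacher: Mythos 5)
Your lower bound is fine and coincides with the paper's: the triangle of $K_4$ avoiding the long edge becomes an induced cycle of length at least $6$, so Lemma~\ref{lem:cycle} gives $\tww(G)\geq2$ (your first attempt via an induced $P_4$ would only give $\tww(G)\geq1$, but you discarded it yourself).

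The upper bound, however, has a genuine gap: the existence of a $2$-contraction sequence \emph{is} the content of the proposition, and you never exhibit one --- the middle phase is deferred with ``I expect this to require choosing the contraction order so that\ldots''. Moreover, the heuristic you propose for that phase would not work. Consider the extreme case where the long edge is also subdivided exactly once, so $G$ is the $1$-subdivision of $K_4$ with branch vertices $v_1,\dots,v_4$ and subdivision vertices $v_{i,j}$. There, \emph{every} contraction of two adjacent vertices creates a vertex of red degree $3$ (a branch vertex merged with an incident subdivision vertex becomes red-adjacent to its two other subdivision neighbours and to the far branch vertex, and subdivision vertices on distinct edges are never adjacent), so your rule ``always merge a path-interior vertex with a neighbour that has the same non-path-neighbourhood'' cannot even make a first move. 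A red-degree-$\leq2$ first contraction must involve two \emph{non-adjacent} subdivision vertices sharing a branch neighbour, and this is exactly the key move of the paper's proof: with the long edge taken to be $v_3v_4$, contract $\{v_{1,3},v_{2,3}\}$, then $\{v_{1,4},v_{2,4}\}$, then $\{v_1,v_2\}$, then absorb $v_{1,2}$; the underlying graph is now a cycle containing the subdivided edge $v_3v_4$, which is finished by greedily contracting adjacent vertices. Nothing in your proposal identifies this pairing move. Two further inaccuracies confirm that the bookkeeping was not carried out: shortening the long path to length $2$ (resp.\ $3$) necessarily creates a red edge at both (resp.\ at least one) of its ends, contrary to your claim that the red degrees of $a_1,a_2$ stay at $0$; and the appeal to Steps~1--2 of Lemma~\ref{lem:2tree} is misplaced, since $G$ is a subdivision of $K_4$ rather than a $\theta$-graph, so there are no parallel length-$3$/length-$4$ paths between branch vertices to collapse as in that lemma.
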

\begin{proof}
    Let $v_1,\ldots,v_4$ be the vertices of~$K_4$.
    We may assume that every edge in $E(K_4)\setminus\{v_3v_4\}$ is subdivided exactly once.
    For $1\leq i<j\leq4$ with $\{i,j\}\neq\{3,4\}$, we denote by $v_{i,j}$ the vertex of $G$ with neighbourhood $\{v_i,v_j\}$.
    Since $G$ has an induced cycle of length $6$, by Lemma~\ref{lem:cycle}, $\tww(G)\geq2$.
    To show that $\tww(G)\leq2$, we do the following in order.
    \begin{enumerate}[label=\bf{Step \arabic*.},leftmargin=*]
        \item Contract the following four pairs of vertices in order: $\{v_{1,3},v_{2,3}\}$ to~$v_{1,3}$, $\{v_{1,4},v_{2,4}\}$ to~$v_{1,4}$, $\{v_1,v_2\}$ to~$v_1$, and $\{v_1,v_{1,2}\}$ to~$v_1$.
        After Step 1, the underlying graph becomes a cycle.
        \item Iteratively contract pairs of adjacent vertices, until no such pairs remain.
    \end{enumerate}    
    It is straightforward to see that the resulting contraction sequence is a \cont{2} of $G$.
\end{proof}

As we have seen in Theorem~\ref{thm:girth} for $K_4$, one may expect that subdivisions of the graphs in~$\mathcal{F}_3$ has twin-width at least~$4$ if it has large girth.
However, the following proposition shows that this is not the case for~$Q_3$ by presenting subdivisions of~$Q_3$ having arbitrarily large girth and twin-width at most~$3$.

\begin{proposition}
    Let $e$, $f$ be two edges incident with a fixed vertex of~$Q_3$.
    If~$G$ is a graph obtained from~$Q_3$ by subdividing each edge other than~$e$ and~$f$ at least~$0$ times, then $\tww(G)\leq3$.
    The equality holds if~$G$ has a vertex~$x$ such that $G-x$ is of girth at least~$7$.
\end{proposition}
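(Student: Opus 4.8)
The plan is to prove the upper bound $\tww(G)\le 3$ by exhibiting, for an arbitrary subdivision $G$ of $Q_3$ that leaves $e$ and $f$ unsubdivided, an explicit $3$-contraction sequence, and then to prove the matching lower bound via Proposition~\ref{prop:segregated2} (or Theorem~\ref{thm:girth}) together with the detection lemma, Lemma~\ref{lem:detecting}. First I would set up notation: let $r$ be the common endpoint of $e$ and $f$ in $Q_3$, so $e=rs$ and $f=rt$ are present as edges in $G$, and every other edge of $Q_3$ is replaced in $G$ by a path of some length $\ge 1$. Since $Q_3$ is $3$-regular and bipartite with $8$ vertices and $12$ edges, $G$ is a subdivision of a cubic graph with exactly four degree-$3$ vertices besides $r$; the key structural feature is that $r$ has two of its three incident edges unsubdivided, so $r$ together with $s$, $t$ forms a "collapsible corner". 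I would use Lemma~\ref{lem:theta}-style reductions in spirit: iteratively contract pairs of adjacent degree-$2$ vertices lying on long subdivided paths to bring $G$ down, within a partial $3$-contraction sequence, to a trigraph whose underlying graph is a subdivision of $Q_3$ in which the ten subdivided edges all have bounded length; in fact the cleanest route is to reduce each subdivided path to length at most $2$ using contractions of adjacent degree-$2$ vertices, which never creates red degree exceeding $2$ along a path.

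The heart of the upper bound is then a finite case analysis on a bounded-size trigraph. After the path reductions I would be left with a trigraph $H'$ whose underlying graph is $\theta$-like: a subdivision of $Q_3$ with each of ten edges replaced by a path of length $1$ or $2$ and with $e,f$ of length $1$. The plan is to first contract the corner: use the unsubdivided edges $rs$ and $rt$ to merge $s$ and $t$ into $r$ (via at most a constant number of contractions absorbing the short path between $s$ and $t$ that goes through $Q_3$ minus $r$), turning the degree-$3$ vertex $r$ into a vertex of controlled red degree while deleting a chunk of the graph. Concretely, in $Q_3$ the three neighbours of $r$ are $s$, $t$, and a third vertex $w$, and $s,t$ each have two further neighbours among the remaining four vertices; contracting $s$ and $t$ into $r$ creates red edges to the $O(1)$ vertices that were adjacent to exactly one of $s,t$. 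I would check that with the specific choice of $e,f$ sharing the vertex $r$, this contraction leaves a trigraph whose underlying graph is (a subdivision of) a graph on fewer vertices that is $K_4$-minor-free or at least admits an easy continuation—the point being that $Q_3$ with a degree-$3$ vertex and its three edges deleted is a $6$-cycle, so after absorbing $s,t$ we are essentially looking at a trigraph on a subdivided $6$-cycle plus a few pendant attachments, which has twin-width at most $3$ by iteratively contracting along the cycle. The main obstacle here is bookkeeping the red edges: I must verify that at no intermediate step does any vertex accumulate red degree $4$, and this requires exploiting that $e$ and $f$ are \emph{unsubdivided} (so the contraction $s,t\to r$ does not drag in extra length-$2$ paths with their own internal red structure) and that they are \emph{incident to a common vertex} (so only one corner needs collapsing and the two "spreading" red edges land on a single vertex rather than two).

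For the lower bound, the plan is: assume $G$ has a vertex $x$ with $G-x$ of girth at least $7$. I would like to produce an induced $2$-$\sgre$ of $K_4$ in $G$, or more directly an induced subgraph of girth $\ge 7$ that is a subdivision of $K_4$, so that Theorem~\ref{thm:girth} applies and gives $\tww(G)\ge 3$. Since $Q_3$ has $K_4$ as a minor (contract a perfect matching) and every subdivision of a graph with a $K_4$-minor contains, after appropriate deletions, a subdivision of $K_4$, the girth hypothesis on $G-x$ forces the relevant subdivision of $K_4$ inside $G-x$ (hence inside $G$) to have girth at least $7$; then $\tww(G)\ge\tww(\text{that subgraph})\ge 3$ by Theorem~\ref{thm:girth}. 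Combined with the upper bound $\tww(G)\le 3$ this gives equality. The delicate point for the lower bound is checking that deleting $x$ and passing to a $K_4$-subdivision subgraph can be done while keeping girth $\ge 7$: here I would argue that $Q_3$ has a $K_4$-minor model using only vertices and edges avoiding $x$'s branch set in a way that every cycle of the resulting $K_4$-subdivision is a cycle of $G-x$, whose girth is $\ge 7$ by hypothesis; concretely, $Q_3$ minus any vertex still has a $K_4$ \emph{topological minor} (it is $3$-connected minus a vertex, hence $2$-connected with enough structure), so a suitable subdivision of $K_4$ sits inside $G-x$ as a subgraph, and as a subgraph of $G-x$ its girth is at least $7$. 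I expect the upper-bound contraction sequence—specifically, simultaneously managing the corner collapse at $r$ and the residual subdivided $6$-cycle without a red-degree-$4$ blowup—to be the main obstacle, and I would handle it by splitting into the (finitely many) cases according to which of the ten non-$e,f$ edges have length $1$ versus $2$, or better, by invoking Lemma~\ref{lem:theta} to first normalise and then writing a single uniform sequence.
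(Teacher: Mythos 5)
Your lower-bound sketch is essentially the paper's argument: one obtains an \emph{induced} subdivision of $K_4$ inside $G-x$ by taking all branch vertices of $Q_3$ except a suitable one together with all subdivided paths between them (this is induced in $G$, avoids $x$, and has girth at least $7$ because it is a subgraph of $G-x$), and then Theorem~\ref{thm:girth} applies; do state it for an induced subgraph, since that is what the theorem requires, rather than only "sits inside $G-x$ as a subgraph". The upper bound, however, has a genuine gap, precisely at the step you identified as the heart of the proof. First, the structural claim is false: deleting a degree-$3$ vertex of $Q_3$ together with its incident edges leaves a $7$-vertex graph that is a subdivision of $K_4$ (its three former neighbours become subdivision vertices on a triangle of $K_4$), not a $6$-cycle; a $6$-cycle only appears when two \emph{antipodal} vertices are removed. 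So after your corner collapse you would face a subdivided $K_4$ with five subdivided paths attached to the collapsed vertex, and "contracting along the cycle" is not an available continuation.

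Second, and more fatally, the collapse itself cannot be performed within red degree $3$. With the labelling of Figure~\ref{fig:forbidden} and $r=v_5$, $s=v_1$, $t=v_8$ (so $e=v_1v_5$, $f=v_5v_8$), no vertex of $Q_3$ is adjacent to all three of $v_1,v_5,v_8$, and the set $\{v_1,v_5,v_8\}$ has exactly four outside neighbours, $v_2,v_4,v_6,v_7$. Hence at the moment $\{r,s,t\}$ becomes a single part while the parts containing those four vertices are still distinct, that part has red degree $4$ --- even if every subdivided path has already been contracted to a single edge. Your normalisation to paths of length at most $2$ is also too weak for the milder merge of $s$ with $t$ alone: if the paths from $v_1$ and from $v_8$ to their second common neighbour $v_4$ both still have length $2$, contracting $\{v_1,v_8\}$ already yields four red neighbours. (Lemma~\ref{lem:theta} does not help either: its hypothesis is an \subd{2}, which $G$ is not, and it normalises towards \emph{longer} paths, destroying the advantage that $e$ and $f$ are unsubdivided.) The working route, which is the paper's, is to normalise all the way down to $Q_3$ by absorbing every degree-$2$ vertex into a neighbour not incident with $e$ or $f$, so that $e$ and $f$ stay black and every branch vertex has red degree at most $3$; then run a short explicit sequence that contracts $\{v_1,v_8\}$ (their common black neighbour $v_5$ keeps that edge black, so red degree stays $3$) but never puts $v_5$ into that part --- instead one contracts $\{v_5,v_7\}$, then $\{v_3,v_6\}$, then $\{v_4,v_7\}$, after which the underlying graph is $C_4$ and any contraction sequence finishes. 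If you insist on eventually forming the part $\{r,s,t\}$, you must first merge enough of $v_2,v_4,v_6,v_7$ with one another, which in effect forces a sequence of this shape; as written, your plan's opening structural move is impossible.
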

\begin{proof}
    We label the vertices of $Q_3$ as in Figure~\ref{fig:forbidden}.
    Without loss of generality, we may assume that $e:=v_1v_5$ and $f:=v_5v_8$.
    Note that every vertex of degree-$2$ has a neighbour not incident with~$e$ or~$f$.
    Thus, we iteratively contract a pair of degree-$2$ vertex and its neighbour not incident with~$e$ or~$f$
    to obtain a trigraph whose underlying graph is~$Q_3$.
    We remark that in the resulting trigraph, both~$e$ and~$f$ are black edges.
    We then contract the following four pairs of vertices in order: $\{v_1,v_8\}$, $\{v_5,v_7\}$, $\{v_3,v_6\}$, and $\{v_4,v_7\}$.
    Afterwards, the underlying graph is~$C_4$.
    Thus, we apply an arbitrary contraction sequence of~$C_4$.
    It is clear to check that the resulting sequence is a \cont{3} of~$G$.

    Suppose that $G$ has a vertex $x$ such that~$G-x$ is of girth~$7$.
    Since $G-x$ has an induced subgraph which is a subdivision of $K_4$, by Theorem~\ref{thm:girth}, $G$ has twin-width~$3$.
\end{proof}

\section{Twin-width of grids, walls, and the line graphs of walls}\label{sec:grid}

We investigate the twin-width of grids, walls, and the line graphs of walls.
We use the labellings of grids and walls introduced in Section~\ref{sec:intro}.

\begin{proposition}\label{prop:grid1}
    For a positive integer $n$, the $2\times n$ grid has twin-width~$2$ if and only if $n\geq4$.
\end{proposition}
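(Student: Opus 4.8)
The plan is to handle the two directions separately. For the forward direction, I would show that the $2\times 2$ and $2\times 3$ grids have twin-width at most $1$ by exhibiting explicit contraction sequences: the $2\times 2$ grid is $C_4$, a cograph, hence twin-width $0$, and the $2\times 3$ grid is easily checked to be a cograph-free graph of twin-width $1$ (it has no induced cycle of length at least $5$, and one can contract the two degree-$2$ vertices in the middle column with their top/bottom neighbours, reducing to a path or to $P_4$-type configurations, keeping red degree at most $1$). This gives $\tww\le 1$ when $n\le 3$. The $2\times 1$ grid is $K_2$ with twin-width $0$.

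For the backward direction, i.e. $\tww \ge 2$ when $n\ge 4$, the key observation is that the $2\times 4$ grid contains an induced cycle of length $\ge 5$: in fact it contains an induced $C_6$ (take the boundary cycle, which has length $2\cdot(2-1)+2\cdot(4-1)=8$, or more carefully exhibit a $6$-cycle obtained by deleting two opposite "interior-free" vertices). By Lemma~\ref{lem:cycle}, any cycle of length at least $5$ has twin-width $2$, and since twin-width is monotone under taking induced subgraphs, the $2\times n$ grid has twin-width at least $2$ for all $n\ge 4$. It remains to supply the matching upper bound $\tww\le 2$ for all $n\ge 4$: I would describe a sweeping contraction sequence from left to right, where at each step one contracts the top-left and bottom-left vertices with the next column, maintaining a ladder-like trigraph with bounded red degree; after contracting each column the red degree never exceeds $2$, and eventually the whole grid collapses to a path, which has twin-width at most $1$. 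Combining, $\tww(\text{$2\times n$ grid})=2$ exactly when $n\ge 4$.

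The main obstacle I anticipate is making the "sweeping" upper bound argument fully rigorous: one must carefully track the red edges created when a partially-contracted left block is merged column by column, and verify that no vertex ever acquires red degree $3$. This is routine but fiddly bookkeeping. A clean way to organise it is to describe an invariant: after processing the first $k$ columns, the trigraph is isomorphic to a $2\times(n-k+1)$ grid with an extra vertex attached by red edges to the two vertices of its leftmost column, and red degree at most $2$ everywhere; then one checks that contracting the attached vertex with, say, the top-left vertex of the remaining grid preserves the invariant. With this invariant in hand, the induction on $k$ goes through immediately, and the base and terminal cases (a small grid or a path) are trivial.
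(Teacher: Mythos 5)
Your lower bound for $n\geq4$ does not work: the $2\times n$ grid contains no induced cycle of length at least $5$, so Lemma~\ref{lem:cycle} gives nothing here. The grid is bipartite, so there are no odd induced cycles, and any cycle of length at least $6$ in the ladder must use two vertical edges $(1,j)(2,j)$ and $(1,j')(2,j')$ with $j'\geq j+2$, whence it passes through both $(1,j+1)$ and $(2,j+1)$ and the vertical edge between them is a chord; thus every induced cycle is a $4$-cycle. In particular the ``induced $C_6$'' you propose in the $2\times4$ grid does not exist (a quick count confirms this: deleting any two vertices whose degrees make the edge count come out to $6$ always leaves a vertex of degree $1$). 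So the claim $\tww\geq2$ for $n\geq4$ is unproved in your write-up. Note also that it is not enough to observe that the $2\times4$ grid has no twins: the first contraction \emph{can} keep red degree $1$ (e.g.\ contracting $(1,1)$ with $(2,2)$ creates only one red edge), so ruling out a $1$-contraction sequence genuinely requires pushing the case analysis further (a finite, exhaustive check, which is what the paper implicitly invokes when it asserts that the $2\times4$ grid has twin-width~$2$), and then induced-subgraph monotonicity extends the bound to all $n\geq4$.

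The rest is essentially fine. Your sweeping upper bound is correct and is in substance the paper's argument: the paper maintains the invariant of a $2\times m$ grid in which the two edges at one end vertex are red and removes a column with two contractions, while you carry an extra vertex red-attached to the leftmost column; these are the same induction and both keep red degree at most $2$. For $n\leq3$ your conclusions are right, but your description of the $2\times3$ grid is garbled (the middle-column vertices have degree $3$, not $2$, and the sketched contraction does not obviously stay within red degree $1$); an explicit $1$-contraction sequence should be given, e.g.\ contract $(1,1)$ with $(2,2)$, then $(1,2)$ with $(2,3)$, and finish along the resulting path, which never creates red degree $2$.
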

\begin{proof}
    Let $G_n$ be a trigraph whose underlying graph is the $2\times n$ grid 
    such that $(1,n-1)(1,n)$ and $(1,n)(2,n)$ are the only red edges.
    Since~$G_2$ has twin-width~$2$ and $G_{n-1}$ is obtained from $G_n$ by contracting $\{(1,n-1),(2,n)\}$ to~$(1,n-1)$ and then contracting $\{(1,n-1),(1,n)\}$ to~$(1,n-1)$, it follows that for every integer $n\ge2$, $G_n$ has twin-width at most~$2$.
    This implies that the $2\times n$ grid has twin-width at most~$2$ for all integers $n\ge 2$.

    It is easy to see that the $2\times3$ grid has twin-width~$1$, and the $2\times4$ grid has twin-width~$2$.
\end{proof}

\begin{proposition}\label{prop:grid2}
    For positive integers $m$ and $n$ with $3\leq m\leq5$, the $m\times n$ grid has twin-width~$3$ if and only if $n\geq8-m$.
\end{proposition}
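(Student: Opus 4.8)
The plan is to prove Proposition~\ref{prop:grid2} by establishing the upper and lower bounds separately, following the same template used for the $2\times n$ case in Proposition~\ref{prop:grid1} and leveraging Theorem~\ref{thm:girth} (or more precisely Proposition~\ref{prop:girth}) for the lower bound. For the upper bound, I would show directly that the $m\times n$ grid has twin-width at most $3$ for all $m\leq 5$ and all $n$. The natural strategy is a ``column sweep'': maintain a trigraph whose red edges are confined to the last few columns, and show that contracting the rightmost column into the next one (merging $(i,n)$ with $(i,n-1)$ from top to bottom, say) keeps the maximum red degree at most $3$ because each vertex sees red neighbours only among its at most two grid-neighbours in its own column and at most one or two in the adjacent column. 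One has to be slightly careful: when $m=5$ a vertex in the middle of an active column can accumulate red edges to both vertical neighbours and to its horizontal neighbour on each side, so the order of contractions within a column matters. I would pick the contraction order (top-to-bottom, folding each new red edge forward) so that at every intermediate step the only vertices of red degree $3$ are on the boundary of the ``already processed'' region, exactly as in Proposition~\ref{prop:grid1}. Since the base case (a single column, or an $m\times 1$ grid, which is a path) has twin-width at most $1$, induction on $n$ gives $\tww \le 3$ for all $n$.

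For the lower bound, I need two things: that the $m\times n$ grid has twin-width at least $3$ when $n\geq 8-m$, and that it has twin-width at most $2$ when $n< 8-m$ (so that ``twin-width $3$'' is an ``if and only if''). For the lower bound when $n\ge 8-m$, the key observation is that such a grid contains an induced subgraph isomorphic to a subdivision of $K_4$ of girth at least $7$; then Theorem~\ref{thm:girth} immediately gives $\tww\ge 3$. Concretely, for $(m,n)=(3,5),(4,4),(5,3)$ one exhibits such a subdivision of $K_4$ by hand — e.g.\ in the $3\times 5$ grid, take the four corners as the degree-$3$ vertices and route four internally-disjoint paths along the boundary and through the middle so that every cycle has length at least $7$ — and for larger $m$ or $n$ one notes that the smaller grid sits as an induced subgraph, and twin-width is monotone under induced subgraphs. (Alternatively, since the hypothesis of Proposition~\ref{prop:girth} is about cycles in $H-S$, I could verify directly that the relevant induced subdivision of $K_4$ has all cycles of length at least $7$.) For the upper bound when $n\le 7-m$, i.e.\ the finitely many small cases $3\times n$ for $n\le 4$, $4\times n$ for $n\le 3$, $5\times n$ for $n\le 2$, these are all small grids; the $5\times 2$ and $3\times 3$ grids, and so on, can be checked to have twin-width at most $2$ either by an explicit $2$-contraction sequence or by observing they contain no $2$-segregated model of $K_4$ and no induced subdivision of $K_4$ of girth $7$, and exhibiting a sequence directly. (This matches Table~\ref{tab:grid}, where the $3\times 3$ grid has twin-width $2$.)

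I would organise the write-up as: first state and prove the general upper bound $\tww(m\times n\text{ grid})\le 3$ for $m\le 5$ by induction on $n$ with the explicit folding contraction sequence; then handle the small cases $n\le 7-m$ by giving explicit $\le 2$-contraction sequences (these are a short finite check, analogous to the last line of Proposition~\ref{prop:grid1}); then for $n\ge 8-m$ exhibit the induced subdivision of $K_4$ of girth at least $7$ in the $(8-m)\times m$ grid — only three base configurations $(3,5)$, $(4,4)$, $(5,3)$ need to be drawn explicitly — and invoke Theorem~\ref{thm:girth} together with induced-subgraph monotonicity to conclude $\tww\ge 3$ for all $n\ge 8-m$ and all $m'\ge m$.

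The main obstacle I anticipate is the explicit routing of the girth-$\ge 7$ subdivision of $K_4$ inside the $3\times 5$, $4\times 4$, and $5\times 3$ grids: one must choose the branch vertices and the four paths so that not only the three ``face'' cycles but also every other cycle formed by two of the paths has length at least $7$, which in a grid as small as $3\times 5$ (only $15$ vertices) is tight and requires a careful picture. A secondary, more routine obstacle is making the folding contraction sequence for the upper bound genuinely rigorous for $m=5$: one must specify the invariant (``all red edges lie in columns $n-1$ and $n$, and within column $n-1$ red degrees are at most…'') precisely enough that the induction step is mechanical, since a careless order of contractions could momentarily create a vertex of red degree $4$. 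Both obstacles are of the ``draw the right picture / write the right invariant'' type rather than conceptual, so I expect the proof to be short once the configurations are pinned down, and I would present the three base subdivisions in a figure analogous to Figure~\ref{fig:cubic}.
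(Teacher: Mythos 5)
Your upper-bound plan (a column-by-column folding sequence for the $5\times n$ grid keeping the red edges on a frontier, plus explicit $2$-contraction sequences for the finitely many small cases) is essentially what the paper does, and is fine modulo writing down the precise invariant: note that the naive ``merge $(i,n)$ with $(i,n-1)$ top to bottom'' momentarily creates red degree $4$ already at the second contraction, so one really needs the paper's style of diagonal merges at the top and bottom and a collapse of the middle of the last column.

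The genuine gap is in your lower bound. You propose to exhibit, inside the $3\times 5$ and $4\times 4$ grids, an \emph{induced} subgraph that is a subdivision of $K_4$ of girth at least $7$ and then invoke Theorem~\ref{thm:girth}. No such subgraph exists. Grids are bipartite, so girth at least $7$ forces girth at least $8$; then each of the four cycles of the subdivision coming from a triangle of $K_4$ has length at least $8$, and since each subdivided edge lies on two such cycles, the subdivision has at least $16$ edges and hence at least $14$ vertices. In the $3\times 5$ grid ($15$ vertices) the only way to delete at most one vertex and destroy all three degree-$4$ vertices is to delete $(2,3)$, which leaves eight vertices of degree $3$, so no induced subgraph on $14$ or $15$ vertices is a subdivision of $K_4$. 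In the $4\times 4$ grid ($16$ vertices) one must delete at most two vertices; killing all four interior degree-$4$ vertices forces deleting two interior vertices, and every such choice either leaves six degree-$3$ vertices, a degree-$1$ vertex, or an induced $4$-cycle among the remaining vertices, so again there is no induced girth-$\geq7$ subdivision of $K_4$. (Your suggested picture with ``the four corners as the degree-$3$ vertices'' cannot work either, since corners have degree $2$ in the grid and hence in any induced subgraph.) The same counting shows a $2$-\sgre{} of $K_4$ needs at least $4+6\cdot 2=16$ vertices with all non-branch vertices of degree $2$, so Proposition~\ref{prop:segregated2} is also unavailable for these two grids. This is precisely why the paper does not give a structural argument here: the lower bound for the $3\times5$ and $4\times4$ grids is obtained by a SAT-solver computation of Schidler (personal communication, by the method of Schidler and Szeider), and the remaining cases follow by induced-subgraph monotonicity. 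As written, your proposal is missing a proof of its key step, and the specific route you chose provably cannot be carried out.
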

\begin{proof}
    We first show that $5\times n$ grid has twin-width at most~$3$.
    For all integers $n\ge2$,
    let $G_n$ be a trigraph whose underlying graph is the $5\times n$ grid 
    such that $G_n$ has exactly $5$ red edges $(2,n)(2,n-1)$, 
    $(3,n)(3,n-1)$, $(4,n)(4,n-1)$, $(2,n)(3,n)$, and $(3,n)(4,n)$.
    Let $G_1$ be a trigraph 
    whose underlying graph is the $5\times 1$ grid 
    such that $G_1$ has exactly $2$ red edges $(2,1)(3,1)$, and $(3,1)(4,1)$.
    Observe that for all integers $n\ge2$, $G_{n-1}$ is obtained from $G_n$ by contracting the following five pairs of vertices in order: 
    $\{(1,n),(2,n-1)\}$ to~$(2,n-1)$,
    $\{(5,n),(4,n-1)\}$ to~$(4,n-1)$,
    $\{(2,n),(3,n)\}$ to~$(3,n)$,
    $\{(3,n),(4,n)\}$ to~$(3,n)$,
    and $\{(3,n-1),(3,n)\}$ to~$(3,n-1)$.
    Since $G_1$ has twin-width at most~$3$, we deduce that 
    $G_n$ has twin-width at most $3$ and therefore the $5\times n$ grid has twin-width at most~$3$.

    Using the method of Schidler and Szeider~\cite{SS2021}, Schidler~\cite{schidlerprivate} verified that both the $3\times5$ grid and the $4\times4$ grid have twin-width~$3$.
    Therefore, if $n\geq8-m$, then the $m\times n$ grid has twin-width exactly~$3$.

    Suppose now that $n\leq7-m$.
    By Proposition~\ref{prop:grid1}, we may assume that $m\leq4$.
    Since the $m\times n$ grid is an induced subgraph of the $3\times4$ grid, it suffices to show that the $3\times4$ grid has twin-width at most~$2$.
    We present a partial \cont{2} of the $3\times4$ grid to a cycle as follows.
    \[
        \begin{array}{lll}
            1)\ \{(1,1),(2,2)\}\text{ to }(2,2),\text{\hspace{0.7cm}}
            &2)\ \{(2,1),(3,2)\}\text{ to }(3,2),\text{\hspace{0.7cm}}
            &3)\ \{(1,4),(2,3)\}\text{ to }(2,3),\\
            4)\ \{(2,4),(3,3)\}\text{ to }(3,3),
            &5)\ \{(3,1),(3,2)\}\text{ to }(3,2),
            &6)\ \{(3,3),(3,4)\}\text{ to }(3,3),\\
            7)\ \{(1,2),(2,3)\}\text{ to }(2,3),
            &8)\ \{(1,3),(2,3)\}\text{ to }(1,3).
        \end{array}
    \]
    Thus, the $3\times4$ grid has twin-width at most~$2$, and this completes the proof.
\end{proof}

We show that every wall has twin-width at most~$4$.

\begin{lemma}\label{lem:gridgrid}
    For all integers $m,n\geq1$, every trigraph whose underlying graph is a subdivision of the $m\times n$ grid has twin-width at most~$4$.
\end{lemma}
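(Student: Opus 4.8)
The plan is to show that an arbitrary trigraph $H$ whose underlying graph is a subdivision of the $m\times n$ grid can be contracted, column by column (or more precisely, by processing one horizontal ``slab'' of the grid at a time), while never exceeding red degree $4$. The key structural observation is that the $m\times n$ grid $\Gamma$ has a tree decomposition --- in fact a path decomposition --- where the bags are the pairs of consecutive columns, and each adhesion set is a single column, an induced path on $m$ vertices. A subdivision only lengthens the edges joining consecutive vertices within a column or between consecutive columns, so the ``boundary'' between the part already processed and the part not yet processed is always a subdivided path (together with at most a bounded amount of extra structure from the column we are currently working on). The strategy is therefore: repeatedly take the rightmost column, together with the subdivided edges joining it to its neighbouring column, and contract all of this structure into the adhesion path on the left, keeping red degree at most $4$ throughout; after $n-1$ such steps we are left with a subdivided path (one column's worth of structure), which has twin-width at most $2$ by Lemma~\ref{lem:cycle} (or rather the path analogue, which is even easier), and we finish.

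First I would set up the induction on $n$ (for fixed $m$), with the base case $n=1$ being a subdivision of a path $P_m$, which is a trigraph whose underlying graph is a caterpillar-free path --- its twin-width is at most $2$, indeed at most $1$ if there are no red edges, by iteratively contracting adjacent degree-$2$ vertices exactly as in the proof of Lemma~\ref{lem:cycle}. For the inductive step, I would describe precisely the subgraph $B$ consisting of column $n$, its $m$ vertices $(1,n),\dots,(m,n)$, the subdivided vertical edges within column $n$, and the subdivided horizontal edges joining $(i,n)$ to $(i,n-1)$ for each $i\in[m]$. I would give an explicit sequence of contractions that ``zips'' each subdivided horizontal edge onto its endpoint $(i,n-1)$ in column $n-1$ and then collapses the subdivided vertical edges of column $n$, ultimately merging $(i,n)$ into $(i,n-1)$. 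During this process, the only vertices that can acquire red neighbours are the vertices of column $n-1$ and the contraction vertices travelling along column $n$ or along a horizontal connector; I would check that each such vertex has at most $4$ red neighbours at every stage --- a vertex $(i,n-1)$ has its two vertical neighbours in its own column, at most one neighbour towards column $n-2$, and picks up red edges only to the (at most two) contraction vertices approaching it from the right and above/below, which is where the bound $4$ (rather than $3$) is genuinely needed. After the step, the underlying graph is a subdivision of the $m\times(n-1)$ grid (possibly with some extra red edges, which only helps, since a refined subgraph has no larger twin-width by Lemma~\ref{lem:refinedtww} --- though here it is cleaner to just carry the red edges along in the induction hypothesis, stating it for arbitrary trigraphs with the given underlying graph, exactly as the lemma is phrased).

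The main obstacle I anticipate is bookkeeping the red degree bound precisely during the ``zip'' of a column, especially at the three-way junctions where a horizontal connector meets column $n-1$: one must order the contractions so that a contraction vertex sliding down column $n$ and a contraction vertex sliding left along the connector do not simultaneously create red edges that push some vertex of column $n-1$ past red degree $4$. The clean way to handle this, which I would adopt, is to fully process the horizontal connectors one at a time from top to bottom --- for each $i$ from $1$ to $m$, first contract the connector between $(i,n)$ and $(i,n-1)$ down to a single edge and then contract $\{(i,n),(i,n-1)\}$, dealing with the vertical subdivided edge between $(i,n)$ and $(i+1,n)$ only after both its endpoints have been merged leftward --- so that at any moment at most one vertex of column $n-1$ is ``active'' and its red degree is easily seen to stay within budget. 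I would note that this is the same phenomenon that forces the bound $4$ in Theorem~\ref{thm:bbd}, so the appearance of $4$ here is expected and not wasteful. Finally, since the statement is about every trigraph with the given underlying graph and $\DeltaR(H)$ could a priori be anything, I would first observe that if $\DeltaR(H)>4$ the statement is about a graph that is not even a $4$-trigraph, so implicitly we should read the claim as: the twin-width is at most $4$, which requires $\DeltaR(H)\le 4$ to begin with; thus I would state the induction hypothesis for trigraphs $H$ with $\DeltaR(H)\le 4$ whose underlying graph is a subdivision of the $m\times n$ grid, and conclude $\tww(H)\le 4$.
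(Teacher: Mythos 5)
There is a genuine gap in the red-degree bookkeeping at exactly the junctions you flag as the main obstacle. When you contract two vertices $u,v$, every edge from the merged vertex to a vertex adjacent to only one of $u,v$ becomes red, so the merged vertex does \emph{not} keep black edges to the old neighbours of $(i,n-1)$; your claim that $(i,n-1)$ ``picks up red edges only to the (at most two) contraction vertices approaching it'' is therefore false. Concretely, under your stated ordering (process rows $i=1,\dots,m$ of column $n$, and deal with the subdivided vertical edge of column $n$ between rows $i$ and $i+1$ only after both $(i,n)$ and $(i+1,n)$ have been merged leftward), consider the contraction of $\{(i,n),(i,n-1)\}$ for $2\le i\le m-1$. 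At that moment $(i,n-1)$ still has its three attachments (towards $(i-1,n-1)$, towards $(i+1,n-1)$, towards column $n-2$), while $(i,n)$ still has its attachments to the untouched subdivided vertical paths of column $n$ towards row $i-1$ (now ending at the already-merged vertex in row $i-1$) and towards row $i+1$. For a generic subdivision these five first vertices are pairwise distinct and none is a common neighbour, so the merged vertex has five incident edges, all of which become red: red degree $5$, and the $4$-contraction-sequence condition is violated (the bound must hold in every intermediate trigraph). Your ordering would need to eliminate the column-$n$ remnant between rows $i-1$ and $i$ (e.g.\ by merging it into the parallel subdivided edge of column $n-1$) \emph{before} contracting $(i,n)$ into $(i,n-1)$, and that step is precisely what is missing. (A minor point: the worry about $\DeltaR(H)>4$ is vacuous, since a subdivision of a grid has maximum degree at most $4$, so every trigraph on it automatically has red degree at most $4$.)

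A clean repair is to decouple the two difficulties instead of handling them simultaneously, which is what the paper does: first iteratively contract each subdivision vertex (a degree-$2$ vertex) with one of its neighbours; since the maximum degree of a grid subdivision is $4$, each such contraction produces a vertex of degree at most $4$ and never pushes any red degree above $4$, so one reaches the all-red unsubdivided $m\times n$ grid by a partial $4$-contraction sequence. (An arbitrary trigraph with the given underlying graph is a refined subgraph of the all-red one, so by Lemma~\ref{lem:refinedtww} it suffices to treat the all-red case.) Then reduce the grid itself by contracting the last row diagonally into the previous one, i.e.\ $\{(m,j),(m-1,j+1)\}$ for $j=1,\dots,n-1$: each merged vertex has at most four neighbours, so the red degree stays at most $4$, and induction on the number of rows finishes the proof, the base case being a path. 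Your column-by-column zip of the still-subdivided grid can also be made to work with the extra step indicated above, but as written the junction contraction exceeds red degree $4$.
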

\begin{proof}
    Let $H_{m,n}$ be a trigraph without black edges whose underlying graph is the $m\times n$ grid with $m\leq n$.
    It is easy to see that if $G$ is a trigraph without black edges whose underlying graph is a subdivision of the $m\times n$ grid, then there is a partial \cont{4} from~$G$ to~$H_{m,n}$,
    simply by iteratively contracting pairs of a vertex of degree $2$ and its neighbour.
    Thus, it suffices to show that $\tww(H_{m,n})\leq4$.

    We now proceed by induction on~$m$ to show that $\tww(H_{m,n})\leq4$.
    If $m=1$ or $n=1$, then the maximum degree of~$H_{m,n}$ is at most~$2$, so it has twin-width at most~$2$.
    Thus, we may assume that both $m$ and $n$ are at least~$2$.
    For~$j$ from~$1$ to~$n-1$, we contract $\{(m-1,j+1),(m,j)\}$ to~$(m-1,j+1)$ to obtain a partial \cont{4} from $H_{m,n}$ to~$H_{m,n-1}$.
    Thus, by the inductive hypothesis, $\tww(H_{m,n})\leq4$.
    This completes the proof by induction.
\end{proof}

\begin{proposition}\label{prop:wall}
    For all integers $m,n\geq1$, every trigraph whose underlying graph is an $m\times n$ wall has twin-width at most~$4$.
\end{proposition}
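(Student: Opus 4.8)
The plan is to reduce the statement about walls to the statement about grids that was just proved in Lemma~\ref{lem:gridgrid}. First I would recall that, by definition, an $m\times n$ wall is a graph isomorphic to a subdivision of the elementary $m\times n$ wall, and the elementary $m\times n$ wall is by definition obtained from the $m\times n$ grid by deleting a set of ``vertical'' edges and then deleting the resulting degree-$1$ vertices. In particular, the elementary $m\times n$ wall is a subgraph of the $m\times n$ grid, so every subdivision of the elementary $m\times n$ wall is (isomorphic to) a subgraph of some subdivision of the $m\times n$ grid: subdivide each grid edge the appropriate number of times, subdividing the deleted edges $0$ times, and then delete the unwanted vertices and edges. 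Hence the underlying graph of any trigraph $H$ as in the statement is a subgraph of a subdivision $G'$ of the $m\times n$ grid.

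The key point is then to pass from ``subgraph'' to ``induced subgraph'', because twin-width is monotone only under induced subgraphs. Here I would use the standard trick: if $H$ is a trigraph whose underlying graph is a subgraph (not necessarily induced) of some graph $G'$ which is a subdivision of the $m\times n$ grid, then one can first note that the underlying graph of $H$ is itself a subdivision of some subgraph of the elementary wall, hence still a subdivision of a subgraph of the $m\times n$ grid, and a subdivision of a subgraph of the $m\times n$ grid is again a subgraph of a subdivision of the $m\times n$ grid. More cleanly: I would observe that the underlying graph $H_0$ of $H$ is a subdivision of (a subgraph of) the elementary wall, and a subdivision of the elementary $m\times n$ wall is an induced subgraph of a suitable subdivision of the elementary $(m')\times(n')$ wall only up to minor issues; the simplest route is to argue that $H_0$ is a subgraph of a subdivision $G'$ of the $m\times n$ grid, and that there is a trigraph $H^*$ with underlying graph $G'$, with $\DeltaR(H^*)=\DeltaR(H)$ and no new red edges, such that $H$ is obtained from $H^*$ by deleting vertices and edges; but deleting edges is not an induced-subgraph operation, so instead I would contract each ``missing'' path of $G'$ away. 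Concretely: take $G'$ to be the subdivision of the $m\times n$ grid in which every edge of the elementary wall present in $H_0$ is subdivided exactly as in $H_0$, and every vertical edge of the grid absent from the wall is subdivided $0$ times (so it is just an edge), and set $H^*$ to be the trigraph on $G'$ agreeing with $H$ on the edges of $H_0$ and making every remaining edge red. Then $\DeltaR(H^*)\le \DeltaR(H)+?$— this is where care is needed, since adding back the grid edges could raise red degrees at grid vertices.

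To avoid that red-degree bookkeeping, the cleaner plan is: do not try to realise $H_0$ as an induced subgraph of a grid subdivision directly. Instead, observe that $H_0$ is itself a subdivision of a subgraph of the $m\times n$ grid, hence a subgraph of a subdivision $G''$ of the $m\times n$ grid obtained by subdividing the absent grid edges once (say) rather than zero times, and now every vertex of the original grid still has degree at most $4$ and every subdivision vertex has degree $2$; then delete from $G''$ the unwanted subdivision vertices to get exactly $H_0$ as an \emph{induced} subgraph of $G''$ — deleting a degree-$2$ internal path vertex together with its two incident edges from a subdivided grid is an induced-subgraph operation precisely when we delete an entire subdivided path, which is the case here since each absent wall edge corresponds to an entire path of $G''$. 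Thus $H$ is a refined/induced subgraph of a trigraph $H''$ whose underlying graph is the subdivision $G''$ of the $m\times n$ grid and with $\DeltaR(H'')=\DeltaR(H)$ (no grid vertex gains red edges, since we only remove). By Lemma~\ref{lem:gridgrid}, $\tww(H'')\le 4$, and since twin-width does not increase under taking induced subgraphs, $\tww(H)\le 4$.

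The main obstacle I expect is exactly this ``subgraph versus induced subgraph'' gap: making sure that the wall (a subdivision of the elementary wall, which is a \emph{proper} subgraph of the grid) is realised as an \emph{induced} subgraph of a genuine subdivision of the grid, rather than merely a subgraph, so that Lemma~\ref{lem:gridgrid} and the induced-subgraph monotonicity of twin-width can both be applied. Once the right subdivision $G''$ is chosen (subdividing the missing vertical edges, so that removing them amounts to deleting whole internal paths, an induced operation), the rest is immediate. I would write the proof roughly as follows.

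\begin{proof}
    Let $m,n\geq1$ be integers and let $H$ be a trigraph whose underlying graph $H_0$ is an $m\times n$ wall, that is, $H_0$ is isomorphic to a subdivision of the elementary $m\times n$ wall $W$.
    By definition, $W$ is obtained from the $m\times n$ grid $\Gamma$ by deleting a set $D$ of edges, each of the form $(i,j)(i,j+1)$, and then deleting all resulting degree-$1$ vertices; let $W'$ be the spanning subgraph of $\Gamma$ with edge set $E(\Gamma)\setminus D$, so that $W$ is obtained from $W'$ by deleting some vertices of degree at most~$1$.
    In particular $W$ is an induced subgraph of $W'$, and $W'$ is a spanning subgraph of $\Gamma$.

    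Since $H_0$ is a subdivision of $W$, it is also a subdivision of an induced subgraph of $W'$.
    For each edge $e$ of $\Gamma$, let $k_e$ be the number of times $e$ is subdivided in $H_0$ if $e\in E(W')$ and $e$ lies on a path of $H_0$, and let $k_e:=1$ if $e\in D$; for the remaining edges of $W'$ (those on paths of $W$ not used by $H_0$, i.e.\ incident with the deleted degree-$\le1$ vertices) set $k_e:=1$ as well.
    Let $G'$ be the subdivision of $\Gamma$ in which each edge $e$ is subdivided $k_e$ times.
    Then $G'$ is a subdivision of the $m\times n$ grid, every vertex of $G'$ corresponding to a vertex of $\Gamma$ has the same degree as in $\Gamma$, and every other vertex of $G'$ has degree~$2$.
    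Moreover $H_0$ is obtained from $G'$ by deleting, for each edge $e\in D$ and each edge on a path of $W$ incident with a deleted degree-$\le1$ vertex, all internal vertices of the corresponding path of $G'$ together with the incident edges, and no other vertices.
    Since each such deletion removes an entire internal path of $G'$ between two branch vertices, $H_0$ is an induced subgraph of $G'$.

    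Let $H'$ be the trigraph with underlying graph $G'$ such that $H$ is the induced subgraph of $H'$ on $V(H_0)$, and such that every edge of $G'$ not in $H_0$ is black.
    Then $\DeltaR(H')=\DeltaR(H)$, since the vertices of $H'$ outside $V(H_0)$ have no red edges and the red edges incident with vertices of $V(H_0)$ are exactly those of $H$.
    By Lemma~\ref{lem:gridgrid}, the twin-width of $H'$ is at most~$4$.
    Since $H$ is an induced subgraph of $H'$, we conclude that $\tww(H)\leq\tww(H')\leq4$.
\end{proof}
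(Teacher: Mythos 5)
Your proof is correct and takes essentially the same route as the paper: reduce to Lemma~\ref{lem:gridgrid} by placing the wall trigraph inside a trigraph whose underlying graph is a subdivision of the $m\times n$ grid. The only difference is cosmetic --- the paper realises the wall as a \emph{refined} subgraph and invokes Lemma~\ref{lem:refinedtww} (so the deleted vertical edges can simply be coloured red and dropped), whereas you subdivide those deleted edges so that the wall becomes a genuine induced subgraph and only induced-subgraph monotonicity is needed; your phrase ``and no other vertices'' overlooks that the degree-$1$ grid vertices removed when forming the elementary wall must also be deleted, but this is immaterial to the argument.
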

\begin{proof}
    If the underlying graph of a trigraph $H$ is an $m\times n$ wall, then it is a refined subgraph of a trigraph whose underlying graph is a subdivision of the $m\times n$ grid, which has twin-width at most~$4$ by Lemma~\ref{lem:gridgrid}.
    Thus, by Lemma~\ref{lem:refinedtww}, $\tww(H)\leq4$.
\end{proof}

We now show that the line graph of a wall has twin-width at most~$4$.

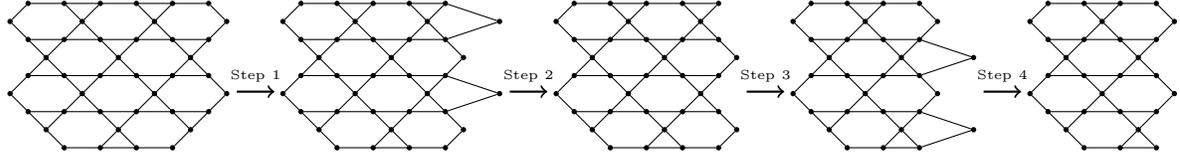
\begin{figure}[t]
    \centering
    \tikzstyle{b}=[circle, draw=black!70, solid, fill=black!70, inner sep=0pt, minimum width=5.5pt]
    \tikzstyle{v}=[circle, draw, solid, fill=black, inner sep=0pt, minimum width=1.6pt]
    \begin{tikzpicture}[scale=0.48]
        \draw(0,6.6) node(){};
        \draw(0,0.4) node(){};
        \foreach \x in {1,2,3,4,5,6}{
            \draw (\x+0.5,5) node[v](5h\x){};
            \draw (\x+0.5,4) node[v](4h\x){};
            \draw (\x+0.5,3) node[v](3h\x){};
            \draw (\x+0.5,2) node[v](2h\x){};
        }
        \foreach \x in {2,3,4,5}{
            \draw (\x+0.5,1) node[v](1h\x){};
        }
        \foreach \x in {1,3,5,7}{
            \draw (\x,4.5) node[v](4v\x){};
            \draw (\x,2.5) node[v](2v\x){};
        }
        \foreach \x in {2,4,6}{
            \draw (\x,3.5) node[v](3v\x){};
            \draw (\x,1.5) node[v](1v\x){};
        }
        
        \draw (5h1)--(5h6);
        \draw (4h1)--(4h6);
        \draw (3h1)--(3h6);
        \draw (2h1)--(2h6);
        \draw (1h2)--(1h5);

        \draw (2v1)--(1h2);
        \draw (4v1)--(1h4);
        \draw (5h2)--(1v6);
        \draw (5h4)--(2v7);
        \draw (5h6)--(4v7);
        
        \draw (5h1)--(4v1);
        \draw (5h3)--(2v1);
        \draw (5h5)--(1v2);
        \draw (4v7)--(1h3);
        \draw (2v7)--(1h5);
    \end{tikzpicture}\begin{tikzpicture}[overlay]
        \draw[->,thick] (.1,1.15) -- ++(0.5,0) node[midway,above] {\tiny Step 1};
    \end{tikzpicture}
    \begin{tikzpicture}[scale=0.48]
        \draw(0,6.6) node(){};
        \draw(0,0.4) node(){};
        \foreach \x in {1,2,3,4,5}{
            \draw (\x+0.5,5) node[v](5h\x){};
            \draw (\x+0.5,4) node[v](4h\x){};
            \draw (\x+0.5,3) node[v](3h\x){};
            \draw (\x+0.5,2) node[v](2h\x){};
        }
        \foreach \x in {2,3,4,5}{
            \draw (\x+0.5,1) node[v](1h\x){};
        }
        \foreach \x in {1,3,5,7}{
            \draw (\x,4.5) node[v](4v\x){};
            \draw (\x,2.5) node[v](2v\x){};
        }
        \foreach \x in {2,4,6}{
            \draw (\x,3.5) node[v](3v\x){};
            \draw (\x,1.5) node[v](1v\x){};
        }
        
        \draw (5h1)--(5h5);
        \draw (4h1)--(4h5);
        \draw (3h1)--(3h5);
        \draw (2h1)--(2h5);
        \draw (1h2)--(1h5);
        
        \draw (2v1)--(1h2);
        \draw (4v1)--(1h4);
        \draw (5h2)--(1v6);
        \draw (5h4)--(3v6);
        
        \draw (5h1)--(4v1);
        \draw (5h3)--(2v1);
        \draw (5h5)--(1v2);
        \draw (3v6)--(1h3);
        \draw (1v6)--(1h5);
        
        \draw (4h5)--(4v7)--(5h5);
        \draw (2h5)--(2v7)--(3h5);
    \end{tikzpicture}\begin{tikzpicture}[overlay]
        \draw[->,thick] (.1,1.15) -- ++(0.5,0) node[midway,above] {\tiny Step 2};
    \end{tikzpicture}
    \begin{tikzpicture}[scale=0.48]
        \draw(0,6.6) node(){};
        \draw(0,0.4) node(){};
        \foreach \x in {1,2,3,4,5}{
            \draw (\x+0.5,5) node[v](5h\x){};
            \draw (\x+0.5,4) node[v](4h\x){};
            \draw (\x+0.5,3) node[v](3h\x){};
            \draw (\x+0.5,2) node[v](2h\x){};
        }
        \foreach \x in {2,3,4,5}{
            \draw (\x+0.5,1) node[v](1h\x){};
        }
        \foreach \x in {1,3,5}{
            \draw (\x,4.5) node[v](4v\x){};
            \draw (\x,2.5) node[v](2v\x){};
        }
        \foreach \x in {2,4,6}{
            \draw (\x,3.5) node[v](3v\x){};
            \draw (\x,1.5) node[v](1v\x){};
        }
        
        \draw (5h1)--(5h5);
        \draw (4h1)--(4h5);
        \draw (3h1)--(3h5);
        \draw (2h1)--(2h5);
        \draw (1h2)--(1h5);
        
        \draw (2v1)--(1h2);
        \draw (4v1)--(1h4);
        \draw (5h2)--(1v6);
        \draw (5h4)--(3v6);
        
        \draw (5h1)--(4v1);
        \draw (5h3)--(2v1);
        \draw (5h5)--(1v2);
        \draw (3v6)--(1h3);
        \draw (1v6)--(1h5);
    \end{tikzpicture}\begin{tikzpicture}[overlay]
        \draw[->,thick] (.1,1.15) -- ++(0.5,0) node[midway,above] {\tiny Step 3};
    \end{tikzpicture}
    \begin{tikzpicture}[scale=0.48]
        \draw(0,6.6) node(){};
        \draw(0,0.4) node(){};
        \foreach \x in {1,2,3,4}{
            \draw (\x+0.5,5) node[v](5h\x){};
            \draw (\x+0.5,4) node[v](4h\x){};
            \draw (\x+0.5,3) node[v](3h\x){};
            \draw (\x+0.5,2) node[v](2h\x){};
        }
        \foreach \x in {2,3,4}{
            \draw (\x+0.5,1) node[v](1h\x){};
        }
        \foreach \x in {1,3,5}{
            \draw (\x,4.5) node[v](4v\x){};
            \draw (\x,2.5) node[v](2v\x){};
        }
        \foreach \x in {2,4,6}{
            \draw (\x,3.5) node[v](3v\x){};
            \draw (\x,1.5) node[v](1v\x){};
        }
        
        \draw (5h1)--(5h4);
        \draw (4h1)--(4h4);
        \draw (3h1)--(3h4);
        \draw (2h1)--(2h4);
        \draw (1h2)--(1h4);
        
        \draw (2v1)--(1h2);
        \draw (4v1)--(1h4);
        \draw (5h2)--(2v5);
        \draw (5h4)--(4v5);
        
        \draw (5h1)--(4v1);
        \draw (5h3)--(2v1);
        \draw (4v5)--(1v2);
        \draw (2v5)--(1h3);

        \draw (3h4)--(3v6)--(4h4);
        \draw (1h4)--(1v6)--(2h4);
    \end{tikzpicture}\begin{tikzpicture}[overlay]
        \draw[->,thick] (.1,1.15) -- ++(0.5,0) node[midway,above] {\tiny Step 4};
    \end{tikzpicture}
    \begin{tikzpicture}[scale=0.48]
        \draw(0,6.6) node(){};
        \draw(0,0.4) node(){};
        \foreach \x in {1,2,3,4}{
            \draw (\x+0.5,5) node[v](5h\x){};
            \draw (\x+0.5,4) node[v](4h\x){};
            \draw (\x+0.5,3) node[v](3h\x){};
            \draw (\x+0.5,2) node[v](2h\x){};
        }
        \foreach \x in {2,3,4}{
            \draw (\x+0.5,1) node[v](1h\x){};
        }
        \foreach \x in {1,3,5}{
            \draw (\x,4.5) node[v](4v\x){};
            \draw (\x,2.5) node[v](2v\x){};
        }
        \foreach \x in {2,4}{
            \draw (\x,3.5) node[v](3v\x){};
            \draw (\x,1.5) node[v](1v\x){};
        }
        
        \draw (5h1)--(5h4);
        \draw (4h1)--(4h4);
        \draw (3h1)--(3h4);
        \draw (2h1)--(2h4);
        \draw (1h2)--(1h4);
        
        \draw (2v1)--(1h2);
        \draw (4v1)--(1h4);
        \draw (5h2)--(2v5);
        \draw (5h4)--(4v5);
        
        \draw (5h1)--(4v1);
        \draw (5h3)--(2v1);
        \draw (4v5)--(1v2);
        \draw (2v5)--(1h3);
    \end{tikzpicture}
    \caption{The leftmost graph is the line graph of the elementary $5\times7$ wall, and the other graphs are the underlying graphs of trigraphs obtained after conducting Steps~1 to 4, respectively.}
    \label{fig:linecontra}
\end{figure}

\begin{proposition}\label{prop:linewall}
    For all integers $m,n\geq1$, every trigraph whose underlying graph is the line graph of an $m\times n$ wall has twin-width at most~$4$.
\end{proposition}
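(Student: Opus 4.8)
The plan is to construct explicit contraction sequences, following the scheme illustrated in Figure~\ref{fig:linecontra}. By Lemma~\ref{lem:refinedtww} we may assume that all edges of the trigraph $H$ are red, since a general trigraph with a given underlying graph is a refined subgraph of the all-red trigraph with that underlying graph. The first point to record is that every wall has maximum degree at most~$3$, so the line graph of a wall has maximum degree at most~$4$ and hence $\DeltaR(H)\le 4$ to begin with; the entire difficulty lies in keeping the maximum red degree at most~$4$ after performing contractions.

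I would next reduce to elementary walls. An $m\times n$ wall $W$ is a subdivision of the elementary $m\times n$ wall $W_0$, and each edge $e$ of $W_0$ corresponds in $L(W)$ to an induced path $f_0f_1\cdots f_t$ whose ends $f_0$ and $f_t$ lie respectively in the cliques of $L(W)$ formed by the edges of $W$ incident with the two ends of $e$, while the internal vertices $f_1,\dots,f_{t-1}$ have degree~$2$ (and each of $f_0,f_t$ has degree at most~$3$). Contracting $f_1,\dots,f_{t-1}$ into a single vertex, then merging that vertex into $f_0$, and then merging the result into $f_t$, replaces this path by a single vertex adjacent to both cliques and never creates a vertex of degree exceeding~$4$, hence never a red degree exceeding~$4$. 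Performing this for every subdivided edge of $W_0$ gives a partial \cont{4} from $H$ to a trigraph with underlying graph $L(W_0)$. So it remains to prove that every trigraph whose underlying graph is $L(W_0)$, for $W_0$ the elementary $m\times n$ wall, has twin-width at most~$4$.

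This I would prove by induction on $m+n$. If $m\le 2$ and $n\le 2$, then $L(W_0)$ is one of finitely many fixed graphs, each of twin-width at most~$4$, which can be verified directly. Otherwise $m\ge 3$ or $n\ge 3$; assuming $n\ge 3$ (the case $m\ge 3$ is the same with rows and columns interchanged), I would sweep the last two columns of $W_0$ into the remainder of the graph, exactly as in Figure~\ref{fig:linecontra}: using a bounded number of contractions per row and processing the rows from one side to the other, one produces a partial \cont{4} from $H$ to a trigraph whose underlying graph is $L(W')$ for some $m\times(n-2)$ wall $W'$, and then the inductive hypothesis applies. The main obstacle is the red-degree bookkeeping in this sweep. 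The contractions accumulate red edges in a narrow strip straddling the boundary between the two columns being removed and the columns that remain, so the order in which the contractions are carried out --- in particular the row-by-row interleaving of the two columns that the figure suggests --- must be arranged so that no vertex of this strip ever attains red degree~$5$. Once a suitable order is fixed this reduces to a finite, local check, as every contraction pairs a vertex of degree at most~$4$ with one of its neighbours and alters only neighbourhoods contained in the strip.
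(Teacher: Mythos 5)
Your first two reductions are sound and essentially match the paper's: any trigraph with the given underlying graph is a refined subgraph of the all-red one, so Lemma~\ref{lem:refinedtww} applies, and contracting, for each subdivided edge of the elementary wall, the corresponding induced path of $L(W)$ into a single vertex keeps every degree, hence every red degree, at most $4$; the paper performs exactly this computation, one subdivision at a time. The gap is in the only place where the proposition has real content: the elementary case. You never exhibit the contraction order for absorbing two layers, and the general principle you offer instead --- ``every contraction pairs a vertex of degree at most $4$ with one of its neighbours'' --- does not bound the red degree by $4$: two adjacent degree-$4$ vertices of $L(W_0)$ with only one common neighbour contract to a vertex of degree (hence possibly red degree) $5$. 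The scheme of Figure~\ref{fig:linecontra} succeeds because each contracted pair has the union of its neighbourhoods of size at most $4$, which is arranged by always working against the outermost layer of the wall (whose wall-vertices have degree at most $2$) and by doing global sweeps --- first merging the two outermost layers of edge-vertices at every position, then folding in the rung edges, then merging with the next layer, and a final cleanup --- rather than the row-by-row interleaving you describe. Choosing such an order and verifying the red degrees along it is the proof; asserting that ``a suitable order'' exists and that the rest is ``a finite, local check'' leaves precisely that verification undone.

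A secondary problem is the shape of your induction. The elementary wall is defined asymmetrically in the two coordinates (the parity rule deletes edges in only one direction), so ``the same with rows and columns interchanged'' is not available without further argument, and with base case $m,n\le2$ your induction can terminate at an infinite family (for instance $n\le2$ with $m$ arbitrary), which is not ``finitely many fixed graphs''. The paper sidesteps this by peeling in one direction only, reducing $m=2k+1$ to $m=2k-1$ with $n$ fixed, and by giving an explicit \cont{4} for the infinite base family $m=3$ with $n$ arbitrary; your plan needs an analogous explicit base case or a separately verified sweep in the second direction.
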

\begin{proof}
    For convenience, for adjacent vertices $(a,b)$ and $(a',b')$ in an elementary wall, we denote by $((a+a')/2,(b+b')/2)$ the edge between them.

    We first claim that every trigraph $H$ whose underlying graph is the line graph of the elementary $m\times n$ wall has twin-width at most~$4$.
    Since the elementary $m\times n$ wall is an induced subgraph of the elementary $m'\times n'$ wall for some odd integers~$m'$ and~$n'$, it suffices to show the claim for odd integers~$m$ and~$n$.
    Thus, we may assume that $m=2k+1$ and $n=2\ell+1$ for positive integers~$k$ and~$\ell$.

    We proceed by induction on~$k$ to prove the claim.
    For $k=1$, we present a \cont{4} of~$H$ as follows.
    \begin{enumerate}[label=\bf{Step \arabic*.},leftmargin=*]
        \item For each $j\in[n]\setminus\{1\}$, contract $(1.5,j)$ and $(2.5,j)$.
        \item For each $h\in[\ell]$, contract $(1,2h+0.5)$ and $(3,2h+0.5)$.
        After Step 2, the underlying graph becomes a path.
        \item Iteratively contracting pairs of adjacent vertices, until no such pairs remain.
    \end{enumerate}

    Suppose that $k\geq2$.
    We present a partial \cont{4} from~$H$ to a trigraph whose underlying graph is the line graph of the elementary $(m-2)\times n$ wall as follows; see Figure~\ref{fig:linecontra} for an illustration.
    \begin{enumerate}[label=\bf{Step \arabic*.},leftmargin=*]
        \item For each $j\in[n]\setminus\{1\}$, contract $\{(m-1.5,j),(m-0.5,j)\}$ to $(m-1.5,j)$.
        \item For each $h\in[\ell]$, contract $\{(m-2,2h+0.5),(m,2h+0.5)\}$ to $(m-2,2h+0.5)$.
        \item For each $j\in[n]$, contract $\{(m-2.5,j),(m-1.5,j)\}$ to $(m-2.5,j)$.
        \item For each $h\in[\ell]$, contract $\{(m-3,2h-0.5),(m-1,2h-0.5)\}$ to $(m-3,2h-0.5)$.
        \item Contract $\{(m-2.5,1),(m-3,1.5)\}$ to $(m-3,1.5)$.
    \end{enumerate}
    By the inductive hypothesis, the resulting trigraph has twin-width at most~$4$, and therefore $\tww(H)\leq4$.
    This proves the claim by induction.

    Now, let us prove this proposition by induction on the number of vertices.
    Let~$H$ be a trigraph whose underlying graph is the line graph of an $m\times n$ wall~$W$.
    Note that every vertex of~$H$ has degree at most~$4$.
    If~$W$ is the elementary $m\times n$ wall, then we already showed that the twin-width of~$H$ is at most $4$.
    Thus, we may assume that there is an $m\times n$ wall~$W'$ such that~$W$ is obtained from~$W'$ by subdividing some edge~$e$ of~$W'$ to two edges~$e_1$ and~$e_2$.
    Let~$H'$ be the trigraph obtained from~$H$ by contracting~$e_1$ and~$e_2$.
    Then the underlying graph of~$H'$ is
    isomorphic to the line graph of~$W'$ 
    by the isomorphism mapping the new vertex arising from the contraction to~$e$ of~$W'$.
    Therefore, the twin-width of~$H'$ is at most~$4$ by the inductive hypothesis.
    This implies that $H$ has twin-width at most~$4$.
\end{proof}

\section{Maximum number of edges in a graph having a \boldmath{\subd{1}} of twin-width at most \boldmath{$3$}}\label{sec:extremal}

In this section, we prove Theorem~\ref{thm:final}.
We consider only simple graphs in this section.
To prove Theorem~\ref{thm:final}, we use the following definition and proposition.
A $3$-contractible tree decomposition is \emph{tight} if either 
\begin{itemize}
    \item it has only one bag and its unique bag has size $3$ 
    or 
    \item every adhesion set is of size exactly~$3$, at most one of its bags induces a graph isomorphic to~$K_4$ or~$K_6^{=}$, and every other bag is of size exactly~$5$.
\end{itemize}

\begin{proposition}\label{prop:exactextremal}
    Every $\mathcal{F}_3$-minor-free graph~$G$ has at most $\lfloor\frac12(7\abs{V(G)}-15)\rfloor$ edges, unless it has at most~$2$ vertices.
    Such a graph $G$ has exactly $\lfloor\frac12(7\abs{V(G)}-15)\rfloor$ edges if and only if it admits a tight $3$-contractible tree decomposition.
\end{proposition}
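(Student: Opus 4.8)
The plan is to combine the structural result Proposition~\ref{prop:treedecomp} (equivalently Corollary~\ref{cor:finalstructure}) with a double-counting argument over a $3$-contractible tree decomposition. First I would reduce to the case that $G$ itself admits a $3$-contractible tree decomposition: by Proposition~\ref{prop:treedecomp} there is an $\mathcal{F}_3$-minor-free graph $G'$ with $V(G')=V(G)$, $E(G)\subseteq E(G')$, and $G'$ admitting such a decomposition, with $G'$ simple since $G$ is. For the bound it then suffices to bound $|E(G')|$, and for the equality statement the fact that $|E(G)|=|E(G')|$ would force $G=G'$. Fix a $3$-contractible tree decomposition $(T,(B_t)_{t\in V(T)})$ of $G'$. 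Since every adhesion set is a clique by~\ref{def:td contractible edge}, and since each vertex and each edge of $G'$ spans a (nonempty) subtree of $T$, inclusion–exclusion on $T$ gives the exact identity
\[
2\abs{E(G')}-7\abs{V(G')} \;=\; \sum_{t\in V(T)}\bigl(2m_t-7n_t\bigr)\;-\sum_{vw\in E(T)}\bigl(2\tbinom{s_{vw}}{2}-7s_{vw}\bigr),
\]
where $n_t:=\abs{B_t}$, $m_t:=\abs{E(G'[B_t])}$, and $s_{vw}:=\abs{B_v\cap B_w}\le 3$.

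Rooting $T$ at a bag $r$ of size at least~$3$ and collecting each non-root bag with its parent edge, the right-hand side becomes $(2m_r-7n_r)+\sum_{t\ne r}c(t)$ where $c(t):=(2m_t-7n_t)+(7s_t-2\tbinom{s_t}{2})$ and $s_t$ is the adhesion of $t$ to its parent. A short table of the nine graphs in $\mathcal{K}$ records that $2m_t-7n_t$ takes the values $-7,-12,-15,-16,-15,-18,-16,-19,-27$ for $K_1,K_2,K_3,K_4,K_5,K_6^\equiv,K_6^{=},\overline{C_6}+K_1,L(K_{3,3})$ respectively; hence $2m_r-7n_r\le-15$, and using $s_t\le\min\{3,n_t\}$ one checks $c(t)\le 0$ for every non-root bag, with equality only when $B_t\subseteq B_p$ (a $K_1$ with $s_t=1$, a $K_2$ with $s_t=2$, or a $K_3$ with $s_t=3$) or when $t$ is a $K_5$ with $s_t=3$, and $c(t)=-1$ only when $t$ is a $K_4$ or a $K_6^{=}$ with $s_t=3$. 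If $T$ has no bag of size~$\ge3$ then $G'$ is a forest and $2\abs{E(G')}-7\abs{V(G')}\le2(n-1)-7n<-16$ directly; a brief additive estimate shows disconnected graphs are never extremal. Therefore $2\abs{E(G')}\le 7\abs{V(G')}-15$, and since $\abs{E(G)}\le\abs{E(G')}$ is an integer this gives $\abs{E(G)}\le\lfloor\tfrac12(7\abs{V(G)}-15)\rfloor$. For the easy direction of the equality statement I would plug a tight decomposition into the same identity: if the unique bag has size~$3$ then $G=K_3$ and $\abs{E(G)}=3=\lfloor\tfrac12(7\cdot3-15)\rfloor$, and otherwise all adhesions have size~$3$ and every bag is $K_5$ except possibly one $K_4$ or $K_6^{=}$, giving $2\abs{E(G)}-7\abs{V(G)}\in\{-15,-16\}$ and hence equality.

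For the hard direction I would assume $\abs{E(G)}=\lfloor\tfrac12(7n-15)\rfloor$, deduce $G=G'$, and take a $3$-contractible tree decomposition of $G$ minimising $\abs{V(T)}$. The crucial step is that such a minimal decomposition is \emph{reduced}: no bag is contained in a neighbouring bag. Indeed, if $B_t\subseteq B_p$ then $B_t$ is a clique of size at most~$3$ (it is a minimal separator by~\ref{def:td contractible edge}), so one can delete $t$ and reattach its other neighbours to $p$; because $t$ separates $p$ from each such neighbour $q$ in $T$, every vertex of $B_p\cap B_q$ lies in $B_t$, so the new adhesion $B_p\cap B_q$ equals the old $B_t\cap B_q$, which preserves~\ref{def:td contractible edge}, while~\ref{def:td contractible vertex} is preserved because the relevant sets of size-$3$ adhesion cliques only shrink (and shrinking an addable set keeps it addable) — contradicting minimality. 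Now $\abs{E(G)}=\lfloor\tfrac12(7n-15)\rfloor$ forces $2\abs{E(G)}-7n\ge-16$, which makes the sign analysis rigid: $2m_r-7n_r\ge-16$ so $r\in\{K_3,K_4,K_5,K_6^{=}\}$, and $\sum_{t\ne r}c(t)\in\{0,-1\}$ so at most one non-root bag has $c(t)=-1$ and all others have $c(t)=0$. Since the decomposition is reduced, the equality cases with $B_t\subseteq B_p$ are excluded, so every non-root bag is a $K_5$ with $s_t=3$ or a single $K_4$/$K_6^{=}$ with $s_t=3$; and a $K_3$ bag can then have no neighbour (an adhesion of size~$3$ into a $3$-element bag is containment), forcing the root not to be $K_3$ and ruling out all $K_1,K_2,K_3$ bags. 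Hence every bag is $K_4$, $K_5$, or $K_6^{=}$, every adhesion set has size~$3$, and at most one bag is $K_4$ or $K_6^{=}$; that is, the decomposition is tight. (When $\abs{V(T)}=1$ the single bag is $K_3$, $K_4$, $K_5$, or $K_6^{=}$, each giving a tight single-bag decomposition.) I expect the reduction step — verifying that contracting a bag contained in a neighbour preserves both~\ref{def:td contractible edge} and~\ref{def:td contractible vertex} — to be the main obstacle, the remainder being bookkeeping with the table of values over $\mathcal{K}$.
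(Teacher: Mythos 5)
Your proposal is correct, and it takes a genuinely different route from the paper. The paper proves the proposition by induction on $\abs{V(G)}$: it takes a $3$-contractible tree decomposition with fewest nodes, peels off leaf bags, and uses the edge-count numerology of $\mathcal{K}$ (its Observation on $\abs{E(H)}$ versus $\tfrac72\abs{V(H)}$, the same table you compute) together with a two-leaf argument to absorb the half-unit loss caused by $K_4$/$K_6^{=}$ leaves; the equality case is then rebuilt recursively by reattaching a $K_5$ leaf. You instead prove an exact inclusion--exclusion identity over the whole decomposition (valid because adhesion sets are cliques by~\ref{def:td contractible edge}), root at a bag of size at least~$3$, and read off both the bound and the rigidity of extremal graphs from a single global budget $c(t)\le 0$; your reducedness lemma for a node-minimal decomposition replaces the paper's leaf-only shrinking step (which the paper also invokes, via minimality, with essentially the same justification). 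I checked the two points you flag as delicate and they do go through: the reattachment preserves~\ref{def:td contractible edge} because the path from $p$ to $q$ in $T$ passes through $t$, so $B_p\cap B_q=B_t\cap B_q$, and it preserves~\ref{def:td contractible vertex} because $B_t=B_p\cap B_t$ has size at most~$3$ by~\ref{def:td contractible edge}, so any new size-$3$ adhesion at $p$ equals $B_t$, which already lay in the old addable family, and subsets of addable sets are addable. What your approach buys is uniformity: the single budget shows at once that at most one bag contributes $-1$ (so at most one $K_4$/$K_6^{=}$ bag in total, since a $-16$ root together with a $-1$ child would drop below $-16$ -- worth one explicit sentence), avoiding the paper's separate two-leaf case analysis and yielding the tightness of the minimal decomposition directly rather than by induction; the paper's induction, in exchange, never needs the identity or the general (non-leaf) reduction. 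Two cosmetic remarks: your aside about disconnected graphs is unnecessary, since an empty adhesion already forces $c(t)\le-7$, and you should state explicitly that single-bag decompositions with bag $K_4$, $K_5$, or $K_6^{=}$ satisfy the second clause of tightness vacuously; neither affects correctness.
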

\begin{proof}
    We proceed by induction on $\abs{V(G)}$.
    By Proposition~\ref{prop:treedecomp}, 
    we may assume that $G$ admts a $3$-contractible tree decomposition $(T,(B_t)_{t\in V(T)})$.
    We choose $(T,(B_t)_{t\in V(T)})$ such that $\abs{V(T)}$ is minimised.
    
    Note that if $3\leq\abs{V(G)}\leq5$, then $ \abs{E(G)}\leq \binom{\abs{V(G)}}{2}= \lfloor\frac12(7\abs{V(G)}-15)\rfloor$,
    where the equality holds if and only if $G$ is a complete graph.
    Since each of $K_3$, $K_4$, and $K_5$ has a tight $3$-contractible tree decomposition with only one bag, the statement holds when  $3\leq\abs{V(G)}\leq5$.

    Therefore, we may assume that $\abs{V(G)}\geq6$.
    If $\abs{V(T)}=1$, then $G$ is isomorphic to a graph in $\mathcal{K}\setminus\{K_n:n\in[5]\}$, and therefore $\abs{E(G)}\leq\lfloor\frac12(7\abs{V(G)}-15)\rfloor$, where the equality holds if and only if~$G$ is isomorphic to~$K^{=}_6$.
    Thus, the statement holds for the cases when $\abs{V(T)}=1$.
    
    Hence, we may now assume that $\abs{V(T)}\geq2$.
    By the inductive hypothesis, for every leaf~$\ell$ of~$T$ and its neighbour $\ell'$ in~$T$, both $B_\ell\setminus B_{\ell'}$ and $B_{\ell'}\setminus B_\ell$ are nonempty because otherwise we can shrink~$T$ to obtain a smaller $3$-contractible tree decomposition.

    If there is a vertex~$u$ of degree at most $2$, then  
    \[
        \abs{E(G)}\le \abs{E(G-u)}+\deg_G(u) 
        \le 
        \left\lfloor \frac{7\abs{V(G-u)}-15}{2}\right\rfloor+2 < \left\lfloor\frac{7\abs{V(G)}-15}{2} \right\rfloor.
    \]
    Thus, we may assume that every vertex of~$G$ has degree at least~$3$.
    This implies that 
    $\abs{B_v}\ge 4$ for each leaf~$v$ of~$T$.
    We will use the following observation.

    \begin{observation}\label{obs:ratioinK}
        Let $H\in \mathcal{K}$ with $\abs{V(H)}\ge 4$. Then the following hold.
        \begin{itemize}
            \item $\abs{E(H)}\le  \frac{7}{2}(\abs{V(H)}-1) -1$.
            \item $\abs{E(H)}-1\le  \frac{7}{2}(\abs{V(H)}-2)-1$.
            \item $\abs{E(H)}-3\le \frac{7}{2}(\abs{V(H)}-3)-1 $ unless $H\in \{K_5,K_4,K_6^=\}$.
            \item $\abs{E(H)}-3= \frac{7}{2}(\abs{V(H)}-3)-\frac12 $ if $H=K_4$ or $H=K_6^=$.
            \item $\abs{E(H)}-3= \frac{7}{2}(\abs{V(H)}-3)$ if $H=K_5$.
        \end{itemize}
    \end{observation}

    Since $\abs{V(T)}\geq2$, there are distinct leaves~$v$ and~$v'$ of~$T$.
    Let $S_v:=B_v\cap \bigcup_{u\in V(T)\setminus\{v\}} B_u$ and let $S_{v'}:=B_{v'}\cap\bigcup_{u\in V(T)\setminus\{v'\}} B_u$.
    Note that $\abs{V(G)\setminus (B_v\setminus S_v)}\ge \abs{B_{v'}}\ge 4$.
    
    If $\abs{S_v}<3$ 
    or $G[B_v]$ is isomorphic to no graph in $\{K_5,K_4,K_6^=\}$, then by Observation~\ref{obs:ratioinK} and the inductive hypothesis,
    \begin{align*}
        \abs{E(G)}&= \left(\abs{E(G[B_v])}-\binom{\abs{S_v}}{2}\right) + \abs{E(G-(B_v\setminus S_v))}\\
        &\leq\frac{7}{2}\abs{B_v\setminus S_v}-1+\left\lfloor\frac{7\abs{V(G-(B_v\setminus S_v))}-15}{2}\right\rfloor<\left\lfloor\frac{7\abs{V(G)}-15}{2}\right\rfloor.        
    \end{align*}
    Thus, we may assume that $\abs{S_v}=3$
    and $G[B_v]$ is isomorphic to a graph in $\{K_4,K_5,K_6^=\}$.
    By symmetry, we may also assume that $\abs{S_{v'}}=3$ and $G[B_{v'}]$ is isomorphic to a graph in $\{K_4,K_5,K_6^=\}$.
    
    Note that $G-((B_v\setminus S_v)\cup (B_{v'}\setminus S_{v'}) )$ has at least three vertices.
    If neither~$G[B_v]$ nor~$G[B_{v'}]$ is isomorphic to~$K_5$, then by Observation~\ref{obs:ratioinK} 
    and the inductive hypothesis 
    applied to $G-((B_v\setminus S_v)\cup (B_{v'}\setminus S_{v'}) )$, we deduce that 
    \begin{linenomath}
    \begin{align*} 
        \abs{E(G)}&=\left(\abs{E(G[B_v])}-\binom{\abs{S_v}}{2}\right)+\left(\abs{E(G[B_{v'}])}-\binom{\abs{S_{v'}}}{2}\right)+\abs{E(G-((B_v\setminus S_v)\cup (B_{v'}\setminus B_{v'}))}\\
        &=\frac{7\abs{B_v\setminus S_v}-1}{2}+\frac{7\abs{B_{v'} \setminus S_{v'}}-1}{2}+\left\lfloor\frac{7\abs{V(G)}-7\abs{B_v \setminus S_v}-7\abs{B_{v'}\setminus S_{v'}}-15}{2}\right\rfloor \\
        &< \left\lfloor\frac{7\abs{V(G)}-15}{2}\right\rfloor.
    \end{align*}
    \end{linenomath}
    Hence, we may assume that  $G[B_v]$ or $G[B_{v'}]$ is isomorphic to $K_5$.
    
    Without loss of generality, $G[B_v]$ is isomorphic to~$K_5$. 
    By Observation~\ref{obs:ratioinK} and the inductive hypothesis, we deduce that 
    \begin{linenomath}
    \begin{align*}
        \abs{E(G)}&= \left(\abs{E(G[B_v])}-\binom{\abs{S_v}}{2}\right) + \abs{E(G-(B_v\setminus S_v))}\\
        &=\frac{7}{2}\abs{B_v\setminus S_v}+\left\lfloor\frac{7\abs{V(G-(B_v\setminus S_v))}-15}{2}\right\rfloor\leq \left\lfloor\frac{7\abs{V(G)}-15}{2}\right\rfloor.        
    \end{align*}
    \end{linenomath}

    Now let us discuss the equality condition. By our analysis, when $G$ has more than $5$ vertices, to have the equality, it is necessary that 
    $G[B_v]$ is isomorphic to $K_5$, 
    $\abs{S_v}=3$, 
    and $G-(B_v\setminus S_v)$ admits a tight $3$-contractible tree decomposition $(T',(B'_t)_{t\in V(T')})$.
    We claim that in this case, $G$ also has a tight $3$-contractible tree decomposition.
    Suppose that $G[B_v]$ is isomorphic to $K_5$, 
    $\abs{S_v}=3$, 
    and $G-(B_v\setminus S_v)$ admits a tight $3$-contractible tree decomposition $(T',(B'_t)_{t\in V(T')})$.
    Since $S_v$ is a clique, by Lemma~\ref{lem:complete bag}, there is a node~$w$ of~$T'$ such that $S_v\subseteq B_{w}$.
    Let $T''$ be the tree obtained from $T$ by attaching a new leaf~$x$ to~$w$. Let $B''_t:=B'_t$ for all $t\in V(T')$ and $B''_x:=B_v$.
    Then it is easy to see that 
    $(T'',\{B''_t\}_{t\in V(T'')})$ is a tight $3$-contractible tree decomposition of~$G$.
    
    For the converse, observe that if $(T,(B_t)_{t\in V(T)})$ is a tight $3$-contractible tree decomposition, then so is $(T\setminus v,(B_t)_{t\in V(T\setminus v)})$, and so this proves by the inductive hypothesis that if $(T,(B_t)_{t\in V(T)})$ is tight, then the equality holds. 
    This completes the proof.
\end{proof}

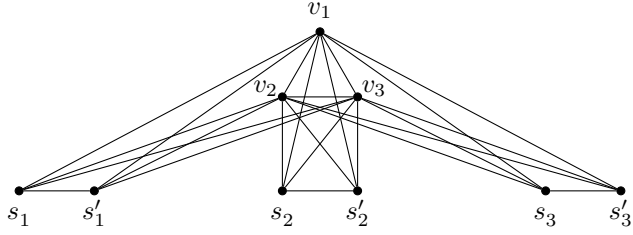
\begin{figure}[t]
    \centering
    \tikzstyle{v}=[circle, draw, solid, fill=black, inner sep=0pt, minimum width=3pt]
    \begin{tikzpicture}[scale=0.5]
        \draw (0,4.232) node[v,label={[xshift=0.0mm,yshift=0.0mm]$v_1$}](v1){};
        \draw (-1,2.5) node[v,label={[xshift=-2.2mm,yshift=-1.7mm]$v_2$}](v2){};
        \draw (1,2.5) node[v,label={[xshift=2.2mm,yshift=-1.7mm]$v_3$}](v3){};
        \draw (v1)--(v2)--(v3)--(v1);

        \draw (-8,0) node[v,label={[xshift=0.0mm,yshift=-6.5mm]$s_1^{\phantom{'}}$}](s1){};
        \draw (-6,0) node[v,label={[xshift=0.0mm,yshift=-6.5mm]$s'_1$}](s1'){};
        \draw (-1,0) node[v,label={[xshift=0.0mm,yshift=-6.5mm]$s_2^{\phantom{'}}$}](s2){};
        \draw (1,0) node[v,label={[xshift=0.0mm,yshift=-6.5mm]$s'_2$}](s2'){};
        \draw (6,0) node[v,label={[xshift=0.0mm,yshift=-6.5mm]$s_3^{\phantom{'}}$}](s3){};
        \draw (8,0) node[v,label={[xshift=0.0mm,yshift=-6.5mm]$s'_3$}](s3'){};

        \foreach \x in {1,2,3}{
            \draw (s\x)--(s\x');
            \draw (s\x)--(v1);
            \draw (s\x)--(v2);
            \draw (s\x)--(v3);
            \draw (s\x')--(v1);
            \draw (s\x')--(v2);
            \draw (s\x')--(v3);
        }
    \end{tikzpicture}
    \caption{The graph $G_9$ in the proof of Theorem~\ref{thm:final}. Note that $G_8=G_9-s_3$.}
    \label{fig:construction}
\end{figure}

We now restate and prove Theorem~\ref{thm:final}.

\edgenumber*

\begin{proof}
    By Proposition~\ref{prop:exactextremal}, 
    if an $n$-vertex graph~$G$ has more than $\lfloor(7n-15)/2\rfloor$ edges and $n\ge 3$, then 
    $G$ has a minor in~$\mathcal{F}_3$.
    By Proposition~\ref{prop:mainforward}, every \subd{1} of~$G$ has twin-width at least~$4$. This proves the first part.

    For the second part, 
    we construct an $n$-vertex graph $G$ with $\lfloor(7n-15)/2\rfloor$ edges such that every \subd{1} of~$G$ has twin-width at most~$3$.
    For odd $n$, we construct an $n$-vertex graph $G_n$ as follows.
    Let $v_1$, $v_2$, and~$v_3$ be the vertices of~$K_3$ 
    and let $s_1,s_2,\ldots,s_{(n-3)/2},s_1',s_2',\ldots,s_{(n-3)/2}'$ be the vertices of~$\frac{n-3}{2}K_2$ such that~$s_i$ is adjacent to~$s_i'$ for each $i\in[(n-3)/2]$.
    Let $G_n:=K_3+\frac{n-3}{2}K_2$; see Figure~\ref{fig:construction} for $G_9$.
    For even $n$, let $G_n:=G_{n+1}-s_{(n-2)/2}$.
    Note that $\abs{E(G_n)}=\lfloor(7n-15)/2\rfloor$ for every integer $n\geq3$.
    We are going to show that for each integer $n\geq3$, every \subd{1} of $G_n$ has twin-width at most $3$.
    Since $G_n$ is an induced subgraph of~$G_{n+1}$, it suffices to show the statement for odd integers $n$.

    Let $G'$ be an \subd{1} of $G_n$ for odd $n\geq3$.
    If $n=3$, then $G'$ is a cycle of length at least~$6$, which has twin-width~$2$, so the statement holds.
    Thus, we may assume that $n\geq5$.
    We are going to construct partial \cont{3}s $\sigma_0,\sigma_1,\ldots,\sigma_{(n-1)/2}$ such that for each $i\in\{0,1,\ldots,(n-1)/2\}$, the subsequence $\sigma_i$ is a partial \cont{3} from a trigraph~$H_i$ to a trigraph~$H_{i+1}$ where $H_0:=G'$ and $H_{(n+1)/2}$ is the $1$-vertex graph.
    Note that we obtain a \cont{3} of~$G'$ by concatenating these partial \cont{3}s.

    For each edge $e$ of $G_n$, we denote by $P_e$ the path in $G'$ replacing~$e$.
    Since $G'$ is an \subd{1} of~$G_n$, for every edge~$e$ of~$G_n$, the length of $P_e$ is at least~$2$.
    For each $i\in[(n-3)/2]$ and each $j\in[3]$, let~$Q_{i,j}$ be the set of internal vertices of~$P_{s_iv_j}$ and~$P_{s'_iv_j}$, that is, $Q_{i,j}:=(V(P_{s_iv_j})\cup V(P_{s'_iv_j}))\setminus\{s_i,s'_i,v_j\}$.
    
    We first construct $\sigma_0$ as follows.
    \begin{enumerate}[label=\bf{Step \arabic*.},leftmargin=*]
        \item For each $1\leq i<j\leq3$, iteratively contract pairs of adjacent internal vertices of $P_{v_iv_j}$ so that only one vertex, say $p_{i,j}$, remains.
        \item Contract $\{p_{1,2},p_{1,3}\}$ to~$p_{1,2}$, and then contract $\{p_{1,2},p_{2,3}\}$ to~$p_{1,2}$.
    \end{enumerate}
    Let $H_1$ be the resulting trigraph, and let $p:=p_{1,2}$.
    It is straightforward to see that $\sigma_0$ is a partial \cont{3}, and the edges incident with $p$ are the only red edges of $H_1$.
    
    For $i$ from $1$ to $(n-3)/2$, we construct $\sigma_i$ as follows.
    \begin{enumerate}[label=\bf{Step \arabic*.},leftmargin=*]
        \item Iteratively contract pairs of adjacent vertices in $Q_{i,1}$ so that only two vertices $q_{i,1}$ and $q'_{i,1}$ remain.
        \item Contract $\{q_{i,1},q'_{i,1}\}$ to $q_{i,1}$.
        \item Iteratively contract pairs of adjacent internal vertices of $P_{s_is'_i}$ so that only one vertex $r_i$ remains.
        \item Contract $\{q_{i,1},r_i\}$ to $q_{i,1}$.
        \item For each $j\in\{2,3\}$, iteratively contract pairs of adjacent vertices in $Q_{i,j}$ so that only two vertices $q_{i,j}$ and $q'_{i,j}$ remain.
        \item Contract the following seven pairs of vertices in order.
    \end{enumerate}
    \[
        \begin{array}{llll}
            1)\ \{q_{i,2},q'_{i,2}\}\text{ to }q_{i,2}, 
            &2)\ \{q_{i,3},q'_{i,3}\}\text{ to }q_{i,3},
            &3)\ \{s_i,s'_i\}\text{ to }s_i,
            &4)\ \{s_i,q_{i,1}\}\text{ to }s_i,\\
            5)\ \{s_i,q_{i,2}\}\text{ to }s_i,
            &6)\ \{s_i,q_{i,3}\}\text{ to }s_i,
            &7)\ \{p,s_i\}\text{ to }p.
        \end{array}
    \]
    Let $H_{i+1}$ be the resulting trigraph.
    It is straightforward to see that $\sigma_i$ is a partial \cont{3}, and the edges incident with $p$ are the only red edges of $H_{i+1}$.
    By combining $\sigma_0,\sigma_1,\ldots,\sigma_{(n-3)/2}$ with an arbitrary contraction sequence $\sigma_{(n-1)/2}$ of~$H_{(n-1)/2}$, we obtain a \cont{3} of $G'$, and this completes the proof.
\end{proof}

\section{Conclusion}\label{sec:conclu}

For an \subd{2}~$H$ of a multigraph~$G$, we characterise when $\tww(H)\leq d$ for each $d\leq3$ in terms of a set of forbidden minors for~$G$.
As an application, we precisely determine the twin-width of all large grids, large walls, and the line graphs of large walls.
Furthermore, we determine the maximum number of edges in an \subd{1} $H$ of a multigraph~$G$ if $H$ has $n$ vertices and $\tww(H)\leq3$.

We present two open problems.
The first is to extend Theorem~\ref{thm:main}\ref{main:atmost3} to \subd{1}s.

\begin{question}
     If $G$ is an $\mathcal{F}_3$-minor-free multigraph, does there exist a $3$-contraction sequence for every \subd{1} of~$G$?
\end{question}

The other problem is to compute the twin-width of narrow grids, narrow walls, and the line graphs of narrow walls.

\begin{question}
    Let $m$ and $n$ be positive integers.
    Determine the twin-width of the following graphs:
    \begin{itemize}
        \item the $6\times n$ grid for $n\geq9$,
        \item an $m\times n$ wall for $m\leq10$ or $n\leq6$, and
        \item the line graph of an $m\times n$ wall for $m\leq10$ or $n\leq5$.
    \end{itemize}
\end{question}

\providecommand{\bysame}{\leavevmode\hbox to3em{\hrulefill}\thinspace}
\providecommand{\MR}{\relax\ifhmode\unskip\space\fi MR }
\providecommand{\MRhref}[2]{%
  \href{http://www.ams.org/mathscinet-getitem?mr=#1}{#2}
}
\providecommand{\href}[2]{#2}

\end{document}